\DeclareFontFamily{U}{mathc}{}
\DeclareFontShape{U}{mathc}{m}{it}%
{<->s*[1.03] mathc10}{}
\DeclareMathAlphabet{\mathcal}{U}{mathc}{m}{it}
\newtheorem{theorem}{Theorem}[section]
\newtheorem{lemma}[theorem]{Lemma}
\newtheorem{proposition}[theorem]{Proposition}
\newtheorem{corollary}[theorem]{Corollary} 
\theoremstyle{definition}  
\newtheorem{definition}[theorem]{Definition}
\newtheorem{example}[theorem]{Example}
\newtheorem{conjecture}[theorem]{Conjecture}  
\newtheorem{remark}[theorem]{Remark}
\numberwithin{equation}{section}
\newcommand{\id}{\text{Id}}
\newcommand{\nl}{\newline}
\newcommand{\Pic}{{{Pic}}}
\newcommand{\BrPic}{{{BrPic}}}
\newcommand{\Picbr}{Pic_{br}}
\newcommand{\uHom}{\underline{\text{Hom}}} 
\newcommand{\Aut}{Aut}
\DeclareMathOperator{\Coker}{\operatorname{\mathsf{Coker}}}
\newcommand{\Inv}{Inv}
\DeclareMathOperator{\Hom}{\operatorname{\mathsf{Hom}}}
\DeclareMathOperator{\Quad}{\operatorname{\mathsf{Quad}}}
\DeclareMathOperator{\Ext}{\operatorname{\mathsf{Ext}}}
\DeclareMathOperator{\Extpower}{\mathsf{\Lambda}}
\DeclareMathOperator{\Ker}{\operatorname{\mathsf{Ker}}}
\newcommand{\uuEx}{\mathbf{Ex}}
\newcommand{\uuExctr}{\mathbf{Ex_{ctr}}}
\newcommand{\uuExcrbr}{\mathbf{Ex_{cr-br}}}
\newcommand{\uuExbr}{\mathbf{Ex_{br}}}
\newcommand{\uuExbrqt}{\mathbf{Ex_{br-qt}}}
\newcommand{\uuExsym}{\mathbf{Ex_{sym}}}
\newcommand{\uPic}{\mathcal{P\mkern-3mu ic}} 
\newcommand{\uuPic}{\mathbf{Pic}}
\newcommand{\uBrPic}{\mathcal{\B\mkern-3mu rPic}} 
\newcommand{\uuBrPic}{\mathbf{BrPic}}
\newcommand{\uPicbr}{\mathcal{{P\mkern-3mu ic}_{br}}} 
\newcommand{\uuPicbr}{\mathbf{{Pic}_{br}}}
\newcommand{\PicbrI}{{Pic}^1_{br}}
\newcommand{\uuPicbrI}{\mathbf{{Pic}^1_{br}}}
\newcommand{\Picsym}{Pic_{sym}}
\newcommand{\uPicsym}{\mathcal{{P\mkern-3mu ic}_{sym}}} 
\newcommand{\uuPicsym}{\mathbf{{Pic}_{sym}}}
\newcommand{\uAut}{\mathcal{A\mkern-3mu ut}} 
\newcommand{\uInv}{\mathcal{I\mkern-3mu nv}} 
\newcommand{\uuInv}{\mathbf{Inv}}
\newcommand{\rev}{\text{rev}}
\newcommand{\eps}{\varepsilon}
\newcommand{\B}{\mathcal{B}}
\newcommand{\C}{\mathcal{C}}
\newcommand{\I}{\mathcal{I}}
\newcommand{\R}{\mathcal{R}}
\newcommand{\D}{\mathcal{D}}
\newcommand{\E}{\mathcal{E}}
\newcommand{\F}{\mathcal{F}}
\newcommand{\Z}{\mathcal{Z}}
\renewcommand{\H}{\mathcal{H}}
\renewcommand{\L}{\mathcal{L}}
\newcommand{\M}{\mathcal{M}}
\newcommand{\A}{\mathcal{A}}
\newcommand{\N}{\mathcal{N}}
\newcommand{\K}{\mathcal{K}}
\newcommand{\be}{\mathbf{1}}
\renewcommand{\P}{\mathcal{P}}
\renewcommand{\S}{\mathcal{S}}
\newcommand{\Q}{\mathcal{Q}}
\newcommand{\G}{\mathcal{G}}
\renewcommand{\be}{\mathbf{1}}
\newcommand{\bt}{\boxtimes}
\newcommand{\ot}{\otimes}
\newcommand{\op}{\oplus}
\newcommand{\bZ}{{\mathbb{Z}}}
\newcommand{\beq}{\begin{equation}}
\newcommand{\eeq}{\end{equation}}
\newcommand{\ve}{{\varepsilon}}
\newcommand{\lb}{\label}
\newcommand{\bpf}{\begin{proof}}
\newcommand{\epf}{\end{proof}}
\newcommand{\bth}{\begin{theorem}}
\renewcommand{\eth}{\end{theorem}}
\newcommand{\bpr}{\begin{proposition}}
\newcommand{\epr}{\end{proposition}}
\newcommand{\ble}{\begin{lemma}}
\newcommand{\ele}{\end{lemma}}
\newcommand{\bco}{\begin{corollary}}
\newcommand{\eco}{\end{corollary}}
\newcommand{\bde}{\begin{definition}}
\newcommand{\ede}{\end{definition}}
\newcommand{\bex}{\begin{example}}
\newcommand{\eex}{\end{example}}
\newcommand{\bre}{\begin{remark}}
\newcommand{\ere}{\end{remark}}
\newcommand{\bcj}{\begin{conjecture}}
\newcommand{\ecj}{\end{conjecture}}
\newcommand{\bM}{\mathbf{M}}
\newcommand{\Ex}{{\bf Ex}}
\newcommand{\Mod}{\mathbf{Mod}}
\newcommand{\Modsym}{\mathbf{Mod_{sym}}}
\newcommand{\Modbr}{\mathbf{Mod_{br}}}
\newcommand{\ModbrO}{\mathbf{Mod_{br}^0}}
\newcommand{\ModbrI}{\mathbf{Mod_{br}^1}}
\newcommand{\Bimod}{\mathbf{Bimod}}
\newcommand{\Fun}{\mathcal{F\mkern-3mu un}}
\newcommand{\End}{\mathcal{E \mkern-3mu nd}}
\newcommand{\opp}{\text{op}}
\newcommand{\Vect}{\mathcal{V \mkern-3mu ect}} 
\newcommand{\sVect}{\mathcal{sV \mkern-3mu ect}} 
\newcommand{\Rep}{\mathcal{R\mkern-3mu ep}}
\newcommand\void[1]{}
\newcommand{\INV}{\mathbf{Inv}}
\mathchardef\mhyphen="2D
\newcommand{\FUN}{\mathbf{2\mhyphen}\mathbf{Fun}}
\newcommand{\FUNbr}{\mathbf{2\mhyphen}\mathbf{Fun}_{\mathbf{br}}}
\newcommand{\FUNsym}{\mathbf{2\mhyphen}\mathbf{Fun}_{\mathbf{sym}}}
\newcommand{\AUT}{\mathbf{Aut}}
\newcommand{\AD}{\mathbf{Ad}}
\newcommand{\bG}{\mathbf{G}}
\newcommand{\bN}{\mathbf{N}}
\newcommand{\bA}{\mathbf{A}}
\newcommand{\X}{{\mathcal X}}
\begin{document}

\author{Alexei Davydov $^{a)}$, Dmitri Nikshych $^{b)}$}
\title{Braided Picard groups \\ and  graded extensions of braided tensor categories}

\maketitle

\begin{center}
$^{a)}$\ Department of Mathematics, Ohio University, Athens, OH 45701, USA\\
$^{b)}$\ Department of Mathematics and Statistics, University of New Hampshire,  Durham, NH 03824, USA
\end{center}
\date{}
\begin{abstract}
We classify various types of graded extensions of  a finite braided tensor category $\B$  in terms of 
its $2$-categorical Picard groups. In particular, we prove that braided extensions of $\B$ by a finite group $A$ correspond
to braided monoidal $2$-functors from $A$ to the  braided $2$-categorical Picard group of $\B$
(consisting of invertible central $\B$-module categories).  Such functors can be expressed in terms of the Eilnberg-Mac~Lane cohomology.   
We describe in detail braided $2$-categorical Picard groups of symmetric fusion categories 
and of pointed braided fusion categories.
\end{abstract}
\tableofcontents


\section{Introduction and synopsis}

\subsection{Extensions of tensor categories}

In this paper we work over an algebraically closed field $k$. All tensor categories are assumed to be $k$-linear and finite \cite{DGNO}.

Let $\B$ be a tensor category. An {\em extension} of $\B$ is an embedding of $\B$ into a tensor category $\C$
i.e. a fully faithful tensor functor  $\iota: \B \to \C$. We will  identify $\B$ with its image in $\C$ and use notation $\B \subset \C$ to
denote an extension.  An {\em isomorphism} between extensions  $\iota_1,\, \iota_2: \B \to \C$ is a tensor autoequivalence
$F: \C \to \C$ such that $F\circ\iota_1= \iota_2$. 

When $\B$ is braided (or symmetric) there are several types of extensions reflecting different ``amounts of commutativity" of $\C$.
Namely, we say that an extension $\iota :\B\to \C$ is
\begin{itemize}
\item {\em central} if there is a  lifting tensor functor $L: \B \to \Z(\C)$  such that $\iota$ coincides with
the composition $\B \xrightarrow{L} \Z(\C) \xrightarrow{\text{Forget}} \C$, where $\Z(\C)$ is the center of $\C$ and 
$\text{Forget}:\Z(\C)\to \C$ is the forgetful functor;
\item {\em braided} if $\C$ is braided;
\item {\em symmetric} if $\C$ is symmetric.
\end{itemize}

The extending tensor category $\C$ can be viewed as a $\B$-module category.  Furthermore,
the tensor product of $\C$ equips it with a structure of a pseudo-monoid \cite{DS} in a  monoidal $2$-category $\mathbf{M}$ consisting of certain
$\B$-modules.  Commutativity properties of $\B,\, \C$ and the type of an extension $\B\subset \C$ are reflected in the  
choice of $\B$-modules in $\mathbf{M}$ and in the properties of the pseudo-monoid $\C$. These properties are summarized in Table~\ref{table-1}.

\medskip
\begin{table}[h!]
\centering
\begin{tabular}{ |p{3cm}||p{7cm}|p{4cm}|  }
 \hline
 \hline
Extension $\B\subset \C$ & $2$-category $\mathbf{M}$ of $\B$-modules & $\C$\\
 \hline
tensor    & monoidal $2$-category $\Bimod(\B)$    &  pseudo-monoid  \\
central &  monoidal $2$-category $\Mod(\B)$  &  pseudo-monoid  \\
braided  & braided monoidal $2$-category $\Mod_{br}(\B)$ &  braided pseudo-monoid  \\
symmetric &  symmetric monoidal $2$-category $\Mod_{sym}(\B)$ & symmetric pseudo-monoid \\
 \hline
\end{tabular}
\caption{\label{table-1}Extensions $\B\subset \C$  as pseudo-monoids in a monoidal $2$-category $\mathbf{M}$.}
\end{table}
\medskip

In this paper, for a tensor category $\D$ we denote by $\Bimod(\D)$  the monoidal $2$-category of $\D$-bimodule categories \cite{ENO}.
For a braided tensor category $\B$
we denote by $\Mod(\B)$  the monoidal $2$-category of $\B$-module categories \cite{DN, ENO} (it can be viewed as
a monoidal $2$-subcategory of $\Bimod(\B)$) and $\Mod_{br}(\B)$ the braided monoidal $2$-category
of {\em braided} $\B$-module categories. For a symmetric tensor category $\E$  we denote $by \Mod_{sym}(\E)$ the symmetric monoidal $2$-category
of {\em symmetric} $\E$-module categories\footnote{For fusion categories, these 2-categories of module categories are fusion $2$-categories \cite{DR}.}. 
By definition \cite{E,B,BBJ},  a braided $\B$-module category is
equipped with a natural collection of isomorphisms coherently extending the braiding on $\B$, see Definition~\ref{pb definition}. 
Braided module categories  turn out to play an important role in 4d topological field theory and factorization homology. 
In Theorem~\ref{Modbr=ZMod} we show that the 2-category 
$\Mod_{br}(\B)$ of braided module categories is braided $2$-equivalent to the center (in the sense of \cite{BN}) of $\Mod(\B)$:
\begin{equation} 
\Mod_{br}(\B) \cong
 \mathbf{Z}(\Mod(\B)).
 \end{equation}

\subsection{Graded extensions  and monoidal $2$-functors to $2$-categorical groups}

We focus on extensions of finite braided tensor categories graded by finite groups.

Let $G$ be a group.  A {\em $G$-extension} of a tensor category $\D$  is an extension $\D \subset \C$ together with  a faithful 
$G$-grading of $\C$ such that $\D$ is the trivial component. In other words, $\C$ admits a decomposition
\begin{equation}
\label{G-extension def}
\C =\bigoplus_{g\in G}\, \C_g
\end{equation}
such that $\C_1=\D$ and the tensor multiplication of $\C$ maps $\C_x\times \C_y$ to $\C_{xy}$ for all $x,y\in G$.  
An {\em equivalence of $G$-extensions} is an equivalence of extensions respecting the grading.

In \cite{ENO} $G$-extensions of a tensor category $\D$  were classified by means of the Brauer-Picard $2$-categorical group $\uuBrPic(\D)$
of invertible $\D$-bimodule categories.  Namely,
it was shown that 
extensions \eqref{G-extension def} correspond to monoidal $2$-functors $G\to \uuBrPic(\D)$. 
As a result,  equivalence classes of such extensions can be described in terms of certain cohomology groups associated to a homomorphism $G\to \BrPic(\D)$. 

This paper  provides a classification  of  various types of $G$-extensions 
(where $G$ is an Abelian group) of a {\em braided} tensor category $\B$.

By a {\em $2$-categorical group} (respectively, {\em braided or symmetric $2$-categorical group}) we understand a monoidal 
(respectively,  braided or symmetric monoidal $2$ category)
in which every $0$-cell is invertible with respect to the tensor product, every $1$-cell is an  equivalence, and every $2$-cell is an isomorphism. 
For a monoidal $2$-category $\mathbf{M}$ the set of its invertible objects is a 2-categorical group which we will denote by $\INV(\mathbf{M})$.

For the monoidal $2$-categories $\Bimod(\B)$, $\Mod(\B)$, $\Mod_{br}(\B)$ and $\Mod_{sym}(\B)$ (for symmetric $\B$) introduced above the $2$-categorical  groups
of invertible objects 
\begin{eqnarray}
\label{InvBrPic}
\uuBrPic(\B) &=& \INV(\Bimod(\B)) \\
\label{InvPic}
\uuPic(\B) &=& \INV(\Mod(\B)),  \\
\label{InvPicbr}
\uuPicbr(\B) &=& \INV(\Mod_{br}(\B)), \\
 \label{InvPicsym}
\uuPicsym(\B) &=& \INV(\Mod_{sym}(\B)),
\end{eqnarray}
are called the {\em Brauer-Picard}, {\em Picard}, {\em braided Picard}, and {\em symmetric Picard} 2-categorical group, respectively.
These  2-categorical groups play the key role in our  study of extensions of tensor categories.

The main results of this paper concerning graded extensions (see Chapter~\ref{ext chapter}) can be stated as follows:
\begin{equation}
\label{groupoid iso}
\xymatrix{
{\left\{\begin{array}{c}\text{the groupoid of } \\ \text{$G$-extensions of $\B$}\\ \text{of a given type}
\end{array}\right\}}\ \ar@{=}[r] &\ {\left\{\begin{array}{c}  \text{the groupoid of}\\ \text{ corresponding monoidal $2$-functors }\\ \text{ between 2-categorical groups 
$G\to \mathbf{G}$}
\end{array}\right\}}  }
\end{equation}
for an appropriate $2$-categorical group $\mathbf{G}$. These categorical $2$-groups and
the correspondence between different types of $G$-extensions and monoidal $2$-functors $G\to \mathbf{G}$ are given in Table~\ref{table-2}.

\medskip
\begin{table}[h!]
\centering
\begin{tabular}{ |p{3cm}||p{7cm}|p{4cm}|  }
 \hline
 \hline
Extensions  $\B\subset \C$ & $2$-categorical group  $\mathbf{G}$  & $2$-functors $G\to \mathbf{G}$ \\
 \hline
tensor    & $2$-categorical group  $\uuBrPic(\B)$    &  monoidal  \\
central &  $2$-categorical group  $\uuPic(\B)$   &  monoidal  \\
braided  & braided $2$-categorical group  $\uuPicbr(\B)$    &  braided  \\
symmetric &  symmetric $2$-categorical group  $\uuPic(\B)$  & symmetric \\
 \hline
\end{tabular}
\caption{\label{table-2}$G$-extensions $\B\subset \C$  and corresponding  monoidal $2$-functors.}
\end{table}
\medskip

\subsection{Homotopy groups and invariants of $2$-categorical groups}

Let $\bG$ be a 2-categorical group with the identity object $\I$.  We introduce its homotopy groups as follows:
\begin{eqnarray}
\pi_0(\bG) &=& \text{the group of isomorphism classes of objects ($0$-cells) of $\bG$}, \\
\pi_1(\bG) &=& \Aut_\bG(\I),\text{ the group of isomorphism classes of $1$-automorphisms of  $\I$}, \\
\pi_2(\bG) &=& \Aut(\id_\I), \text{ the group of $2$-automorpshisms of the identity $1$-automorphism of $\I$}.
\end{eqnarray} 
The multiplication of $\pi_0(\bG)$ is given by the tensor product of $\bG$ and the multiplication of $\pi_1(\bG),\, \pi_2(\bG)$ 
is the composition of automorphisms. 

These homotopy groups  come equipped with additional structure, which we refer to as the {\em standard invariants}, namely
a $\pi_0(\bG)$-action on $\pi_{m}(\bG)$,
\begin{equation}
\label{wp}
\pi_0(\bG)\times \pi_m(\bG)\to \pi_{m}(\bG)\quad m=0,1,2
\end{equation}
given by the conjugation with $\id_\mathcal{X}$ for $\mathcal{X}\in \pi_0(\bG)$
(this action is used while making sense of the cohomology groups below) 
and the {\em first} and the {\em second canonical classes}
\begin{equation}
\label{standard canonical classes}
\alpha_\bG\in H^3(\pi_0(\bG),\pi_1(\bG))\qquad\text{and} \qquad q_\bG\in H^3_{br}(\pi_1(\bG),\pi_2(\bG)).
\end{equation}
Part of the properties of the standard invariants is that the second canonical class is invariant under the $\pi_0(\bG)$-action. 

Here and in what follows we denote by 
\begin{equation}
H^n_{br}(A,M) := H^{n+1}(A,2;M) \quad \text{and} \quad H^n_{sym}(A,M) := H^{n+3}(A,4;M)
\end{equation}
the Eilenberg-Mac Lane cohomology \cite{EM1} of level 2 and 4, respectively. Note that $H^3_{br}(A,M)$ is isomorphic
to the group of quadratic functions from $A$ to $M$.

For a braided 2-categorical group $\bG$ the $\pi_0(\bG)$-action \eqref{wp} is trivial. The canonical classes get promoted to 
\begin{equation}
\label{braided standard canonical classes}
\alpha_\bG\in H^3_{br}(\pi_0(\bG),\pi_1(\bG))\ \quad \text{and} \quad  q_\bG\in H^3_{sym}(\pi_1(\bG),\pi_2(\bG)).
\end {equation}
An additional structure is the {\em  Whitehead products} 
\begin{equation}
\lb{wp2}\pi_n(\bG)\times \pi_m(\bG)\to \pi_{n+m+1}(\bG),\qquad n,m=0,1,2.
\end{equation}
Note that the  product $\pi_0(\bG)\times \pi_0(\bG)\to \pi_1(\bG)$ is determined by the first canonical class 
(as the polarization of the quadratic function $\alpha$). 

For a symmetric 2-categorical group $\bG$ all Whitehead products are zero and the canonical classes are 
\begin{equation}
\label{symmetric standard canonical classes}
\alpha_\bG\in H^3_{sym}(\pi_0(\bG),\pi_1(\bG)) \quad \text{and} \quad q_\bG\in H^3_{sym}(\pi_1(\bG),\pi_2(\bG)).
\end{equation}

For a tensor category $\D$ the homotopy groups and standard invariants of the 2-categorical group 
$\uuBrPic(\D)$ were examined in \cite{ENO}.  One has
\begin{equation*}
\pi_0(\uuBrPic(\D)) = \BrPic(\D),  \quad \pi_1(\uuBrPic(\D)) = \Inv(\Z(\D)), \quad \pi_2(\uuBrPic(\D)) = k^\times.
\end{equation*}
It was shown there that the $\BrPic(\D)$-action on $\Inv(\Z(\D))$ (i.e. the $\pi_0$-action on $\pi_1$)
 comes from the isomorphism $\BrPic(\D)\simeq \Aut_{br}(\Z(\D))$  and that the second canonical class
 is given by the quadratic function 
 \[
 \pi_1=\Inv(\Z(\D)) \to \pi_2 =k^\times : Z \mapsto c_{Z,Z}, 
 \] 
where $c$ denotes the braiding of $\Z(\D)$.

The homotopy groups of $2$-categorical groups introduced in \eqref{InvPic} - \eqref{InvPicsym} are
\begin{align*}
 \pi_0(\uuPic(\B)) &= \Pic(\B), & \pi_1(\uuPic(\B)) &= \Inv(\B),  & \pi_2(\uuPic(\B)) &= k^\times, \\
 \pi_0(\uuPic_{br}(\B)) &= \Pic_{br}(\B), & \pi_1(\uuPic_{br}(\B)) &= \Inv(\Z_{sym}(\B)), & \pi_2(\uuPic_{br}(\B)) &= k^\times, \\
  \pi_0(\uuPic_{sym}(\E)) &= \Pic_{sym}(\E), & \pi_1(\uuPic_{br}(\E)) &= \Inv(\E), & \pi_2(\uuPic_{br}(\E)) &= k^\times, 
\end{align*}
where  $\B$ is a braided tensor category, and $\E$ is a symmetric tensor category.

We investigate the standard invariants of the braided 2-categorical group $\uuPic_{br}(\B)$ 
and of the symmetric 2-categorical group $\uuPic(\E)$. 
For a braided tensor $\B$ we describe the  Whitehead product (Proposition \ref{PB is well defined})
\beq\lb{swp}
\pi_0 \times \pi_1 = \Pic_{br}(\B)\times \Inv(\Z_{sym}(\B))\ \to\ \pi_2 = k^\times 
\eeq
and the first canonical class (viewed as a quadratic function) 
\beq\lb{bfc}
Q: \pi_0 =\Pic_{br}(\B)\ \to\  \pi_1=  \Inv(\Z_{sym}(\B)).
\eeq
For a symmetric tensor category $\E$ 
the first canonical class becomes a  
homomorphism
\beq\lb{sfc}
Q:\Pic_{sym}(\E)\ \to\  \Inv(\E)_2
\eeq
into the 2-torsion of the group of invertible objects of $\E$.\\

\subsection{Cohomological description of (braided) monoidal $2$-functors}

In view of the identification 
\eqref{groupoid iso} it is desirable to have a good description of various types of monoidal $2$-functors $G\to \bG$.
We present one  
 in Section~\ref{Chapter on cohomology} in terms of the Eilenberg-Mac Lane cohomology.

Let $\bG$ be a 2-categorical group (respectively, braided, symmetric 2-categorical group). 
Denote by $\FUN(G,\bG)$ (respectively, $\FUNbr(G,\bG)$, $\FUNsym(G,\bG)$)
the 2-groupoid of monoidal (respectively, braided, symmetric) $2$-functors $G\to \bG$. 
Such a functor restricts on  objects to a map from  $\pi_0(\FUN(G,\bG))$ (respectively, from $\pi_0(\FUNbr(G,\bG))$, 
$\pi_0(\FUNsym(G,\bG))$) to $\Hom(G,\,\pi_0(\bG))$), i.e.
from the set of isomorphism classes of $2$-functors to the set of group homomorphisms.
A homomorphism $\phi:G\to \pi_0(\bG)$ is in the image of this map (i.e. $\phi$ can be lifted
 to a monoidal (respectively, braided, symmetric) $2$-functor if and only if the following two obstructions vanish. 

The first obstruction is the image of $\phi$ under the homomorphism
\begin{equation}
\label{o3}
o_3: \Hom(G,\, \pi_0(\bG)) \to H^3(G,\pi_1(\bG))
\end{equation}
(respectively, $\Hom(G,\, \pi_0(\bG)) \to H^3_{br}(G,\pi_1(\bG))$, $\Hom(G,\, \pi_0(\bG)) \to H^3_{sym}(G,\pi_1(\bG))$),
given by   the pullback along $\phi$ of the first canonical class  $\alpha_\bG$ defined in \eqref{standard canonical classes}
(respectively, in  \eqref{braided standard canonical classes},
 \eqref{symmetric standard canonical classes}).
The obstruction $o_3(\phi)$ vanishes if and only if  $\phi$ can be lifted to a monoidal (respectively, braided, symmetric) functor from $G$
to the $1$-categorical truncation $\Pi_{\leq 1}(\bG)$ of $\bG$.

Suppose that a lifting $F: G \to \Pi_{\leq 1}(\bG) 
$ of $\phi$ is chosen. Then the  second obstruction  is the image of $F$ under the map
\begin{equation}
o_4: \Fun(G,\,  \Pi_{\leq 1}(\bG)  
\to H^4(G,\,\pi_1(\bG))
\end{equation}
(respectively, $\Fun_{br}(G,\, \Pi_{\leq 1}(\bG) 
) \to H^4_{br}(G,\,\pi_1(\bG))$, $\Fun_{sym}(G,\, \Pi_{\leq 1}(\bG) 
) \to H^4_{sym}(G,\,\pi_1(\bG))$).
The obstruction $o_4(F)$  measures the failure of extending $F$ to a monoidal (respectively, braided, symmetric) $2$-functor $G \to \bG$.
 When $o_4(F)$ vanishes, the equivalence classes of such $2$-functors extending $F$ form a torsor over the cokernel of a certain group homomorphism
 $H^1(G,\, \pi_1(\bG))\to H^3(G,\pi_2(\bG))$  (respectively, $H^1(G,\, \pi_1(\bG))\to H^3_{br}(G,\pi_2(\bG))$, 
 $H^1(G,\, \pi_1(\bG))\to H^3_{sym}(G,\pi_2(\bG))$) depending on $F$.

\subsection{Computation of standard invariants and groups of extensions}

For a non-degenerate braided fusion category $\B$ there is a monoidal $2$-equivalence
$\Mod(\Vect) =\Mod_{br}(\B)$, see Proposition~\ref{ModbrVect}.  In particular, 
the braided 2-categorical Picard group $\uuPic_{br}(\B)$ is ``trivial" in this case and so (as is well known)
is the extension theory: any braided graded extension of $\B$ splits into the tensor product of $\B$
and a pointed braided fusion category.

Thus, the most interesting braided Picard $2$-categorical  groups come from degenerate tensor categories.
In Section~\ref{symmetric section} we compute the homotopy groups and standard invariants of
symmetric fusion categories.   For example, the homotopy groups of the braided 2-categorical group $\uuPic_{br}(\Rep(G))$,
where $G$ is a finite group, are
\begin{equation*}
\pi_0 = H^2(G,\,k^\times)\times Z(G),\qquad \pi_1 = H^1(G,\,k^\times),\qquad \pi_2 = H^0(G,\, k^\times) =k^\times,
\end{equation*}
where $Z(G)$ denotes the center of $G$.
The first canonical class \eqref{bfc} is the quadratic function 
\[
H^2(G,\,k^\times)\times Z(G)\ \to\ H^1(G,k^\times),\qquad (\gamma,z) \mapsto \gamma_z(-) = \frac{\gamma(z,-)}{\gamma(-,z)}\ 
\]
and the second canonical class is trivial.

The Whitehead product \eqref{swp} is
\[
(H^2(G,\,k^\times)\times Z(G))\times H^1(G,\,k^\times)\ \to\ k^\times,\qquad (\gamma,z)\times \chi\mapsto \chi(z).
\]
We determine the corresponding homotopy groups and maps for a general (not necessarily Tannakian) symmetric 
fusion category in Section~\ref{symcase}.

We show that the groupoid of symmetric $A$-extensions of a  symmetric tensor category $\E$ has a structure of
a symmetric $2$-categorical group $\uuExsym(A,\,\E)$. We describe an exact sequence
that can be used to compute $\pi_0(\uuExsym(A,\,\E))$ in Section~\ref{Sect cat grp sym2fun}. We also determine the group of symmetric
extensions of a symmetric fusion category in Theorem~\ref{Ex groups computed}. 

\subsection{Organization} 

Section~\ref{Chapter on cohomology} contains the technical tools we need.  We include a detailed description of
the Eilneberg-Mac Lane cohomology \cite{EM1} in low degrees and the notions of braided and symmetric monoidal $2$-categories and $2$-functors between
them \cite{DS, KV2, S}.  An important observation is that the axioms of such categories and functors can be viewed as ``non-commutative versions"
of the higher Eilneberg-Mac Lane cocycle equations (e.g., compare equations \eqref{d13} -\eqref{d3}  with commuting 
polytopes \eqref{4cocycle 1} -  \eqref{4cocycle 4}).  This is parallel to the pentagon axiom of a monoidal category being
a non-commutative version of a $3$-cocycle equation.
Of a special use to us are (braided, symmetric)  $2$-categorical groups, characterized by invertibility of their cells
with respect to the tensor product. Monoidal (braided, symmetric) $2$-functors from a finite group (viewed as discrete $2$-categorical group) 
 to (braided, symmetric) $2$-categorical groups
can be obtained  as liftings of usual (braided, symmetric) monoidal functors, provided that certain cohomological obstructions vanish.
These obstructions for monoidal (respectively, braided, symmetric) $2$-functors and parameterization of liftings are described 
in Section~\ref{Sect Functors and cohomology}
(respectively,   Section~\ref{Sect Braided functors and cohomology},  Section~\ref{Sect Symmetric functors and cohomology}).
Symmetric monoidal $2$-functors as above form a symmetric $2$-categorical group. Its group of isomorphism classes of objects
fits into a certain exact sequence  (Theorem~\ref{exact sequence for Funsym groups}).

In Section~\ref{Chapter on module categories} we recall the $2$-category $\mathbf{Mod}(\B)$ of module categories
 over a finite tensor category $\B$.  When $\B$ is braided, $\mathbf{Mod}(\B)$ is a monoidal $2$-category. Its tensor product
 can be defined either by a universal property or by an explicit construction, see Remark~\ref{explicit btB}. 

Section~\ref{Chapter on Braided  module categories} deals with braided module categories over a braided tensor category $\B$  
introduced and studied by Enriques \cite{E}, Brochier \cite{B}, and Ben-Zvi, Brochier, and Jordan \cite{BBJ}. 
In such categories the action of $\B$ has an additional symmetry compatible with the braiding of $\B$ (Definition~\ref{pb definition}).
Equivalently, a module braiding on a $\B$-module category $\M$ is the same thing as a natural tensor  isomorphism between
the $\alpha$-inductions \cite{BEK} $\alpha^\pm_\M : \B^{\opp} \to \End_\B(\M)$ (Proposition~\ref{cbs}). 
The $2$-category $\Modbr(\B)$ of braided $\B$-module categories is   $2$-equivalent to
the $2$-center of $\mathbf{Mod}(\B)$ (Theorem~\ref{Modbr=ZMod}). In particular, $\Modbr(\B)$ is braided.
The easiest examples of braided $\B$-module categories
come from tensor automorphisms of $\id_\B$, we describe these in Section~\ref{basics of brmods}. We also prove 
in Proposition~\ref{ModbrVect} that  $\Modbr(\B)\cong \Modbr(\Vect)$ when $\B$ is a non-degenerate braided fusion category. 
Finally, module categories over  a symmetric  tensor category $\E$ can be equipped with the identity $\E$-module braiding 
and so they form a symmetric monoidal $2$-category $\Modsym(\E)$.
We prove in Proposition~\ref{inducing to center} that
the induction $\Modsym(\Z_{sym}(\B))\to \Modbr(\B)$  of braided module categories from the symmetric center of $\B$
is a braided monoidal $2$-functor. 

In Section~\ref{Chapter on Picards} we describe various $2$-categorical Picard groups associated to tensor categories. These are parts
of the corresponding monoidal $2$-categories consisting of invertible  module categories.  
The new ones are the braided Picard $2$-categorical group $\uuPicbr(\B)=\mathbf{Inv}(\Modbr(\B))$ of a braided tensor category $\B$
and the symmetric Picard $2$-categorical group $\uuPicsym(\E)=\mathbf{Inv}(\Modsym(\E))$ of a symmetric tensor category $\E$.
We describe their homotopy groups, canonical classes, and Whitehead brackets. Proposition~\ref{exs} provides an exact sequence
featuring the group $\pi_0(\uuPicbr(\B))$ that can be seen as a sequence of homotopy groups of a certain fibration. Here we also describe
Azumaya algebras in braided tensor categories, as they give a convenient description of invertible module categories.

Section~\ref{symmetric section} is dedicated to the braided $2$-categorical Picard group of a symmetric fusion category $\E$.
We recall the computation of $\Pic(\E)$  due to Carnovale \cite{Car} and use it to describe the braided categorical Picard group of $\E$
and its canonical classes. 

In Section~\ref{pointed section} we compute the braided categorical Picard group of a pointed braided fusion category $\B$.
We show that in this case there is a braided monoidal equivalence of braided categorical groups $\uPicbr(\B) \cong \uPicbr(\Z_{sym}(\B))$,
where $\Z_{sym}(\B)$ is the symmetric center of $\B$,  see Proposition~\ref{Picbr ptd}.

Finally, Section~\ref{ext chapter} contains a classification of graded extensions. Tensor (respectively, central, braided, and symmetric)
graded extensions are classified in Theorem~\ref{main ENO} (respectively, Theorem~\ref{secondary ENO}, Theorem~\ref{main DN},
and Theorem~\ref{main DN sym}). We compute the group of symmetric extensions of a symmetric fusion category in Theorem~\ref{Ex groups computed}.
Here we also explain that the zesting procedure studied in \cite{DGPRZ} can be understood 
as a deformation of  a braided monoidal functor $A\to \uPicbr(\B)$ and compute Pontryagin-Whitehead obstructions to existence 
of extensions in this case.

\subsection{Acknowledgements} 

We are grateful  to Pavel Etingof, C\'esar Galindo, Corey Jones, David Jordan, Liang Kong, Victor Ostrik, and Milen Yakimov for
many useful discussions.  We also thank the anonymous referee for comments and corrections.
The first author thanks the Simons foundation for partial support. 
The work of the second author was supported  by  the  National  Science  Foundation  under  
Grant No.\ DMS-1801198.  This material is based upon work supported by the National Science Foundation under Grant No. DMS-1440140, 
while the authors were in residence at the Mathematical Sciences Research Institute in Berkeley, California, during the Spring 2020 
semester.

\section{Higher categorical groups and group cohomology}
\label{Chapter on cohomology}

\subsection{Eilenberg-Mac Lane cohomology}
\label{Section EM cohomology}

We denote by $C^*(A,M)$ the normalised standard complex of the abelian group $A$ with coefficients in the trivial $A$-module $M$. 

\begin{remark}
We will refer to cochain complexes for the second, third, and fourth Eilenberg-Mac Lane cohomology groups as {\em braided},
{\em sylleptic}, and {\em symmetric}, respectively. This is justified since such cochains  give rise to  braided, sylleptic, and symmetric
$2$-categorical groups, see  Sections~\ref{Section higher categories} and \ref{2cg}.  The explicit descriptions of these complexes are recalled
below.
\end{remark}

We denote by $C^*_{br}(A,M) = C^{*+1}(K(A,2),M)$ the normalised standard complex computing the second Eilenberg-Mac Lane cohomology \cite{EM1}.
The first few terms of the cochain complex $C^*_{br}(A,M)$ are as follows:
\begin{align*}
C^0_{br}(A,M) &= C^0(A,M)=M,\qquad C^1_{br}(A,M)=C^1(A,M),\qquad C^2_{br}(A,M)=C^2(A,M),\\ 
C^3_{br}(A,M) &= C^3(A,M)\op C^2(A,M) = \{(a(-,-,-),a(- | -))\}, \\
C^4_{br}(A,M) &= C^4(A,M) \op C^3(A,M) \op C^3(A,M) = \{(a(-,-,-,-),a(-,-|-),a(-|-,-))\}, \\
C^5_{br}(A,M) &= C^5(A,M)\op C^4(A,M) \op C^4(A,M)\op C^4(A,M)\op C^3(A,M) \\
 &= \{(a(-,-,-,-,-),a(-,-,-|-),a(-,-|-,-),a(-|-,-,-),a(-|-|-)) \}
 \end{align*} 
with the differentials
\[
d :C^2_{br}(A,M)\to C^3_{br}(A,M) 
\]
\begin{eqnarray}
d(a)(x,y,z)  &=& a(y,z) - a(xy,z) + a(x,yz) - a(x,y),\\
d(a)(x|y) &=& a(y,x) - a(x,y); 
\end{eqnarray}
\[
d :C^3_{br}(A,M)\to C^4_{br}(A,M) 
\]
\begin{align}
\label{3cob1}
d(a)(x,y,z,w)  &=& a(y,z,w) - a(xy,z,w) + a(x,yz,w) - a(x,y,zw) + a(x,y,z), \\
\label{3cob2}
d(a)(x,y|z) &=& a(y|z) - a(xy|z) + a(x|z) + a(x,y,z) - a(x,z,y) + a(z,x,y), \\
\label{3cob3} 
d(a)(x|y,z)  &=&  a(x|y) - a(x|yz) + a(x|z) - a(x,y,z) + a(y,x,z) - a(y,z,x), 
\end{align}
and
\[
d : C^4_{br}(A,M)\to C^5_{br}(A,M)  
\]
\begin{eqnarray}
& & \begin{split}
\label{d5}
d(a)(x,y,z,w,u)  &=  a(y,z,w,u) - a(xy,z,w,u) + a(x,yz,w,u) \\
&\quad  - a(x,y,zw,u) + a(x,y,z,wu) - a(x,y,z,w), 
\end{split} \\
& & \begin{split}
\label{d13}
d(a)(x|y,z,w)  &=  a(x|z,w) - a(x|yz,w) + a(x|y,zw)- a(x|y,z) \\
& \quad - a(x,y,z,w) + a(y,x,z,w) - a(y,z,x,w) + a(y,z,w,x),
\end{split} \\
& & \begin{split}
 \label{d31}
d(a)(x,y,z|w)  &=  a(y,z|w) - a(xy,z|w) + a(x,yz|w)- a(x,y|w) - \\
&\quad a(x,y,z,w) + a(x,y,w,z) - a(x,w,y,z) + a(w,x,y,z), 
\end{split} \\
& & \begin{split}
\label{d22}
d(a)(x,y|z,w)  &=  a(y|z,w) - a(xy|z,w) + a(x|z,w) - a(x,y|w) + a(x,y|zw) - a(x,y|z)   \\
&\quad  +  a(x,y,z,w) - a(x,z,y,w) + a(z,x,y,w)  \\
&\quad + a(x,z,w,y) - a(z,x,w,y) + a(z,w,x,y), \\
\end{split} \\
\label{d3}
& & d(a)(x|y|z)  =  -a(x,y|z) + a(y,x|z) - a(x|y,z) + a(x|z,y),\qquad x,y,z,w\in A.
\end{eqnarray}

\begin{example}
The first few terms of the cochain complex $C^*_{br}(\mathbb{Z}/2\mathbb{Z},M)$ are 
$$\xymatrix{ M \ar[r]^0 & M \ar[r]^{d_1} & M \ar[r]^{0} & M^2 \ar[r]^{d_3} & M^3 \ar[r]^{d_4} & M^5 \ar[r] & ...}$$
where $M^n$ is the direct sum of $n$ copies of $M$ and 
$$\begin{array}{rcl}
d_1(m) & = & 2m \\
d_3(m,l) & = & (2m,2l+m,2l-m) \\
d_4(m,l,k) & = & (0,0,2(m-l+k),0,0) \\
\end{array}$$
Thus the first few braided cohomology groups are 
\begin{eqnarray*}
H^0_{br}(\mathbb{Z}/2\mathbb{Z},M) &=& M,\\
H^1_{br}(\mathbb{Z}/2\mathbb{Z},M) &=& M_2,\\
H^2_{br}(\mathbb{Z}/2\mathbb{Z},M) &=& M/2M, \\
H^3_{br}(\mathbb{Z}/2\mathbb{Z},M) &=& M_4,\\
H^4_{br}(\mathbb{Z}/2\mathbb{Z},M) &=& M_2\oplus M/4M.
\end{eqnarray*}
Here $M_s = \{m\in M|\ sm=0\}$. 
\end{example}

We denote by $C^*_{syl}(A,M) = C^{*+2}(K(A,3),M)$ the normalised standard complex computing the third Eilenberg-Mac Lane cohomology \cite{EM1}.
The first few terms of the cochain complex $C^*_{syl}(A,M)$ are as follows:
\begin{eqnarray*}
& &C^0_{syl}(A,M) =M,\quad C^1_{syl}(A,M)=C^1(A,M),\\
& & C^2_{syl}(A,M)=C^2(A,M),\quad C^3_{syl}(A,M) = C^3_{br}(A,M),\\
& & C^4_{syl}(A,M) = C^4_{br}(A,M) \op C^2(A,M) = \{(a(-,-,-,-),a(-,-|-),a(-|-,-),a(-||-))\},\\
& & \begin{split} C^5_{syl}(A,M) & = C^5_{br}(A,M)\op C^3(A,M)\op C^3(A,M)=\\
&= \{(a(-,-,-,-,-),a(-,-,-|-),a(-,-|-,-),a(-|-,-,-), \\
& \quad \quad a(-|-|-),a(-,-||-),a(-||-,-)) \}
\end{split}
\end{eqnarray*}
with the additional differentials
\[
d:C^3_{syl}(A,M)\to C^4_{syl}(A,M)
\]
\beq\label{}
d(a)(x||y) = a(x|y) + a(y|x),
\eeq
and 
\[
d:C^4_{syl}(A,M)\to C^5_{syl}(A,M)
\]
\beq\label{sym1}
d(a)(x||y,z) = a(x|y,z) + a(y,z|x) + a(x||y) + a(x||z) - a(x||yz),
\eeq
\beq \label{sym2}
d(a)(x,y||z) = a(x,y|z) +a(z|x,y) + a(x||z) + a(y||z) - a(xy||z).
\eeq

\begin{example}
The first few sylleptic cohomology groups of $\bZ/2\bZ$ are 
\begin{eqnarray*}
H^0_{syl}(\mathbb{Z}/2\mathbb{Z},M) &=& M,\\
H^1_{syl}(\mathbb{Z}/2\mathbb{Z},M) &=& M_2,\\
H^2_{syl}(\mathbb{Z}/2\mathbb{Z},M) &=& M/2M, \\
H^3_{syl}(\mathbb{Z}/2\mathbb{Z},M) &=& M_2,\\
H^4_{syl}(\mathbb{Z}/2\mathbb{Z},M) &=& M_2\oplus M/2M.
\end{eqnarray*}
\end{example}

We denote by $C^*_{sym}(A,M) = C^{*+3}(K(A,4),M)$ the normalised standard complex computing the fourth Eilenberg-Mac Lane cohomology \cite{EM1}.
The first few terms of the cochain complex $C^*_{sym}(A,M)$ are as follows:
\begin{eqnarray*}
& & C^0_{sym}(A,M) =M,\qquad C^1_{sym}(A,M)=C^1(A,M),\qquad C^2_{sym}(A,M)=C^2(A,M),\\ 
& & C^3_{sym}(A,M) = C^3_{br}(A,M),\qquad C^4_{sym}(A,M) = C^4_{syl}(A,M),\\
& & \begin{split} & C^5_{sym}(A,M) = C^5_{syl}(A,M)\op C^2(A,M) = \\
& \quad = \{ (a(-,-,-,-,-),a(-,-,-|-),a(-,-|-,-),a(-|-,-,-),\\
& \quad \quad a(-|-|-),a(-,-||-),a(-||-,-),a(-|||-)) \} 
\end{split}
\end{eqnarray*}
with the additional differential
\[
d:C^4_{sym}(A,M)\to C^5_{sym}(A,M)
\]
\beq\label{ssym}
d(a)(x|||y) = a(x||y) - a(y||x), \qquad x,y\in A.
\eeq

\begin{example}
The first few level 4 cohomology groups of $\bZ/2\bZ$ are the same as the symmetric cohomology
$$H^n_{sym}(\mathbb{Z}/2\mathbb{Z},M) = H^n_{syl}(\mathbb{Z}/2\mathbb{Z},M),\qquad n\leq 4.$$
\end{example}

\begin{example}
\label{cohomology 0-3} 
It is immediate from the definitions that 
\begin{eqnarray*}
& & H^0_ {br}(A,\,M) = H^0_ {syl}(A,\,M) = H^0_ {sym}(A,\,M) \cong M, \\ 
& & H^1_ {br}(A,\,M) = H^1_ {syl}(A,\,M) = H^1_ {sym}(A,\,M) \cong  \Hom(A,\,M), \\
& & H^2_ {br}(A,\,M) = H^2_ {syl}(A,\,M) = H^2_ {sym}(A,\,M) \cong  \Ext(A,\,M).
\end{eqnarray*}
It was shown in \cite{EM2} that there are isomorphisms
\begin{equation*}
H^3_ {br}(A,\,M) \cong \Quad(A,\,M),\qquad H^3_ {syl}(A,\,M) = H^3_ {sym}(A,\,M) \cong \Hom(A,\,M_2),
\end{equation*}
given by 
\[
(a(-,-,-),a(- | -))\mapsto q,\quad \text{ where } q(x) = a(x|x),\, x\in A. 
\]
Here $\Quad(A,M)$ is the group of quadratic maps and $M_2 = \{m\in M|\ 2m=0\}$ is the 2-torsion subgroup of $M$.
\end{example}

The $4$th cohomology groups are especially important for our purposes.
Let us now assume that $M$ is divisible.  The following results  are from \cite{EM2}.

\begin{example}
\label{cohomology 4} 
There is an isomorphism
\begin{equation}
\label{H4sym}
\theta_{sym} :H^4_{sym}(A,\, M) \xrightarrow{\sim } \Hom(A_2,\,M)
\end{equation}
assigning to a symmetric $4$-cocycle $(a(-,-,-,-),a(-,-|-),a(-|-,-),a(-||-))$ the homomorphism
\begin{equation}
\label{EM hom}
A_2\to M : x\mapsto a(x,x|x) - a(x|x,x) - a(x,x,x,x).
\end{equation}

There is an isomorphism
\begin{equation}
\label{H4syl}
\theta_{syl} :H^4_{syl}(A,\, M) \xrightarrow{\sim } \Hom(A_2,\,M) \oplus \Hom(\Extpower^2 A,\, M),
\end{equation}
whose first component   
is given by \eqref{EM hom} and the second component  assigns to a sylleptic $4$-cocycle
$(a(-,-,-,-),a(-,-|-),a(-|-,-),a(-||-))$ the homomorphism
\begin{equation}
\label{EM form}
\Extpower^2 A \to M : x \wedge y \mapsto a(x||y) - a(y||x),
\end{equation}
which is the obstruction for a sylleptic $4$-cocycle to be symmetric. 

Finally, there is a homomorphism 
\begin{equation}
\label{H4br}
\theta_{br} :H^4_{br}(A,\, M) \xrightarrow{ } \Ext(A,\, \Hom(A,\, M)),
\end{equation}
which is the obstruction for a braided $4$-cocycle to have a sylleptic  structure. It is defined as follows.
Let $(a(-,-,-,-),a(-,-|-),a(-|-,-))$ be a braided $4$-cocycle. For any $x\in A$ define a function $b_x\in C^2(A,\, M)$ by
\[
b_x(y,z)= a(x|y,z) + a(y,z |x),\qquad x,y,z\in A.
\]
It follows from formulas \eqref{d5} -\eqref{d3} and divisibility of $M$ that $b_x$ is a $2$-coboundary.
That is there exists a function  $a(-||-)\in C^2(A,\, M)$ such that \eqref{sym1} vanishes.
The function
\[
A^3\to M : (x,y,z) \mapsto a(x||y)-a(y||x) +a(x||z) -a(z||x) - a(x||yz) + a(yz||x)
\]
is multiplicative in $x$ and, hence, defines a symmetric $2$-cocycle $g$ on $A$ with values in $\Hom(A,\,M)$.
This $2$-cocycle is cohomologically trivial if and only if the given braided $4$-cocycle admits a sylleptic structure.
We set $\theta_{br}(a(-,-,-,-),a(-,-|-),a(-|-,-))$ to be the class of $g$ in $\Ext(A,\, \Hom(A,\, M))$. 

The kernel of \eqref{H4br} is isomorphic to $\Hom(A_2,\, M)$ via \eqref{EM hom}.
\end{example}

\subsection{Higher braided monoidal categories and functors}
\label{Section higher categories}
 
Semistrict monoidal $2$-categories were defined by Kapranov  and Voevodsky \cite{KV2}
and also by Day and Street \cite{DS} under the name of Gray monoids.   It was shown that every
monoidal $2$-category is equivalent to a semistrict one. We refer the reader to these papers 
and to \cite{Stay}
for basic definitions. All monoidal $2$-categories considered in this paper will be assumed semistrict.

Let  $\F,\H:\bM\to\bN$ be $2$-functors between 2-categories.  
Recall that a {\em pseudo-natural transformation} $P: \F\to \H$ is a collection of 
1-morphisms $P_\M: \F(\M)\to \H(\M)$  and  invertible 2-cells
\begin{equation}
\label{pi}
\xymatrix{
\F(\M) \ar[dd]_{\F(F)}_{}="a" \ar[rr]^{P_\M} && \H(\M) \ar[dd]^{\H(F)}="b" \\
&& \\
\F(\N) \ar[rr]^{P_\N} && \H(\N),
\ar@{}"a";"b"^(.25){}="b2"^(.75){}="c2" \ar@{=>}^{P_F}"b2";"c2"
}
\end{equation}
for all objects $\M$ and $1$-morphisms $F:\M \to \N$ in $\bM$  such that $P_{\id_\M} = \id_{P_\M}$ and 
\begin{equation}
\label{pi fg}
P_{F\circ G} = P_{F} \circ P_{G}
\end{equation}
 for all composable 1-morphisms $F$ and $G$. 

Let $P,Q: \F\to \H$  be pseudo-natural transformations between $2$-functors.
A {\em modification} $\eta:P\to Q$ is a collection of 2-cells
\begin{equation}
\label{modification}
\xymatrix{
\F(\M)   \ar@/^1pc/[rrrr]^{P_\M}_{}="a"      \ar@/^-1pc/[rrrr]_{Q_\M}^{}="d" &&&& \H(\M) ,
\ar@2{->}^{\eta_\M}"a";"d"
}
\end{equation}
for all objects $\M$ in $\bM$, natural in 1-morphisms in $\bM$.

\begin{definition}
A {\em (semistrict)
braided monoidal $2$-category} \cite{KV2, Cr, BN} consists of a (semistrict) monoidal $2$-category 
$(\mathbf{M},\, \bt,\, \mathcal{I})$, where $\bt$ is the tensor product, equipped with invertible $2$-cells
\begin{equation}
\label{btZW}
\xymatrix{
\M \bt \N \ar[dd]_{\M \bt W}_{}="a" \ar[rr]^{Z\bt \N} && \M' \bt \N \ar[dd]^{\M' \bt W}="b" \\
&& \\
 \M \bt \N'   \ar[rr]^{Z\bt \N'} && \M' \bt \N',
\ar@{}"a";"b"^(.25){}="b2"^(.75){}="c2" \ar@{=>}^{\bt_{Z,W}}"b2";"c2"
}
\end{equation}
for any $Z\in \mathbf{M}(\M,\,\M'),\, W\in \mathbf{M}(\N,\,\N')$, 
and $\mathcal{I}$ is  the unit object, 
\nl
together  with a pseudo-natural equivalence ({\em braiding})
\[
B_{\M,\,\N} : \M \bt  \N \to \N  \bt \M,\qquad \M,\N\in \mathbf{M},  
\]
invertible $2$-cells
\begin{equation}
\label{BZN and BMW}
\xymatrix{
\M \bt \N \ar[dd]_{B_{\M,\N}}_{}="a" \ar[rr]^{Z \bt \N} && \M' \bt \N \ar[dd]^{B_{\M',\N}}="b" 
&&
\M \bt \N \ar[dd]_{B_{\M,\N}}_{}="c" \ar[rr]^{\M \bt W} && \M \bt \N' \ar[dd]^{B_{\M',\N}}="d" 
\\
&& \\
 \N \bt \M   \ar[rr]^ {\N \bt Z} && \N\bt \M',
 &&
  \N \bt \M   \ar[rr]^{W \bt \M}  && \N' \bt \M,
\ar@{}"a";"b"^(.25){}="b2"^(.75){}="c2" \ar@{=>}^{B_{Z,\N}}"b2";"c2"
\ar@{}"c";"d"^(.25){}="b2"^(.75){}="c2" \ar@{=>}^{B_{\M,W}}"b2";"c2"
}
\end{equation}
satisfying $B_{Z_1,\N} \circ B_{Z_2,\N} = B_{Z_1\ot Z_2,\N}$ and $B_{\M,W_1} \circ B_{\M,W_2} = B_{\M,W_1\ot W_2}$,
and two invertible modifications\footnote{Below we omit the identity functors and the tensor product symbol $\bt$, so we write $\M\N$ for $\M \bt \N$.}
\begin{equation}
\label{betaxyz}
\xymatrix{
&& \L \N  \M  \ar[drr]^{B_{\L,\N}} && 
&  
&& \M \L \N \ar[drr]^{B_{\L,\N}} && \\
\L  \M  \N \ar[urr]^{ B_{\M,\N}} \ar[rrrr]_{B_{\L\M, \N}}^{}="B" &&&&  \N \L \M 
&
\L \M  \N \ar[urr]^{B_{\L,\M}} \ar[rrrr]_{B_{\L,\M\N}}^{}="b" &&&&  \M  \N  \L,  
\ar@{}"1,3";"B"^(.25){}="x"^(.85){}="y" \ar@2{->}^{\beta_{\L, \M \mid \N}}"x";"y"
\ar@{}"1,8";"b"^(.25){}="x"^(.85){}="y" \ar@2{->}^{\beta_{\L \mid  \M,\N}}"x";"y"
}
\end{equation}
satisfying the following axioms\footnote{Equalities of $2$-cell compositions in this paper can be used to represent 
commuting polytopes \cite{KV1}. These polytopes are recovered by gluing the  diagrams on both sides of equality
along the perimeter}:
\begin{equation}
\label{4cocycle 1}
\scalebox{1.0}{
\xymatrix{
\L  \K  \M  \N  \ar[dd]_{B_{\K,\M}}  &&& \K  \L  \M  \N  \ar[lll]_{B_{\K,\L}}  \ar[dd]^{B_{\K,\L\M\N}}_{}="c"  \ar[ddlll]_>>>>>>>>>>>>{B_{\K,\L\M}}_{}="a"^{}="b"  
&  & 
\L  \K  \M  \N \ar[dd]_{B_{\K,\M}}   \ar[ddrrr]_>>>>>>>>>>>>{B_{\K,\M\N}}_{}="A"^{}="B" &&& \K  \L  \M  \N   \ar[lll]_{B_{\K,\L}}  \ar[dd]^{B_{\K,\L\M\N}}_{}="C"  \\
&&&& = &&&& \\
\L  \M  \K  \N    \ar[rrr]_{B_{\K,\N}}   &&& \L  \M  \N  \K  & & \L  \M  \K  \N    \ar[rrr]_{B_{\K,\N}}    &&& \L  \M  \N  \K, 
\ar@{}"1,1";"a"^(.25){}="a1"^(.75){}="b1" \ar@2{->}^{\beta_{\K|\L,\M}}"a1";"b1"
\ar@{}"b";"c"^(.25){}="a1"^(.75){}="b1" \ar@2{->}_{\beta_{\K|\L\M,\N}}"a1";"b1"
\ar@{}"3,6";"A"^(.25){}="a1"^(.75){}="b1" \ar@2{->}_{\beta_{\K|\M,\N}}"a1";"b1"
\ar@{}"B";"C"^(.25){}="a1"^(.75){}="b1" \ar@2{->}^{\beta_{\K|\L,\M\N}}"a1";"b1"
}
}
\end{equation}

\begin{equation}
\label{4cocycle 2}
\scalebox{1.0}{
\xymatrix{
\K  \L  \N  \M  \ar[dd]_{B_{\L,\N}}  &&& \K  \L  \M  \N  \ar[lll]_{B_{\M,\N}}  \ar[dd]^{B_{\K\L\M,\N}}_{}="c"  \ar[ddlll]_>>>>>>>>>>>>{B_{\L\M,\N}}_{}="a"^{}="b"  
&  & 
\K  \L  \N  \M \ar[dd]_{B_{\L,\N}}   \ar[ddrrr]_>>>>>>>>>>>>{B_{\K\L,\N}}_{}="A"^{}="B" &&& \K  \L  \M  \N   \ar[lll]_{B_{\M,\N}}  \ar[dd]^{B_{\K\L\M,\N}}_{}="C"  \\
&&&& = &&&& \\
\K  \N  \L  \M    \ar[rrr]_{B_{\K,\N}}   &&&  \N  \K  \L  \M  & & \L  \M  \K  \N    \ar[rrr]_{B_{\K,\N}}    &&&  \N  \K  \L  \M, 
\ar@{}"1,1";"a"^(.25){}="a1"^(.75){}="b1" \ar@2{->}^{\beta_{\L,\M|\N}}"a1";"b1"
\ar@{}"b";"c"^(.25){}="a1"^(.75){}="b1" \ar@2{->}_{\beta_{\K,\L\M|\N}}"a1";"b1"
\ar@{}"3,6";"A"^(.25){}="a1"^(.75){}="b1" \ar@2{->}_{\beta_{\K,\L|\N}}"a1";"b1"
\ar@{}"B";"C"^(.25){}="a1"^(.75){}="b1" \ar@2{->}^{\beta_{\K\L,\M|\N}}"a1";"b1"
}
}
\end{equation}

\begin{equation}
\label{4cocycle 3}
\scalebox{1.0}{
\xymatrix{
 & \K  \L  \M  \N \ar[dl]_{B_{\L,\M}} \ar[dddl]^{}="b" \ar[dddr]^{}="d" \ar[dr]^{B_{\K\L,\M}} &              &&               & \K  \L  \M  \N \ar[dl]_{B_{\L,\M}} \ar[dr]^{B_{\K\L,\M}} ^{}="B" &       \\
 \K  \M  \L  \N \ar[dd]^{}="a"  && \M  \K  \L  \N  \ar[dd]^{}="c"                               &&      \K  \M  \L  \N \ar[dd]^{}="X" \ar[rr]^{}="A"  && \M  \K  \L  \N  \ar[dd]^{}="E"  \ar[dddl]^{}="C"   \\
    & &                                                                     &=&                     & &   \\
 \K  \M  \N  \L \ar[rr]^{}="e" \ar[dr] && \M  \N  \K  \L                                   &&                \K  \M  \N  \L  \ar[dr]^{}="F" && \M  \N  \K  \L    \\
 & \M  \K  \N  \L \ar[ur]^{}="f" &                                                     &&         & \M  \K  \N  \L, \ar[ur]^{}="D" &
\ar@{}"2,1";"b"^(.25){}="a1"^(.95){}="b1"\ar@2{->}^{\beta_{\L|\M,\N}}"a1";"b1"
\ar@{}"2,3";"d"^(.25){}="a1"^(1.05){}="b1"\ar@2{->}^{\beta_{\K\L|\M,\N}}"a1";"b1"
\ar@{}"4,1";"d"^(.15){}="a1"^(.95){}="b1"\ar@2{->}^{\beta_{\K,\L|\M\N}}"a1";"b1"
\ar@{}"5,2";"e"^(.15){}="a1"^(.95){}="b1"\ar@2{->}_{\beta_{\K|\M,\N}}"a1";"b1"
\ar@{}"2,5";"B"^(.25){}="a1"^(.95){}="b1"\ar@2{->}^{\beta_{\K,\L|\M}}"a1";"b1"
\ar@{}"X";"C"^(.20){}="a1"^(.80){}="b1" \ar@2{->}_{\bt_{B_{\K,\M},B_{\L,\N}}}"a1";"b1"
\ar@{}"3,7";"C"^(.25){}="a1"^(.95){}="b1" \ar@2{->}_{\beta_{\K,\L|\N}}"b1";"a1"
}
}
\end{equation}
\begin{equation}
\label{4cocycle 4}
\scalebox{1.0}{
\xymatrix{
& \K  \L  \M \ar[dl] \ar[dddl]^{}="b" \ar[dr]^{}="A" &                           &&         & \K  \L  \M \ar[dl]^{}="C" \ar[dddr]^{}="h" \ar[dr] &       \\
\L  \K   \M \ar[dd]^{}="a" && \K  \M  \L \ar[dd] \ar[dddl]^{}="d"         &&           \L  \K  \M \ar[dd] \ar[dddr]^{}="f"   && \K  \M  \L \ar[dd]^{}="g"   \\
&&                                                                               &=&            \\
\L  \M   \K \ar[dr]^{}="B" && \M  \K  \L \ar[dl]^{}="c"                                     &&            \L  \M  \K \ar[dr]^{}="e" && \M  \K  \L \ar[dl]^{}="D"   \\
& \M  \L  \K &                                                                         &&              & \M  \L  \K, &     
\ar@{}"2,1";"b"^(.25){}="a1"^(.85){}="b1"  \ar@2{->}^{\beta_{\K|\L,\M}}"a1";"b1"
\ar@{}"4,3";"d"^(.25){}="a1"^(1.10){}="b1"  \ar@2{->}_{\beta_{\K|\M,\L}}"a1";"b1"
\ar@{}"4,5";"f"^(.35){}="a1"^(1.05){}="b1"  \ar@2{->}^{\beta_{\L,\K|\M}}"a1";"b1"
\ar@{}"2,7";"h"^(.35){}="a1"^(1.05){}="b1"\ar@2{->}^{\beta_{\K,\L|\M}}"a1";"b1"
\ar@{}"A";"B"^(.15){}="a1"^(.85){}="b1" \ar@2{->}_{B_{\K,B_{\L,\N}}}"a1";"b1"
\ar@{}"C";"D"^(.15){}="a1"^(.85){}="b1" \ar@2{->}_{B_{B_{\K,\L}, \M}}"a1";"b1"
}
}
\end{equation}  
\end{definition}
for all $\K,\, \L,\,\M,\,\N \in \mathbf{M}$. 

\begin{definition}
A {\em sylleptiic} monoidal $2$-category is a braided monoidal $2$-category $\mathbf{M}$ with  an invertible {\em syllepsis}
modification
\begin{equation}
\label{symmetry 2-cell}
\xymatrix{
\M  \N    \ar@/^1pc/[rrrr]^{B_{\M,\N}}_{}="a"   &&&& \N \M \ar@/^1pc/[llll]^{B_{\N,\M}}_{}="d",
\ar@2{->}^{\tau_{\M,\N}}"a";"d"
}
\end{equation}
i.e. $\tau_{\M,\N}$ is an invertible modification between $B_{\N,\M} B_{\M,\N}$ and $\id_{\M\boxtimes \N}$ 
such that 
\begin{equation}
\label{1st symmetric}
\scalebox{0.85}{
\xymatrix{
&& \L  \N \M   \ar@/^3pc/[ddrr]^{B_{\L,\N}}  &&&&&& 
\L\N\M   \ar@/^3pc/[ddrr]^{B_{\L,\N}}_{}="d2" \ar[ddll]^{B_{\N,\M}}_{}="a2" 
&&
\\
&&&&& = &&&&&
\\
 \L  \M \N \ar@/^3pc/[uurr]^{B_{\M,\N}} \ar[rrrr]^{B_{\L\M,\N}}="a1"_{}="b1"
 &&&& \N\L\M  \ar@/^3pc/[llll]^{B_{\N,\L\M}}_{}="c1" && 
  \L  \M \N \ar@/^3pc/[uurr]^{B_{\M,\N}}_{}="b2" &&&& \N\L\M, \ar@/^3pc/[llll]^{B_{\N,\L\M}}_{}="e2" \ar[uull]^{B_{\N,\L}}_{}="c2"
 \ar@2{->}^{\beta_{\L,\M |\N}}"1,3";"a1"
 \ar@2{->}^{\tau_{\L\M,\N}}"b1";"c1"
 \ar@2{->}^{\beta_{\N|\L,\M }}"1,9";"e2"
 \ar@2{->}^{\tau_{\N,\M}}"a2";"b2"
  \ar@2{->}^{\tau_{\N,\L}}"c2";"d2"
}
}
\end{equation}

\begin{equation}
\label{2nd symmetric}
\scalebox{0.85}{
\xymatrix{
&& \M \L  \N    \ar@/^3pc/[ddrr]^{B_{\L,\N}}  &&&&&& 
\M\L\N   \ar@/^3pc/[ddrr]^{B_{\L,\N}}_{}="d2" \ar[ddll]^{B_{\M,\L}}_{}="a2" 
&&
\\
&&&&& = &&&&&
\\
 \L  \M \N \ar@/^3pc/[uurr]^{B_{\L,\M}} \ar[rrrr]^{B_{\L,\M\N}}="a1"_{}="b1"
 &&&& \M\N\L  \ar@/^3pc/[llll]^{B_{\M\N,\L}}_{}="c1" && 
  \L  \M \N \ar@/^3pc/[uurr]^{B_{\L,\M}}_{}="b2" &&&& \M\N\L, \ar@/^3pc/[llll]^{B_{\M\N,\L}}_{}="e2" \ar[uull]^{B_{\N,\L}}_{}="c2"
 \ar@2{->}^{\beta_{\L|\M,\N}}"1,3";"a1"
 \ar@2{->}^{\tau_{\L,\M\N}}"b1";"c1"
 \ar@2{->}^{\beta_{\M,\N|\L}}"1,9";"e2"
 \ar@2{->}^{\tau_{\M,\L}}"a2";"b2"
  \ar@2{->}^{\tau_{\N,\L}}"c2";"d2"
}
}
\end{equation}
commute for all objects $\L,\,\M,\,\N$  in $\mathbf{M}$.
\end{definition}

\begin{definition} 
A sylleptic braided monoidal $2$-category $\mathbf{M}$ is called {\em symmetric} if its syllepsis \eqref{symmetry 2-cell}
satisfies
\begin{equation}
\label{3rd symmetric}
\scalebox{0.85}{
\xymatrix{
\M\N    \ar@/^3pc/[rrrr]^{B_{\M,\N}}_{}="a"    \ar@/^-2pc/[rrrr]_{B_{\M,\N}}^{}="d" &&&& \N \M \ar[llll]_{B_{\N,\M}}="b"_{}="c"
& =&
\M \N    \ar@/^3pc/[rrrr]^{B_{\M,\N}}_{}="a1"    \ar@/^-2pc/[rrrr]_{B_{\M,\N}}^{}="d1" &&&& \N  \M 
\ar@2{->}^{\tau_{\M,\N}}"a";"b"
\ar@2{->}^{\tau_{\N,\M}}"c";"d"
\ar@{=>}_<<<<<<<<{\id_{\M\boxtimes \N}}"a1";"d1"
}
}
\end{equation}
for all objects $\M,\,\N$  in $\mathbf{M}$.
\end{definition}

\begin{definition}
A {\em monoidal  $2$-functor} $\F:\bM\to \bM'$ between monoidal 2-categories is a $2$-functor
along with a pseudo-natural  equivalence
\begin{equation}
\label{FNM}
F_{\M,\N}:\F(\M) \F(\N)\to \F(\M \N),
\end{equation}
an equivalence $U: \F(\I) \to \I$, and invertible  modifications
\begin{equation}
\label{alphaLMN}
\xymatrix{
 \F(\L) \F(\M) \F(\N) \ar[dd]_{F_{\L,\M}}_{}="a" \ar[rr]^{F_{\M,\N}} &&  \F(\L) \F(\M\N) \ar[dd]^{F_{\L,\M\N}}_{}="b" \\
&& \\
 \F(\L\M) \F(\N) \ar[rr]^{F_{\L\M,\N}} &&  \F(\L\M\N),
\ar@{}"a";"b"^(.25){}="a1"^(.75){}="b1" \ar@{=>}^{\alpha_{\L,\M,\N}}"a1";"b1"
}
\end{equation}
\begin{equation}
\label{lambda and rho}
\xymatrix{
\F(\I)\F(\M) \ar[dd]_{F_{\I,\M}}_{}="a" \ar[rr]^{U} && \I \F(\M) \ar[dd]^{L_{\F(\M)}}="b" && 
\F(\M)\F(\I) \ar[dd]_{F_{\M,\I}}_{}="A" \ar[rr]^{U} && \F(\M) \I, \ar[dd]^{R_{\F(\M)}}="B" 
\\
&&& \text{and} &&& \\
\F(\I\M) \ar[rr]^{\F(L_{\M})} && \F(\M), &&
\F(\M\I) \ar[rr]^{\F(R_{\M})} && \F(\M),
\ar@{}"a";"b"^(.25){}="b2"^(.75){}="c2" \ar@{=>}^{\lambda_{\M}}"b2";"c2"
\ar@{}"A";"B"^(.25){}="B2"^(.75){}="C2" \ar@{=>}^{\rho_{\M}}"B2";"C2"
}
\end{equation}
where $L$ and $R$ denote the unit constraints of $\bM$,
such that 
\begin{equation}
\label{mon2fun}
\scalebox{0.65}{
\xymatrix{
& \F(\K) \F(\L) \F(\M) \F(\N) \ar[dl]_{F_{\K,\L}} \ar[dr]^{F_{\M,\N}} & & &
& \F(\K) \F(\L) \F(\M) \F(\N) \ar[dl]_{F_{\K,\L}}_{}="A3" \ar[dr]^{F_{\M,\N}} \ar[dd]^{F_{\L,\M}}_{}="a3" 
 \\
 \F(\K\L) \F(\M) \F(\N) \ar[dd]_{F_{\K\L,\M}}_{}="a" \ar[dr]^{F_{\M,\N}} &&  \F(\K)\F(\L)\F(\M\N) \ar[dd]^{F_{\L,\M\N}}_{}="b" \ar[dl]_{F_{\K,\L}}_{}="B" &  &
 \F(\K\L) \F(\M) \F(\N) \ar[dd]_{F_{\K\L,\M}}_{}="b3"  &&  \F(\K)\F(\L)\F(\M\N)  \ar[dd]^{F_{\L,\M\N}}_{}="c3" 
  \\
&  \F(\K\L) \F(\M\N) \ar[dd]^{F_{\K\L,\M\N}}_{}="c" & & = &  
&  \F(\K) \F(\L\M) F(\N) \ar[dl]_{F_{\K,\L\M}}^{}="B3"  \ar[dr]^{F_{\L\M,\N}} & 
\\
 \F(\K\L\M) \F(\N) \ar[dr]_{F_{\K\L\M,\N}} && \F(\K)\F(\L\M\N) \ar[dl]^{F_{\K,\L\M\N}}^{}="C" &  &
 \F(\K\L\M) \F(\N) \ar[dr]_{F_{\K\L\M,\N}} && \F(\K)\F(\L\M\N) \ar[dl]^{F_{\K,\L\M\N}} \\
& \F(\K\L\M\N) & & &
& \F(\K\L\M\N) & 
\ar@{}"a";"c"^(.25){}="a1"^(.75){}="b1" \ar@{=>}^{\alpha_{\K\L,\M,\N}}"a1";"b1"
\ar@{}"B";"C"^(.25){}="b2"^(.75){}="c2" \ar@{=>}_{\alpha_{\K,\L,\M\N}}"b2";"c2"
\ar@{}"2,1";"2,3"^(.25){}="e2"^(.75){}="f2" \ar@{=>}^{\bt_{F_{\K,\L}, F_{\M,\N}}}"e2";"f2"
\ar@{}"A3";"B3"^(.25){}="a1"^(.75){}="b1" \ar@{=>}_{\alpha_{\K,\L,\M}}"a1";"b1"
\ar@{}"a3";"c3"^(.25){}="b2"^(.75){}="c2" \ar@{=>}^{\alpha_{\L,\M,\N}}"b2";"c2"
\ar@{}"4,5";"4,7"^(.25){}="e2"^(.75){}="f2" \ar@{=>}^{\alpha_{\K,\L\M,\N}}"e2";"f2"
}
}  
\end{equation}
and
\begin{equation}
\scalebox{0.7}{
\xymatrix{
& \F(\M) \F(\I) \F(\N) \ar[dl]_{F_{\M,\I}} \ar[dr]^{F_{\I,\N}}& & &
& \F(\M) \F(\I) \F(\N) \ar[dl]_{F_{\M,\I}}_{}="A" \ar[dr]^{F_{\I,\N}}_{}="C" \ar[dd]^{U}_{}="a" & \\
 \F(\M\I) \F(\N) \ar[dd]_{F(R_\M)}_{}="a" \ar[dr]^{F_{\M\I,\N}} && \F(\M)\F(\I\N) \ar[dd]^{\F(L_\N)}^{}="Z" \ar[dl]_{F_{\M,\I\N}} &  &
 \F(\M\I) \F(\N) \ar[dd]_{\F(R_\M)}_{}="b"  && \F(\M)\F(\I\N) \ar[dd]^{\F(L_\N)}_{}="c" \\
& \F(\M\I\N) \ar[dd]_{\F(R_\M)}_{}="f"^{F(L_\N)}_{}="Y" & &  = &
& \F(\M)\I \F(\N) \ar[dl]_{R_{\F(\M)}}^{}="B"  \ar[dr]^{L_{\F(\N)}}^{}="D"  & \\
 \F(\M) \F(\N)  \ar[dr]_{F_{\M,\N}} && \F(\M)\F(\N) \ar[dl]^{F_{\M,\N}} &  &
 \F(\M) \F(\N) \ar[dr]_{F_{\M,\N}} && \F(\K)\F(\M)\F(\N) \ar[dl]^{F_{\M,\N}} \\
& \F(\M\N) & & &
& \F(\M\N) & 
\ar@{}"a";"f"^(.25){}="a1"^(.75){}="b1" \ar@{=>}^{F_{R_\M,\id_N}}"a1";"b1"
\ar@{}"Z";"Y"^(.25){}="b2"^(.75){}="c2" \ar@{=>}_{F_{\id_\M,L_\N}}"b2";"c2"
\ar@{}"2,1";"2,3"^(.25){}="e2"^(.75){}="f2" \ar@{=>}^{\alpha_{\M,\I,\N}}"e2";"f2"
\ar@{}"A";"B"^(.25){}="a1"^(.75){}="b1" \ar@{=>}_{\rho_{\M}}"a1";"b1"
\ar@{}"C";"D"^(.25){}="b2"^(.75){}="c2" \ar@{=>}^{\lambda_{\N}}"b2";"c2"
\ar@{}"4,5";"4,7"^(.35){}="e2"^(.65){}="f2" \ar@{=}^{}"e2";"f2"
}
}
\end{equation}
for all $\K,\L,\M,\N\in\bM$.
\end{definition}

\begin{definition}
A {\em braided monoidal $2$-functor} $\F:\bM\to \bM'$ 
between braided monoidal 2-categories is a monoidal functor along with an invertible modification
\begin{equation}
\label{deltaMN}
\xymatrix{
\F(\M)\F(\N) \ar[dd]_{F_{\M,\N}}_{}="a" \ar[rr]^{B_{F(\M),F(\N)}} && \F(\N)\F(\M) \ar[dd]^{F_{\N,\M}}="b" \\
&& \\
\F(\M\N) \ar[rr]^{F(B_{\M,\N})} && \F(\N \M)
\ar@{}"a";"b"^(.25){}="b2"^(.75){}="c2" \ar@{=>}^{\delta_{\M,\N}}"b2";"c2"
}
\end{equation}
such that 
\begin{equation}
\label{braided2fun1}
\scalebox{0.7}{
\xymatrix{
& \F(\L)\F(\M\N) \ar[dr] &&& 
& 
& \F(\L)\F(\M\N) \ar[dr]  \ar[dddddd]_{}="d"^{}="e" &&& \\
\F(\L)\F(\M)\F(\N) \ar[ur]\ar[dr] \ar[dddddd]_{}="a" \ar[dddr] && \F(\L\M\N) \ar[dddr] &&
& 
\F(\L)\F(\M)\F(\N) \ar[ur]  \ar[dddddd]_{}="f" && \F(\L\M\N) \ar[dddr] \ar[dddddd]_{}="b"^{}="c" && \\
& \F(\L\M)\F(\N) \ar[ur]\ar[dr] &&& 
& 
&                     &&& \\
&& \F(\M\L)\F(\N) \ar[dr] && 
& 
&&     && \\
& \F(\M)\F(\L)\F(\N) \ar[ur]\ar[dr] \ar[dddl] && \F(\M\L\N)  \ar[dddl]
& = 
& &  && \F(\M\L\N)  \ar[dddl] &\\ 
&& \F(\M)\F(\L\N) \ar[ur] \ar[dddl]&& 
& 
&&   && \\
&                   &&& 
& 
& \F(\M\N)\F(\L)  \ar[dr] &&& \\
\F(\M)\F(\N)\F(\L) \ar[dr] && \F(\M\N\L) &&
& 
\F(\M)\F(\N)\F(\L) \ar[dr] \ar[ur]&& \F(\M\N\L) && \\
& \F(\M)\F(\N\L) \ar[ur] &&& 
& 
& \F(\M)\F(\N\L) \ar[ur] &&&
\ar@{}"3,2";"1,2"^(.25){}="a1"^(.75){}="b1" \ar@{=>}_{\alpha_{\L,\M,\N}^{-1}}"b1";"a1"
\ar@{}"4,3";"6,3"^(.25){}="c1"^(.75){}="d1" \ar@{=>}_{\alpha_{\M,\L,\N}}"c1";"d1"
\ar@{}"3,2";"5,2"^(.25){}="e1"^(.75){}="f1" \ar@{=>}_{\delta_{\L,\M}}"e1";"f1"
\ar@{}"2,3";"4,3"^(.25){}="g1"^(.75){}="h1" \ar@{=>}_{F_{{B_{\L,\M},\N}}}"g1";"h1"
\ar@{}"6,3";"8,3"^(.25){}="i1"^(.75){}="j1"  \ar@{=>}_>>>{F_{{\M,B_{\L,\N}}}}"i1";"j1"
\ar@{}"6,3";"8,1"^(.25){}="k1"^(.75){}="l1" \ar@{=>}_{\delta_{\L,\N}}"k1";"l1"
\ar@{=>}_>>>>>>>>>>>>{\beta'_{F(\L)|F(\M),F(\N)}}"5,2";"a"
\ar@{}"7,7";"9,7"^(.25){}="o1"^(.75){}="p1" \ar@{=>}_{\alpha_{\M,\N,\L}}"o1";"p1"
\ar@{=>}_>>>>>>>>>>>>{F(\beta_{\L|\M,\N})}"5,9";"b"
\ar@{}"d";"c"^(.25){}="q1"^(.75){}="r1" \ar@{=>}_{\delta_{\L,\M\N}}"q1";"r1"
\ar@{}"e";"f"^(.25){}="s1"^(.75){}="t1" \ar@{=>}_{B'_{\F(\L),F_{\M,\N}}}"s1";"t1"
}
}
\end{equation}
and
\begin{equation}
\label{braided2fun2}
\scalebox{0.7}{
\xymatrix{
& F(\L\M)F(\N) \ar[dr] &&& 
& 
&  F(\L\M)F(\N) \ar[dr]  \ar[dddddd]_{}="d"^{}="e" &&& \\
F(\L)F(\M)F(\N) \ar[ur]\ar[dr] \ar[dddddd]_{}="a" \ar[dddr] && F(\L\M\N) \ar[dddr] &&
& 
F(\L)F(\M)F(\N) \ar[ur]          \ar[dddddd]_{}="f"                && F(\L\M\N) \ar[dddr] \ar[dddddd]_{}="b"^{}="c" && \\
& F(\L)F(\M\N) \ar[ur]\ar[dr] &&& 
& 
&                     &&& \\
&& F(\L)F(\N\M) \ar[dr] && 
& 
&&     && \\
& F(\L)F(\N)F(\M) \ar[ur]\ar[dr] \ar[dddl] && F(\L\N\M)  \ar[dddl]
& = 
& &  && F(\L\N\M)  \ar[dddl] &\\ 
&& F(\L\N)F(\M) \ar[ur] \ar[dddl]&& 
& 
&&   && \\
&                   &&& 
& 
& F(\N)F(\L\M)  \ar[dr] &&& \\
F(\N)F(\L)F(\M) \ar[dr] && F(\N\L\M) &&
& 
F(\N)F(\L)F(\M) \ar[dr] \ar[ur]&& F(\N\L\M) && \\
& F(\N\L)F(\M) \ar[ur] &&& 
& 
&  F(\N\L)F(\M) \ar[ur] &&&
\ar@{}"3,2";"1,2"^(.25){}="a1"^(.75){}="b1" \ar@{=>}_{\alpha_{\L,\M,\N}}"b1";"a1"
\ar@{}"4,3";"6,3"^(.25){}="c1"^(.75){}="d1" \ar@{=>}_{\alpha_{\L,\N,\M}^{-1}}"c1";"d1"
\ar@{}"3,2";"5,2"^(.25){}="e1"^(.75){}="f1" \ar@{=>}_{\delta_{\M,\N}}"e1";"f1"
\ar@{}"2,3";"4,3"^(.25){}="g1"^(.75){}="h1" \ar@{=>}_{F_{\L,{B_{\N,\M}}}}"g1";"h1"
\ar@{}"6,3";"8,3"^(.25){}="i1"^(.75){}="j1"  \ar@{=>}_>>>{F_{{B_{\L,\N},\M}}}"i1";"j1"
\ar@{}"6,3";"8,1"^(.25){}="k1"^(.75){}="l1" \ar@{=>}_{\delta_{\L,\N}}"k1";"l1"
\ar@{=>}_>>>>>>>>>>>>{\beta'_{F(\L),F(\M)| F(\N)}}"5,2";"a"
\ar@{}"7,7";"9,7"^(.25){}="o1"^(.75){}="p1" \ar@{=>}_{\alpha_{\N,\L,\M}^{-1}}"o1";"p1"
\ar@{=>}_>>>>>>>>>>>>{F(\beta_{\L,\M|\N})}"5,9";"b"
\ar@{}"d";"c"^(.25){}="q1"^(.75){}="r1" \ar@{=>}_{\delta_{\L\M,\N}}"q1";"r1"
\ar@{}"e";"f"^(.25){}="s1"^(.75){}="t1" \ar@{=>}_{B'_{F_{\L,\M},F(\N)}}"s1";"t1"
}
}
\end{equation}
for all objects $\L,\M,\N$ in $\mathbf{M}$. Here $B,\, B'$ denote the braidings on $\bM,\, \bM'$. We omit $1$-cell 
labels to keep the diagrams readable.
\end{definition}

\begin{definition}
A braided monoidal $2$-functor $\F:\mathbf{M}\to \mathbf{M}'$ between sylleptic (respectively, symmetric) monoidal $2$-categories 
$\mathbf{M}$ and $\mathbf{M}'$ is {\em sylleptic} (respectively,  {\em symmetric}) if 
\begin{equation}
\label{sym2fun}
\scalebox{0.9}{
\xymatrix{
\F(\M)\F(\N)  \ar@/^1pc/[rrr]^{B'_{\F(\M),\F(\N)}}\ar[dd]_{F_{\M,\N}}_{}="a"  &&& F(\N) F(\M) \ar[dd]^{F_{\N,\M}}_{}="b" 
& & 
\F(\M)\F(\N) \ar@/^1pc/[rrr]^{B'_{F(\M),F(\N)}}_{}="g"  \ar[dd]_{F_{\M,\N}}_{}="c"  &&& 
\F(\N) \F(\M)  \ar[dd]^{F_{\N,\M}}_{}="d" \ar@/^1pc/[lll]^{B_{\F(\N),\F(\M)}}_{}="h" \\
&&&& = &&&& \\
\F(\M\N)   \ar@/^1pc/[rrr]^{\F(B_{\M,\N})}_{}="e"  &&& \F(\N\M) \ar@/^1pc/[lll]^{\F(B_{\N,\M})}_{}="f"
& & 
\F(\M\N)   &&& \F(\N\M), \ar@/^1pc/[lll]^{\F(B_{\N,\M})}
\ar@{}"a";"b"^(.25){}="a1"^(.75){}="b1" \ar@{=>}^{\delta_{\M,\N}}"a1";"b1"
\ar@{}"c";"d"^(.25){}="c1"^(.75){}="d1" \ar@{=>}^{\delta_{\N,\M}}"d1";"c1"
\ar@{=>}^{F(\tau_{\M,\N})}"e";"f"
\ar@{=>}^{\tau'_{F(\M),F(\N)}}"g";"h"
}
}
\end{equation}
for all $\M,\,\N$ in $\mathbf{M}$. Here $\tau,\, \tau'$ are the modification  defined in \eqref{symmetry 2-cell}. 
\end{definition}

\begin{definition}
\label{mon2trans}
Let $\F, \F':\bM\to\bM'$ be two monoidal functors.
A {\em monoidal  pseudo-natural transformation} $P: \F\to \F'$ is a pseudo-natural transformation along with 
an invertible modification
\begin{equation}
\label{muss}
\xymatrix{
\F(\L)\F(\M) \ar[dd]_{F_{\L,\M}}_{}="a" \ar[rr]^{P_\L P_\M} && \F'(\L)\F'(\M) \ar[dd]^{F'_{\L,\M}}="b" \\
&& \\
\F(\L\M) \ar[rr]^{P_{\L\M}} && \F'(\L \M)
\ar@{}"a";"b"^(.25){}="b2"^(.75){}="c2" \ar@{=>}^{\mu_{\L,\M}}"b2";"c2"
}
\end{equation}
such that 
\begin{equation}
\label{MT cube}
\scalebox{0.7}{
\xymatrix{
 & \F(\L\M)\F(\N) \ar[dr]^{F_{\L\M,\N}} & &&
 & \F(\L\M)\F(\N) \ar[dr]^{F_{\L\M,\N}}^{}="C" \ar[dd]_{P_{\L\M}\P_\N} &\\
\F(\L)\F(\M)\F(\N) \ar[ur]^{F_{\L,\M}}_{}="aa" \ar[dr]_{F_{\M,\N}}^{}="a"  \ar[dd]_{P_\L P_\M P_\N} && \F(\L\M\N) \ar[dd]^{P_{\L\M\N}}^{}="c" &&
\F(\L)\F(\M)\F(\N) \ar[ur]^{F_{\L,\M}}_{}="A" \ar[dd]_{P_\L P_\M P_\N} && \F(\L\M\N) \ar[dd]^{P_{\L\M\N}}  \\
 & \F(\L)\F(\M\N) \ar[ur]_{F_{\L,\M\N}}^{}="bb"_{}="c" \ar[dd]_{P_\L P_{\M\N}} &  &=&
 & \F'(\L\M)\F'(\N)  \ar[dr]_{F'_{\L\M,\N}}^{}="D"  &    \\
 \F'(\L)\F'(\M)\F'(\N)  \ar[dr]_{F'_{\M,\N}}^{}="b"   && \F'(\L\M\N) &&
 \F'(\L)\F'(\M)\F'(\N)  \ar[dr]_{F'_{\M,\N}}  \ar[ur]_{F'_{\L,\M}}^{}="cc"_{}="B"  && \F'(\L\M\N)  \\
& \F'(\L)\F'(\M\N) \ar[ur]_{F'_{\L,\M\N}}^{}="d" & &&
& \F'(\L)\F'(\M\N) \ar[ur]_{F'_{\L,\M\N}}^{}="dd" &
\ar@{}"aa";"bb"^(.25){}="x"^(.75){}="y" \ar@{=>}^{\alpha_{\L,\M,\N}}"x";"y"
\ar@{}"cc";"dd"^(.25){}="x"^(.75){}="y" \ar@{=>}^{\alpha'_{\L,\M,\N}}"x";"y"
\ar@{}"a";"b"^(.25){}="x"^(.75){}="y" \ar@{=>}_{\mu_{\M,\N}}"x";"y"
\ar@{}"c";"d"^(.25){}="x"^(.75){}="y" \ar@{=>}_{\mu_{\L,\M\N}}"x";"y"
\ar@{}"A";"B"^(.25){}="x"^(.75){}="y" \ar@{=>}_{\mu_{\L,\M}}"x";"y"
\ar@{}"C";"D"^(.25){}="x"^(.75){}="y" \ar@{=>}_{\mu_{\L\M,\N}}"x";"y"
}
}
\end{equation}
 for all $\L,\M,\N$ in $\bM$. Here $\alpha$ and $\alpha'$ denote the monoidal structures of $\F$ and $\F'$.
\end{definition}

\begin{definition}
A monoidal pseudo-natural transformation $P: \F\to \F'$ between braided monoidal $2$-functors is {\em braided} if 
\begin{equation}
\label{BT cube}
\scalebox{0.8}{
\xymatrix{
 & \F(\L\M) \ar[dr]^{P_{\L\M}} & &&
 & \F(\L\M) \ar[dr]^{P_{\L\M}}^{}="C" \ar[dd]_{\F(B_{\L,\M})}_{}="g" & \\
 \F(\L)\F(\M) \ar[ur]^{F_{\L,\M}}_{}="aa" \ar[dr]_{P_\L P_\M}^{}="a"  \ar[dd]_{B'_{\F(\L),\F(\M)}}_{}="E"  && \F'(\L\M) \ar[dd]^{\F'(B_{\L,\M})}^{}="c" &&
 \F(\L)\F(\M) \ar[ur]^{F_{\L,\M}}_{}="A" \ar[dd]_{B'_{\F(\L),\F(\M)}}&& \F'(\L\M) \ar[dd]^{\F'(B_{\L,\M})}^{}="h" \\
 & \F'(\L)\F'(\M) \ar[ur]_{F'_{\L,\M}}^{}="bb"_{}="c" \ar[dd]_{B_{\F'(\L),\F'(\M)}}_{}="F"&  &=&
 & \F(\M\L)  \ar[dr]_{P_{\M\L}}^{}="D"  &    \\
 \F(\M)\F(\L)  \ar[dr]_{P_\M P_\L}^{}="b"   && \F'(\M\L) &&
 \F(\M)\F(\L)  \ar[dr]_{P_\M P_\L}  \ar[ur]_{F_{\M,\L}}^{}="cc"_{}="B"  && \F'(\M\L)  \\
& \F'(\M)\F'(\L) \ar[ur]_{F'_{\M,\L}}^{}="d" & &&
& \F'(\M)\F'(\L) \ar[ur]_{F'_{\M,\L}}^{}="dd" &
\ar@{}"aa";"bb"^(.25){}="x"^(.75){}="y" \ar@{=>}^{\mu_{\L,\M}}"x";"y"
\ar@{}"cc";"dd"^(.25){}="x"^(.75){}="y" \ar@{=>}^{\mu_{\M,\L}}"x";"y"
\ar@{}"E";"F"^(.25){}="x"^(.75){}="y" \ar@{=>}^{B'_{P_\L,P_\M}}"x";"y"
\ar@{}"c";"d"^(.25){}="x"^(.75){}="y" \ar@{=>}_{\delta'_{\L,\M}}"x";"y"
\ar@{}"A";"B"^(.25){}="x"^(.75){}="y" \ar@{=>}_{\delta_{\M,\L}}"x";"y"
\ar@{}"g";"h"^(.25){}="x"^(.75){}="y" \ar@{=>}_{P_{B_{\L,\M}}}"x";"y"
}
}
\end{equation}
 for all $\L,\M$ in $\bM$. Here $\delta$ and $\delta'$ denote the braided structures on $F$ and $F'$.
\end{definition}

\begin{definition}
A modification $\eta:P \to Q$ between two monoidal pseudo-natural transformations $P,Q: \F\to \F'$ is {\em monoidal} if 
\begin{equation}
\label{MM cylinder}
\xymatrix{
\F(\M)\F(\N) \ar[r]^{F_{\M,\N}}_{}="A1" \ar@/^-2pc/[dd]_{P_\M P_\N}  & \F(\M\N)  \ar@/^2pc/[dd]^{Q_{\M\N}}_{}="b"   \ar@/^-2pc/[dd]_{P_{\M\N}}^{}="a" & & 
\F(\M)\F(\N) \ar[r]^{F_{\M,\N}}_{}="A2"  \ar@/^-2pc/[dd]_{P_\M P_\N}^{}="A"  \ar@/^2pc/[dd]^{Q_\M Q_\N}_{}="B" & \F(\M\N)  \ar@/^2pc/[dd]^{Q_{\M\N}}^{}  \\
& & = & & \\   
\F'(\M)\F'(\N) \ar[r]_{F'_{\M,\N}}^{}="B1"  & \F'(\M\N) & & \F'(\M)\F'(\N) \ar[r]_{F'_{\M,\N}}^{}="B2"  & \F'(\M\N),
\ar@{}"a";"b"^(.25){}="x"^(.75){}="y" \ar@{=>}^{\eta_{\M\N}}"x";"y"
\ar@{}"A";"B"^(.25){}="x"^(.75){}="y"  \ar@{=>}^{\eta_{\M}\eta_{\N}}"x";"y"
\ar@{}"A1";"B1"^(.25){}="x"^(.75){}="y" \ar@/^-1.5pc/@{=>}_{\mu_{\M,\N}}"x";"y"
\ar@{}"A2";"B2"^(.25){}="x"^(.75){}="y" \ar@/^1.5pc/@{=>}^{\nu_{\M,\N}}"x";"y"
} 
\end{equation}
for all $\M,\, \N \in \bM$, where $\mu$ and $\nu$ are modifications \eqref{muss} defining the  
monoidal structures on $P$ and $Q$, respectively.  

\end{definition}

\subsection{The center of a  monoidal $2$-category}
\label{2-center}

Let $\bM$ be a braided monoidal $2$-category.
Its {\em center} $\mathbf{Z}(\bM)$
is a braided monoidal $2$-category defined as follows \cite[Section 3]{BN}. Objects of  $\mathbf{Z}(\mathbf{M})$
are triples $(\N,\, S,\, \gamma)$ where $\N$ is an object of $\mathbf{M}$, $S$
is a pseudo-natural collection of equivalences (called a {\em half-braiding})
\[
S_\M: \M \N \to \N \M,\qquad \M\in \mathbf{A},
\]
and $\gamma$ is an invertible modification 
\begin{equation}
\label{tilde beta}
\xymatrix{
&& \L \N \M \ar[drr]^{ S_\L } && \\
\L  \M \N \ar[urr]^{\S_\M} \ar[rrrr]_{S_{\L\M}}^{}="b" &&&&  \N \L  \M 
\ar@{}"1,3";"b"^(.25){}="x"^(.75){}="y" \ar@2{->}^{\gamma_{\L,\M}}"x";"y"
}
\end{equation}
such that 
\begin{equation}
\label{central coherence}
\xymatrix{
\K  \L  \N  \M  \ar[dd]_{S_\L}  &&& \K  \L  \M  \N  \ar[lll]_{S_\M}  \ar[dd]^{S_{\K\L\M}}_{}="c"  \ar[ddlll]_{S_{\L\M}}_{}="a"^{}="b"  
&  & 
\K  \L  \N  \M \ar[dd]_{S_\L}   \ar[ddrrr]_{S_{\K\L}}_{}="A"^{}="B" &&& \K  \L  \M  \N   \ar[lll]_{S_{\M}}  \ar[dd]^{S_{\K\L\M}}_{}="C"  \\
&&&& = &&&& \\
\K  \N  \L  \M    \ar[rrr]_{S_{\K}}   &&&  \N  \K  \L  \M  & & \L  \M  \K  \N    \ar[rrr]_{S_{\K}}    &&&  \N  \K  \L  \M, 
\ar@{}"1,1";"a"^(.25){}="a1"^(.75){}="b1" \ar@2{->}^{\gamma_{\L,\M}}"a1";"b1"
\ar@{}"b";"c"^(.25){}="a1"^(.75){}="b1" \ar@2{->}_{\gamma_{\K,\L\M}}"a1";"b1"
\ar@{}"3,6";"A"^(.25){}="a1"^(.75){}="b1" \ar@2{->}^{\gamma_{\K,\L}}"a1";"b1"
\ar@{}"B";"C"^(.25){}="a1"^(.75){}="b1" \ar@2{->}^{\gamma_{\K\L,\M}}"a1";"b1"
}
\end{equation}
for all $\K,\,\L,\,\M\in \mathbf{M}$. 

A morphism between  $(\N,\, S,\, \gamma)$
and $(\N',\, S',\, \gamma')$ in $\mathbf{Z}(\mathbf{M})$ is  pair $(F,\, \sigma)$, where $F:\N\to \N'$
is a morphism in $\mathbf{M}$ and $\sigma$ is an invertible modification
\begin{equation}
\label{delta-def}
\xymatrix{
\M  \N  \ar[rr]^{S_\M} \ar[d]_{F}^{}="a"  && \N  \M \ar[d]^{F}_{}="b" \\
\M  \N'  \ar[rr]_{S'_\M}&&  \N'  \M
\ar@{}"a";"b"^(.25){}="a1"^(.75){}="b1" \ar@2{->}^{\sigma_\M}"a1";"b1"
}
\end{equation}
such that
\begin{equation*}
\scalebox{0.9}{
\xymatrix{
&& \L\N\M \ar[drr]^{S_\L} && 
& &
&& \L\N\M \ar[drr]^{S_\L}  \ar[d]_{F}_{}="B"  &&  \\
\L\M\N \ar[rrrr]_{S_{\L\M}}^{}="A1" \ar[urr]^{S_\M} \ar[d]_{F}_{}="a" &&&& \N\L\M \ar[d]^{F}_{}="b"
&=&
\L\M\N \ar[urr]^{S_\M}  \ar[d]_{F}_{}="A"  && \L\N'\M \ar[drr]_{S'_\L} && \N\L\M \ar[d]^{F}_{}="C"  \\
 \L\M\N' \ar[rrrr]_{S'_{\L\M}}   &&&& \N'\L\M
 & &
 \L\M\N' \ar[rrrr]_{S'_{\L\M}}^{}="A2" \ar[urr]_{S'_\M}  &&&& \N'\L\M.
\ar@{}"a";"b"^(.25){}="a1"^(.75){}="b1" \ar@2{->}_{\sigma_{\L\M}}"a1";"b1"
\ar@{}"A";"B"^(.25){}="a1"^(.75){}="b1" \ar@2{->}^{\sigma_{\M}}"a1";"b1"
\ar@{}"B";"C"^(.25){}="a1"^(.75){}="b1" \ar@2{->}^{\sigma_{\L}}"a1";"b1"
\ar@{}"1,3";"A1"^(.25){}="a1"^(.75){}="b1" \ar@2{->}^{\gamma_{\L,\M}}"a1";"b1"
\ar@{}"2,9";"A2"^(.25){}="a1"^(.75){}="b1" \ar@2{->}^{\gamma'_{\L,\M}}"a1";"b1"
}
}
\end{equation*}
for all $\L,\,\M$ in $\bM$.

A $2$-morphism in $\mathbf{Z}(\mathbf{M})$  between $(F,\, \sigma)$ and $(F',\, \sigma')$ is a $2$-cell 
\begin{equation}
\xymatrix{
\N \ar@/^1pc/[rrrr]^{F}_{}="a"      \ar@/^-1pc/[rrrr]_{F'}^{}="d" &&&&  \N',
\ar@2{->}^{\alpha}"a";"d"
}
\end{equation}
such that 
\begin{equation}
\xymatrix{
\M\N \ar[r]^{S_\M}_{}="A1" \ar@/^-2pc/[dd]_{F}  & \N\M  \ar@/^2pc/[dd]^{F'}_{}="b"   \ar@/^-2pc/[dd]^{F}^{}="a" & & 
\M\N  \ar[r]^{S_\M}_{}="A2"  \ar@/^-2pc/[dd]_{F}^{}="A"  \ar@/^2pc/[dd]_{F'}_{}="B" & \N\M \ar@/^2pc/[dd]^{F'}  \\
& & = & & \\   
\M\N' \ar[r]_{S'_\M}^{}="B1"  & \N'\M & & \M\N' \ar[r]_{S'_\M}^{}="B2"  & \N'\M,
\ar@{}"a";"b"^(.25){}="x"^(.75){}="y" \ar@{=>}^{\alpha\, \id_\M }"x";"y"
\ar@{}"A";"B"^(.25){}="x"^(.75){}="y"  \ar@{=>}^{\id_\M \alpha}"x";"y"
\ar@{}"A1";"B1"^(.25){}="x"^(.75){}="y" \ar@/^-1.5pc/@{=>}_{\sigma_\M}"x";"y"
\ar@{}"A2";"B2"^(.25){}="x"^(.75){}="y" \ar@/^1.5pc/@{=>}^{\sigma'_\M}"x";"y"
} 
\end{equation}
for all $\M$ in $\bM$.

The tensor product in $\mathbf{Z}(\mathbf{M})$ is given by
\begin{equation}
(\N,\, S,\, \gamma) \bt  (\N',\, S',\, \gamma') = (\N\N',\, SS',\, \gamma\gamma'),
\end{equation}
where $\N\N'$ is the tensor product in $\bM$, $(SS)'_\M$ is defined as the composition 
\[
(SS)'_\M : \M \N \N' \xrightarrow{S_\M\, N'} \N \M \N' \xrightarrow{\N\,S'_\M} \N\N'\M, 
\]
and $(\gamma\gamma')_{\L,\M}$ is given by the following composition of $2$-cells
\begin{equation}
\xymatrix{
&& \L\N\N'\M  \ar[dr]^{S_\L}  && \\
& \L\N\M\N' \ar[ur]^{S'_\M}  \ar[dr]_{S_\L} && \N\L\N'\M    \ar[dr]^{S'_\L} & \\
\L\M\N\N' \ar[ur]^{S_\M} \ar[rr]_{S_{\L\M}}^{}="a" && \N\L\M\N' \ar[ur]_{S'_\M} \ar[rr]_{S'_{\L\M}}^{}="b" && \N\N'\L\M
\ar@{}"2,2";"a"^(.25){}="x"^(.75){}="y" \ar@2{->}^{\gamma_{\L,\M}}"x";"y"
\ar@{}"2,4";"b"^(.25){}="x"^(.75){}="y" \ar@2{->}^{\gamma'_{\L,\M}}"x";"y"
\ar@{}"1,3";"3,3"^(.25){}="x"^(.75){}="y" \ar@2{->}^{\bt_{S'_\M,S_\L}}"x";"y"
}
\end{equation}
for all $\L,\M\in \bM$.

The braiding between $(\N,\, S,\, \gamma)$ and   $(\N',\, S',\, \gamma')$ is given by
\begin{equation}
(S'_\N,\, \Sigma): (\N,\, S,\, \gamma) \bt  (\N',\, S',\, \gamma') \to (\N',\, S',\, \gamma')  \bt (\N,\, S,\, \gamma),
\end{equation}
where $\Sigma_\M$ is the following composition  $2$-cell:
\begin{equation*}
\xymatrix{
\M\N\N' \ar[rr]^{S_\M} \ar[d]_{S'_\N} \ar[drr]^{S'_{\M\N}}^{}="a" && \N\M\N' \ar[rr]^{S'_\M}  \ar[drr]_{S'_{\N\M}}_{}="b"  && \N\N'\M \ar[d]^{S'_\N} \\
\M\N\N' \ar[rr]_{S'_\M} && \N'\M\N  \ar[rr]_{S_\M} && \N'\N\M. 
\ar@{}"2,1";"a"^(.35){}="x"^(.85){}="y" \ar@{=>}^{\gamma'_{\M,\N}}"x";"y"
\ar@{}"1,5";"b"^(.35){}="x"^(.85){}="y" \ar@{=>}^{\gamma'_{\N,\M}}"x";"y"
\ar@{}"1,3";"2,3"^(.25){}="x"^(.75){}="y" \ar@{=>}^{S'_{S_\M}}"x";"y"
}
\end{equation*}

Let $\bM$ be a braided monoidal $2$ category with braiding $B_{\M,\N}$ and structure modifications $\beta$ and $ \gamma$
\eqref{betaxyz}.  There is a braided monoidal $2$-functor 
\[
\F:\bM \to \mathbf{Z}(\mathbf{M}) : \N \mapsto (\N,\, B_{-, \N},\, \beta_{-,-|\N})
\]
with $F_{N,N'} : \F(\N)  \F(\N') \to \F(\N\N')$ given by $(\id_{\N\N'},\, \beta_{-\mid \N,\N'})$ and identity $2$-cells $\alpha$ \eqref{alphaLMN}
and $\delta$ \eqref{deltaMN}.  See  \cite{BN, Cr}  for details.

\subsection{$2$-categorical groups}
\lb{2cg}

Recall that a {\em categorical group} is a monoidal category in which  every object is invertible with respect
to the tensor product and each morphism is an isomorphism. 

We call an object $\P$ of a monoidal 2-category $\bM$ {\em invertible} if there is another object $\Q$ together with an equivalence $\P\bt\Q\to\I$,
where $\I$ is the unit object of $\bM$. 
Note that in this case the object $\Q$ is unique up to an equivalence. 

Note that the tensor products $-\bt\P$  and $\P\bt -$ with an invertible object $\P\in\bM$ are 2-autoequivalences of $\bM$.
In particular, each of them defines an equivalence of monoidal categories $\bM(\I,\I)\to \bM(\P,\P)$,
where $\I$ is the unit object of $\bM$. 

\begin{definition}
A {\em  $2$-categorical group} is a monoidal 2-category whose objects are invertible with respect to the tensor product, whose 1-morphisms are equivalences, and whose $2$-cells are isomorphisms. 
\end{definition}

\begin{example}
Let $\bA$ be a  2-category.
Then the  monoidal 2-category $\AUT(\bA)$  of autoequivalences of 
$\bA$ with pseudo-natural equivalences as 1-morphisms and isomorphisms as 2-morphisms is a 2-categorical group.
\end{example}

\begin{example}
Let $\bM$ be a monoidal 2-category.
Then the monoidal 2-category $\INV(\bM)$  of invertible objects in $\bM$ 
with equivalences as 1-morphisms and isomorphisms as 2-morphisms is a 2-categorical group.
\end{example}

Let $\bG$ be a braided 2-categorical group with the tensor product $\bt$ and unit object $\I$. 
Below we discuss some invariants of $\bG$.

Let $\varPi_{\leq 1}(\bG)$ denote the $1$-categorical truncation of $\bG$, i.e. the categorical group whose objects are objects of $\bG$ and morphisms
are isomorphism classes of $1$-cells in $\bG$.  Let  $\varPi_{1\leq}(\bG) =\bG(\I,\, \I)$ be the braided categorical group of autoequivalences of $\I$.
Its braiding is given by the naturality $2$-cells 
\begin{equation}
\xymatrix{
\I \bt \I  \ar[rr]^{\id \bt g} \ar[d]_{f\bt \id}_{}="a" && \I \bt \I \ar[d]^{f\bt \id}^{}="b" \\
\I \bt \I  \ar[rr]_{\id \bt g}  && \I \bt \I 
\ar@{}"a";"b"^(.25){}="x"^(.75){}="y" \ar@{=>}^{\bt_{f,g}}"x";"y"
}
\end{equation}
for all $f,g \in \bG(\I,\, \I)$.

\begin{definition}
The {\em homotopy groups} of $\bG$ are defined as follows.
\begin{itemize}
\item the 0th homotopy group $\pi_0(\bG)$ is the group of equivalence classes of objects of $\bG$,
\item the 1st homotopy group $\pi_1(\bG)$ is the group of isomorphism classes of autoequivalences in $\bG(\I,\, \I)$,
\item the 2nd homotopy group $\pi_2(\bG)$ is the group of automorphisms  of the identity 1-morphism $\id_\I$. 
\end{itemize}
\end{definition}

Since $\varPi_{1\leq}(\bG)$ is braided,  the homotopy groups $\pi_1(\bG), \pi_2(\bG)$ are abelian.

\begin{definition}
The {\em first} and {\em second canonical classes} of $\bG$,
\begin{equation}
\label{canonical classes}
\alpha_\bG\in H^3(\pi_0(\bG),\,\pi_1(\bG))\quad \text{and} \quad q_\bG\in H^3_{br}(\pi_1(\bG),\,\pi_2(\bG))
\end{equation}   
are, respectively,  the associator  of the categorical group $\varPi_{1\leq}(\bG)$ and the braided associator of the braided categorical group $\varPi_{1\leq}(\bG)$.
\end{definition}

\begin{proposition}
\label{prop: Pi0 action on Pi1}
There is a monoidal functor 
\begin{equation}
\label{Pi0 action on Pi1}
a: \varPi_{\leq 1}(\bG) \to \A ut_{br}(\varPi_{1\leq}(\bG))
\end{equation}
canonically defined up to a natural isomorphism.
\end{proposition}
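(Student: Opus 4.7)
The plan is to define $a(\mathcal{X})$ as conjugation by $\mathcal{X}$. Since each invertible object $\mathcal{X}\in\bG$ admits a pseudo-inverse $\mathcal{X}^{-1}$ together with equivalences $\mathcal{X}\bt\mathcal{X}^{-1}\simeq\I\simeq\mathcal{X}^{-1}\bt\mathcal{X}$, the composite 2-functor $\mathcal{X}\bt(-)\bt\mathcal{X}^{-1}:\bG\to\bG$ is an auto-2-equivalence that sends $\I$ to an object equivalent to $\I$. Restricting to endomorphism categories and conjugating by the chosen equivalence $\mathcal{X}\bt\mathcal{X}^{-1}\simeq\I$, we obtain an autoequivalence $a(\mathcal{X})$ of $\bG(\I,\I)=\varPi_{1\leq}(\bG)$.

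Three things remain to check: (i) that $a(\mathcal{X})$ is a braided monoidal autoequivalence of $\varPi_{1\leq}(\bG)$; (ii) that $a$ is monoidal on $\varPi_{\leq 1}(\bG)$; and (iii) that $a$ is independent of the choices of inverse data, up to natural isomorphism. For (i), the monoidal structure on $\bG(\I,\I)$ is inherited from $\bt$ and is preserved by conjugation using the interchange 2-cells $\bt_{Z,W}$ of \eqref{btZW}. The braiding of $\bG(\I,\I)$ is itself $\bt_{f,g}$, and its compatibility with $a(\mathcal{X})$ follows from the braided modification $\beta_{\mathcal{X}\mid-,-}$ (and $\beta_{-,-\mid\mathcal{X}}$) together with the hexagon-type relation \eqref{4cocycle 3}, which says precisely that conjugating the interchange by an object respects the braiding data.

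For (ii), any 1-morphism $Z:\mathcal{X}\to\mathcal{X}'$ between invertible objects induces, via the naturality 2-cells $\bt_{Z,-}$ and $\bt_{-,Z}$ of \eqref{btZW}, a monoidal natural isomorphism $a(Z):a(\mathcal{X})\Rightarrow a(\mathcal{X}')$, and isomorphic 1-morphisms yield the same transformation after passing to isomorphism classes, so $a$ descends to a functor on $\varPi_{\leq 1}(\bG)$. The monoidal structure on $a$ itself, namely a coherent family of natural isomorphisms $a(\mathcal{X}\bt\mathcal{Y})\cong a(\mathcal{X})\circ a(\mathcal{Y})$, arises from the associator modification $\alpha$ of the ambient monoidal 2-category together with the canonical equivalence $(\mathcal{X}\bt\mathcal{Y})^{-1}\simeq \mathcal{Y}^{-1}\bt\mathcal{X}^{-1}$. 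Pentagonal coherence for this structure follows from the monoidal 2-category axioms for $\bt$. Finally, for (iii), any two pseudo-inverses of $\mathcal{X}$ differ by an equivalence, and the resulting autoequivalences of $\bG(\I,\I)$ are related by the associated natural isomorphism, so $a$ is canonical in the stated sense.

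The main technical obstacle is the coherence bookkeeping: verifying that the various associativity modifications and interchange 2-cells interact correctly so that $a(\mathcal{X})$ is a genuine (not merely lax) braided monoidal autoequivalence and that the resulting coherence data on $a$ satisfies the pentagonal axiom. This amounts to an extended diagram chase through the relations \eqref{4cocycle 1}--\eqref{4cocycle 4}, which is exactly where the semistrict structure of $\bG$ gets used.
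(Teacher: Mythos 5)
Your construction is at bottom the same conjugation idea the paper uses, but the implementations differ in one useful way and one of your key justifications is off target. The paper never chooses a pseudo-inverse: it defines $a(\P)$ by composing the whiskering equivalence $\bG(\I,\I)\to\bG(\P,\P)$, $f\mapsto f\bt\P$, with a quasi-inverse of the other whiskering $f\mapsto \P\bt f$ (both are equivalences because $-\bt\P$ and $\P\bt-$ are $2$-autoequivalences of $\bG$). This sidesteps exactly the bookkeeping you single out as the main obstacle — the chosen unit equivalences $\X\bt\X^{-1}\simeq\I$, the comparison $(\X\bt\Y)^{-1}\simeq\Y^{-1}\bt\X^{-1}$, and independence of the choice of inverse — since only ``canonically defined up to natural isomorphism'' is claimed.

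The genuine gap is in your step (i). The braiding on $\varPi_{1\leq}(\bG)=\bG(\I,\I)$ is the interchange $2$-cell $\bt_{f,g}$, so compatibility of $a(\X)$ with it is a statement about how the interchange cells \eqref{btZW} of the semistrict monoidal $2$-category behave under whiskering by $\X\bt-$ and $-\bt\X^{-1}$ (together with your chosen unit equivalences); it is not controlled by the braiding $B$ of $\bG$ or the modifications $\beta$, and \eqref{4cocycle 3} — whose relevant face is $\bt_{B_{\K,\M},B_{\L,\N}}$, an interchange of braiding $1$-cells — does not deliver it. Moreover, leaning on $B$ and $\beta$ silently restricts the statement to braided $\bG$, whereas the functor \eqref{Pi0 action on Pi1} is needed precisely for $2$-categorical groups with no braiding, such as $\uuBrPic(\D)$ and $\uuPic(\B)$, where the resulting $\pi_0$-action and Whitehead brackets are nontrivial; when $\bG$ is braided the action is trivializable, as the paper notes immediately afterwards. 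Replace the appeal to $\beta_{\X\mid -,-}$ and \eqref{4cocycle 3} by the naturality and composition axioms of the interchange cells (the ``naturality properties of $\bt$'' in the paper's proof); with that repair, the remainder of your argument — monoidality of $a$ and independence of the choices up to natural isomorphism — goes through.
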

\begin{proof}
For any object $\P$ in $\bG$  the corresponding  autoequivalence $a(\P)$ of $\bG(\I,\, \I)$ is given by composing the 
monoidal equivalence
\[
\bG(\I,\, \I) \to \bG(\P,\, \P) : f \mapsto f\bt \P,\, \alpha \mapsto \alpha\bt \P
\]
with the quasi-inverse of 
\[
\bG(\I,\, \I) \to \bG(\P,\, \P) : f \mapsto \P\bt f,\, \alpha \mapsto \P \bt \alpha. 
\]
That $a(\P)$ is a braided autoequivalence  and that $a$ is a monoidal functor follow  the naturality properties of $\bt$.
\end{proof}

\begin{remark}
The  action \eqref{Pi0 action on Pi1} of $\varPi_{\leq 1}(\bG)$ on $\varPi_{1\leq}(\bG)$ 
can also be recovered from the {\em adjoint action} of the 2-categorical group $\bG$
on itself, i.e. a monoidal 2-functor $\AD:\bG\to \AUT(\bG)$ characterized (up to an equivalence) by a coherent collection of equivalences $\P\bt\X \to \AD_\P(\X)\bt\P$, pseudo-natural in $\X\in\bG$. 
\end{remark}

The action \eqref{Pi0 action on Pi1} yields canonical group homomorphisms
\[
\pi_0(\bG) \to \Aut(\pi_1(\bG)),\quad \pi_0(\bG) \to \Aut(\pi_2(\bG)),\quad \text{and} \quad \pi_1(\bG) \to \Hom(\pi_1(\bG),\, \pi_2(\bG))
\]
corresponding, respectively, to the actions of objects of $\varPi_{\leq 1}(\bG)$ on objects and morphisms of $\varPi_{\leq 1}(\bG)$
and to the action of the group of automorphisms of $\I$ by automorphisms of $\id_{\varPi_{\leq 1}(\bG)}$.   We 
will refer to  the corresponding maps
\begin{equation}
\label{wb}
\pi_0(\bG) \times \pi_1(\bG) \to \pi_1(\bG),\quad  \pi_0(\bG) \times \pi_2(\bG) \to \pi_2(\bG),
\quad \text{and} \quad \pi_1(\bG) \times \pi_1(\bG) \to \pi_2(\bG)
\end{equation}
as the {\em Whitehead brackets}. 

Note that the first canonical class $\alpha_\bG$ is invariant with respect to the action of $\pi_0(\bG)$ and 
that the bimultiplicative pairing $\pi_1(\bG) \times \pi_1(\bG) \to \pi_2(\bG)$ is given by the polarization of the second canonical class $q_\bG$. 

Suppose that $\bG$ is a braided $2$-categorical group. In this case $\varPi_{\leq 1}(\bG)$ is a braided categorical
group and $\varPi_{1\leq}(\bG)$ is a symmetric categorical group. Hence, the canonical classes \eqref{canonical classes}
get promoted to
\begin{equation}
\label{braided canonical classes}
\alpha_\bG\in H^3_{br}(\pi_0(\bG),\,\pi_1(\bG))\quad \text{and} \quad q_\bG\in H^3_{sym}(\pi_1(\bG),\,\pi_2(\bG)).
\end{equation}
The braiding of $\bG$ gives a trivialization of the functor \eqref{Pi0 action on Pi1} which implies
that Whitehead brackets \eqref{wb} are trivial and yields a new bilinear pairing
\begin{equation}
\label{new pairing}
[\, , \, ]: \pi_0(\bG)\times \pi_1(\bG)\to \pi_2(\bG)
\end{equation}
constructed as follows.  For each object $\P$ in $\bG$, or an element of $\pi_0(\bG)$,  we have a canonical monoidal automorphism of 
$a(\P)=\id_\I$, i.e. a homomorphism $\pi_1(\bG) \to \pi_2(\bG)$. This gives a homomorphism $\pi_0(\bG) \to \Hom (\pi_1(\bG),\, \pi_2(\bG))$
identified with \eqref{new pairing}.

For a symmetric 2-categorical group $\bG$ the canonical classes are 
\begin{equation}
\label{symmetric canonical classes}
\alpha_\bG\in H^3_{sym}(\pi_0(\bG),\,\pi_1(\bG))\quad \text{and} \quad q_\bG\in H^3_{sym}(\pi_1(\bG),\,\pi_2(\bG)).
\end{equation}
All Whitehead brackets are trivial in this case.

\subsection{Monoidal $2$-functors between 2-categorical groups}
\label{Sect Functors and cohomology}

Let $G$ be a group. We consider it as a 2-categorical group with identity 1- and 2-morphisms.

Let  $\bG$ be a $2$-categorical group (viewed as a semistrict monoidal $2$-category) with the corresponding  canonical classes 
\[
\alpha_\bG\in H^3(\pi_0( \bG),\, \pi_1( \bG))
\quad\text{and}\quad
(\omega_\bG,\, c_\bG) \in H^3_{br}(\pi_1( \bG),\, \pi_2( \bG)).
\]

Let  $C: G \to \varPi_{\leq 1}(\bG): x\to \C_x$ be a monoidal functor. This means that there are $0$-cells 
$\C_x$ in $\bG$, $1$-isomorphisms $M_{x,y}: \C_x \C_y \to \C_{xy}$,
and invertible $2$-cells
\begin{equation}
\label{alpha fgh}
\xymatrix{
\C_x \C_y \C_z \ar[dd]_{M_{x,y}}_{}="a" \ar[rr]^{M_{y,z}} &&  \C_x \C_{yz} \ar[dd]^{M_{x,yz}}_{}="b" \\
&& \\
 \C_{xy} \C_z \ar[rr]^{M_{xy,z}} &&  \C_{xyz},
\ar@{}"a";"b"^(.25){}="a1"^(.75){}="b1" \ar@{=>}^{\alpha_{x,y,z}}"a1";"b1"
}
\end{equation}
for all $x,y,z\in G$. 

Note that $C$ gives rise to an action  of $G$ on $\pi_1(\bG)$ (obtained by composing the underlying group homomorphism
$G\to \pi_0(\bG)$ with the action of  $\pi_0(\bG)$ on $\pi_1(\bG)$). We denote this action by $(g,\,Z)\mapsto Z^g$.  

Following \cite{ENO} define a $4$-cochain
$\mathcal{p}^0_C: G^4\to \pi_2(\bG)$ by setting $\mathcal{p}^0_C(x,y,z,w)$
to be the composition of  faces of the following cube:
\begin{equation}
\label{cubes fghk}
\xymatrix{
&&& \C_x  \C_y  \C_z  \C_w  \ar[ddll]_{M_{x,y}} \ar@{-->}[dd]_{M_{y,z}}="bb" \ar[ddrr]^{M_{z,w}}  &&  \\
\\
&\C_{xy}  \C_z  \C_w \ar[dd]_{M_{xy,z}}="c" \ar[ddrr]_>>>>>>>>>>>>{M_{z,w}}  
&&    \C_{x}   \C_{yz}  \C_w  \ar@{-->}[ddll]_<<<<<<<<<<<{M_{x,yz}}="a" \ar@{-->}[ddrr]^<<<<<<<<<<{M_{yz,w}}_{}="b"
&& \C_x \C_y \C_{zw}  \ar[ddll]_>>>>>>>>>>{M_{x,y}} \ar[dd]^{M_{y,zw}}_{}="d"\\
 \mathcal{p}^0_C(x,y,z,w) =&&&&&\\
&\C_{xyz}  \C_w  \ar[ddrr]_{M_{xyz,w}}="e"
&& \C_{xy}  \C_{zw} \ar[dd]^{M_{xy,zw}}="x"
&& \C_{x}  \C_{yzw} \ar[ddll]^<<<<<<<<<<<{M_{x,yzw}}="y"\\
\\
&&& \C_{xyzw}, &&
\ar@{}"a";"y"^(.15){}="a2"^(.45){}="y2" \ar@2{-->}^{\alpha_{x,yz,w}}"a2";"y2" %
\ar@{}"c";"bb"^(.35){}="c1"^(.75){}="bb1" \ar@2{-->}^{\alpha_{x,y,z}}"bb1";"c1" %
\ar@{}"bb";"d"^(.25){}="b2"^(.75){}="d2" \ar@2{-->}^{\alpha_{y,z,w}}"b2";"d2" %
\ar@{}"c";"x"^(.35){}="e2"^(.85){}="x2" \ar@2{->}_{\alpha_{xy,z,w}}"e2";"x2" %
\ar@{}"x";"d"^(.15){}="x3"^(.65){}="y3" \ar@2{->} ^{\alpha_{x,y,zw}}"y3";"x3"
} 
\end{equation}
where the top face is given by $\bt_{M_{x,y},M_{z,w}}$.  That is, we view $\mathcal{p}^0_C(x,y,z,w)$ 
as a $2$-automorphism of the composition of morphisms between opposite corners, e.g., of $M_{xyz,w}M_{xy,z}M_{x,y},
\,x,y,z,w\in G$
(the $2$-automorphisms of other compositions are conjugate to this one).  We use this convention for other polytopes
in this paper.

\begin{proposition}
$\mathcal{p}^0_C$ is a $4$-cocycle whose cohomology class in  $H^4(G,\, \pi_2(\bG))$ depends only on
the isomorphism class of $C$.  A monoidal functor $C:G \to \varPi_{\leq 1}(\bG)$ extends to a monoidal $2$-functor $G\to \bG$
if and only if $\mathcal{p}^0_C=0$ in   $H^4(G,\, \pi_2(\bG))$.
\end{proposition}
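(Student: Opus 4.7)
The plan is to prove the three assertions in sequence, exploiting that all the structure $\alpha_{x,y,z}$ is a 2-cell in $\bG$ whose failure to satisfy the pentagon axiom \eqref{mon2fun} takes values in $\pi_2(\bG)$, the group of 2-automorphisms of identity 1-cells.

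First I will verify the cocycle identity $d\mathcal{p}^0_C = 0$ by considering the five-element pentagon coherence polytope for five objects $x,y,z,w,v \in G$. Because $\bG$ is semistrict, applying the 2-cells $\alpha_{-,-,-}$ and the interchangers $\bt_{M_{-,-},M_{-,-}}$ to the tensor product $\C_x \C_y \C_z \C_w \C_v$ produces a closed polytope (a 4-dimensional Stasheff associahedron, whose 3-cells are precisely the cubes \eqref{cubes fghk} indexed by the five four-tuples appearing in $d\mathcal{p}^0_C(x,y,z,w,v)$ and the remaining ``interchange'' 3-cells that cancel in pairs by naturality of $\bt$). Tracing 2-cells around this polytope yields the identity 2-cell, and when one reads off the $\pi_2(\bG)$-contributions face by face, the sum precisely equals $d\mathcal{p}^0_C(x,y,z,w,v)$ up to the sign conventions of the normalised bar differential on $C^*(G,\pi_2(\bG))$.

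Next I will show that the cohomology class $[\mathcal{p}^0_C]$ depends only on the isomorphism class of $C$ in $\mathrm{Fun}(G,\,\varPi_{\leq 1}(\bG))$. A monoidal isomorphism $\phi : C \to C'$ furnishes 1-isomorphisms $\phi_x : \C_x \to \C'_x$ together with invertible 2-cells $\mu_{x,y}$ as in \eqref{muss} compatible via \eqref{MT cube}. Inserting the prisms $\mu_{x,y}$ along the edges of the cube \eqref{cubes fghk} defines a 4-dimensional polytope whose two opposite faces are $\mathcal{p}^0_C(x,y,z,w)$ and $\mathcal{p}^0_{C'}(x,y,z,w)$; the remaining faces collapse (by the coherence \eqref{MT cube}) to exactly $d\nu$ where $\nu(x,y,z)$ is the $\pi_2(\bG)$-valued discrepancy extracted from $\mu$. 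Thus $\mathcal{p}^0_{C'} - \mathcal{p}^0_C = d\nu$, proving cohomological invariance.

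Finally, for the extension criterion, note that a monoidal 2-functor $G \to \bG$ extending $C$ is the same as choosing 2-cells $\alpha_{x,y,z}$ as in \eqref{alpha fgh} which satisfy the pentagon axiom \eqref{mon2fun}, and this axiom, unfolded for the quintuple $(x,y,z,w)$ of \eqref{cubes fghk}, asserts precisely that the composition $\mathcal{p}^0_C(x,y,z,w)$ equals the identity. So an extension forces $\mathcal{p}^0_C = 0$ on the nose, hence $[\mathcal{p}^0_C] = 0$. Conversely, if $[\mathcal{p}^0_C] = 0$, write $\mathcal{p}^0_C = d\nu$ for some $\nu \in C^3(G,\,\pi_2(\bG))$; using the free and transitive $\pi_2(\bG)$-action on 2-cells filling \eqref{alpha fgh} (a consequence of $\pi_2(\bG) = \mathrm{Aut}(\id_\I)$), replace each $\alpha_{x,y,z}$ by $\nu_{x,y,z}^{-1} \cdot \alpha_{x,y,z}$. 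A direct computation, using the second-step cocycle calculation again, shows the new obstruction cube is trivial, i.e.\ \eqref{mon2fun} holds, producing the desired 2-functor.

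The main obstacle is the bookkeeping in the first and third steps: one must confirm that the 4-dimensional associahedral polytope underlying the cocycle identity really glues the five cubes according to the signs of the bar differential, and that after twisting $\alpha$ by $\nu^{-1}$ the interaction with the two ``top'' and ``bottom'' interchangers $\bt_{M,M}$ in \eqref{cubes fghk} still produces the trivial 2-cell. Both are routine but delicate semistrict coherence checks, parallel (one dimension higher) to the classical fact that the obstruction to lifting a set-theoretic group homomorphism to a monoidal functor into a categorical group is a 3-cocycle.
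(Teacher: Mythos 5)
Your proposal follows essentially the same route as the paper's proof: glue the obstruction cubes for five group elements into a closed polytope whose leftover cells commute by naturality of $\bt$, observe that changing the filling $2$-cells $\alpha_{x,y,z}$ by a cochain $\nu\in C^3(G,\pi_2(\bG))$ multiplies $\mathcal{p}^0_C$ by $d\nu$, and identify the pentagon \eqref{mon2fun} with commutativity of the cube \eqref{cubes fghk}. Two small corrections to the bookkeeping: the differential $d\colon C^4\to C^5$ has six terms, so the closed polytope (combinatorially a $4$-cube built from eight cubes, not a Stasheff associahedron, since $G$ is strictly associative) contains six cubes of type \eqref{cubes fghk} together with two cubes commuting by naturality of the tensor product; and in your invariance step the $2$-cells $\mu_{x,y}$ supplied by an isomorphism of the truncated functors are not themselves ``compatible via \eqref{MT cube}''---the failure of \eqref{MT cube} is precisely the $3$-cochain $\nu$ whose coboundary relates $\mathcal{p}^0_{C'}$ to $\mathcal{p}^0_C$, exactly as in the paper's polytope \eqref{15vert'} used for Proposition~\ref{p1CP prop}. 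With that rephrasing your second step is sound, and in fact treats the isomorphism-invariance claim more explicitly than the paper's own proof, which only records independence of the choice of the fillers $\alpha_{x,y,z}$.
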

\begin{proof}
Consider the following  polytope (its planar projection is pictured):  
\begin{equation}
\label{15vert}
\scalebox{0.8}{
\xymatrix{
&&& \C_x \C_y  \C_z  \C_w  \C_u  \ar[ddll] \ar[ddl] \ar[ddr] \ar[ddrr] &&& \\
\\
& \C_{xy}  \C_z  \C_w  \C_u \ar[ddl] \ar[dd] \ar [ddr] 
& \C_x \C_{yz}  \C_w  \C_u  \ar[ddll] \ar[ddrr] \ar[ddrrr] 
&&  \C_x \C_y  \C_{zw}  \C_u \ar[ddlll] \ar[dd] \ar[ddrr] 
& \C_x \C_y  \C_z  \C_{wu}   \ar[ddlll] \ar[dd] \ar[ddr] & \\
\\
\C_{xyz}  \C_w  \C_u \ar[ddr] \ar[ddrr]
&  \C_x\C_y  \C_z\C_w  \C_u \ar[dd] \ar[ddrrr]
& \C_{xy}  \C_z \C_{wu}  \ar[dd] \ar[ddrr]
&& \C_x \C_{yzw}  \C_u \ar[ddlll] \ar[ddr]
& \C_x \C_{yz} \C_{wu} \ar[dd] \ar[ddlll]
& \C_x  \C_y  \C_{zwu} \ar[ddl] \ar[ddll]\\
\\
& \C_{xyzw}  \C_u \ar[ddrr]
& \C_{xyz}  \C_{wu} \ar[ddr]
&& \C_{xy}  \C_{zwu} \ar[ddl]
& \C_x \C_{yzwu} \ar[ddll] & \\
\\
&&& \C_{xyzwu}. &&&
}
}
\end{equation}
The edges of this polytope are isomorphisms $M_{x,y}$. The faces are cells  $\alpha_{x,y,z},\, x,y,z\in G,$
\eqref{alpha fgh} and $\bt_{f,g}$. The polytope \eqref{15vert} consists of $8$ cubes (four containing the top
vertex and four containing the bottom one) glued together in such a way that each of their $48$ faces belongs to exactly
two cubes (so that the boundary is empty).  Six of these cubes  are of the form \eqref{cubes fghk}; their composition is the differential 
of $\mathcal{p}^0_C$.  The remaining two cubes commute due to the naturality of the tensor product in $\bG$. Namely,
$M_{x,y}$ commutes with the $2$-cell $\alpha_{z,w,u}$ and $M_{w,u}$ commutes with the $2$-cell $\alpha_{x,y,z}$.
Thus, $d(\mathcal{p}^0_C)=1$.

A different choice of $2$-cells \eqref{alpha fgh}  results in multiplying $\mathcal{p}^0_C$ by a $4$-coboundary, so its class in 
$H^4(G,\, \pi_2(\bG))$ is well-defined.

Finally, $C$ extends to a monoidal $2$-functor if the $2$-cells \eqref{alpha fgh} can be chosen in such a way that 
\eqref{mon2fun} is satisfied. This is equivalent to commutativity of the cube \eqref{cubes fghk} 
(the latter is obtained by gluing the two sides of \eqref{mon2fun}), i.e. to $\mathcal{p}^0_C$  being cohomologically trivial. 
\end{proof}

For $L\in H^2(G,\, \pi_1(\bG))$ the monoidal functor  $L\cdot C: G \to \varPi_{\leq 1}(\G)$ is obtained by multiplying $M_{x,y}$
by $L_{x,y}$ for all $x,y\in G$.

Let $C:G \to \varPi_{\leq 1}(\bG)$ be a monoidal functor with the monoidal structure $M_{x,y}:\C_x\C_y \to \C_{xy},\,x,y\in G$. 

The group $\Aut(C)$ of automorphisms of $C$ is isomorphic to $H^1(G,\, \pi_1(\bG))$.
Explicitly, $P\in \Aut(C)$ corresponds to a  collection of 
equivalences $P_x:\C_x\to \C_x$ such that there are invertible $2$-cells
\begin{equation}
\label{muss xy}
\xymatrix{
\C_x\C_y \ar[dd]_{M_{x,y}}_{}="a" \ar[rr]^{P_x P_y} && \C_x\C_y \ar[dd]^{M_{x,y}}="b" \\
&& \\
\C_{xy} \ar[rr]^{P_{xy}} && \C_{xy}
\ar@{}"a";"b"^(.25){}="b2"^(.75){}="c2" \ar@{=>}^{\mu_{x,y}}"b2";"c2"
}
\end{equation}
for all $x,y\in G$.

Suppose that a monoidal functor $C:G \to \varPi_{\leq 1}(\bG)$  extends to a monoidal $2$-functor $\C: G\to \bG$.
That is, there is a choice of invertible $2$-cells \eqref{alpha fgh} such that the cubes \eqref{cubes fghk} commute,
i.e. $\mathcal{p}^0_C=1$. Let $P$ be a monoidal automorphism of $C$.

Define a  function $\mathcal{p}^1_C(P): G^3\to \pi_2(\bG)$ by 
\begin{equation}
\label{cubes for PW1}
\xymatrix{
&&& \C_x  \C_y  \C_z    \ar[ddll]_{M_{x,y}} \ar@{-->}[dd]_{P_xP_yP_z}="bb" \ar[ddrr]^{M_{y,z}}  &&  \\
\\
&\C_{xy}  \C_z   \ar[dd]_{P_{xy}P_z}="c" \ar[ddrr]_>>>>>>>>>>>> {M_{xy,z}}
&&    \C_{x}   \C_y  \C_z  \ar@{-->}[ddll]_<<<<<<<<<<<{M_{x,y}}="a" \ar@{-->}[ddrr]^<<<<<<<<<<{M_{y,z}}_{}="b"
&& \C_x \C_{yz}  \ar[ddll]^>>>>>>>>>>{M_{x,yz}} \ar[dd]^{P_xP_{yz}}_{}="d"\\
 \mathcal{p}^1_C(P)(x,y,z) = &&&&&& \\
&\C_{xy}  \C_z  \ar[ddrr]_{M_{xy,z}}="e"
&& \C_{xyz} \ar[dd]^{P_{xyz}}="x"
&& \C_{x}  \C_{yz} \ar[ddll]^<<<<<<<<<<<{M_{x,yz}}="y"\\
\\
&&& \C_{xyz}, &&
\ar@{}"a";"y"^(.15){}="a2"^(.45){}="y2" \ar@2{-->}^{\alpha_{x,y,z}}"a2";"y2" %
\ar@{}"c";"bb"^(.35){}="c1"^(.75){}="bb1" \ar@2{-->}^{\mu_{x,y}}"bb1";"c1" %
\ar@{}"bb";"d"^(.25){}="b2"^(.75){}="d2" \ar@2{-->}^{\mu_{yz,w}}"b2";"d2" %
\ar@{}"c";"x"^(.35){}="e2"^(.85){}="x2" \ar@2{->}_{\mu_{xy,z}}"e2";"x2" %
\ar@{}"x";"d"^(.15){}="x3"^(.65){}="y3" \ar@2{->} ^{\mu_{x,yz}}"y3";"x3"
} 
\end{equation}
for all $x,y,z\in G$.  Here the top and bottom faces are $\alpha_{x,y,z}$.

\begin{proposition}
\label{p1CP prop}
$\mathcal{p}^1_C(P)$ is a $3$-cocycle and the map
\begin{equation}
\label{p1(C)}
\mathcal{p}^1_C: \Aut(C) =H^1(G,\, \pi_1(\bG)) \to H^3(G,\, \pi_2(\bG)) : P \mapsto \mathcal{p}^1_C(P)
\end{equation}
is a well defined homomorphism.  The automorphism $P$ extends to a monoidal pseudo-natural automorphism of $\C$
if and only if  $\mathcal{p}^1_C(P)=0$ in $H^3(G,\, \pi_2(\bG))$.
\end{proposition}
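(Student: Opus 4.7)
The plan is to mirror the polytope argument just given for $\mathcal{p}^0_C$. First I would assemble a single closed polytope with four-fold inputs $(x,y,z,w)\in G^4$, whose vertices $\C_x\C_y\C_z\C_w$ and $\C_{xyzw}$ (together with all intermediate contractions) appear in two parallel copies joined by the $1$-cells $P_\bullet$. The boundary of this polytope decomposes into: five $3$-dimensional faces of shape \eqref{cubes for PW1} whose composition reproduces the coboundary $d\mathcal{p}^1_C(P)(x,y,z,w)$ with alternating signs; additional $3$-dimensional faces of shape \eqref{cubes fghk}, each of which commutes by the standing hypothesis that $C$ extends to $\C$ (i.e.\ $\mathcal{p}^0_C=1$); and naturality squares witnessing that each $P_\bullet$ commutes with the tensor $1$-cells $M_{\bullet,\bullet}$, which commute automatically by pseudo-naturality of $P$. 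Closing the polytope forces $d\mathcal{p}^1_C(P)=1$ in $C^4(G,\pi_2(\bG))$.

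Next I would address well-definedness on cohomology. Rescaling the modifications $\mu_{x,y}$ of \eqref{muss xy} by a normalized $2$-cochain $\nu\in C^2(G,\pi_2(\bG))$ changes $\mathcal{p}^1_C(P)$ by $d\nu$, so the class $[\mathcal{p}^1_C(P)]\in H^3(G,\pi_2(\bG))$ depends only on the isomorphism class of $P$ in $\Aut(C)=H^1(G,\pi_1(\bG))$. Additivity in $P$ follows from observing that the composite $PQ$ of two monoidal automorphisms inherits the monoidal structure $\mu^{PQ}$ obtained by horizontal pasting of $\mu^P$ and $\mu^Q$; the corresponding cube \eqref{cubes for PW1} factors as the vertical concatenation of the cubes for $P$ and for $Q$, and the resulting $2$-cells add in the abelian group $\pi_2(\bG)$.

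Finally, comparing the cube \eqref{cubes for PW1} with the monoidal pseudo-natural axiom \eqref{MT cube} of Definition~\ref{mon2trans} specialized to $\F=\F'=\C$ with the fixed associators $\alpha_{x,y,z}$, one sees that an extension of $P$ to a monoidal pseudo-natural automorphism of $\C$ amounts precisely to a choice of $\mu_{x,y}$ making every cube \eqref{cubes for PW1} commute, i.e.\ $\mathcal{p}^1_C(P)(x,y,z)=1$ for all $x,y,z\in G$. The ``only if'' direction is thus immediate. Conversely, if $\mathcal{p}^1_C(P)=d\nu$ for some $\nu\in C^2(G,\pi_2(\bG))$, then replacing $\mu_{x,y}$ by $\nu_{x,y}^{-1}\cdot\mu_{x,y}$ strictifies all the cubes and produces the desired lift.

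The main obstacle is the combinatorial bookkeeping of step~1: one must enumerate the faces of the four-index polytope, verify that the signs of their $2$-cell compositions match the standard differential in $C^3(G,\pi_2(\bG))$, and check that every face is accounted for as either one of the five cubes of type \eqref{cubes for PW1}, a cube of type \eqref{cubes fghk}, or a naturality square. Once the polytope is correctly laid out, the remaining verifications are purely formal.
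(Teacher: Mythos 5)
Your proposal follows essentially the same route as the paper: the paper's proof closes exactly such a four-variable polytope (its diagram \eqref{15vert'}, two copies of the associativity cube joined by the $P_\bullet$ edges), whose boundary splits into five cubes \eqref{cubes for PW1} giving $d\mathcal{p}^1_C(P)$, two cubes \eqref{cubes fghk} commuting because $\mathcal{p}^0_C=1$, and one leftover cube commuting by naturality of the tensor product of $\bG$ (the interchange cells), and then handles well-definedness, multiplicativity (gluing the cubes for $P$ and $Q$ along $\alpha_{x,y,z}$), and the lifting criterion via \eqref{MT cube} just as you do. The only cosmetic difference is your attribution of the leftover faces to pseudo-naturality of $P$ rather than to the interchange/naturality of $\bt$, which is part of the bookkeeping you already flag as the remaining verification.
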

\begin{proof}
Consider the following  polytope (its planar projection is pictured):  
\begin{equation}
\label{15vert'}
\scalebox{0.8}{
\xymatrix{
&&& \C_x \C_y  \C_z  \C_w    \ar[ddll] \ar[ddl] \ar[ddr] \ar@{.>}[ddrr] &&& \\
\\
& \C_{xy}  \C_z  \C_w   \ar[ddl] \ar[dd] \ar@{.>}[ddrrr] 
& \C_x \C_{yz}  \C_w    \ar[ddll] \ar[dd] \ar@{.>}[ddrrr] 
&&  \C_x \C_y  \C_{zw}  \ar[ddlll] \ar[ddll] \ar@{.>}[ddrr] 
& \C_x \C_y  \C_z  \C_w   \ar[ddl] \ar[dd] \ar[ddr] & \\
\\
\C_{xyz}  \C_w   \ar[ddr] \ar@{.>}[ddrr]
&  \C_{xy}  \C_{zw}   \ar[dd] \ar@{.>}[ddrrr]
& \C_x  \C_{yzw}  \ar[ddl] \ar@{.>}[ddrrr]
&& \C_{xy}  \C_z  \C_w \ar[ddll] \ar[dd]
& \C_x \C_{yz}  \C_w   \ar[ddlll] \ar[dd]
&  \C_x \C_y  \C_{zw}  \ar[ddll] \ar[ddl]\\
\\
& \C_{xyzw}  \ar@{.>}[ddrr]
& \C_{xyz}  \C_w \ar[ddr]
&& \C_{xy}  \C_{zw} \ar[ddl]
& \C_x \C_{yzw} \ar[ddll] & \\
\\
&&& \C_{xyzw}. &&&
}
}
\end{equation}
The solid arrows are isomorphisms $M_{x,y}$ and the dotted ones are products of isomorphisms $P_x$. 
The faces are cells  $\alpha_{x,y,z}$
\eqref{alpha fgh}, $\mu_{x,y}$ \eqref{muss xy},
and $\bt_{x,y},\, x,y,z\in G$. 

The polytope \eqref{15vert'} consists of $8$ cubes (four containing the top
vertex and four containing the bottom one) glued together in such a way that each of their $48$ faces belongs to exactly
two cubes (so that the boundary is empty).  Five of these cubes  are of the form \eqref{cubes for PW1}; their composition is  the differential 
of $\mathcal{p}^1_C(P)$.  Two cubes consisting of solid arrows are the cubes \eqref{cubes fghk} and so they commute by assumption.
The remaining cube commutes due to the naturality of the tensor product of $\bG$. 
Thus, $d(\mathcal{p}^1_C(P))=1$.

A different choice of $2$-cells \eqref{muss xy}  results in multiplying $\mathcal{p}^1_C(P)$ by a coboundary, so its class in 
$H^3(G,\, \pi_2(\bG))$ is well-defined.

The equality 
\[
\mathcal{p}^1_C(PQ) = \mathcal{p}^1_C(P) \mathcal{p}^1_C(Q),\qquad  P,Q\in \Aut(C)
\]
is proved directly by gluing two cubes \eqref{cubes for PW1}  for $P$ and $Q$
along the face $\alpha_{x,y,z}$. 

Finally, $P$ extends to a monoidal pseudo-natural automorphism of $\C$ if $2$-cells \eqref{muss xy} can be chosen
in such a way that \eqref{MT cube} is satisfied. This is equivalent
to commutativity of the cube \eqref{cubes for PW1},  i.e. to $\mathcal{p}^1_C(P)$  being cohomologically trivial. 
\end{proof}

Let $\C:G \to \bG$ be a monoidal $2$-functor. For any $\omega \in Z^3(G,\, \pi_2(\bG))$ let $\C^\omega$ 
be a monoidal $2$-functor obtained from $\C$ by multiplying each $2$-cell $\alpha_{x,y,z}$ by $\omega(x,\,y,\,z),\,
x,y,z\in G$. The monoidal $2$-equivalence  class of $\C^\omega$ depends only on the cohomology class of $\omega$ in $H^3(G,\, \pi_2(\bG))$.
If $\C,\, \C'$ are extensions of the same monoidal functor  $C:G \to \varPi_0(\bG)$ if and only if 
$\C'\cong \C^\omega$  for some $\omega$. 

\begin{corollary}
\label{cokernel torsor}
Monoidal $2$-functors $\C^{\omega_1},\, \C^{\omega_2}: G \to \bG$  are isomorphic if and only if  
$\omega_2 = \mathcal{p}^1_C(P) \omega_1$ for some $P\in \Aut(C)=H^1(G,\, \pi_1(\bG))$.  
\end{corollary}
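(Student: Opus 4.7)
The plan is to unpack what data defines an isomorphism $F: \mathcal{C}^{\omega_1} \to \mathcal{C}^{\omega_2}$ between the two $2$-functors and to recognise it as the cube \eqref{cubes for PW1} with shifted top and bottom faces.

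First, I would note that the cocycles $\omega_i \in Z^3(G,\,\pi_2(\bG))$ modify $\mathcal{C}$ only at the level of the associator $2$-cells $\alpha_{x,y,z}$, via central scalars in $\pi_2(\bG) = \Aut(\id_{\mathcal{I}})$. In particular, the underlying monoidal functor $C: G \to \varPi_{\leq 1}(\bG)$, together with its monoidal structure $M_{x,y}$, is the same for $\mathcal{C}^{\omega_1}$ and $\mathcal{C}^{\omega_2}$. Consequently the underlying $1$-morphism data of $F$ reduces to a monoidal automorphism $P$ of $C$, i.e.\ an element of $\Aut(C) \cong H^1(G,\,\pi_1(\bG))$, together with a choice of invertible $2$-cells $\mu_{x,y}$ as in \eqref{muss xy}.

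Next, I would spell out the monoidal coherence axiom \eqref{MT cube} for such an $F$. Because the source associator is $\omega_1(x,y,z)\,\alpha_{x,y,z}$ and the target associator is $\omega_2(x,y,z)\,\alpha_{x,y,z}$, this axiom is precisely the commutativity of the cube \eqref{cubes for PW1} with its top face replaced by $\omega_1(x,y,z)\,\alpha_{x,y,z}$ and its bottom face by $\omega_2(x,y,z)\,\alpha_{x,y,z}$. Pulling the central scalars $\omega_i(x,y,z)\in\pi_2(\bG)$ out of the composition, the axiom reduces, at the cocycle level, to
\begin{equation*}
\omega_2(x,y,z) \;=\; \mathcal{p}^1_C(P)(x,y,z)\cdot \omega_1(x,y,z), \qquad x,y,z \in G,
\end{equation*}
by the very definition of $\mathcal{p}^1_C(P)$ as the total $2$-cell of the cube with both top and bottom equal to $\alpha_{x,y,z}$. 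Thus the existence of $F$ for a given underlying $P$ is equivalent to this equation holding on the nose for some choice of $\mu$.

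Passing to cohomology classes recovers the corollary: if $F$ exists then $[\omega_2] = \mathcal{p}^1_C([P])\cdot [\omega_1]$ in $H^3(G,\,\pi_2(\bG))$, and conversely, any $[P]$ realising this relation can be represented by cocycle data $(P,\mu)$ satisfying the on-the-nose equation, after absorbing coboundary ambiguities by admissible modifications of the chosen representatives, which do not change $\mathcal{C}^{\omega_1}$ and $\mathcal{C}^{\omega_2}$ up to isomorphism. The main point where care is needed is precisely this book-keeping of coboundary ambiguities: the dependence of $\mathcal{p}^1_C(P)$ on the choice of $\mu$ has already been shown in Proposition~\ref{p1CP prop} to be a coboundary, which matches exactly the freedom available in the construction of an isomorphism.
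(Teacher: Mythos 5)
Your proposal is correct and follows essentially the same route as the paper: an isomorphism $\C^{\omega_1}\to\C^{\omega_2}$ is exactly the data of $P\in\Aut(C)$ with $2$-cells $\mu_{x,y}$ making the cube \eqref{cubes for PW1} commute after replacing its top and bottom faces by $\omega_1(x,y,z)\alpha_{x,y,z}$ and $\omega_2(x,y,z)\alpha_{x,y,z}$, which by centrality of $\pi_2(\bG)$ amounts to $\omega_2/\omega_1=\mathcal{p}^1_C(P)$. Your extra remarks on the coboundary ambiguity of $\mu$ (via Proposition~\ref{p1CP prop}) are consistent with, and slightly more explicit than, the paper's argument.
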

\begin{proof}
Let $\alpha_{x,y,z},\, x,y,z\in G,$ be the cells \eqref{alpha fgh} for $\C$.  A monoidal pseudo-natural isomorphism
between $\C^{\omega_1}$ and $\C^{\omega_2}$ consists of $1$-automorphisms $P_x: \C_x \to \C_x$ such that the cube 
\eqref{cubes for PW1} (with the top and bottom faces being, respectively, $\omega_1(x,y,z)\alpha_{x,y,z}$ and 
$\omega_2(x,y,z)\alpha_{x,y,z}$) commutes.  This is equivalent to $\omega_2/\omega_1 = \mathcal{p}^1_C(P)$,
where $C: G \to \varPi_{\leq 1}(\bG)$  is the underlying monoidal functor of $\C$. 
\end{proof}

\begin{example}
Let $I:G \to \varPi_{\leq 1}(\bG) : x\mapsto \I$ denote the trivial monoidal functor.  Then 
\begin{equation}
\label{p1 formula}
\mathcal{p}^1_I(P)(x,y,z)= \omega_\bG(P_x,\, P_y,\, P_z),\qquad x,y,z\in G.
\end{equation}
\end{example}

The next Corollary summarizes  our description of monoidal $2$-functors $G \to \bG$.

\begin{corollary}
\label{summary monoidal fun}
Let $C:G \to \varPi_0(\bG)$ be a monoidal functor. An extension of $C$ to a  monoidal $2$-functor $\C: G \to \bG$ exists if and only 
if $ \mathcal{p}^0_C=0$ in $H^4(G,\, \pi_2(\bG))$. 
Equivalence classes  of such extensions of $C$ form a torsor over 
$\Coker \left( \mathcal{p}^1_C: H^1(G,\, \pi_1(\bG))  \to H^3(G,\, \pi_2(\bG))   \right)$.
\end{corollary}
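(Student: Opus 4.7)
The plan is to assemble the corollary by directly combining the three results already proved in this subsection; no new cube diagrams are needed. For the first assertion, I simply invoke the proposition (preceding Proposition~\ref{p1CP prop}) that characterizes the vanishing of $\mathcal{p}^0_C$ in $H^4(G,\pi_2(\bG))$ as the exact obstruction to choosing $2$-cells $\alpha_{x,y,z}$ making the cubes \eqref{cubes fghk} commute; since commutativity of these cubes is equivalent to the axiom \eqref{mon2fun}, this is precisely the condition for extending $C$ to a monoidal $2$-functor $\C:G\to\bG$.

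For the second assertion, I would fix one extension $\C:G\to\bG$ of $C$ (which exists by the first part) and use it as a basepoint. By construction, any other monoidal $2$-functor extending $C$ is obtained from $\C$ by multiplying each associator $2$-cell $\alpha_{x,y,z}$ by some $3$-cocycle $\omega\in Z^3(G,\pi_2(\bG))$; call the result $\C^\omega$. A rescaling by a $3$-coboundary can be absorbed into a trivial pseudo-natural automorphism of the underlying $1$-morphisms, so $\C^\omega$ depends only on the class $[\omega]\in H^3(G,\pi_2(\bG))$. This yields a surjection $H^3(G,\pi_2(\bG))\twoheadrightarrow\{\text{extensions of }C\}/{\sim}$ whose fiber over the class of $\C$ is exactly the kernel of this identification.

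To identify that kernel, I apply Corollary~\ref{cokernel torsor}: two deformations $\C^{\omega_1}$ and $\C^{\omega_2}$ are equivalent as monoidal $2$-functors iff $\omega_2\omega_1^{-1}=\mathcal{p}^1_C(P)$ for some $P\in\Aut(C)=H^1(G,\pi_1(\bG))$. Consequently the fibers of the above surjection are precisely the cosets of $\Image(\mathcal{p}^1_C)$ in $H^3(G,\pi_2(\bG))$, so equivalence classes of extensions form a torsor over $\Coker(\mathcal{p}^1_C)$ as claimed.

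The only real content in the above is packaging, so I do not expect any obstacle. The one subtle point worth double-checking is that the torsor action is free and transitive on the nose (not just surjective): freeness is exactly Corollary~\ref{cokernel torsor}, and transitivity follows because any two extensions share the same underlying $1$-functor $C$ and hence differ only by a rescaling of $\alpha_{x,y,z}$ by some $\omega$ — this is the step where one uses that all $1$-cells and $2$-cells in $\bG$ are invertible, so that the ``difference'' of two choices of $\alpha$ lands in $\pi_2(\bG)$.
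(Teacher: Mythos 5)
Your proposal is correct and follows essentially the same route as the paper: the first assertion is exactly the preceding proposition on $\mathcal{p}^0_C$ (commutativity of the cubes \eqref{cubes fghk} being equivalent to \eqref{mon2fun}), and the torsor statement is obtained, as in the text, by noting that any two extensions of the fixed underlying functor $C$ differ by twisting the $2$-cells $\alpha_{x,y,z}$ by a class in $H^3(G,\pi_2(\bG))$ and then quotienting by the isomorphisms classified in Corollary~\ref{cokernel torsor}. Your closing remark on why the difference of two choices of $\alpha$ lands in $\pi_2(\bG)$ (invertibility of all cells) is precisely the point the paper leaves implicit.
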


\subsection{Braided monoidal $2$-functors between 2-categorical groups}
\label{Sect Braided functors and cohomology}

Let $A$ be an Abelian group. We consider it as a 2-categorical group with identity 1- and 2-morphisms.

Let  $\bG$ be a braided $2$-categorical group with the corresponding  canonical classes 
\[
\alpha_\bG\in H^3_{br}(\pi_0( \bG),\, \pi_1( \bG))
\quad\text{and}\quad
q_\bG =(\omega_\bG,\, c_\bG) \in H^3_{sym}(\pi_1( \bG),\, \pi_2( \bG)).
\]

Let  $C: A \to \varPi_{\leq 1}(\bG): x\to \C_x$ be a braided monoidal functor. This means that there is a $0$-cell 
$\C_x$ in $\bG$ for  each $x\in G$, $1$-isomorphisms $M_{x,y}: \C_x \C_y \to \C_{xy}$,
invertible $2$-cells $\alpha_{x,y,z}$ \eqref{alpha fgh}, and invertible $2$-cells
\begin{equation}
\label{deltas}
\xymatrix{
\C_x\bt_\B \C_y  \ar[rrrr]^{B_{x,y}}="a"  \ar[drr]_{M_{x,y}} &&&& \C_y\bt_\B \C_x \ar[dll]^{M_{y,x}}_{}="b" \\
&& \C_{xy} &&
\ar@{}"2,3";"a"^(.25){}="x"^(.75){}="y" \ar@2{->}^{\delta_{x,y}}"x";"y"
}
\end{equation}
for all $x,y,z\in G$. Let $\beta_{x|y,z}$ and $\beta_{x,y|z}$ denote the invertible modifications \eqref{betaxyz} 
with $\L=\C_x,\, \M=\C_y,\ \N=\C_z$.

Define a braided $4$-cochain $\mathcal{p}^0_C\in C^4_{br}(A,\, \pi_2(\bG))$ by taking
$\mathcal{p}_0^{br}(C)(x,y,z,w)$ from \eqref{cubes fghk},
\begin{equation}
\label{oct theta}
\xymatrix{
& \C_{x} \bt_\B \C_y \bt_\B \C_z  \ar@/^-8pc/[ddddrrrr]_{B_{x\bt y,z}}^<<<<<<<{}="c"^{}="l"^(.20){}="X"
\ar[rr]^{M_{y,z}}^{}="b"  \ar[ddr]_{M_{x,y}}="g" \ar@/^3pc/[rrrr]^{B_{y,z}}_{}="a" &&
\C_x \bt_\B \C_{yz} \ar[dd]^{M_{x,yz}}_{}="d" && 
\C_x\bt_\B \C_z\bt_\B \C_y \ar[ll]_{M_{z,y}} \ar[dd]_{M_{x,z}}^{}="e" \ar@/^4pc/[dddd]^{B_{x,z}}_{}="f" \\
\\
\mathcal{p}^0_C(x,y|z) = &&\C_{xy}\bt_\B \C_z \ar[r]^{M_{xy,z}} \ar[ddr]_{B_{xy,z}}="m"_{}="m1" &
\C_{xyz} && 
\C_{xz} \bt_\B \C_y \ar[ll]_{M_{xz,y}}="h" \\
\\
&&& \C_z\bt_\B \C_{xy}  \ar[uu]_{M_{z,xy}}="k" &&
\C_{z} \bt_\B \C_x \bt_\B \C_y,  \ar[ll]_{M_{x,y}} \ar[uu]^{M_{z,x}}="s"
\ar@{}"1,4";"a"^(.25){}="x"^(.75){}="y" \ar@2{->}_{\delta_{y,z}}"x";"y" 
\ar@{}"a";"X"^(.1){}="x"^(.9){}="y"  \ar@2{-->}_{\beta_{x,y|z}}"x";"y"
\ar@{}"3,6";"f"^(.45){}="x"^(.95){}="y" \ar@2{->}^{\delta_{x,z}}"x";"y"
\ar@{}"3,4";"m1"^(.25){}="x"^(.95){}="y" \ar@2{->}_{\delta_{xy,z}}"x";"y"
\ar@{}"g";"d"^(.45){}="x"^(.95){}="y"  \ar@2{->}^{\alpha_{x,y,z}}"x";"y"
\ar@{}"e";"d"^(.25){}="x"^(.75){}="y" \ar@2{->}_{\alpha_{x,z,y}}"x";"y"
\ar@{}"s";"k"^(.25){}="x"^(.75){}="y" \ar@2{->}_{\alpha_{z,x,y}}"x";"y"
\ar@{}"l";"m"^(.15){}="x"^(.85){}="y" \ar@2{->}^{B_{M_{x,y},\C_z}}"x";"y"
}
\end{equation}
and
\begin{equation}
\label{oct psi}
\xymatrix{
& \C_{x} \bt_\B \C_y \bt_\B \C_z  \ar@/^-8pc/[ddddrrrr]_{B_{x, y\bt z}}^<<<<<<<{}="c"^{}="l"^(.20){}="X"
\ar[rr]^{M_{x,y}}^(0.75){}="b"  \ar[ddr]_{M_{y,z}} \ar@/^3pc/[rrrr]^{B_{x,y}}_{}="a" &&
\C_{xy} \bt_\B \C_{z} \ar[dd]^{M_{xy,z}}_{}="d" && 
\C_y\bt_\B \C_x\bt_\B \C_z \ar[ll]_{M_{y,x}}="s" \ar[dd]_{M_{x,z}}^{}="e" \ar@/^4pc/[dddd]^{B_{x,z}}_{}="f" \\
\\
\mathcal{p}^0_C(x|y,z) =&&\C_{x}\bt_\B \C_{yz} \ar[r]^{M_{x,yz}}="g" \ar[ddr]_{B_{x,yz}}="m"_{}="m1" &
\C_{xyz} && 
\C_{y} \bt_\B \C_{xz} \ar[ll]_{M_{y,xz}}="h" \\
\\
&&& \C_{yz}\bt_\B \C_{x},  \ar[uu]_{M_{yz,x}}="k" &&
\C_{y} \bt_\B \C_z \bt_\B \C_x.  \ar[ll]_{M_{y,z}}="t" \ar[uu]^{M_{z,x}}
\ar@{}"1,4";"a"^(.25){}="x"^(.75){}="y" \ar@2{->}_{\delta_{x,y}}"x";"y" 
\ar@{}"a";"X"^(.1){}="x"^(.9){}="y"  \ar@2{-->}_{\beta_{x|y,z}}"x";"y"
\ar@{}"3,6";"f"^(.45){}="x"^(.95){}="y" \ar@2{->}^{\delta_{x,z}}"x";"y"
\ar@{}"3,4";"m1"^(.25){}="x"^(.95){}="y" \ar@2{->}_{\delta_{x,yz}}"x";"y"
\ar@{}"b";"g"^(.25){}="x"^(.75){}="y" \ar@2{->}_{\alpha_{x,y,z}}"x";"y"
\ar@{}"s";"h"^(.25){}="x"^(.75){}="y"  \ar@2{->}^{\alpha_{y,x,z}}"x";"y"
\ar@{}"t";"h"^(.25){}="x"^(.75){}="y"  \ar@2{->}_{\alpha_{y,z,x}}"x";"y"
\ar@{}"l";"m"^(.15){}="x"^(.85){}="y" \ar@2{->}^{B_{\C_x,M_{y,z}}}"x";"y"
}
\end{equation}
where the plane projections of the octahedra are pictured.

\begin{remark}
The octahedra \eqref{oct theta} and \eqref{oct psi} are special cases of those from the definition of
a braided pseudomonoid in a braided Gray monoid  \cite[Definition 13]{DS}.
\end{remark}

\begin{proposition}
\label{br obstruction}
$\mathcal{p}^0_C$
is a braided $4$-cocycle whose cohomology class in $H^4_{br}(A,\, \pi_2(\bG))$  depends only on the isomorphism class of $C$.  
A braided monoidal functor $C: A\to \varPi_{\leq 1}(\bG)$ extends to a braided monoidal $2$-functor
$A\to \bG$ if and only if  $\mathcal{p}^0_C =0$ in $H^4_{br}(A,\, \pi_2(\bG))$.
\end{proposition}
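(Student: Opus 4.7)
The plan is to follow the template of the previous proposition. Recall that the braided cocycle condition for $(a(-,-,-,-),\,a(-,-|-),\,a(-|-,-))\in C^4_{br}(A,\pi_2(\bG))$ splits into five equations \eqref{d5}, \eqref{d13}, \eqref{d31}, \eqref{d22}, \eqref{d3}. For each one I would exhibit a closed polytope (no boundary) in $\bG$ whose $3$-dimensional faces are all commuting, so that the composite of $2$-cell labels must be trivial; this composite will factor as the relevant piece of $d(\mathcal{p}^0_C)$ together with contributions that cancel by axioms already enforced in $\bG$ and by the naturality of $\bt$. The guiding principle is the remark preceding the statement: the braided monoidal $2$-functor axioms \eqref{braided2fun1}--\eqref{braided2fun2} and the braided $2$-category axioms \eqref{4cocycle 1}--\eqref{4cocycle 4} are precisely the non-commutative shadows of the cochain identities one wants.

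Concretely, equation \eqref{d5} for the purely monoidal component $\mathcal{p}^0_C(x,y,z,w)$ is already the content of the previous proposition applied to the underlying monoidal functor $C$ and the cube \eqref{cubes fghk}. For \eqref{d13} I would build a polytope on the objects $\C_x\C_y\C_z\C_w$ with a single $\C_x$-slot distinguished, whose top and bottom paths realize both sides of \eqref{braided2fun1} applied with $\L=\C_x$, $\M\N=\C_y\C_z\C_w$; its faces decompose into four octahedra of type \eqref{oct psi} (for $(x|yz,w)$, $(x|y,zw)$, $(x|y,z)$, $(x|z,w)$) together with cubes of type \eqref{cubes fghk} (producing $\mathcal{p}^0_C(x,y,z,w)$ and three of its translates) and instances of the braided category axiom \eqref{4cocycle 1} (producing the four braiding-permuted copies $a(y,x,z,w)$, $a(y,z,x,w)$, $a(y,z,w,x)$); the cells of naturality $\bt_{-,-}$ provide the remaining glue. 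By symmetry the mirror construction yields \eqref{d31} from \eqref{braided2fun2} and \eqref{oct theta}. For \eqref{d22} I would assemble a polytope built from two octahedra of each of the types \eqref{oct theta}, \eqref{oct psi} (accounting for the six components $a(x,y|-),\,a(-|z,w)$ appearing) glued to the mixed axiom \eqref{4cocycle 3} (which supplies the twelve $a(-,-,-,-)$ entries in \eqref{d22}). For \eqref{d3} I would use a polytope on $\C_x\C_y\C_z$ assembled from one copy of each of the octahedra \eqref{oct theta}, \eqref{oct psi} for the two orderings appearing, together with the braided $2$-category axiom \eqref{4cocycle 4} and naturality of $\bt$; the eight $\delta$'s that appear cancel by the invertibility of the braiding, leaving precisely the four $a(-,-|-)$ and $a(-|-,-)$ terms.

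Once the cocycle condition is established, independence of choices is the routine observation that changing the $2$-cells $\alpha_{x,y,z}$ or $\delta_{x,y}$ by elements of $\pi_2(\bG)$-valued $3$-cochains alters the triple $\mathcal{p}^0_C$ by $d$ of a braided $3$-cochain $(\alpha'-\alpha,\,\delta'-\delta)\in C^3_{br}(A,\pi_2(\bG))$; this is the same bookkeeping as in the proof of the previous proposition, now done in each of the three components. Likewise, replacing $C$ by an isomorphic braided monoidal functor transports the $2$-cells by pseudo-natural equivalences and again changes $\mathcal{p}^0_C$ by a braided coboundary.

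Finally, for the extension statement: by definition a braided monoidal $2$-functor extending $C$ amounts to choices of invertible $\alpha_{x,y,z}$ and $\delta_{x,y}$ making the cube \eqref{cubes fghk} and the octahedra \eqref{oct theta}, \eqref{oct psi} commute, i.e.\ making all three components of $\mathcal{p}^0_C$ equal to $1$. Such choices exist if and only if the class $[\mathcal{p}^0_C]\in H^4_{br}(A,\pi_2(\bG))$ vanishes: the ``only if'' is trivial, while the ``if'' direction proceeds by using a trivializing $3$-cochain to modify a given $(\alpha,\delta)$ into one for which the three defining diagrams commute on the nose. The main obstacle will be the polytope for \eqref{d22}, which is the most intricate: one must track twelve associator terms and show that all the naturality $2$-cells $\bt_{Z,W}$ and structural modifications $\beta$ cancel in pairs. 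This is a purely combinatorial check, tractable by drawing planar projections analogous to \eqref{15vert}, but it is the step where the bookkeeping is most demanding.
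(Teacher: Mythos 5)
Your proposal follows essentially the same route as the paper's proof: the component \eqref{d5} is handled by the monoidal argument, and each remaining braided differential is killed by a closed polytope obtained by gluing the octahedra \eqref{oct theta}, \eqref{oct psi} to cubes \eqref{cubes fghk} and to the corresponding braided $2$-category axiom (\eqref{d13} with \eqref{4cocycle 1}, \eqref{d31} with \eqref{4cocycle 2}, \eqref{d22} with \eqref{4cocycle 3}, \eqref{d3} with \eqref{4cocycle 4}), with the well-definedness and extension criterion argued exactly as you describe. Only minor bookkeeping slips: the \eqref{d22} polytope needs three (not two) octahedra of each type and six (not twelve) associator terms, and those $a(-,-,-,-)$ values are contributed by cubes \eqref{cubes fghk} at permuted arguments rather than by the commuting polytope \eqref{4cocycle 3}, which (like the naturality cells) only supplies faces that cancel in pairs.
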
 
\begin{proof}
We need to verify vanishing of the shuffle differentials \eqref{d5} -- \eqref{d3}.  
That the differential \eqref{d5} is zero follows from the construction of polytope \eqref{15vert}. 
Vanishing of  the differentials \eqref{d13} (respectively,  \eqref{d31}, \eqref{d22}, and \eqref{d3}) is proved in a similar way. 
Namely, we form polytopes by gluing the octahedra \eqref{oct theta}, \eqref{oct psi} to cubes \eqref{cubes fghk} 
and  to the commuting polytopes \eqref{4cocycle 1} (respectively,  \eqref{4cocycle 2}, \eqref{4cocycle 3}, and \eqref{4cocycle 4}) 
so that the faces of the octahedra labelled by  $\beta$'s  in octahedra and polytopes are glued to each other. 
In the resulting large polytopes  each face labelled by $\alpha$ or $\delta$ is glued to its inverse. This implies commutativity of
the polytopes, i.e. vanishing of the differentials. 

A different choice of $2$-cells \eqref{alpha fgh}  and \eqref{deltas} results in multiplying $\mathcal{p}^0_C$ by a braided 
$4$-coboundary, so its class in  $H^4(G,\, \pi_2(\bG))$ is well-defined.

Note that $C$ extends to a braided monoidal $2$-functor  $A\to \bG$  if the cells $\alpha_{x,y,z}$ and $\delta_{x,y}$
can be chosen in such a way that \eqref{mon2fun}, \eqref{braided2fun1}, and  \eqref{braided2fun2} are satisfied.
This is equivalent to commutativity of cubes \eqref{cubes fghk} and octahedra \eqref{oct theta} and \eqref{oct psi}, i.e. to $\mathcal{p}^0_C=1$.
Indeed, these polytopes are obtained  by gluing the two sides of \eqref{mon2fun}, \eqref{braided2fun1}, and  \eqref{braided2fun2}.   
\end{proof}

Suppose that a braided monoidal functor $C:A \to \varPi_{\leq 1}(\bG)$  extends to a braided monoidal $2$-functor $\C: A\to \bG$.
That is, there is a choice of invertible $2$-cells \eqref{alpha fgh}  and \eqref{deltas}
such that the cubes \eqref{cubes fghk} and octahedra \eqref{oct theta} and \eqref{oct psi} commute,
i.e. $\mathcal{p}^0_C=1$. Let $P$ be a monoidal automorphism of $C$.

Let $\mathcal{p}^1_C(P)(x,y,z)$ be defined by \eqref{cubes for PW1}
and let $\mathcal{p}^1_C(P)(x\,|\,y)$ be the composition of the faces of the prism 
\begin{equation}
\label{prism x|y}
\xymatrix{
&&& \C_x\C_y  \ar[ddll]_{P_xP_y} \ar@{-->}[dd]_{B_{x,y}}="a" \ar[ddrrrr]^{M_{x,y}}="c" &&  &&  \\
&&& && && \\
 \mathcal{p}^1_C(P)(x\,|\,y) =
& \C_x\C_y  \ar[dd]_{B_{x,y}}="b"  \ar[ddrrrr]^{M_{x,y}}="d" &&  \C_y \C_x \ar@{-->}[ddll]_{P_yP_x} \ar@{-->}[rrrr]_{M_{y,x}}="e"  && && \C_{xy}  \ar[ddll]^{P_{xy}}  \\
 &&& && && \\
& \C_y\C_x   \ar[rrrr]_{M_{y,x}}="f" && && \C_{xy}, &&
\ar@{}"a";"b"^(.10){}="x"^(.60){}="y" \ar@2{-->}^{B_{P_x,P_y}}"x";"y"
\ar@{}"c";"d"^(.25){}="x"^(.75){}="y" \ar@2{->}_{\mu_{x,y}}"x";"y"
\ar@{}"c";"e"^(.25){}="x"^(.75){}="y" \ar@2{-->}^{\delta_{x,y}}"x";"y"
\ar@{}"d";"f"^(.25){}="x"^(.75){}="y" \ar@2{->}^{\delta_{x,y}}"x";"y"
\ar@{}"e";"f"^(.10){}="x"^(.60){}="y" \ar@2{-->}_{\mu_{y,x}}"x";"y"
}
\end{equation}
for all $x,y\in A$.

\begin{proposition}
\label{br p1CP}
$\mathcal{p}^1_C(P)$ is a braided $3$-cocycle and the map 
\begin{equation}
\label{p1br(C)}
\mathcal{p}^1_C: \Aut_\ot(C) =H^1(A,\, \pi_1(\bG)) \to H^3_{br}(A,\, \pi_2(\bG)) : P \mapsto \mathcal{p}^1_C(P)
\end{equation}
is a well defined group homomorphism. 

The natural automorphism $P$ extends to a braided monoidal pseudo-natural automorphism of $\C$ if and only if
$\mathcal{p}^1_C(P)=0$ in $H^3_{br}(A,\, \pi_2(\bG))$.
\end{proposition}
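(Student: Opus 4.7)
The plan is to parallel the arguments of Proposition~\ref{p1CP prop} and Proposition~\ref{br obstruction}. A braided $3$-cochain has two components $(a(-,-,-),\, a(-\,|\,-))$, so the braided cocycle condition amounts to the three shuffle equations \eqref{3cob1}--\eqref{3cob3}. The first of these, for the component $\mathcal{p}^1_C(P)(x,y,z)$ defined by \eqref{cubes for PW1}, has already been verified in Proposition~\ref{p1CP prop}, since that proof only required $\mathcal{p}^0_C(x,y,z,w)=1$, which is one component of the standing hypothesis $\mathcal{p}^0_C=0$. Hence the remaining work is to establish \eqref{3cob2} and \eqref{3cob3}.

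To check \eqref{3cob2}, I would assemble a large polytope whose faces consist of: three copies of the cube \eqref{cubes for PW1} at $(x,y,z)$, $(x,z,y)$, $(z,x,y)$; three copies of the prism \eqref{prism x|y} at $(xy\,|\,z)$, $(x\,|\,z)$, $(y\,|\,z)$; two copies of the octahedron \eqref{oct theta} at $(x,y\,|\,z)$ (one built from the source $1$-cells of the $P_\bullet$'s and one from their targets); the commutative polytope \eqref{4cocycle 3}; and naturality squares \eqref{btZW} for the tensor product. All of these commute by hypothesis --- the octahedra because $\mathcal{p}^0_C=0$, the polytope \eqref{4cocycle 3} by the axioms of a braided monoidal $2$-category, and the naturality squares by definition. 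The polytope should be glued so that every face labelled by $\alpha$, $\delta$, $\beta$, or $B$ is cancelled by a partner with opposite orientation; what is left is precisely $d(\mathcal{p}^1_C(P))(x,y\,|\,z)$. Equation \eqref{3cob3} I would handle by the mirror construction, using \eqref{oct psi} in place of \eqref{oct theta} and \eqref{4cocycle 4} in place of \eqref{4cocycle 3}. The main obstacle will be purely combinatorial bookkeeping: the polytopes involved are comparable in size to \eqref{15vert}, and one must keep careful track of orientations to see that every $\alpha$, $\delta$, $\beta$, and $B$ face really does cancel in pairs.

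Next I would verify that the cohomology class of $\mathcal{p}^1_C(P)$ is independent of the choices of the structural $2$-cells $\alpha_{x,y,z}$, $\delta_{x,y}$, and $\mu_{x,y}$: each rescaling modifies $\mathcal{p}^1_C(P)$ by the braided differential of an appropriate $2$-cochain in $C^2_{br}(A,\pi_2(\bG))$, so the class in $H^3_{br}(A,\pi_2(\bG))$ is well-defined. The homomorphism property $\mathcal{p}^1_C(PQ)=\mathcal{p}^1_C(P)\,\mathcal{p}^1_C(Q)$ is then proved exactly as in Proposition~\ref{p1CP prop}: stack the two cubes \eqref{cubes for PW1} for $P$ and $Q$ along their common $\alpha_{x,y,z}$ face, and analogously stack the two prisms \eqref{prism x|y} along their common $\delta_{x,y}$ face.

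Finally, for the extension statement, recall that by Definitions~\ref{mon2trans} and the braided refinement thereof, a braided monoidal pseudo-natural automorphism extending $P$ to $\C$ is precisely a family of invertible $2$-cells $\mu_{x,y}$ satisfying both \eqref{MT cube} and \eqref{BT cube}; these two conditions are, respectively, commutativity of the cube \eqref{cubes for PW1} and of the prism \eqref{prism x|y}. Replacing the chosen $\mu_{x,y}$ by $\nu(x,y)\cdot\mu_{x,y}$ for a braided $2$-cochain $\nu\in C^2_{br}(A,\pi_2(\bG))$ modifies $\mathcal{p}^1_C(P)$ by $d\nu$, so a family $\mu_{x,y}$ for which both diagrams commute exists if and only if $\mathcal{p}^1_C(P)$ is a braided coboundary, i.e.\ vanishes in $H^3_{br}(A,\pi_2(\bG))$.
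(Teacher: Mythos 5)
Your proposal follows essentially the same route as the paper's proof: the $(-,-,-)$ component is handled by Proposition~\ref{p1CP prop}, the shuffle equations \eqref{3cob2}--\eqref{3cob3} by gluing three cubes \eqref{cubes for PW1}, three prisms \eqref{prism x|y}, and two octahedra \eqref{oct theta} (resp.\ \eqref{oct psi}) into a boundaryless polytope, well-definedness and multiplicativity by the coboundary and stacking arguments, and the extension criterion by observing that \eqref{MT cube} and \eqref{BT cube} reduce to commutativity of the cube and the prism, which can be arranged by rescaling $\mu$ exactly when $\mathcal{p}^1_C(P)$ is a braided coboundary. The only cosmetic deviation is that you also invoke \eqref{4cocycle 3}/\eqref{4cocycle 4}, whereas the paper's gluing (at the three-variable level) uses only the cubes, prisms, and octahedra together with naturality of the structural cells.
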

\begin{proof}
The differential \eqref{3cob1} vanishes by Proposition~\ref{p1CP prop}.
The vanishing of the differential \eqref{3cob2} (respectively,\eqref{3cob3})  is established by gluing the faces of three cubes
\eqref{cubes for PW1},  three prisms \eqref{prism x|y}, and two copies of the octahedron~\eqref{oct theta} (respectively, \eqref{oct psi}) 
in such a way that the result has the empty boundary.

A different choice of $2$-cells \eqref{muss xy}  results in multiplying $\mathcal{p}^1_C(P)$ by a braided coboundary, so its class in 
$H^3_{br}(G,\, \pi_2(\bG))$ is well-defined.

The multiplicative property of $\mathcal{p}^1_C$ is a direct consequence of the definition of prisms \eqref{prism x|y}.

Finally, a monoidal pseudo-natural automorphism of $\C$ obtained by extending $P$ is braided
if $2$-cells \eqref{muss xy} can be chosen in such a way that the cube \eqref{BT cube} is satisfied.
But this cube becomes a prism \eqref{prism x|y} in our situation since the structural $2$-cells of $A$, viewed
as a braided $2$-categorical group, are trivial. So the braided property is equivalent
to the braided cohomology class of $\mathcal{p}^1_C(P)$ being trivial.
\end{proof}

Let $\C:A \to \bG$ be a monoidal $2$-functor. For any $(\omega,\, c) \in Z^3_{br} (A,\, \pi_2(\bG))$ let $\C^{(\omega,c)}$ 
be a monoidal $2$-functor obtained from $\C$ by multiplying each $2$-cell $\alpha_{x,y,z}$ by $\omega(x,\,y,\,z)$
and each $2$-cell $\delta_{x,y}$ by $c(x,y)$ for all $x,y,z\in A$.
The isomorphism class of $\C^{(\omega,c)}$ depends only on the class of $(\omega,\,c)$ in $H^3_{br}(A,\, \pi_2(\bG))$.
If $\C,\, \C'$ are extensions of the same braided monoidal functor  $C:A \to \varPi_{\leq 1}(\bG)$ if and only if 
$\C'\cong \C^{(\omega,c)}$  for some $(\omega,\,c)$. 

\begin{corollary}
\label{cokernel torsor braided}
Braided monoidal $2$-functors $\C^{(\omega_1,c_1)},\, \C^{(\omega_2, c_2)}: A \to \bG$  are isomorphic if and only if  
$(\omega_2,\, c_2) = (\omega_1,\, c_1)\,  \mathcal{p}^1_C$ for some $P\in \Aut(C)=H^1(A,\, \pi_1(\bG))$.  
\end{corollary}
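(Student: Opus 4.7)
The plan is to adapt the argument used for Corollary~\ref{cokernel torsor} by incorporating the additional braiding datum. By Definition~\ref{mon2trans} together with the braided compatibility axiom, a braided monoidal pseudo-natural isomorphism $P:\C^{(\omega_1,c_1)}\to \C^{(\omega_2,c_2)}$ consists of equivalences $P_x:\C_x\to\C_x$ and invertible $2$-cells $\mu_{x,y}$ as in~\eqref{muss xy}, subject to both the monoidal coherence~\eqref{MT cube} and the braided coherence~\eqref{BT cube}, where now the associators on source and target are $\omega_i(x,y,z)\alpha_{x,y,z}$ and the braided constraints are $c_i(x,y)\delta_{x,y}$ for $i=1,2$.

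First I would apply Corollary~\ref{cokernel torsor} to the underlying monoidal structures.  The analysis there identifies~\eqref{MT cube} with commutativity of the cube~\eqref{cubes for PW1} whose top and bottom faces are $\omega_1(x,y,z)\alpha_{x,y,z}$ and $\omega_2(x,y,z)\alpha_{x,y,z}$, respectively; this is equivalent to
\[
\omega_2(x,y,z)/\omega_1(x,y,z) = \mathcal{p}^1_C(P)(x,y,z), \qquad x,y,z\in A,
\]
which is exactly the $C^3(A,\pi_2(\bG))$-component of the equation $(\omega_2,c_2) = (\omega_1,c_1)\cdot \mathcal{p}^1_C(P)$.

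Next I would examine the braided axiom~\eqref{BT cube} in our setting.  Because $A$ is regarded as a discrete braided $2$-categorical group, all its braiding and structure $2$-cells are identities and the source braiding $B_{x,y}$ in~\eqref{BT cube} becomes $\id$.  The cube~\eqref{BT cube} therefore degenerates to the prism~\eqref{prism x|y}, with the two $\delta$-faces now replaced by $c_1(x,y)\delta_{x,y}$ and $c_2(x,y)\delta_{x,y}$ respectively.  Commutativity of this modified prism is precisely the identity
\[
c_2(x,y)/c_1(x,y) = \mathcal{p}^1_C(P)(x\,|\,y), \qquad x,y\in A,
\]
which is the $C^2(A,\pi_2(\bG))$-component of the same equation.

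Together these two components give the stated equivalence, and conversely any $P\in H^1(A,\pi_1(\bG))$ whose braided $3$-cocycle $\mathcal{p}^1_C(P)$ equals $(\omega_2,c_2)/(\omega_1,c_1)$ produces a braided monoidal pseudo-natural isomorphism $\C^{(\omega_1,c_1)}\to \C^{(\omega_2,c_2)}$.  The main bookkeeping point is verifying that the collapse of~\eqref{BT cube} to~\eqref{prism x|y} correctly inserts the $c_i$-scalars on the two $\delta$-faces rather than on some other face; this is routine once one draws out~\eqref{BT cube} with $B_{x,y}=\id$, but needs careful attention to $2$-cell orientations to match the sign conventions built into the definition of $\mathcal{p}^1_C(P)(x|y)$.
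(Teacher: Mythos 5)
Your proposal is correct and follows essentially the same route as the paper: invoke Corollary~\ref{cokernel torsor} for the associator component, then observe that since $A$ is discrete the braided coherence cube~\eqref{BT cube} collapses to the prism~\eqref{prism x|y} with $\delta$-faces scaled by $c_1$ and $c_2$, yielding $c_2/c_1=\mathcal{p}^1_C(P)(x\,|\,y)$. This matches the paper's argument (which likewise reuses the collapse of~\eqref{BT cube} to~\eqref{prism x|y} already noted in the proof of Proposition~\ref{br p1CP}).
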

\begin{proof}
This is similar to the proof of Corollary~\ref{cokernel torsor}, where a criterion for isomorphism of monoidal $2$-functors
$\C^{\omega_1}$ and $\C^{\omega_2}$ was established.  The braided property of such an  isomorphism translates to
commutativity the prism \eqref{prism x|y}  with the front and back faces being, respectively, $c_1(x,y,z)\delta_{x,y}$ and 
$c_2(x,y)\delta_{x,y}$.  This is equivalent to $c_2/c_1 = \mathcal{p}^1_C(P)$,
where $C: A \to \varPi_{\leq 1}(\bG)$  is the underlying monoidal functor of $\C$. 
\end{proof}

The next Corollary summarizes  our description of braided monoidal $2$-functors $A \to \bG$.

\begin{corollary}
\label{cohomology description braided}
Let $C:G \to \varPi_{\leq 1}(\bG)$ be a braided monoidal functor. An extension of $C$ to a  monoidal $2$-functor $A \to \bG$ exists if and only 
if $ \mathcal{p}^0_C=0$ in $H^4_{br}(A,\, \pi_2(\bG))$. 
Equivalence classes  of such extensions form a tosrsor over 
$\Coker \left( \mathcal{p}^1_C: H^1(A,\, \pi_1(\bG)) \to H^3_{br}(A,\, \pi_2(\bG))   \right)$.
\end{corollary}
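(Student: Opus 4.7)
The plan is to package this corollary as an immediate consequence of the preceding two propositions (Proposition~\ref{br obstruction} and Proposition~\ref{br p1CP}) together with Corollary~\ref{cokernel torsor braided}. The obstruction statement and the torsor statement are essentially already proved; what remains is to assemble them into a clean existence-and-uniqueness dichotomy.

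First, I would quote Proposition~\ref{br obstruction} verbatim: the class $[\mathcal{p}^0_C] \in H^4_{br}(A,\,\pi_2(\bG))$ depends only on the isomorphism class of the braided monoidal functor $C$, and it vanishes precisely when the cells $\alpha_{x,y,z}$ and $\delta_{x,y}$ can be chosen so that the cubes \eqref{cubes fghk} and octahedra \eqref{oct theta}, \eqref{oct psi} all commute. Since a braided monoidal $2$-functor extending $C$ is nothing but such a coherent choice (the polytopes encode \eqref{mon2fun}, \eqref{braided2fun1}, \eqref{braided2fun2}), the first assertion is proved.

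For the torsor statement, I would fix one extension $\C : A \to \bG$ of $C$, which exists by the first part. Any other extension $\C'$ shares the underlying $1$-morphisms $M_{x,y}$ (up to isomorphism) but differs in the choice of $2$-cells $\alpha_{x,y,z}$ and $\delta_{x,y}$; since these must still satisfy the cocycle conditions encoded by the cubes and octahedra, their quotients by the corresponding cells of $\C$ form a braided $3$-cocycle $(\omega,c) \in Z^3_{br}(A,\,\pi_2(\bG))$. Thus every extension is of the form $\C^{(\omega,c)}$, and the map $(\omega,c) \mapsto [\C^{(\omega,c)}]$ descends to $H^3_{br}(A,\pi_2(\bG))$ and acts transitively on equivalence classes of extensions of $C$.

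Finally, I would invoke Corollary~\ref{cokernel torsor braided}: two extensions $\C^{(\omega_1,c_1)}$ and $\C^{(\omega_2,c_2)}$ are equivalent as braided monoidal $2$-functors if and only if $(\omega_2,c_2)(\omega_1,c_1)^{-1} = \mathcal{p}^1_C(P)$ for some $P \in \mathrm{Aut}_\otimes(C) \cong H^1(A,\,\pi_1(\bG))$. By Proposition~\ref{br p1CP}, $\mathcal{p}^1_C$ is a well-defined homomorphism into $H^3_{br}(A,\,\pi_2(\bG))$, so the stabilizer of any extension in the transitive $H^3_{br}(A,\,\pi_2(\bG))$-action is exactly the image of $\mathcal{p}^1_C$. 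Consequently, the equivalence classes form a torsor over $\Coker(\mathcal{p}^1_C)$, as claimed. The only subtle point — and really the main thing to verify carefully — is that the two descriptions of ``sameness'' of extensions (as braided monoidal $2$-functors vs.\ as $2$-cocycles modulo coboundaries) are compatible, but this was exactly the content of Corollary~\ref{cokernel torsor braided}, so no further work is needed.
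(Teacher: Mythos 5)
Your proposal is correct and follows exactly the route the paper intends: the corollary is stated as a summary of Proposition~\ref{br obstruction} (existence criterion via $\mathcal{p}^0_C$), the observation that any two extensions of the same $C$ differ by a class in $H^3_{br}(A,\pi_2(\bG))$ acting transitively, and Corollary~\ref{cokernel torsor braided} identifying the stabilizer with the image of $\mathcal{p}^1_C$. Your assembly of these ingredients, including the check that the two notions of equivalence are reconciled by Corollary~\ref{cokernel torsor braided}, matches the paper's (implicit) argument.
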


\subsection{Symmetric monoidal $2$-functors between 2-categorical groups}
\label{Sect Symmetric functors and cohomology}

Let $A$ be an Abelian group. Here we will consider it as a symmetric 2-categorical group with identity 1- and 2-morphisms.

Let  $\bG$ be a symmetric $2$-categorical group with the corresponding  canonical classes 
\[
\alpha_\bG\in H^3_{sym}(\pi_0( \bG),\, \pi_1( \bG))
\quad\text{and}\quad
q_\bG \in H^3_{sym}(\pi_1( \bG),\, \pi_2( \bG)).
\]
One can extend the obstruction theory  from Section~\ref{Sect Braided functors and cohomology} to symmetric
monoidal $2$-functors $A\to \bG$. Namely, let $C: A\to\varPi_{\leq 1}(\bG)$  be a symmetric monoidal functor. 
We define $\mathcal{p}_0^C\in C^4_{sym}(A,\, \pi_2(\bG))$ by extending the braided $4$-cocycle from
Proposition~\ref{br obstruction} as follows. The components $\mathcal{p}_0^C(-,\,-,\,-,\,-)$, $\mathcal{p}_0^C(-\, |\,-,\,-)$, and 
$\mathcal{p}_0^C(-,\,-,\, | \,-)$  are given by \eqref{cubes fghk} , \eqref{oct theta}, and \eqref{oct psi}, respectively, and 
\begin{equation}
\label{cone}
\xymatrix{
& \C_x\bt_B \C_y  \ar@/^-2pc/[rrrr]^{B_{x,y}}^{}="a" \ar[dddrr]_{M_{x,y}}="A" &&&& 
\C_y\bt_B \C_x  \ar@/^-2pc/[llll]_{B_{y,x}}^{}="b"^(.4){}="Z"    \ar[dddll]^{M_{y,x}}="B"^(.8){}="W" \\
\mathcal{p}_0^C(x\, || \,y) = && &&& \\
&&& && \\
&&& \C_{xy},&&
\ar@{}"a";"b"^(.25){}="x"^(.75){}="y" \ar@2{->}_{\tau_{x,y}} "y";"x"  
\ar@{}"4,4";"a"^(.2){}="x"^(.8){}="y" \ar@2{->}^{\delta_{x,y}} "x";"y"
\ar@{}"W";"Z"^(.2){}="x"^(.8){}="y" \ar@2{-->}_{\delta_{y,x}} "x";"y"
}
\end{equation}
for all $x,y\in A$.

\begin{proposition}
\label{sym obstruction}
The above $\mathcal{p}^0_C$
is a symmetric  $4$-cocycle whose cohomology class in $H^4_{sym}(A,\, \pi_2(\bG))$  depends only on the isomorphism class of $C$.  
A symmetric monoidal functor $C: A\to \varPi_{\leq 1}(\bG)$ extends to a symmetric  monoidal $2$-functor
$A\to \bG$ if and only if  $\mathcal{p}^0_C =0$ in $H^4_{sym}(A,\, \pi_2(\bG))$.
\end{proposition}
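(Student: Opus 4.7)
The plan is to extend the proof of Proposition~\ref{br obstruction} to handle the additional sylleptic component encoded by the cone \eqref{cone}. Since the first three components of $\mathcal{p}^0_C$ coincide with the braided cocycle of that proposition, the shuffle differentials \eqref{d5}--\eqref{d3} vanish automatically, so what remains to be verified are the two sylleptic coboundary conditions \eqref{sym1}, \eqref{sym2} and the purely symmetric condition \eqref{ssym}.

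For \eqref{sym1}, I would form a closed polytope whose $2$-cell composition realizes the differential. The building blocks are the octahedron \eqref{oct psi} for $\mathcal{p}^0_C(x|y,z)$ and an instance of \eqref{oct theta} for $\mathcal{p}^0_C(y,z|x)$, together with three cones \eqref{cone} for the classes $\mathcal{p}^0_C(x||y)$, $\mathcal{p}^0_C(x||z)$, and $\mathcal{p}^0_C(x||yz)$, glued along their common $\beta$-, $\delta$-, and $\tau$-faces. Axiom \eqref{1st symmetric} of the symmetric $2$-category $\bG$ is precisely what is needed to show that the $\beta$- and $\tau$-faces coming from $\bG$ pair off as inverses, while the symmetric monoidal axiom \eqref{sym2fun} for $C$ matches the $\delta$-faces and naturality of $\bt$ handles the $\alpha$-faces. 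Condition \eqref{sym2} is verified analogously using \eqref{2nd symmetric} and \eqref{oct theta}. The symmetric condition \eqref{ssym}, namely $\mathcal{p}^0_C(x||y) = \mathcal{p}^0_C(y||x)$, is the cleanest: the two cones share their $\delta$-boundary and differ only by $\tau_{x,y}$ versus $\tau_{y,x}^{-1}$, which agree by \eqref{3rd symmetric}.

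Independence of choices and the extension criterion then follow the same pattern as in the braided case. Altering the $2$-cells $\alpha$, $\delta$, $\tau$ by cochains modifies $\mathcal{p}^0_C$ by a symmetric coboundary in $C^3_{sym}(A,\pi_2(\bG))$, and an isomorphism of symmetric monoidal functors induces such a coboundary explicitly, so the class $[\mathcal{p}^0_C] \in H^4_{sym}(A,\pi_2(\bG))$ is an invariant of $C$. Conversely, a lift of $C$ to a symmetric monoidal $2$-functor amounts to choosing the cells $\alpha_{x,y,z}$, $\delta_{x,y}$ so that the cubes \eqref{cubes fghk}, the octahedra \eqref{oct theta}--\eqref{oct psi}, and the cones \eqref{cone} all commute; by the axioms \eqref{mon2fun}, \eqref{braided2fun1}, \eqref{braided2fun2}, and \eqref{sym2fun}, this is equivalent to $\mathcal{p}^0_C$ being trivial in $H^4_{sym}(A,\pi_2(\bG))$.

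The main obstacle I anticipate is the bookkeeping for \eqref{sym1} and \eqref{sym2}. Each of the corresponding polytopes has many faces, and one must orient the three cones \eqref{cone} so that their $\tau$-faces align with the sides of \eqref{1st symmetric} and \eqref{2nd symmetric}; choosing the gluing so that every non-syllepsis $2$-cell cancels with its inverse is where the bulk of the work lies, though once the picture is drawn correctly the verification reduces to a routine application of the symmetric $2$-category axioms together with the symmetric monoidal structure on $C$.
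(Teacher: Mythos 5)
Your proposal is correct and follows essentially the same route as the paper: vanishing of \eqref{sym1} and \eqref{sym2} by gluing the octahedra \eqref{oct theta}, \eqref{oct psi} to three cones \eqref{cone} (with the sylleptic axioms \eqref{1st symmetric}, \eqref{2nd symmetric} of $\bG$ supplying the cancellation of the $\beta$- and $\tau$-faces), \eqref{ssym} from \eqref{3rd symmetric}, and the lifting criterion reduced to commutativity of the cone on top of the braided case (Proposition~\ref{br obstruction}). One small correction: the $\delta$-faces cancel simply because each octahedron and cone is built from the same chosen $2$-cells $\delta_{x,y}$ (and $\tau$ is fixed structure of $\bG$, not a choice to be varied), so you should not invoke \eqref{sym2fun}, which is an axiom of the symmetric monoidal $2$-functor being constructed rather than a hypothesis on $C$.
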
 
\begin{proof}
We need to check vanishing of the differentials \eqref{sym1}, \eqref{sym2}, and \eqref{ssym}. 
Vanishing of the differentials  \eqref{sym1} and  \eqref{sym2} is checked by gluing  polytopes \eqref{oct theta} and \eqref{oct psi}
along their associativity faces $\alpha$  and gluing their braiding faces $\delta$ to two sides of cones \eqref{cone}.
The differential  \eqref{ssym} vanishes thanks to axiom \eqref{3rd symmetric} of a symmetric monoidal $2$-category.

Proposition~\ref{br obstruction} gives a criterion for $\C$ to have an extension to a braided monoidal $2$-functor.
This extension admits a  symmetric monoidal  $2$-functor structure if and only if  the cells \eqref{deltaMN} are  chosen in such a  way that the 
cone \eqref{cone} commutes. This is equivalent to $\mathcal{p}_0^C$ being trivial in $H^4_{sym}(A,\, \pi_2(\bG))$.
\end{proof}

Suppose that a symmetric monoidal functor $C:A \to \varPi_{\leq 1}(\bG)$  extends to a symmetric monoidal $2$-functor $\C: A\to \bG$.
For any $P\in \Aut_\ot(C)$ the braided $3$-cocycle from Proposition~\ref{br p1CP} is symmetric, 
i.e. 
\[
\mathcal{p}_1^C(P)(x\,|\,y)\,\mathcal{p}_1^C(P)(y\,|\,x) =1
\] 
for all $x,y\in A$. This can be seen gluing boundaries of two prisms \eqref{prism x|y} and two cones \eqref{cone}.

\begin{corollary}
\label{cokernel torsor symmetric}
Let $(\omega_1,c_1),\,(\omega_2, c_2)\in Z^3_{sym}(A, \pi_1(\bG))$ be symmetric $3$-cocycles.
Symmetric monoidal $2$-functors $\C^{(\omega_1,c_1)},\, \C^{(\omega_2, c_2)}: A \to \bG$  are isomorphic if and only if  
$(\omega_2,\, c_2) = (\omega_1,\, c_1)\,  \mathcal{p}_1^C(P)$ for some $P\in \Aut(C)=H^1(A,\, \pi_1(\bG))$.  
\end{corollary}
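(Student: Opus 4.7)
The plan is to reduce this corollary to its braided counterpart (Corollary~\ref{cokernel torsor braided}) and then verify that the extra symmetric constraint contributes nothing new, using the fact already recorded in the paragraph preceding the statement that $\mathcal{p}^1_C(P)$ is automatically symmetric for $P\in\Aut_\ot(C)$.

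First I would reduce to the braided setting. A symmetric monoidal $2$-functor is, in particular, a braided monoidal $2$-functor, and an isomorphism of symmetric monoidal $2$-functors is by definition a braided monoidal pseudo-natural isomorphism (no additional cell is imposed on the transformation itself beyond what the braided notion already requires). Thus $\C^{(\omega_1,c_1)}\cong \C^{(\omega_2,c_2)}$ as symmetric $2$-functors if and only if they are isomorphic as braided $2$-functors, and Corollary~\ref{cokernel torsor braided} gives the desired equivalence with the condition $(\omega_2,c_2)=(\omega_1,c_1)\,\mathcal{p}^1_C(P)$ for some $P\in H^1(A,\pi_1(\bG))$.

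Next I would check that the statement makes sense, i.e.\ that the correction term $\mathcal{p}^1_C(P)$ really lies in the image of $Z^3_{sym}(A,\pi_2(\bG))$ inside $Z^3_{br}(A,\pi_2(\bG))$, since otherwise multiplying the symmetric cocycle $(\omega_1,c_1)$ by $\mathcal{p}^1_C(P)$ could destroy symmetry. This is exactly the observation made just before the statement: gluing together the two prisms \eqref{prism x|y} for $(x|y)$ and $(y|x)$ along their $\delta$-faces, and capping with two cones \eqref{cone}, produces a closed polytope whose commutativity yields the symmetry relation $\mathcal{p}^1_C(P)(x|y)\,\mathcal{p}^1_C(P)(y|x)=1$, matching the symmetric cocycle differential \eqref{ssym} applied to the $(-||-)$ component. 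Hence $\mathcal{p}^1_C(P)$ lifts canonically to a class in $H^3_{sym}(A,\pi_2(\bG))$ and the equation in the statement is well-posed.

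The main (and only) obstacle is checking that a braided monoidal pseudo-natural isomorphism $P:\C^{(\omega_1,c_1)}\to\C^{(\omega_2,c_2)}$ between symmetric $2$-functors automatically respects the symmetric structure, so that no further constraint on $P$ is needed. For this I would argue as follows: the compatibility of $P$ with the syllepses $\tau,\tau'$ is expressed by a pasting diagram obtained by attaching two copies of the braided cube \eqref{BT cube} (for $(x,y)$ and for $(y,x)$) to the syllepsis cones \eqref{cone} for $\C^{(\omega_1,c_1)}$ and $\C^{(\omega_2,c_2)}$. Since $A$ is a symmetric $2$-categorical group with trivial $1$- and $2$-cells, the image syllepses $\tau$ evaluated on objects of $A$ are identities, so the resulting closed polytope involves only $\delta_{x,y}$, $\delta_{y,x}$, $\mu_{x,y}$ and $\mu_{y,x}$. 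Commutativity is then a formal consequence of the already established commutativity of the two braided cubes together with axiom \eqref{3rd symmetric} for the target $\bG$. Thus every braided isomorphism $P$ furnished by Corollary~\ref{cokernel torsor braided} is automatically symmetric, completing the proof.
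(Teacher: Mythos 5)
Your proposal is correct and follows essentially the same route as the paper: the paper's proof is exactly the reduction to Corollary~\ref{cokernel torsor braided}, on the grounds that there is no difference between isomorphisms of braided and of symmetric monoidal $2$-functors, together with the remark preceding the statement that $\mathcal{p}^1_C(P)$ is automatically a symmetric cocycle. Your final paragraph's verification that a braided isomorphism ``respects the syllepsis'' is harmless but superfluous, since no such compatibility cell or axiom is imposed on morphisms of symmetric monoidal $2$-functors in the first place.
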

\begin{proof} 
This is the same as Corollary~\ref{cokernel torsor braided}, 
since there is no difference between isomorphisms of braided and symmetric monoidal $2$-functors.
\end{proof}

The next Corollary summarizes  our description of braided monoidal $2$-functors $A \to \bG$.

\begin{corollary}
\label{cohomology description symmetric}
Let $C:A \to \varPi_{\leq 1}(\bG)$ be a symmetric monoidal functor. An extension of $C$ to a  monoidal $2$-functor $A \to \bG$ exists if and only 
if $ \mathcal{p}^0_C=0$ in $H^4_{sym}(A,\, \pi_2(\bG))$. 
Equivalence classes  of such extensions form a torsor over 
$\Coker \left( \mathcal{p}^1_C: H^1(A,\, \pi_1(\bG)) \to H^3_{sym}(A,\, \pi_2(\bG))   \right)$.
\end{corollary}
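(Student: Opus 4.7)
The plan is to observe that this corollary is essentially the assembly of the two preceding results, Proposition~\ref{sym obstruction} and Corollary~\ref{cokernel torsor symmetric}, into a single statement about existence and classification. There is no new computation required; the work is organizational.

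First I would dispatch the existence half. By Proposition~\ref{sym obstruction}, a symmetric monoidal functor $C:A\to \varPi_{\leq 1}(\bG)$ extends to a symmetric monoidal $2$-functor $A\to \bG$ precisely when the obstruction $\mathcal{p}^0_C$ is trivial in $H^4_{sym}(A,\pi_2(\bG))$. Since in the symmetric setting an extension to a monoidal $2$-functor (subject to the symmetric compatibility coming from $C$) is the same data as a symmetric monoidal $2$-functor, this gives exactly the claimed criterion.

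Next, assuming $\mathcal{p}^0_C=0$, I would fix one extension $\C:A\to \bG$ and show the set of equivalence classes of extensions is a torsor over $\Coker(\mathcal{p}^1_C)$. The key observation is that any other extension is obtained from $\C$ by multiplying the $2$-cells $\alpha_{x,y,z}$ and $\delta_{x,y}$ by a pair $(\omega,c)$ satisfying the symmetric cocycle condition, yielding a twisted $2$-functor $\C^{(\omega,c)}$, and every extension arises this way (up to equivalence). This gives a transitive action of $Z^3_{sym}(A,\pi_2(\bG))$ on the set of extensions. One then verifies (exactly as in the proof of Proposition~\ref{sym obstruction}) that the resulting isomorphism class depends only on the class of $(\omega,c)$ in $H^3_{sym}(A,\pi_2(\bG))$, so the action descends to $H^3_{sym}(A,\pi_2(\bG))$.

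Finally, by Corollary~\ref{cokernel torsor symmetric}, two twists $\C^{(\omega_1,c_1)}$ and $\C^{(\omega_2,c_2)}$ are isomorphic as symmetric monoidal $2$-functors if and only if $(\omega_2,c_2)/(\omega_1,c_1) = \mathcal{p}^1_C(P)$ for some $P\in H^1(A,\pi_1(\bG))$. Therefore the stabilizer of the $H^3_{sym}(A,\pi_2(\bG))$-action on equivalence classes is precisely the image of $\mathcal{p}^1_C$, so the action descends to a free transitive action of $\Coker(\mathcal{p}^1_C)$, which is the torsor statement. The main (mild) obstacle is simply checking that the twisting construction is well-defined at the level of cohomology — but this is already embedded in the proofs of Proposition~\ref{sym obstruction} and Corollary~\ref{cokernel torsor symmetric}, so nothing new is needed here.
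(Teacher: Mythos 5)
Your proposal is correct and matches the paper's intent exactly: the corollary is stated there as a summary, with existence supplied by Proposition~\ref{sym obstruction} and the torsor structure by the twisting construction $\C^{(\omega,c)}$ together with Corollary~\ref{cokernel torsor symmetric}, which is precisely how you assemble it. No gaps.
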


\subsection{The symmetric $2$-categorical group of symmetric monoidal $2$-functors}
\label{Sect cat grp sym2fun}

Let $A$ be a finite Abelian group and let $\bG$ be a  symmetric $2$-categorical group.

Let $C,\,C': A \to \varPi_{\leq 1}(\bG)$ be symmetric monoidal $2$-functors, where $C$ is given by  $x\mapsto \C_x$  with the monoidal
structure $M_{x,y}: \C_x\bt \C_y \xrightarrow{\sim} \C_{xy}$  and $C'$ is given by  $x\mapsto \C'_x$  with the monoidal
structure $M'_{x,y}: \C'_x\bt \C'_y \xrightarrow{\sim} \C'_{xy}$, $x,y\in A$. 

Define   a symmetric monoidal functor 
\begin{equation}
\label{tildeM}
\tilde{C}:=
C\bt C': A \to  \varPi_{\leq 1}(\bG): \qquad x\mapsto \C_x \bt_\B \C'_x.
\end{equation}
with the monoidal  structure 
\begin{equation}
\tilde{M}_{x,y}:  \C_x \bt \C'_x \bt  \C_y \bt \C'_y \xrightarrow{B_{x',y}} \C_x \bt \C_y \bt  \C'_x \bt  \C'_y 
\xrightarrow{M_{x,y} \bt  M'_{x,y}}  \C_{xy} \bt   \C'_{xy},\qquad x,\,y\in A.
\end{equation}
Here and below we denote $B_{x',y}$ the braiding between $\C'_x$ and $\C_y$. 

Suppose that $C$ and $C'$ extend to symmetric monoidal $2$-functors $\C,\, \C': A \to \bG$.
The  associativity and braiding  $2$-cells \eqref{alphaLMN}
and \eqref{deltaMN} for $\C$ and $\C'$ will be denoted $\alpha,\, \alpha'$ and $\delta,\, \delta'$,  respectively. 

Our goal is to construct a canonical braided monoidal $2$-functor  $\tilde{\C}$ extending $\tilde{C}$. 

Define the  associativity 
$2$-cells $\tilde{\alpha}_{x,y,z}\, (x,y,z\in A)$
by 
\begin{equation}
\label{tildealpha}
\scalebox{1.15}{
\xymatrix{
\C_x  \C'_x  \C_y  \C'_y  \C_z  \C'_z    \ar[rr]^{B_{y',z}}  \ar[dd]_{B_{x',y}} &&
\C_x  \C'_x  \C_y  \C_z  \C'_y  \C'_z  \ar[rr]^{M_{y,z}  M'_{y,z}}^(.4){}="e"    \ar[dd]^{B_{x',y\bt z}}_{}="B"  \ar[dl]_{B_{x',y}} &&
\C_x  \C'_x  \C_{yz}  \C'_{yz }  \ar[dd]^{B_{x',yz}} \\
& \C_x  \C_y   \C'_x  \C_z  \C'_y  \C'_z   \ar[dr]^{B_{x',yz}}&&&\\
\C_x  \C_y  \C'_x   \C'_y  \C_z  \C'_z  \ar[rr]^{B_{(x\bt y)',z}}="A"   \ar[dd]_{M_{x,y}  M'_{x,y}}^{}="c"  \ar[ur]^{B_{y',z}} && 
\C_x   \C_y  \C_z  \C'_x  \C'_y  \C'_z  \ar[rr]^{M_{y,z}  M'_{y,z}}^(.4){}="f" \ar[dd]_{M_{x,y}  M'_{x,y}}="d"^{}="a"  &&
\C_x  \C_{yz}  \C'_x  \C'_{yz}   \ar[dd]^{M_{x,yz}  M'_{x,yz}}_{}="b" \\ 
\\
\C_{xy}  \C'_{xy}  \C_z  \C'_z \ar[rr]^{B_{(xy)',z}}  &&
\C_{xy}  \C_z   \C'_{xy}  \C'_z   \ar[rr]_{M_{xy,z}  M'_{xy,z}}  &&
\C_{xyz} \C'_{xyz}
\ar@{}"a";"b"^(.25){}="x"^(.75){}="y" \ar@2{->}^{\alpha_{x,y,z}\alpha'_{x,y,z}}"x";"y"
\ar@{}"c";"d"^(.25){}="x"^(.75){}="y" \ar@2{->}^{\bt_{M_{x,y}, B_{M'_{x,y},\C_z}}}"x";"y"
\ar@{}"e";"f"^(.25){}="x"^(.75){}="y"  \ar@2{->}^{\bt_{B_{\C'_x,M_{y,z}}, M'_{y,z}}}"x";"y"
\ar@{}"1,1";"2,2"^(.25){}="x"^(.75){}="y"  \ar@2{->}^{\bt_{B_{x',y},B_{y',z}} }"x";"y"
\ar@{}"2,2";"A"^(.15){}="x"^(.85){}="y" \ar@2{->}^{\beta_{x',y'|z}}"x";"y"
\ar@{}"2,2";"B"^(.45){}="x"^(.95){}="y" \ar@2{->}^{\beta_{x'|y,z}}"x";"y"
} 
}
\end{equation}
and the braiding $2$-cells $\tilde{\delta}_{x,y}\, (x,y\in A)$ by 
\begin{equation}
\label{tildedelta}
\scalebox{1.15}{
\xymatrix{
\C_x  \C'_x \C_y   \C'_y    \ar[rrrr]^{B_{x\bt x', y\bt y'}}_{}="C" \ar[dd]_{B_{x',y}}  \ar[ddrr]^>>>>>>>>>>>{B_{x\bt x', y}}_{}="A" 
&& &&   
\C_y   \C'_y \C_x  \C'_x  \ar@/^3pc/[dd]^{B_{y',x}} _{}="E"\\
\\
 \C_x  \C_y  \C'_x   \C'_y \ar[rr]_{B_{x,y}} \ar[ddrr]_{M_{x,y}  M'_{x,y}} &&  
\C_y  \C_x  \C'_x   \C'_y \ar[rr]_{B_{x', y'}}  \ar[uurr]^{B_{x\bt x', y'}}_{}="B"  &&
\C_y  \C_x  \C'_y   \C'_x  \ar[uu]^{B_{x, y'}}_{}="D" \ar[ddll]^{M_{y,x}  M'_{y,x}}   \\
\\
&& \C_{xy}    \C'_{xy}. &&
\ar@{}"5,3";"3,3"^(.25){}="x"^(.75){}="y"  \ar@2{->}_{\delta_{x,y}\delta'_{x,y}}"x";"y"
\ar@{}"3,1";"A"^(.45){}="x"^(.95){}="y"  \ar@2{->}_>>>>>>{\beta_{x,x'|y}}"x";"y"
\ar@{}"3,5";"B"^(.45){}="x"^(.95){}="y"   \ar@2{->}^>>>>>>{\beta_{x,x'|y'}}"x";"y"
\ar@{}"3,3";"C"^(.45){}="x"^(.95){}="y"  \ar@2{->}^>>>>>>{\beta_{x\bt x'|y,y'}}"x";"y"
\ar@{}"D";"E"^(.15){}="x"^(.85){}="y"  \ar@2{->}^{\tau_{x,y'}}"x";"y"
}
} 
\end{equation}
Here we  write $\beta_{x\bt x'|y,y'}$ as a shorthand  for $\beta_{\C_x\bt \C'_x|  \C_y,\C'_y}$ etc. 

\begin{proposition}
\label{tildeM is braided}
The $2$-cells \eqref{tildealpha} and \eqref{tildedelta} make $\tilde{\C}= \C\bt  \C'$ a symmetric monoidal $2$-functor.
\end{proposition}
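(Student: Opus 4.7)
The proof amounts to verifying three families of coherence axioms for $\tilde\C$: the monoidal pentagon \eqref{mon2fun} applied to $\tilde\alpha$, the two braided hexagons \eqref{braided2fun1}--\eqref{braided2fun2} applied to $\tilde\delta$, and the symmetry axiom \eqref{sym2fun}. In every case the target identity is a large pasting diagram in $\bG$ whose $1$-cells are products of the $M_{x,y}$, the $M'_{x,y}$, and braidings $B$ among the $\C_x$'s and $\C'_y$'s, and whose $2$-cells are assembled from $\alpha,\alpha',\delta,\delta',\beta,\tau$ together with naturality cells of the form \eqref{btZW}.

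The strategy is to tile each of these large diagrams by three kinds of subregions. First, there are pieces that commute because $\C$, respectively $\C'$, is already a symmetric monoidal $2$-functor: the pentagon for $\alpha$, the hexagons for $\delta$, and the symmetry for $(\delta,\tau)$, and analogously with primes. Second, there are pieces involving only $B$-crossings among the $\C_x,\C'_y$, which commute by the $\beta$-axioms \eqref{4cocycle 1}--\eqref{4cocycle 4} of the braided $2$-category $\bG$, together with the syllepsis axioms \eqref{1st symmetric}--\eqref{2nd symmetric} and the symmetry \eqref{3rd symmetric}. Third, there are interchange squares in which the products $M_{x,y}\bt M'_{x,y}$ are slid past individual braidings; these commute by pseudo-naturality of $\bt$ in $\bG$, i.e.\ by repeated use of cells of the form \eqref{btZW}.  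Concretely, for the hexagon \eqref{braided2fun1} the axioms \eqref{4cocycle 1} and \eqref{4cocycle 3} are used to rearrange the composite crossings $B_{x\bt x',y}$ and $B_{x\bt x',y'}$ appearing in $\tilde\alpha$ into the nested form demanded by the hexagon; for the symmetry axiom \eqref{sym2fun} the essential input is \eqref{3rd symmetric}, which cancels the two $\tau$-faces produced when one composes $\tilde\delta_{x,y}$ with $\tilde\delta_{y,x}$.

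The main obstacle is purely geometric: each coherence polytope is combinatorially sizeable and must be tiled by the elementary pieces above, which becomes a matter of careful bookkeeping on many labelled $2$-cells. A conceptually cleaner route, which I would run in parallel as a check, is to observe that in any braided monoidal $2$-category $\bG$ the pseudo-monoids form a braided monoidal $2$-category under the ``crossed'' tensor product encoded by formulas \eqref{tildealpha}--\eqref{tildedelta}, a general fact about pseudo-monoids in braided Gray monoids in the sense of Day--Street. The proposition is then the specialization of this general construction to the pseudo-monoids on $\bigoplus_{x\in A}\C_x$ and $\bigoplus_{x\in A}\C'_x$ arising from symmetric monoidal $2$-functors out of the discrete symmetric $2$-categorical group $A$. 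Since $A$ is discrete, pseudo-naturality of $\tilde M_{x,y}$ in morphisms of $A$ is automatic, so it really is only the coherence identities at fixed tuples $(x,y,z,w)\in A^n$ that need checking, and these decompose exactly as outlined.
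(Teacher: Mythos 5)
Your argument is correct and takes essentially the same route as the paper: there the axioms (stated, for the discrete source $A$, as commutativity of the cube \eqref{cubes fghk}, the octahedra \eqref{oct theta}, \eqref{oct psi}, and the cone \eqref{cone}, which are exactly the glued forms of \eqref{mon2fun}, \eqref{braided2fun1}--\eqref{braided2fun2}, \eqref{sym2fun}) are likewise verified by tiling each polytope with the corresponding polytopes for $\C$ and $\C'$, the coherence polytopes \eqref{4cocycle 1}--\eqref{4cocycle 4} and \eqref{1st symmetric}--\eqref{3rd symmetric} of $\bG$, and naturality/interchange cells of type \eqref{btZW}. Your Day--Street aside is harmless here because $\bG$ is symmetric (in a merely braided Gray monoid this tensor product of pseudo-monoids exists, but a braiding on the resulting $2$-category of pseudo-monoids needs a syllepsis), though it is not needed for the proof.
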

\begin{proof}
The proof is tedious but straightforward. It extends the corresponding argument for symmetric monoidal functors
and consists of decomposing the  commuting cube \eqref{cubes fghk} and
octahedra \eqref{oct theta},  \eqref{oct psi} formed by $2$-cells \eqref{tildealpha} and \eqref{tildedelta} 
into unions of commuting polytopes glued together.

For the cube \eqref{cubes fghk} for $\tilde{\C}$ one gets commuting 
polytopes obtained by gluing both sides of \eqref{4cocycle 1} - \eqref{4cocycle 4}, the polytopes  commuting due to the naturality
of braiding and the naturality of cells $\beta$ and $\tau$, and  cubes  \eqref{cubes fghk} for $\C$ and $\C'$. 
For the octahedra \eqref{oct theta},  \eqref{oct psi} 
one gets commuting polytopes as above, the corresponding  polytopes
for $\C$ and $\C'$, and the symmetry polytopes   \eqref{1st symmetric},  \eqref{2nd symmetric}, and \eqref{3rd symmetric})
of $\bG$.  It follows that  $\tilde{\C}$ is a braided monoidal $2$-functor. 

The cone \eqref{cone} corresponding to the property of $\tilde{\C}$ being symmetric is comprised
from ${\delta}_{x,y},\, {\delta}'_{x,y}$ and $\tau_{x,y}$ for $x,y\in A$. This cone
decomposes into the union of several commuting polytopes, namely the pair of 
corresponding cones for $\C$ and $\C'$ and the symmetry polytopes 
\eqref{1st symmetric},  \eqref{2nd symmetric}, and \eqref{3rd symmetric}.  Hence, it
commutes.
\end{proof}

\begin{proposition}
The above product of functors turns  $\FUNsym(A,\, \bG)$  into a symmetric $2$-categorical group. 
\end{proposition}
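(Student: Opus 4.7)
The plan is to inherit the entire symmetric monoidal $2$-category structure on $\FUNsym(A,\bG)$ pointwise from that on $\bG$, after which the three $2$-categorical group conditions (invertibility of objects, $1$-cells, and $2$-cells) follow automatically because they hold componentwise in $\bG$.

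First, I would extend the tensor product \eqref{tildeM} from objects to $1$- and $2$-cells. For symmetric monoidal pseudo-natural transformations $P:\C \to \D$ and $P':\C' \to \D'$, set $(P\bt P')_x := P_x \bt P'_x$ in $\bG$. The naturality $2$-cells \eqref{pi} are obtained from the naturality cells of $P,P'$ together with $\bt_{Z,W}$ from \eqref{btZW}; the monoidal structure modification \eqref{muss} and the braided structure modification \eqref{BT cube} for $P\bt P'$ are assembled from those of $P$ and $P'$, the braiding, and the structural cells $\beta,\tau$ of $\bG$, following exactly the pasting strategy used in Proposition~\ref{tildeM is braided}. Modifications tensor entirely componentwise. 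The unit object is the constant symmetric monoidal $2$-functor $I:x\mapsto \I$ with identity structure cells.

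Next, I would construct the associator, braiding, and syllepsis on $\FUNsym(A,\bG)$ pointwise. The braiding pseudo-natural equivalence $B_{\C,\C'}:\C\bt\C'\to \C'\bt\C$ has $x$-component $B_{\C_x,\C'_x}$ in $\bG$; its compatibility modification \eqref{deltaMN} with the structural cells $\tilde{M},\tilde{\alpha},\tilde{\delta}$ from \eqref{tildealpha}, \eqref{tildedelta} is built from the sylleptic and symmetric axioms \eqref{1st symmetric}--\eqref{3rd symmetric} of $\bG$, together with naturality of $\beta$ and $\tau$. The associator modifications \eqref{betaxyz} and the syllepsis \eqref{symmetry 2-cell} of $\FUNsym(A,\bG)$ are likewise inherited pointwise from $\bG$.

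The main technical step is verifying the symmetric monoidal $2$-category axioms \eqref{4cocycle 1}--\eqref{4cocycle 4} and \eqref{1st symmetric}--\eqref{3rd symmetric} for $\FUNsym(A,\bG)$. At each $x\in A$ these reduce to the corresponding axioms in $\bG$, up to additional commuting sub-polytopes assembled from $\tilde{M},\tilde{\alpha},\tilde{\delta}$ and naturality cells for the braiding; this reduction is the hard part, consisting of the same bookkeeping of large pasting diagrams used in Proposition~\ref{tildeM is braided}, just iterated one dimension higher. Finally, for invertibility of an object $\C$, I would choose pointwise quasi-inverses $\C_x^{-1}$ in $\bG$ (available since $\bG$ is a $2$-categorical group) and transport the symmetric monoidal functor structure via \eqref{tildealpha}, \eqref{tildedelta} applied to the pair $(\C,\C^{-1})$, producing an equivalence $\C\bt \C^{-1} \simeq I$ from the componentwise equivalences $\C_x \bt \C_x^{-1} \simeq \I$. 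Every $1$-cell in $\FUNsym(A,\bG)$ is an equivalence and every $2$-cell is invertible, since both properties hold componentwise in $\bG$.
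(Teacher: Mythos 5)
Your proposal matches the paper's proof in essentially every respect: the associator, unit, braiding $B_{\C,\C'}(x)=B_{\C_x,\C'_x}$, modifications $\beta_{x,x'|x''}$, $\beta_{x|x',x''}$, and the syllepsis are all taken pointwise, with the monoidal structure cells of the braiding assembled from $\tau$, the $\beta$'s and the $\delta$'s of $\C,\C'$, and the axioms checked by direct pasting-diagram bookkeeping as in Proposition~\ref{tildeM is braided}. Your extra paragraph on pointwise quasi-inverses and invertibility of $1$- and $2$-cells spells out a point the paper leaves implicit, and is a welcome addition rather than a divergence.
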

\begin{proof}
For $\C,\,\C',\,\C''\in  \FUNsym(A,\, \bG)$  there is a pseudo-natural equivalence between
$(\C \bt \C') \bt \C''$ and $\C \bt (\C'\bt \C'')$. This can be seen to be monoidal by comparing  the associativities
\eqref{tildealpha} for both $2$-functors. The unit object of  $\FUNsym(A,\, \bG)$ is the trivial symmetric $2$-functor (with $\C_x=\I$ 
for all $x\in A$ and all structure morphisms and cells being identities). 

The braiding  of $\C$ and $\C'$ is  a pseudo-natural  isomorphism given by 
\begin{equation}
\label{B CC'}
B_{\C,\,\C'}(x) : \C_x \bt \C'_x \xrightarrow{B_{x,x'}} \C'_x \bt \C_x,\qquad x\in A, 
\end{equation}
with  $2$-cells \eqref{mon2trans} being the following compositions:
\begin{equation}
\xymatrix{
\C_x\C'_x\C_y\C'_y \ar@/^2pc/[rr]^{B_{x',y}}_{}="a" \ar[dd]_{B_{yy'}}  && 
\C_x\C_y\C'_x\C'_y \ar[rr]^{M_{x,y}\bt M_{x',y'}} \ar[dddd]^{B_{x\bt y, x'\bt y'}}_{}="c" 
\ar[ddll]^{B_{y, x'\bt y'}}  \ar[ll]^{B_{y,x'}}_{}="b" &&
\C_{xy}  \C'_{xy} \ar[dddd]^{B_{xy, x'y'}} 
\\ 
&& && \\
\C_x\C'_x\C'_y  \C_y \ar[dd]_{B_{x,x'}}  \ar[ddrr]^{B_{x, x'\bt y'}}  && && \\
&& && \\
 \C'_x\C_x\C'_y  \C_y  \ar[rr]^{B_{x,y'}}  &&  \C'_x\C'_y  \C_x \C_y \ar[rr]^{M_{x',y'}\bt \M_{x,y}} && \C'_{xy}  \C_{xy}.
\ar@{}"a";"b"^(.25){}="a1"^(.75){}="b1" \ar@{=>}^{\tau_{x',y}}"a1";"b1"
\ar@{}"1,1";"2,2"^(.35){}="c1"^(.95){}="d1" \ar@{=>}_{\beta_{y|x',y'}}"c1";"d1"
\ar@{}"3,1";"c"^(.25){}="e1"^(.75){}="f1" \ar@{=>}^{\beta_{x,y|x'\bt y'}}"e1";"f1"
\ar@{}"5,1";"4,2"^(.35){}="g1"^(.95){}="h1" \ar@{=>}^{\beta_{x|x',y'}}"g1";"h1"
\ar@{}"1,4";"5,4"^(.25){}="g11"^(.75){}="h11" \ar@{=>}^{\delta_{x,y}\bt \delta'_{x,y}}"g11";"h11"
}
\end{equation}
The  $2$-cells \eqref{betaxyz} are $\beta_{x,x'|x''}$ and $\beta_{x|x'\,x''},\, x\in A$. 
One can directly verify commutativity of the cubes \eqref{MT cube} and \eqref{BT cube}.

Finally, the symmetry $2$-cell $\tau$ of $\bG$ provides an invertible modification
between $B_{\C',\C}\circ B_{\C,\C'}$ and $\id_{\C\bt \C'}$ satisfying \eqref{1st symmetric},
\eqref{2nd symmetric}, and \eqref{3rd symmetric}.
\end{proof}

\begin{theorem}
\label{exact sequence for Funsym groups}
There is an  exact sequence of group homomorphisms:
\begin{multline}
\label{exact sequence Funsym}
H^1(A,\,\pi_1(\bG)) \to H^3_{sym}(A,\, \pi_2(\bG)) \to \pi_0(\FUNsym(A,\, \bG))
  \to \pi_0(\Fun_{sym}(A,\, \varPi_{\leq 1}(\bG))) \to H^4_{sym}(A,\,\pi_2(\bG)).
\end{multline}
\end{theorem}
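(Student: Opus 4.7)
The plan is to identify the four maps of \eqref{exact sequence Funsym} explicitly. The rightmost arrow $\pi_0(\Fun_{sym}(A,\varPi_{\leq 1}(\bG))) \to H^4_{sym}(A,\pi_2(\bG))$ is the obstruction $[C] \mapsto [\mathcal{p}^0_C]$ from Proposition~\ref{sym obstruction}. The next arrow $\pi_0(\FUNsym(A,\bG)) \to \pi_0(\Fun_{sym}(A,\varPi_{\leq 1}(\bG)))$ is induced by $1$-truncation $\C \mapsto C$. The arrow $H^3_{sym}(A,\pi_2(\bG)) \to \pi_0(\FUNsym(A,\bG))$ sends a class $[(\omega,c)]$ to $[I^{(\omega,c)}]$, the $(\omega,c)$-deformation of the trivial symmetric monoidal $2$-functor $I: A \to \bG$. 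The leftmost arrow is $\mathcal{p}^1_I$ from Proposition~\ref{br p1CP} (in its symmetric incarnation).

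First I would verify that all four maps are group homomorphisms with respect to the abelian group structure on $\pi_0(\FUNsym(A,\bG))$ induced by the tensor product $\C\bt\C'$ of Proposition~\ref{tildeM is braided}. Additivity of $\mathcal{p}^1_I$ is Proposition~\ref{br p1CP}, and additivity of the truncation map is immediate from the definition \eqref{tildeM}. For the map $(\omega,c)\mapsto [I^{(\omega,c)}]$, one checks directly from formulas \eqref{tildealpha}, \eqref{tildedelta} specialized to $\C = I^{(\omega,c)}$, $\C' = I^{(\omega',c')}$ that the auxiliary cells $\beta$, $\tau$ and braidings in the trivial functor $I$ are identities, so the structure cells of the product are precisely those of $I^{(\omega\omega',\, cc')}$. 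The same inspection, applied to $\tilde C = C\bt C'$ in general, decomposes the obstruction polytopes \eqref{cubes fghk}, \eqref{oct theta}, \eqref{oct psi}, \eqref{cone} for $\tilde C$ into those for $C$ and $C'$ together with commuting polytopes built from $\beta$, $\tau$ and naturality squares; consequently $\mathcal{p}^0_{\tilde C} = \mathcal{p}^0_C\,\mathcal{p}^0_{C'}$ in $C^4_{sym}(A,\pi_2(\bG))$ modulo coboundaries, proving additivity of the obstruction.

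Next I would establish exactness at each internal node. Exactness at $\pi_0(\Fun_{sym}(A,\varPi_{\leq 1}(\bG)))$ is literally the content of Proposition~\ref{sym obstruction}: the class $[C]$ lifts to $\pi_0(\FUNsym(A,\bG))$ iff $\mathcal{p}^0_C$ vanishes in $H^4_{sym}(A,\pi_2(\bG))$. Exactness at $\pi_0(\FUNsym(A,\bG))$ says that a $2$-functor $\C$ has $C \cong I$ in $\Fun_{sym}$ iff $\C \cong I^{(\omega,c)}$ for some symmetric $3$-cocycle $(\omega,c)$; the "if" direction is obvious, and the "only if" direction follows by transporting a chosen isomorphism $C \cong I$ to present $\C$ as an extension of $I$ and applying the classification of such extensions up to isomorphism by $H^3_{sym}(A,\pi_2(\bG))$ modulo the image of $\mathcal{p}^1_I$ from Corollary~\ref{cokernel torsor symmetric}. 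Exactness at $H^3_{sym}(A,\pi_2(\bG))$ asserts that $[I^{(\omega,c)}]$ is trivial in $\pi_0(\FUNsym(A,\bG))$ iff $(\omega,c) \in \Image(\mathcal{p}^1_I)$, which is once again Corollary~\ref{cokernel torsor symmetric} specialized to the trivial monoidal functor $C = I$.

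The main obstacle is the multiplicativity of the obstruction map $\mathcal{p}^0$. While Proposition~\ref{sym obstruction} controls $\mathcal{p}^0_C$ for a single $C$, the construction $\tilde C = C\bt C'$ introduces many auxiliary $2$-cells (the modifications $\beta$, $\tau$, and the naturality $2$-cells $\bt_{-,-}$ of $\bG$) into the cube \eqref{cubes fghk} and octahedra \eqref{oct theta}, \eqref{oct psi}, \eqref{cone} for $\tilde C$. Showing these contributions cancel is the polytope bookkeeping already performed in Proposition~\ref{tildeM is braided}; repeating that decomposition with $\alpha, \delta$ replaced everywhere by their deviations from being cocycles yields $\mathcal{p}^0_{\tilde C} = \mathcal{p}^0_C + \mathcal{p}^0_{C'}$, and the remaining exactness verifications then reduce, as above, to previously established results.
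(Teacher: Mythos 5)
Your proposal is correct and follows essentially the same route as the paper: the maps are the ones from Section~\ref{Sect Symmetric functors and cohomology}, the key point is multiplicativity of $C\mapsto \mathcal{p}^0_C$ proved by decomposing the polytopes \eqref{cubes fghk}, \eqref{oct theta}, \eqref{oct psi}, \eqref{cone} for $\C\bt\C'$ into those for $\C$, $\C'$ and commuting polytopes, and exactness is read off from Proposition~\ref{sym obstruction} and Corollary~\ref{cokernel torsor symmetric} (equivalently Corollary~\ref{cohomology description symmetric}). You merely spell out the node-by-node exactness checks that the paper compresses into a single citation.
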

\begin{proof}
The first three arrows are described in Section~\ref{Sect Symmetric functors and cohomology}. 
That they are homomorphisms follows from the definition of the tensor product in $\FUNsym(A,\, \bG)$.

We need to check that
\[
\pi_0(\Fun_{sym}(A,\, \varPi_{\leq 1}(\bG))) \to H^4_{sym}(A,\,\pi_2(\bG)) : C \mapsto \mathcal{p}^0_C,
\]
where the components of the symmetric $4$-cocycle $\mathcal{p}^0_C$ are given by the values
of polytopes \eqref{cubes fghk}, \eqref{oct theta}, \eqref{oct psi}, and \eqref{cone}, 
is a group homomorphism. This is achieved by  decomposing each of these polytopes for $\C\bt \C'$, where
$\C,\C'\in \FUNsym(A,\, \bG)$, into the union of the corresponding polytopes for $\C$ and $\C'$ and 
commuting polytopes satisfied by the structure $2$-cells of  $\bG$ as well as those of $\C,\, \C'$, glued together
in such a way that the resulting boundary is empty.

The exactness of this sequence follows from Corollary~\ref{cohomology description symmetric}.
\end{proof}

\section{Module categories}
\label{Chapter on module categories}

\subsection{Module categories over a tensor category}

Le $\C$ be a tensor category with the associativity constraint $a_{X,Y,Z}: (X\ot Y)\ot Z  \xrightarrow{\sim} X\ot (Y \ot Z)$. 
Let $\C^{\opp}$ denote the tensor category with the opposite multiplication $X\ot^\opp Y = Y\ot X$ and the associativity
constraint $a_{X,Y,Z}^\opp = a_{Z,Y,X}^{-1}: Z \ot (Y \ot X) \xrightarrow{\sim} (Z\ot Y)\ot X$ for $X,Y,Z\in \C$.
Below we recall definitions from \cite{O1, EGNO}.

\begin{definition}
A (left) {\em $\C$-module category}   is a finite  Abelian  $k$-linear category $\M$ together with a bifunctor
\[
\C\times\M\to\M,\quad (X,M)\mapsto X*M,
\]
exact in each variable, and a collection of isomorphisms ({\em $\C$-module associativity constraint})
$$
m_{X,Y,M}:(X\ot Y)*M \xrightarrow{\sim} X*(Y*M),
$$
natural in $X,Y\in\C, M\in M$ and such that the diagram 
\begin{equation}
\label{cmc}
\xymatrix{
& ((X\ot Y)\ot Z)*M \ar[dr]^{m_{X\ot Y,Z,M}} \ar[dl]_{a_{X,Y,Z}*\id_M} &  \\
(X\ot (Y\ot Z))*M  \ar[d]_{m_{X,Y\ot Z,M}} && (X\ot Y)*(Z*M) \ar[d]^{m_{X,Y,Z*M}}  \\
X*((Y\ot Z)*M)  \ar[rr]^{\id_X*m_{Y,Z,M}} && X*( Y* (Z*M))
}
\end{equation}
commutes for all $X,Y,Z\in\C,\ M\in\M$. 

A {\em right} $\C$-module category  is a $\C^\opp$-module category.  A {\em $\C$-bimodule} category is a $(\C \bt \C^\opp)$-module category.
\end{definition}

\begin{remark}
A  $\C$-bimodule category $\M$ can be equivalently described as a category with both left and right $\C$-module structures and a
collection of isomorphisms (a {\em middle associativity constraint}) 
\begin{equation}
m_{X,M,Y}: (X * M) * Y \xrightarrow{\sim}  X * (M * Y)
\end{equation}
 natural in $X,Y\in \C,\, M\in \M$ compatible
in a certain way \cite[Definition 7.1.7]{EGNO}.
\end{remark}

\begin{definition}
A {\em $\C$-module}  functor $F:\M\to\N$ between $\C$-module categories is a functor 
along with a collection of isomorphisms $F_{X,M}:X*F(M)\xrightarrow{\sim}  F(X*M)$   natural in $X\in\C,\ M\in\M$ 
such that the following diagram 
\begin{equation}
\label{cmf}
\xymatrix{
& (X\ot Y)*F(M) \ar[dr]^{F_{X\ot Y, M}} \ar[dl]_{m_{X,Y,F(M)}} &  \\
X*(Y*F(M))  \ar[d]_{\id_X*F_{Y,M}}  && F((X\ot Y)*M) \ar[d]^{F(m_{X,Y,M})} \\
X*F(Y*M)  \ar[rr]^{F_{X, Y*M}} && F(X*(Y*M))
}
\end{equation}
commutes for $X,Y\in\C,\ ,M\in\M$. 
\end{definition}

\begin{definition}
A natural {\em $\C$-module transformation} between $\C$-module functors $F,F':\M\to\N$ is  a natural transformation $\eta:F\to F'$  
such that
\beq\lb{cmt}
\xymatrix{X*F(M)\ar[rr]^{F_{X,M}} \ar[d]_{\id_X*\eta_M} && F(X*M) \ar[d]^{\eta_{X*M}}\\X*F'(M)\ar[rr]^{F'_{X,M}} && F'(X*M)}
\eeq
commutes for all $X\in\C,\, M\in\M$. 
\end{definition}

Let $F:\L \to \M$ and $F":\M \to \N$
be  $\C$-module functors  then $F'\circ F$ has a canonical structure of $\C$-module functor 
\begin{equation}
\label{comf}
(F'\circ F)_{X,M}:  X*F'(F(M)) \xrightarrow{F'_{X,F(M)}}  F'(X*F(M)) \xrightarrow{F'(F_{X,M})}  F'(F(X*M)),\quad X\in \C,\, M\in \M.
\end{equation}
Thus, $\C$-module categories, $\C$-module functors, and $\C$-module natural transformations form a strict 2-category $\Mod(\C)$.

\subsection{Tensor product of module categories}
\label{sect: tp tensor}

Let $\C$ be a tensor category, let $\M$ be a right $\C$-module category, and let $\N$ be a left $\C$-module category.
The {\em (relative) tensor product} $\M \bt_\C \N$ \cite{ENO} is an abelian category $\M \bt_\C \N$ along with a functor $\M \times \N \to \M \bt_\C \N$
universal among $\C$-balanced and right exact in each variable  functors 
 from $\M \times \N$ to Abelian categories.

An explicit description is given as follows.
Objects of $\M \bt_\C \N$ are pairs $(V,\gamma)$, where $V\in \M\bt \N$ and
\begin{equation}
\gamma_X: V*(X\bt \be) \xrightarrow{\sim} (\be \bt X)*V,
\end{equation} 
is a  {\em balancing isomorphism} natural in $V \in \M \bt \N, X\in \C$ and such that the following diagram
\begin{equation}
\label{MbtBN}
\xymatrix{
V*((X\ot Y)\bt \be)  \ar[rr]^{\gamma_{X\ot Y}} \ar[d]_{m_{V,X,Y}} && (\be \bt (X\ot Y))*V  \ar[d]^{n_{X,Y, V}} \\ 
(V*(X\bt \be))*(Y\bt \be) \ar[r]^{\gamma_X}  &  (\be \bt X)* (V*(Y \bt  \be) )   \ar[r]^{\gamma_Y} &  (\be \bt X)* ((\be \bt  Y)*V ) , 
}
\end{equation}
commutes. Here $m$ and $n$ are the module associativity constraints in $\M$ and $\N$.

A morphism between $(V,\, \{\gamma_X\}_{X\in \C})$ and $(V',\, \{\gamma'_X\}_{X\in \C})$ 
in $\M \bt_\C \N$ is a  morphism $f: V \to V'$ in $\M\bt \N$ such that
the diagram
\begin{equation}
\xymatrix{
V*(X\bt \be) \ar[rr]^{f* (X\bt \be)} \ar[d]_{\gamma_X} && V' *(X\bt \be)\ar[d]_{\gamma'_X}\\
(\be \bt X)*V \ar[rr]^{(\be\bt X)*f} && (\be \bt X) *V'
}
\end{equation}
commutes for all $X\in \C$.

If $\M$ is a $\C$-bimodule category then   $\M \bt_\C \N$ inherits the left $\C$-module category structure from $\M$:
\begin{equation}
\label{left module on tp}
Y* (V,\, \{\gamma_X\}) = ((Y\bt \be)*V ,\, \{\overline{\gamma}_X\}),
\end{equation}
where
\begin{equation}
\xymatrix{
((Y\bt \be)*V)*(X\bt \be)  \ar[rr]^{\overline{\gamma}_X} \ar[d]_{m^{-1}_{Y,V,X}} && (\be\bt X)*((Y\bt \be)*V)  \\
(Y\bt \be)*(V*(X\bt \be))   \ar[rr]^{\gamma_X} && (Y\bt \be)*((\be\bt X)*V)  \ar@{=}[u]
}
\end{equation}
for all $X,Y\in \C$. Similarly, if $\N$ is a $\C$-bimodule category then $\M \bt_\C \N$ inherits the right $\C$-module category 
structure from $\N$.

Thus, there is a  monoidal $2$-category $\Bimod(\C)$ of
 $\C$-bimodule categories. Its 1-cells are $\C$-bimodule functors and 2-cells are natural transformations of $\C$-bimodule functors.  
The regular $\C$-bimodule category $\C$ is the identity for $\bt_\C$.

\subsection{From module categories over a braided tensor category to bimodule categories}
\label{sect: from mod to bimod}

Let  $\B$ be a braided tensor category with the braiding 
\[
c_{X,Y} : X \ot Y \xrightarrow{\sim} Y \ot X,\qquad X,\,Y \in \B.
\] 
The braiding of $\B$  allows to turn a left $\B$-module category $\M$ into a $\B$-bimodule category as follows. 
Let $m_{X,Y,M}: (X\ot Y)\ot M \xrightarrow{\sim} X\ot(Y\ot M) $ denote the 
left $\B$-module associativity constraint of $\M$.  
Define the right action of $\B$ on $\M$ by $M* X := X * M$ for all $X\in \B$ and $M\in \M$. 
The right
$\B$-module associativity constraint  is given by the composition
\begin{equation}
\label{right associativity}
\xymatrix{
M *(X\ot Y) \ar@{=}[d] \ar[rrrr]^{m_{M,X,Y}} &&&&  (M*X)* Y \ar@{=}[d] \\
(X\ot Y) *M  \ar[rr]^{c_{X,Y}}  && (Y\ot X)*M  \ar[rr]^{m_{Y,X,M}} && Y * (X * M)
%
}
\end{equation}
and the middle associativity constraint is given by
\begin{equation}
\label{middle associativity}
\xymatrix{
(X*M)*Y \ar@{=}[d] \ar[rrrrrr]^{m_{X,Y,M}} &&&&&& X*(M*Y)  \ar@{=}[d]  \\
Y * (X * M) \ar[rr]^{m_{Y,X,M}^{-1}}  && (Y\ot X) *M  \ar[rr]^{c_{Y,X}} && (X\ot Y)*M \ar[rr]^{m_{X,Y,M}} && X *(Y* M),
%
}
\end{equation}
for all $X,\,Y\in \B$ and $M\in \M$.

\begin{remark}
Since $\B$-module functors  and their $\B$-module natural transformations extend 
to $\B$-bimodule functors  and $\B$-bimodule transformations  in an obvious way,
there is a $2$-embedding $\Mod(\B)\to \Bimod(\B)$.
\end{remark}

Using the  $\B$-bimodule structure of $\M$ we define the tensor product $\M \bt_\B \N$ 
of $\B$-module categories $\M,\, \N$, as in Section~\ref{sect: tp tensor}.  It has a canonical structure
of a left $\B$-module category.  This makes $\Mod(\B)$ a monoidal $2$-category.

\begin{remark}
\label{explicit btB}
From  \eqref{MbtBN} we see that objects of $\M \bt_\B \N$ are pairs $(V,\, \{\gamma_X\}_{X\in \B})$, where $V\in \M\bt \N$ and
\[
\gamma_X: (X\bt \be)*V \to (\be \bt X)*V,\qquad V \in \M \bt \N, X\in \B,
\] 
is a natural balancing isomorphism satisfying  
\begin{equation}
\label{braided MbtBN}
\xymatrix{
((X\ot Y)\bt \be)*V \ar[rrrr]^{\gamma_{X\ot Y}}  \ar[d]_{c_{X,Y}} &&&& (\be \bt(X\ot Y))*V \ar[dd]^{n_{X,Y, V}} \\
((Y\ot X)\bt \be)*V  \ar[d]_{m_{Y,X, V}} &&&&  \\
(Y\bt \be)\ot ((X\bt \be)*V) \ar[rr]^>>>>>>>>>>>{\gamma_X}& &(Y\bt X)*V \ar[rr]^<<<<<<<<<<{\gamma_Y} && (\be \bt Y) \ot ((\be \bt X)*V).
}
\end{equation}
The vertical composition on the left side is the right $\B$-module associativity constraint of $\M$.
\end{remark}

\begin{proposition}
\label{module induction}
Let $\C\subset \B$ be a tensor subcategory. The induction
\begin{equation}
\label{modC to modB}
\Mod(\C)\to \Mod(\B) : \N \mapsto \B \bt_\C \N
\end{equation}
is a monoidal $2$-functor.
\end{proposition}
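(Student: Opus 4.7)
The plan is to exhibit the structural data of a monoidal $2$-functor for $\B\bt_\C(-)$ and verify the coherences, reducing everything to the universal property of the relative tensor product and the braided hexagon of $\B$. Since $\B$ is braided, the tensor subcategory $\C$ inherits a braiding, so $\Mod(\C)$ is a monoidal $2$-category via $\bt_\C$ (Section~\ref{sect: from mod to bimod}), and $\B$ carries a canonical $\C$-bimodule structure by restricting its regular $\B$-bimodule structure. For a left $\C$-module $\N$, the tensor product $\B\bt_\C \N$ then inherits a left $\B$-module structure from the left $\B$-action on $\B$, as in \eqref{left module on tp}. The assignment $\N\mapsto \B\bt_\C \N$ is clearly a $2$-functor: a $\C$-module functor $F:\N\to \N'$ yields $\id_\B \bt F:\B\bt_\C \N \to \B\bt_\C \N'$, and $\C$-module natural transformations lift analogously.

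I would next construct the monoidal structure data. The unit equivalence $U:\B\bt_\C \C \xrightarrow{\sim} \B$ is canonical. For the multiplicative equivalences $F_{\N,\N'}$, the key observation is that for any $\B$-bimodule $\M$ there is a canonical equivalence $\M\bt_\B \B \simeq \M$. Applying this to $\M = \B\bt_\C \N$ endowed with the $\B$-bimodule structure induced by the braiding of $\B$ (Section~\ref{sect: from mod to bimod}), and combining with the associativity of the relative tensor products, one obtains
\begin{equation*}
(\B\bt_\C \N)\bt_\B(\B\bt_\C \N') \;\simeq\; \bigl((\B\bt_\C \N)\bt_\B \B\bigr)\bt_\C \N' \;\simeq\; (\B\bt_\C \N)\bt_\C \N' \;\simeq\; \B\bt_\C (\N\bt_\C \N').
\end{equation*}
Equivalently, using the explicit description in Remark~\ref{explicit btB}, the bilinear functor $(b_1\bt n_1, b_2\bt n_2)\mapsto (b_1\ot b_2)\bt(n_1\bt n_2)$ is $\B$-balanced with balancing isomorphism built from the braiding $c_{-,-}$ of $\B$, and descends to the desired equivalence $F_{\N,\N'}$. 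The associator and unit modifications $\alpha_{\L,\M,\N}$, $\lambda_\M$, $\rho_\M$ of \eqref{alphaLMN}--\eqref{lambda and rho} arise from the standard coherence equivalences for $\bt_\C$ and $\bt_\B$.

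The main task is verifying the cube coherence \eqref{mon2fun} for $F_{\N,\N'}$. I would track objects $(b_1\bt \ell,\, b_2\bt m,\, b_3\bt n)$ through both composites: each side produces $(b_1\ot b_2\ot b_3)\bt(\ell\bt m\bt n)$, and the two associators agree thanks to the pentagon for $\ot$ in $\B$ combined with the braided balancing condition \eqref{braided MbtBN}. I expect this bookkeeping to be the main obstacle, but since all the intermediate equivalences are canonical and controlled solely by the universal property of the relative tensor product and the hexagon of $\B$, the required coherence is forced by coherences already present in $\B$ and in $\Mod(\B)$. Compatibility with the unit constraints is analogous and simpler.
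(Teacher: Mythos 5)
Your proposal is correct and takes essentially the same route as the paper: the paper defines the monoidal structure by the same canonical chain of equivalences $(\B\bt_\C\M)\bt_\B(\B\bt_\C\N)\cong\B\bt_\C(\M\bt_\C\N)$, obtained from associativity of the relative tensor products and the unit property $\X\bt_\B\B\simeq\X$ (with the braiding-induced bimodule structures), and declares the verification of the coherence axioms straightforward, which is exactly the bookkeeping you sketch.
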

\begin{proof}
The monoidal structure on the $2$-functor \eqref{modC to modB} is given by the canonical equivalence
\[
 (\B \bt_\C \M) \bt_\B  (\B \bt_\C \N)  \cong \B \bt_\C (\M \bt_\B \B) \bt_\C \N \cong  \B \bt_\C (\M \bt_\C \N),\qquad \M,\N \in \Mod(\C).
\]
The verification of axioms is straightforward and is left to the reader.
\end{proof}

\section{Braided  module categories}
\label{Chapter on Braided  module categories}

\subsection{Module braiding on module categories}

Let $\B$ be a braided tensor category.  The following definition appeared in \cite{E, B,BBJ}.

\begin{definition}
\label{pb definition}
A {\em braided} $\B$-module category is a  pair $(\M,\, \sigma)$, where $\M$ is a  $\B$-module category 
and $\sigma=\{ \sigma_{X,M}:X*M\to X*M\}_{X\in \B,\, M\in \M}$  is a  natural isomorphism (called a {\em $\B$-module braiding}) 
with $\sigma_{\be,M} = 1_M$  such that the diagrams
\begin{equation}
\label{module braiding diagrams}
\xymatrix{
X*(Y*M) \ar[rr]^{\sigma_{X,Y*M}} \ar[d]_{m_{X,Y,M}^{-1}} && X*(Y*M)  \ar[d]^{m_{X,Y,M}^{-1}} \\
(X\ot Y)*M \ar[d]_{c_{X,Y}} && (X\ot Y)*M \ar[d]^{c_{Y,X}^{-1}} \\
(Y\ot X)*M \ar[d]_{m_{Y,X,M}} && (Y\ot X)*M \ar[d]^{m_{Y,X,M}} \\
Y*(X*M)  \ar[rr]^{\sigma_{X,M}} && Y*(X*M)  \\
}\qquad\qquad
\xymatrix{
(X\ot Y)*M \ar[rr]^{\sigma_{X\ot Y,M}} \ar[d]_{c_{Y,X}^{-1}} && (X\ot Y)*M \ar[d]^{c_{X,Y}} \\
(Y\ot X)*M  \ar[d]_{m_{Y,X,M}} && (Y\ot X)*M \ar[d]^{m_{Y,X,M}}\\
Y*(X*M)  \ar[dr]_{\sigma_{X,M}} && Y*(X*M)  \\
& Y*(X*M) \ar[ur]_{\sigma_{Y,X*M}}
}
\end{equation}
commute for all $X, Y\in \B$ and $M\in\M$. 
\end{definition}

\begin{definition}
A $\B$-module functor $F:\M\to\N$ between  braided $\B$-module categories is {\em  braided} if the diagram
\begin{equation}
\label{bmf diagram}
\xymatrix{
X*F(M)  \ar[rr]^{\sigma_{X, F(M)}} \ar[d]_{F_{X,M}}  && X*F(M) \ar[d]^{F_{X,M}} \\
F(X*M) \ar[rr]^{F(\sigma_{X,M})} && F(X*M)
}
\end{equation}
commutes for all $X\in \B$ and $M\in\M$.  

A morphism  between braided $B$-module functors is a $\B$-module natural transformation.
\end{definition}

Let $\Modbr(\B)$ denote the $2$-category of braided $\B$-module categories.

\begin{example} 
\label{braided extensions BC}
Let $\C$ be a braided tensor category containing $\B$. Then $\C$  is a braided $\B$-module category
with the $\B$-module braiding 
\begin{equation}
\label{sigma =cc}
\sigma_{X,Y}= c_{Y,X}c_{X,Y},\qquad X,Y\in \B, 
\end{equation}
where $c$ denotes the braiding of $\C$.
The commutativity of diagrams in Definition~\ref{pb definition} follows directly from the hexagon identities and naturality of braiding.
\end{example}

Recall that the {\em symmetric center} $\Z_{sym}(\B)$  of a braided tensor category $\B$ is  the full subcategory  of $\B$
whose objects $V$ satisfy $c_{XV}c_{VX}=\id_{X\ot V}$ for all $X$ in $B$.  Clearly, $\Z_{sym}(\B)$ is a symmetric
tensor category.

\begin{example}
\label{regular pb}
A special case of the previous example is $\C=\B$,
the regular $\B$-module category, with the module braiding \eqref{sigma =cc}.
The category of braided module endofunctors of $\B$ is braided equivalent to  $\Z_{sym}(\B)$. 

\end{example}

\begin{remark}
The name {\em  braided} in Definition~\ref{pb definition} 
is justified as follows.  Recall that the Artin braid group of type $B$ is the group $B_n$
generated by elements $\varsigma_1,\dots, \varsigma_{n}$ and relations
\begin{align*}
\varsigma_{n-1}\varsigma_n \varsigma_{n-1}\varsigma_n &= \varsigma_n \varsigma_{n-1}\varsigma_n \varsigma_{n-1},\\
\varsigma_i \varsigma_j &= \varsigma_i \varsigma_j ,\qquad |i-j|\geq 2, \\
\varsigma_i\varsigma_{i+1} \varsigma_i\ &= \varsigma_{i+1} \varsigma_i\varsigma_{i+1},\qquad i=1,\dots,n-1.
\end{align*}
Equivalently,  $B_n$ is the braid group of a once punctured disk.

Let $\sigma$ be an element of $B_n$. We will use the same letter $\sigma$ to denote the induced permutation in $S_{n-1}$.
Given objects $X_1,\dots, X_{n-1}$ in a braided tensor category $\B$ and an object $M$ in a braided $\B$-module category $\M$,
there are isomorphisms
\[
X_1\ot \cdots \ot X_{n-1} * M \to X_{\sigma(1)}\ot \dots, \ot X_{\sigma(n-1)} * M, \qquad \sigma\in B_n,
\]
compatible with the composition of braids. In particular, for any $X\in \B$ there is a homomorphism from the pure braid
group of type $B$ to   $\Aut_\M(X^{\ot (n-1)} * M)$.

\end{remark}

\begin{definition}
We say that a braided $\B$-module category is {\em indecomposable}
if it is indecomposable as a $\B$-module category.
\end{definition}

The {\em $\alpha$-inductions} \cite{BEK} for a a left $\B$-module category $\M$ are tensor functors 
\begin{equation}
\label{alpha inductions}
\alpha^\pm_\M : \B^{\opp} \to \End_\B(\M),\qquad   \alpha^\pm(X)(M) = X\ast M,\qquad X\in \B,\ M\in \M\ .
\end{equation}
Here $\End_\B(\M)$ is the category of right exact $\B$-module endofunctors of $\M$,
The $\B$-module structures on $\alpha^\pm(X)($ are given by the compositions
$$
\xymatrix{
 Y\ot  \alpha^+_\M(X)(M)   \ar[rr]^{\alpha^+_\M(X)_{Y,M}} \ar@{=}[d] &&  \alpha^+_\M(X)(Y* M) \ar@{=}[d] \\
Y* (X * M) \ar[d]_{m_{Y,X,M}^{-1}} &&  X* (Y * M) \\
(Y\ot X) * M   \ar[rr]^{c_{X,Y}^{-1}*\id_M} && (X\ot Y) * M,  \ar[u]_{m_{X,Y,M}} 
}\qquad\qquad
\xymatrix{
Y\ot   \alpha^-_\M(X)(M)  \ar[rr]^{\alpha^-_\M(X)_{Y,M}} \ar@{=}[d] &&  \alpha^-_\M(X)(Y* M) \ar@{=}[d] \\
 Y* (X * M)  \ar[d]_{m_{Y,X,M}^{-1}} && X* (Y * M)  \\
(Y\ot X) * M \ar[rr]^{c_{Y,X}*\id_M} &&  (X\ot Y) * M,  \ar[u]_{m_{X,Y,M}} 
}
$$
for all $X,Y\in\C,\, M\in  \M$ respectively.
\nl
The monoidal structures of $\alpha^\pm_\M$ are
\begin{gather*}
\alpha^+_\M(Y)(\alpha^+_\M(X)(M)) = Y* (X * M)  \xrightarrow{m_{Y,X,M}^{-1}}  (Y\ot X) * M \xrightarrow{c_{Y,X}} (X\ot Y) * M  =  \alpha^+_\M(X\ot Y)(M), \\
\alpha^-_\M(Y)(\alpha^-_\M(X)(M)) = Y* (X * M)  \xrightarrow{m_{Y,X,M}^{-1}}  (Y\ot X) * M \xrightarrow{c_{X,Y}^{-1}} (X\ot Y) * M  =  \alpha^-_\M(X\ot Y)(M).
%
\end{gather*}

\begin{remark}
\label{F apm square}
For every $\B$-module functor $F:\M\to \N$ there are natural transformations of $\B$-module functors
\begin{equation}
\label{Fa+ to a+F}
\xymatrix{
\M \ar[dd]_{\alpha_\M^\pm(X)}_{}="a" \ar[rr]^{F} && \N \ar[dd]^{\alpha_\N^\pm(X)}="b" \\
&& \\
\M \ar[rr]^{F} && \N,
\ar@{}"a";"b"^(.25){}="b2"^(.65){}="c2" \ar@{=>}^{F_{X,-}^\pm}"b2";"c2"
}
\end{equation}
for all $X\in \B$.
\end{remark}

\begin{remark}
Let $\M$ be a $\B$-module category.
The $\B$-bimodule category  $\M$ constructed in Section~\ref{sect: from mod to bimod}
can be conveniently described by means of  the functor $\alpha^+_\M:\B^\opp \to \End_\B(\M)$. 
Indeed, this functor  turns a canonical $(\B \bt \End_\B(\M))$-module category $\M$
into a $\B$-bimodule category.  Note that the functor $\alpha^-_\M$ gives rise to a  different $\B$-bimodule category 
obtained from $\M$ using
the reverse braiding of $\B$.  
\end{remark}

Let $\mathbf{A}(\B)$ denote the $2$-category
whose objects are pairs $(\M,\, \eta)$, where $\M$ is a $\B$-module category
and $\eta: \alpha^+_\M\xrightarrow{\sim} \alpha^-_\M$ is an isomorphism of tensor functors,
1-cells are $\B$-module functors $F:\M\to \N$ such that 
\begin{equation}
\label{K1 condition}
\xymatrix{
\M \ar[rr]^{F}_{}="A1" \ar@/^-2pc/[dd]_{\alpha^+_\M(X)}  && \N  \ar@/^2pc/[dd]^{\alpha^-_\N(X)}_{}="b"   \ar@/^-2pc/[dd]_{\alpha^+_\N(X)}^{}="a" & & &&
\M \ar[rr]^{F}_{}="A2"  \ar@/^-2pc/[dd]_{\alpha^+_\M(X)}^{}="A"  \ar@/^2pc/[dd]^{\alpha^-_\M(X)}_{}="B" && \N \ar@/^2pc/[dd]^{\alpha^-_\N(X)}  \\
& && &  = & & &&  \\   
\M \ar[rr]_{F}^{}="B1"  && \N & & &&\M \ar[rr]_{F}^{}="B2"  && \N,
\ar@{}"a";"b"^(.25){}="x"^(.75){}="y" \ar@{=>}^{\eta_X }"x";"y"
\ar@{}"A";"B"^(.25){}="x"^(.75){}="y"  \ar@{=>}^{\eta_X}"x";"y"
\ar@{}"A1";"B1"^(.25){}="x"^(.75){}="y" \ar@/^-2.5pc/@{=>}_{F^+_{X,-}}"x";"y"
\ar@{}"A2";"B2"^(.25){}="x"^(.75){}="y" \ar@/^2.5pc/@{=>}^{F^-_{X,-}}"x";"y"
} 
\end{equation}
for all $X\in \B$, where $F_{X,-}^\pm$ are natural isomorphisms from \eqref{Fa+ to a+F}, and $2$-cells are $B$-module natural transformations.

\begin{proposition}
\lb{cbs}
There is canonical $2$-equivalence $\Modbr(\B) \cong \mathbf{A}(\B)$.
\end{proposition}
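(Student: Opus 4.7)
The plan is to build the equivalence by taking $\sigma \leftrightarrow \eta$, where $\eta_X(M) := \sigma_{X,M}$, and showing that the two hexagons of Definition~\ref{pb definition} correspond precisely to the two axioms defining a tensor natural isomorphism $\eta: \alpha^+_\M \xrightarrow{\sim}\alpha^-_\M$. First I note that, as plain endofunctors of $\M$, both $\alpha^+_\M(X)$ and $\alpha^-_\M(X)$ coincide with $X*(-)$, so any natural automorphism $\sigma_{X,-}$ of $X*(-)$ is automatically a natural isomorphism $\alpha^+_\M(X)\to\alpha^-_\M(X)$ of ordinary functors. The content of the equivalence lies in matching up the extra structure.

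Next I would verify the two hexagon-to-axiom translations. For the first hexagon of \eqref{module braiding diagrams}: by a direct inspection, the down-left composite is $(\alpha^+_\M(X)_{Y,M})^{-1}$ and the down-right composite is $(\alpha^-_\M(X)_{Y,M})^{-1}$, so the hexagon commutes for every $Y$ if and only if
\[
\sigma_{X,Y*N}\circ \alpha^+_\M(X)_{Y,N} \;=\; \alpha^-_\M(X)_{Y,N}\circ (\id_Y*\sigma_{X,N}),
\]
i.e.\ if and only if $\eta_X$ is a $\B$-module natural transformation from $\alpha^+_\M(X)$ to $\alpha^-_\M(X)$ (condition \eqref{cmt}). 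For the second hexagon, the left and right vertical compositions are precisely the monoidal structure morphisms $\nu^\pm_{X,Y}:\alpha^\pm(Y)\circ\alpha^\pm(X)\to\alpha^\pm(X\ot Y)$ described after \eqref{alpha inductions}; rewriting the hexagon yields
\[
\eta_{X\ot Y}(M)\circ \nu^+_{X,Y}(M) \;=\; \nu^-_{X,Y}(M)\circ \bigl(\eta_{Y,X*M}\circ(\id_Y*\eta_{X,M})\bigr),
\]
which is the compatibility of $\eta$ with the tensor structures on $\alpha^+_\M$ and $\alpha^-_\M$. The normalization $\sigma_{\be,M}=\id_M$ matches the unit condition $\eta_{\be}=\id$. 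This establishes the bijection on objects.

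For $1$-cells I would show that a $\B$-module functor $F:\M\to\N$ satisfies the braided condition \eqref{bmf diagram} if and only if it satisfies the pasting equation \eqref{K1 condition}. Indeed, both sides of \eqref{K1 condition} are natural transformations $\alpha^+_\N(X)\circ F\Rightarrow F\circ\alpha^-_\M(X)$; expanding the $2$-cells $F^\pm_{X,-}$ from Remark~\ref{F apm square} one checks that the equality between them at an object $M$ is equivalent to $F(\sigma^\M_{X,M})\circ F_{X,M}=F_{X,M}\circ\sigma^\N_{X,F(M)}$. For $2$-cells the situation is tautological: morphisms on both sides are $\B$-module natural transformations with no further conditions, since a $\B$-module transformation is automatically compatible with $\sigma$ (respectively with $\eta$) by naturality.

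Finally, I would organize the above into a strict $2$-functor $\Phi:\Modbr(\B)\to\mathbf A(\B)$ acting as the identity on underlying module categories, module functors and module transformations, and equipped with the data $\eta_X(M):=\sigma_{X,M}$ on objects; the inverse $2$-functor goes the other way via $\sigma_{X,M}:=\eta_X(M)$. These compose to the identity on the nose, giving the asserted $2$-equivalence. The main obstacle is purely bookkeeping: keeping track of the signs/orientations in the translation between the hexagons of \eqref{module braiding diagrams} and the tensor-naturality of $\eta$, where one has to correctly identify the vertical composites in each hexagon with the structure morphisms of $\alpha^\pm_\M$ (and their inverses), and the diagonal arrows with the horizontal composite $\eta_Y * \eta_X$. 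Once this identification is made, every diagram chases to an identity.
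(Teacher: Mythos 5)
Your proposal is correct and follows essentially the same route as the paper: you define $\eta_X(M)=\sigma_{X,M}$, identify the first hexagon of \eqref{module braiding diagrams} with $\B$-module naturality of $\eta_X$, the second with the tensor property of $\eta$, and match \eqref{bmf diagram} with \eqref{K1 condition} on $1$-cells, with $2$-cells tautological. The only difference is that you spell out the bookkeeping (inverses of the structure maps of $\alpha^\pm_\M$) which the paper leaves implicit.
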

\begin{proof}
A module braiding $\sigma_{X,M}$ on $\M$ is the same thing as
a natural isomorphism $\eta :\alpha^+_\M \xrightarrow{\sim} \alpha^-_\M$ via
\[
\eta_X(M) =\sigma_{X,M} : \alpha^+_\M(X)(M) = X*M \to X*M = \alpha^-_\M(X)(M),\qquad X\in\B,\, M\in \M.
\]
The first diagram in \eqref{module braiding diagrams} is equivalent to 
$\eta_X:\alpha^+_\M(X) \xrightarrow{\sim} \alpha^-_\M(X)$ being an isomorphism of left $\B$-module functors
and the second diagram expresses the tensor property of the natural isomorphism $\eta_\M$.  On the level of $1$-cells,
the commuting square \eqref{bmf diagram} is equivalent to the identity \eqref{K1 condition}. 
\end{proof}

\begin{remark}
A  version of Proposition~\ref{cbs} was proved by Safonov in \cite[Proposition 2.7]{S}.
\end{remark}

\subsection{$\Modbr(\B)$ as a braided monoidal $2$-category}

\begin{theorem}
\label{Modbr=ZMod}
There is a canonical $2$-equivalence $\Modbr(\B) \cong \mathbf{Z}(\Mod(\B))$. In particular, $\Modbr(\B)$ has a canonical structure
of a braided monoidal $2$-category.
\end{theorem}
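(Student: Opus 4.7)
The plan is to exhibit mutually quasi-inverse $2$-functors
\[
\Phi:\Modbr(\B)\ \longleftrightarrow\ \mathbf{Z}(\Mod(\B)):\Psi
\]
and then transport the canonical braided monoidal structure on the $2$-center (Section~\ref{2-center}) to obtain the second assertion.

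First I would construct $\Phi$. Given a braided $\B$-module category $(\N,\sigma)$, set $\Phi(\N,\sigma)=(\N,S,\gamma)$, where for each $\M\in\Mod(\B)$ the half-braiding
\[
S_\M:\M\bt_\B\N\ \to\ \N\bt_\B\M
\]
is defined on objects $(V,\beta)$ (with $V\in\M\bt\N$ and balancing $\beta_X:(X\bt\be)*V\xrightarrow{\sim}(\be\bt X)*V$, cf.\ Remark~\ref{explicit btB}) by swapping the two tensor factors and twisting the new balancing isomorphism by $\sigma_{X,-}$ applied to the $\N$-component. The two diagrams in \eqref{module braiding diagrams} are precisely what is needed for the twisted balancing to verify \eqref{braided MbtBN} (the left diagram gives $\B$-linearity of the balancing, and the right diagram gives the multiplicativity in $X\otimes Y$, accounting for the two braidings arising when swapping a composite through). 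The structure modification $\gamma_{\L,\M}$ in \eqref{tilde beta} is obtained by gluing two instances of the swap, using once again the right diagram of \eqref{module braiding diagrams} to match balancings; the pentagonal axiom \eqref{central coherence} then reduces, after unwinding the relative tensor products, to the same hexagonal coherence satisfied by $\sigma$.

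Next I would construct $\Psi$. Given $(\N,S,\gamma)$ in $\mathbf{Z}(\Mod(\B))$, evaluate the half-braiding at the regular $\B$-module category, obtaining a $\B$-module autoequivalence
\[
S_\B:\B\bt_\B\N\simeq \N\ \longrightarrow\ \N\simeq\N\bt_\B\B.
\]
The pseudo-naturality of $S$ with respect to the $\B$-module endofunctors $R_X:\B\to\B$ of right multiplication by $X\in\B$ (which exhaust $\End_\B(\B)\cong\B^{\opp}$) produces, for each $X\in\B$ and $N\in\N$, an automorphism $\sigma_{X,N}$ of $X*N$; equivalently, via Proposition~\ref{cbs}, a tensor isomorphism $\eta:\alpha^+_\N\xrightarrow{\sim}\alpha^-_\N$. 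The axiom \eqref{central coherence}, specialised to triples of right multiplications, translates respectively into the hexagon and the $\B$-module naturality diagrams of \eqref{module braiding diagrams}, so that $(\N,\sigma)\in\Modbr(\B)$. On $1$-cells $(F,\varsigma)$ the compatibility modification $\varsigma$ evaluated at $\B$ gives exactly condition \eqref{bmf diagram} (equivalently, condition \eqref{K1 condition} on $\mathbf{A}(\B)$), and $2$-cells pass through tautologically.

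Having defined $\Phi$ and $\Psi$, the verification that $\Psi\circ\Phi\simeq\id$ is immediate from the construction, since $S_\B$ for $\Phi(\N,\sigma)$ is built directly out of $\sigma$. The harder direction $\Phi\circ\Psi\simeq\id$ requires showing that a general half-braiding $S_\M$ is completely determined by its value at $\B$, together with its pseudo-naturality. For this one argues that every $\B$-module category $\M$ is a colimit of categories of the form $\B$ acted on by various $X\in \B$ (more precisely, one uses that each object $(V,\beta)$ of $\M\bt_\B\N$ is obtained as a coequaliser involving free modules), so that pseudo-naturality of $S$ forces $S_\M$ to coincide with the twisted swap determined by the extracted $\sigma$. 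I expect this last point, together with the checking that all the coherence modifications ($\gamma$ on one side, the pentagon for $\sigma$ on the other) match up under $\Phi$ and $\Psi$, to be the main obstacle; it is essentially a lengthy but mechanical diagram chase, most efficiently organised through the description $\Modbr(\B)\cong\mathbf{A}(\B)$ of Proposition~\ref{cbs}, where the data is condensed into the single tensor isomorphism $\eta:\alpha^+_\N\xrightarrow{\sim}\alpha^-_\N$.

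Finally, the last sentence of the theorem follows at once from the general fact that the $2$-center of any monoidal $2$-category carries a canonical braiding \cite[Section~3]{BN} whose structure cells $B_{\M,\N}$ and $\beta_{-\mid -,-}, \beta_{-,-\mid -}$ were recalled at the end of Section~\ref{2-center}; transporting these along the $2$-equivalence $\Phi$ endows $\Modbr(\B)$ with the claimed braided monoidal $2$-category structure.
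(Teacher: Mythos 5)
Your proposal is correct and follows the same overall strategy as the paper: both directions hinge on Proposition~\ref{cbs}, the quasi-inverse is obtained by evaluating the half-braiding at the regular $\B$-module category and using pseudo-naturality to extract $\eta:\alpha^+_\N\xrightarrow{\sim}\alpha^-_\N$, and the braided structure is transported from the $2$-center. The difference lies in the forward $2$-functor: the paper writes an arbitrary module category as $\M=\text{Mod}_\B(A)$ and identifies $\M\bt_\B\N\cong\text{Mod}_\N(\alpha^+_\N(A))$ and $\N\bt_\B\M\cong\text{Mod}_\N(\alpha^-_\N(A))$, so that $S_\M$ is simply induced by the algebra isomorphism $\eta_A$ and the modification $\gamma$ by the tensor property of $\eta$; this makes the fact that a half-braiding is determined by its value at $\B$ essentially automatic. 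You instead define $S_\M$ by an explicit twisted swap on the balancing description of Remark~\ref{explicit btB} (which agrees with the explicit braiding formula the paper records after the theorem) and then need the coequaliser/colimit argument to pin down general half-braidings; that argument is the object-level shadow of the paper's algebra identification, so it works, but it is where the bulk of the diagram chasing sits in your version, whereas the algebra route compresses those checks into commuting squares of algebra isomorphisms. One small correction: in the backward direction the tensor property of $\eta$ (the second diagram of \eqref{module braiding diagrams}) comes from the multiplicativity \eqref{pi fg} of the pseudo-natural transformation $S$ with respect to composition of the $1$-cells of right multiplication, and the module-functor property (the first diagram) from the naturality $2$-cells being morphisms in $\Mod(\B)$ — not from the modification axiom \eqref{central coherence}, which concerns triples of objects rather than $1$-cells; the data you need is present in the $2$-center, but it is supplied by pseudo-naturality rather than by $\gamma$.
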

\begin{proof}
In view of  Proposition~\ref{cbs} it suffices to construct a  $2$-equivalence $\mathbf{A}(\B) \cong \mathbf{Z}(\Mod(\B))$.

We construct a $2$-functor $\mathbf{A}(\B)  \to \mathbf{Z}(\Mod(\B))$ as follows. Let $(\N,\, \eta :\alpha^+_\N \xrightarrow{\sim} \alpha^-_\N)$ 
be an object of  $\mathbf{A}(\B)$. Let $A$ be an algebra in $\B$ and let $\M=\text{Mod}_\B(A)$ be the category of $A$-modules in $\B$ 
(any $\B$-module category is of this form). Then $\M\bt_\B \N \cong \text{Mod}_\N(\alpha^+_\N(A))$, where $\alpha^+_\N(A)$ is an algebra
 in $\End_\B(\N)$ and its module in $\N$ is an object $N\in \N$ along with an action $\alpha^+_\N(A)*N\to N$ satisfying usual axioms.
Similarly, $\N \bt_\B \M  \cong \text{Mod}_\N(\alpha^-_\N(A))$. Hence, the isomorphism  $\eta_A :\alpha^+_\N(A) \xrightarrow{\sim} \alpha^-_\N(A)$
of algebras in $\End_\B(\N)$ yields a pseudo-natural  $\B$-module equivalence $S_\M:\M\bt_\B \N \xrightarrow{\sim}  \N \bt_\B \M$.  

Let $\L =\text{Mod}_\B(A_1)$ and  $\M =\text{Mod}_\B(A_2)$. The invertible 
modification $\gamma_{\L,\M}$ \eqref{tilde beta} comes from the commutative diagram of algebra isomorphisms
\begin{equation}
\xymatrix{
 \alpha^+_\N(A_1) \ot  \alpha^+_\N(A_2) \ar[rrr]^{\eta_{A_1} \ot  \eta_{A_2} } \ar[d]_{(\alpha^+_\N)_{A_1 \ot A_2}} &&&  \alpha^-_\N(A_1) \ot  \alpha^-_\N(A_2) 
 \ar[d]^{(\alpha^-_\N)_{A_1 \ot A_2}}  \\
  \alpha^+_\N(A_1 \ot A_2) \ar[rrr]^{\eta_{A_1\ot A_2 } }  &&&  \alpha^-_\N(A_1 \ot A_2).
} 
\end{equation}
Note that since $\alpha^\pm_\N$ is a central functor, $\alpha^\pm_\N(A_1) \ot  \alpha^\pm_\N(A_2)$ are algebras in $\End_\B(\N)$ and 
$\eta_{A_1} \ot  \eta_{A_2}$ is an algebra isomorphism. The coherence condition \eqref{central coherence} follows from the tensor property
of $\eta$.  Thus, we have an object $(\N,\, S=\{S_\M\},\, \gamma=\{\gamma_{\L,\M}\})$ of $\mathbf{Z}(\Mod(\B))$, see Section~\ref{2-center}. 
This gives rise to  a $2$-functor
\begin{equation}
\label{A to Modbr}
\mathbf{A}(\B)  \to \mathbf{Z}(\Mod(\B)) : (\N,\, \eta) \mapsto (\N,\, S,\, \gamma).
\end{equation}

To construct a $2$-functor in the opposite direction, note that for any $X\in\End_\B(\I_{\Mod(\B)}) \cong \B^\opp$ and $\N \in \Mod(\B)$ 
the tensor functors 
\begin{eqnarray}
\label{X btB N}
&& \B^\opp \to \End_\B(\N) : X \mapsto L_\N  \circ (X \bt_\B \id_\N) \circ L_\N^{-1},  \\ 
\label{N btB X}
&&  \B^\opp  \to \End_\B(\N) : X \mapsto R_\N  \circ (\id_\N \bt_\B X) \circ R_\N^{-1},
\end{eqnarray}
where $L_\N,\, R_\N$ are the unit constraint $1$-cells in $\Mod(\B)$, are isomorphic to $\alpha_\N^+$ and $\alpha_\N^-$,
respectively.

For an object  $(\N,\, S,\, \gamma)$  in $\mathbf{Z}(\Mod(\B))$ consider the following composition of invertible $2$-cells:
\begin{equation}
\label{2-cell eta}
\xymatrix{
 && \N \ar[dll]_{L_\N^{-1}}^{}="a" \ar[drr]^{R_\N^{-1}}^{}="b"   && \\
 \B\bt_\B \N  \ar[d]_{X \bt_\B \id_\N}^{}="c" \ar[rrrr]^{S_\B} &&&& \N\bt_\B \B \ar[d]^{\id_N \bt_B X}_{}="d"  \\
  \B\bt_\B \N \ar[drr]_{L_\N}^{}="e"  \ar[rrrr]^{S_\B}  &&&& \N\bt_\B \B  \ar[dll]^{R_\N}_{}="f" \\
  && \N, &&
  \ar@{}"a";"b"^(.25){}="x"^(.75){}="y" \ar@{=>} "x";"y"
  \ar@{}"e";"f"^(.25){}="x"^(.75){}="y" \ar@{=>} "x";"y"
   \ar@{}"c";"d"^(.25){}="x"^(.75){}="y" \ar@{=>}^{S_X} "x";"y"
} 
\end{equation}
where the top and bottom triangles are canonical $2$-cells coming from the unit constraints of $\Mod(\B)$
and $S_X$ is the half-braiding $2$-cell. The outside compositions are \eqref{X btB N} and \eqref{N btB X}. Thus, $2$-cells
\eqref{2-cell eta} give an isomorphism of $\B$-module functors $\eta_X:  \alpha_\N^+(X) \xrightarrow{\sim} \alpha_\N^-(X),\, X\in \B$. 
The  multiplicative property \eqref{pi fg} of the pseudo-natural transformation $S$ implies that $\eta$ is an isomorphism of tensor functors.

This gives a  $2$-functor
\begin{equation}
\label{Modbr to A}
\mathbf{Z}(\Mod(\B))  \to \mathbf{A}(\B)   : (\N,\, S,\, \gamma) \mapsto (\N,\, \eta_\N)
\end{equation}
quasi-inverse to \eqref{A to Modbr}.

This  resulting $2$-equivalence $\Modbr(\B) \cong \mathbf{Z}(\Mod(\B))$  obtained using  Proposition~\ref{cbs} 
induces on $\Modbr(\B)$ a structure of a braided monoidal $2$-category.
\end{proof}

For braided $\B$-module categories  $\M:=(\M,\, \sigma^\M) $ and $\N:=(\N,\, \sigma^\N)$  the braiding
\[
B_{\M,\N} :\M \bt_\B \N \xrightarrow{\sim} \N \bt_\B \M
\]
in $\Modbr(\B)$ is given by  the half braiding of $\N$.

\begin{proposition}
A braided tensor functor $F:\B \to \C$ induces a braided monoidal $2$-functor 
\[
\Modbr(\C)\to \Modbr(\B) : (\M,\, \sigma) \mapsto (\tilde{\M},\, \tilde{\sigma}),
\]
where $\tilde{\M}=\M$ with the action $X*M =F(X)*M$ and $\tilde{\sigma}_{X,M} = \sigma_{X,M},\, X\in \B,\, M\in \M$.
\end{proposition}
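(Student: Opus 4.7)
The plan is first to verify that $(\tilde{\M},\, \tilde{\sigma})$ is a braided $\B$-module category, then to extend the assignment to $1$-cells and $2$-cells, and finally to equip the resulting $2$-functor with braided monoidal structure. At the object level, $\tilde{\M}$ carries a $\B$-module category structure via $F$ in the standard way (action $X*M := F(X)*M$, associator built from $m^\M_{F(X),F(Y),M}$ together with the monoidal structure of $F$). Each of the two diagrams of Definition~\ref{pb definition} for $\tilde{\sigma}$ involves $c^\B_{X,Y}$ and $(c^\B_{Y,X})^{-1}$; since $F$ is braided, $F(c^\B_{X,Y}) = c^\C_{F(X),F(Y)}$, so the diagrams for $\tilde{\sigma}$ reduce to the corresponding diagrams for $\sigma$ at the objects $F(X), F(Y)$, which commute by hypothesis.

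A cleaner conceptual route uses Proposition~\ref{cbs}: a braided $\C$-module structure on $\M$ is a tensor isomorphism $\eta : \alpha^+_\M \xrightarrow{\sim} \alpha^-_\M$ between tensor functors $\C^\opp \to \End_\C(\M)$. Precomposition with $F^\opp : \B^\opp \to \C^\opp$ and passage to $\End_\B(\tilde{\M})$ via restriction produce a tensor isomorphism $\tilde{\eta} : \alpha^+_{\tilde{\M}} \xrightarrow{\sim} \alpha^-_{\tilde{\M}}$; the required identifications $\alpha^\pm_\M \circ F^\opp \cong \alpha^\pm_{\tilde{\M}}$ use precisely the fact that $F$ intertwines $c^\B$ and $c^\C$. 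Morphisms and $2$-morphisms follow the same pattern: a braided $\C$-module functor $G$ satisfies condition \eqref{K1 condition} with respect to $\C$, and restriction along $F$ yields the analogous condition for $\B$; $\B$-module natural transformations carry over verbatim.

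For the monoidal structure, the comparison $1$-cell $F_{\M,\N} : \tilde{\M} \bt_\B \tilde{\N} \to \widetilde{\M \bt_\C \N}$ is obtained from the universal property of the relative tensor product: the canonical functor $\M \times \N \to \M \bt_\C \N$ is $\C$-balanced, hence $\B$-balanced via $F$, and so factors uniquely through $\tilde{\M} \bt_\B \tilde{\N}$ as a $\B$-module functor; in the balancing picture of Remark~\ref{explicit btB}, this is the evident passage from a $\B$-balancing $\gamma_X$ to the $\C$-balanced refinement available on the underlying $\C$-module data. The unit $U : \tilde{\C} \to \B$ is $F$ itself, viewed as a $\B$-module functor. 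The associator and unit modifications \eqref{alphaLMN}, \eqref{lambda and rho} come from matching universal factorizations, while the braided modification \eqref{deltaMN} is best obtained through the center description $\Modbr(-) \cong \mathbf{Z}(\Mod(-))$ of Theorem~\ref{Modbr=ZMod}, since both sides act on $\M \bt \N$ through half-braidings that are intertwined by $F$.

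The main obstacle will be the bookkeeping in the coherence polytopes \eqref{mon2fun}, \eqref{braided2fun1}, and \eqref{braided2fun2}, each a sizable cube built from module associativities, balancings, and braiding cells. The structural observation that tames this is that every $2$-cell in each polytope for $\Modbr(\B)$ is the pullback along $F$ of the corresponding $2$-cell for $\Modbr(\C)$; consequently the commutativity of each braided monoidal $2$-functor axiom reduces to the already-established commutativity of the same axiom in $\Modbr(\C)$.
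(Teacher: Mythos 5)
Your first paragraph is essentially the paper's entire proof: the authors dispose of the statement with the single remark that it is verified directly using the $\C$-module braiding axioms of $\sigma$ and the braided property of $F$, which is precisely the reduction you carry out on the two diagrams of Definition~\ref{pb definition}. One small correction there: for a non-strict braided functor one only has $J_{Y,X}\circ c^{\C}_{F(X),F(Y)} = F(c^{\B}_{X,Y})\circ J_{X,Y}$, where $J$ is the tensor structure of $F$, and the module associativity of $\tilde\M$ also involves $J$; the diagrams still reduce to those for $\sigma$ at the objects $F(X),F(Y)$, but via this identity and naturality rather than the literal equality $F(c^{\B})=c^{\C}$. Your reformulation through Proposition~\ref{cbs}, obtaining $\alpha^{\pm}_{\tilde\M}$ from $\alpha^{\pm}_{\M}\circ F^{\opp}$ by corestricting $\End_\C(\M)\subset\End_\B(\tilde\M)$, is a clean repackaging of the same computation, and your treatment of $1$- and $2$-cells is fine.

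Where your proposal (and, to be fair, the paper's one-line proof) does not close the argument is the monoidal layer. The comparison cell you get from the universal property is the canonical functor $\tilde\M\bt_\B\tilde\N\to\M\bt_\C\N$, and the unit comparison available from $F$ is $\B\to\tilde\C$, $X\mapsto F(X)$; but the paper's definition of a monoidal $2$-functor requires $F_{\M,\N}$ in \eqref{FNM} and $U$ to be \emph{equivalences}, and neither is in general: already for $\B=\Vect$ one has $\M\bt\N\not\simeq\M\bt_\C\N$ and $\Vect\not\simeq\C$ as $\Vect$-module categories. (Your $U$ also points in the direction opposite to $U:\F(\I)\to\I$ of \eqref{lambda and rho}.) So what your construction actually produces is a lax monoidal, and correspondingly lax braided, structure on the restriction $2$-functor; making the Proposition literally true in the paper's strong sense would require either weakening the definition or extra hypotheses on $F$. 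Relatedly, your closing claim that the coherence polytopes \eqref{mon2fun}, \eqref{braided2fun1}, \eqref{braided2fun2} commute because every $2$-cell is a pullback along $F$ is a reasonable organizing principle but not yet an argument, since those polytopes mix cells of $\Modbr(\C)$, cells of $\Modbr(\B)$, and the comparison cells themselves. None of this is remedied by the paper, whose proof addresses only the object-level verification that you already carried out correctly.
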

\begin{proof}
This is verified directly using the $\C$-module braiding axioms of $\sigma$ and the braided property of~$F$.
\end{proof}

\begin{remark}
The braided monoidal $2$-category structure on  $\Modbr(\B)$ constructed in  Theorem~\ref{Modbr=ZMod} can also be described explicitly
as follows. Let $(\M,\, \sigma^\M) $ and $(\N,\, \sigma^\N)$  be braided $\B$-module categories.
Recall that objects of $\M\bt_\B \N$ are pairs $(V,\, \gamma)$, where $V\in \M\bt \N$ and 
\[
\gamma_X: (X \bt \be)*V \xrightarrow{\sim} (\be \bt X)*V,\, X\in \B,
\]
is a balancing isomorphism satisfying \eqref{MbtBN}.

The tensor product of $\Modbr(\B)$ is
\[
(\M,\, \sigma^\M)  \bt_\B (\N,\, \sigma^\N) = (\M \bt_\B \N,\, \sigma^{\M \bt_\B \N}),
 \]
where 
\[
\sigma^{\M \bt_\B \N}_{X,(V,\gamma)}: (X \bt \be)* (V,\,\gamma) \to (X \bt \be)*(V,\,\gamma),\qquad X\in \B,\, (V,\,\gamma) \in \M \bt_\B \N,
\]
is given by the composition
\[
(X\bt 1)\ot V \xrightarrow{\gamma_X} (1\bt X)\ot V 
 \xrightarrow{\sigma^\N_{X,V}}  (1\bt X)\ot V \xrightarrow{\gamma_X^{-1}} (X\bt 1)\ot V \xrightarrow{\sigma^\M_{X,V}} (X\bt 1)\ot V 
\] 
The unit object for this tensor product is the regular braided $\B$-module category from Example~\ref{regular pb}.

The braiding is 
\[
B_{(\M,\, \sigma^\M), (\N,\, \sigma^\N)} : (\M,\, \sigma^\M)  \bt_\B (\N,\, \sigma^\N)  \xrightarrow{\sim} (\N,\, \sigma^\N)  \bt_\B (\M,\, \sigma^\M),\quad
(V,\, \gamma) \mapsto (V^t,\, \tilde\gamma), 
\]
where $\M\bt \N \to \N \bt \M : V \mapsto V^t$ is   the transposition functor, i.e. $V^t= N \bt M$ for $V =M \bt N$ (this 
extends to $\M \bt \N$ thanks to the universal property of $\bt$) and 
\[
\tilde\gamma_X : (X \bt \be) * V^t \xrightarrow{\sigma^\N_{X,V}} (X \bt \be) * V^t \xrightarrow{(\gamma_X^t)^{-1}} (\be\bt X)* V^t,\quad X\in \B.
\]
\end{remark}

\subsection{Examples and basic properties of braided  module categories}
\label{basics of brmods}

Let $\B$ be a braided tensor category. 

\begin{example}
\label{ModbrO}
The  regular braided $\B$-module category $\B$ from Example~\ref{regular pb}
is the unit object of $\Modbr(\B)$. It generates a braided monoidal $2$-category $\ModbrO(\B)$ whose objects are 
direct sums of copies of $\B$ (identified with natural numbers), $1$-cells are matrices of objects in $\Z_{sym}(\B)$,
and $2$-cells are matrices of morphisms in $\B$. The tensor product is given by the Kronecker product
of such matrices while the braiding $2$-cells are given by the braiding of  $\Z_{sym}(\B)$.
\end{example}

\begin{example}
\label{ModbrI}
Note that $\B$ can have other $\B$-module braidings, in addition to one from Example~\ref{regular pb}.
Namely, it follows from the first diagram  in \eqref{module braiding diagrams}  that a $\B$-module braiding  $\sigma$ 
on $\B$  satisfies 
\[
\sigma_{X,Y} =\sigma_{X,\be} c_{Y,X}c_{X,Y},\, X,Y\in \B.
\]
The second diagram  in \eqref{module braiding diagrams} 
is equivalent to $\sigma_{X,\be}$  being a tensor automorphism of $\id_\B$. Conversely, any $\nu \in \Aut_\ot(\id_\B)$ yields
a module braiding 
\begin{equation}
\sigma^\nu_{X,Y}= (\nu_{X} \ot \be) c_{Y,X}c_{X,Y},\, X,Y\in \B.
\end{equation}
Let $\B^\nu := (\B,\, \sigma^\nu)$ denote the corresponding braided $\B$-module category.

There is an exact sequence \cite[3.3.4]{DGNO} of groups
\begin{equation}
1 \to \Inv(\Z_{sym}(\B)) \to \Inv(\B) \xrightarrow{\alpha}  \Aut(\id_\B),
\end{equation}
where 
\begin{equation}
\label{DGNO a}
\alpha(Z)_X =c_{Z,X} c_{X,Z}  \in \Aut(X\ot Z)= k^\times
\end{equation}
 for every simple object $X\in \B$. Two braided $\B$-module categories $\B^{\nu_1}$
and $\B^{\nu_2},\, \nu_1,\nu_2 \in \Aut(\id_\B),$ are equivalent  if and only if $\nu_2 = \nu_1 \alpha(Z)$ for some $Z\in \Inv(\B)$. Thus, 
the group of equivalence classes of  braided $\B$-module categories of the form  $\B^\nu$  is isomorphic 
to  $\Coker(\Inv(\B) \xrightarrow{\alpha}  \Aut(\id_\B))$.

Let $\ModbrI(\B)$ denote the full  braided monoidal $2$-subcategory of $\Modbr(\B)$ generated by braided $\B$-module categories  $\B^\nu$.
\end{example}

\begin{example}
\label{Bnu}
Let $\M$ be an exact $\B$-module category. The $2$-categorical half-braiding  
\begin{equation}
\label{SMnu}
\M \simeq \M \bt_\B \B^\nu \xrightarrow{S_\M^\nu}  \B^\nu  \bt_\B \M \simeq \M
\end{equation}
is identified with  the image of $\nu$ under the composition
\begin{equation}
\label{SMnu defined}
\Aut_\ot(\id_\B) \xrightarrow{\iota} \Inv(\Z(\B)) \cong \Inv(\Z(\End_\B(\M)) \to \Inv(\End_\B(\M)) = \Aut_\B(\M),
\end{equation}
where $\iota(\nu) =\be$ as an object of $\B$ with the half-braiding
\[
\nu_X \id_X: X \cong X \ot \iota(\nu) \xrightarrow{\sim} \iota(\nu)  \ot X \cong X,\quad X\in \B.
\]
\end{example}

For the trivial tensor category $\B=\Vect$ we have $\Modbr(\Vect) = \ModbrO(\Vect) = \ModbrI(\Vect)$, the 2-category of 2-vector spaces. The objects of this category
are natural numbers, $1$-cells are matrices of vector spaces, and $2$-cells are matrices of linear transformations.

The following result was established in \cite{LKW} using different methods and terminology. 

\begin{proposition}
\label{ModbrVect}
Let $\B$ be a non-degenerate braided fusion category. There is an equivalence of braided monoidal $2$-categories
\begin{equation}
\Modbr(\B)  \simeq \Modbr(\Vect). 
\end{equation} 
\end{proposition}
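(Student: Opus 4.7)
The plan is to combine Theorem~\ref{Modbr=ZMod} with Proposition~\ref{inducing to center}, taking as essential input the fact that non-degeneracy of $\B$ forces $\Z_{sym}(\B)\simeq \Vect$. Applying Proposition~\ref{inducing to center} to the inclusion $\Z_{sym}(\B)\hookrightarrow \B$ yields a braided monoidal $2$-functor
\[
F:\Mod(\Vect) = \Modsym(\Z_{sym}(\B)) \to \Modbr(\B),
\]
which on objects sends a finite-dimensional $\Vect$-module category (i.e.\ a natural number $n$) to the direct sum of $n$ copies of the regular braided $\B$-module category of Example~\ref{regular pb}. The whole task is to show that $F$ is a braided monoidal $2$-equivalence.

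For full faithfulness on $1$- and $2$-morphisms, the key input is Example~\ref{regular pb}: the category of braided module endofunctors of the regular $\B$-module $\B$ is braided equivalent to $\Z_{sym}(\B)=\Vect$. Since both $\Mod(\Vect)$ and the full $2$-subcategory of $\Modbr(\B)$ generated by copies of $\B$ are matricial over these endomorphism categories (compare Example~\ref{ModbrO}), this equivalence propagates to a fully faithful comparison on all Hom-categories between direct sums of copies of $\B$. The braided monoidal structure on $F$ is already produced by Proposition~\ref{inducing to center}, so no further work is needed to promote the equivalence from a plain $2$-equivalence to a braided one.

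For essential surjectivity, given an indecomposable braided $\B$-module category $(\M,\sigma)$ I would translate via Proposition~\ref{cbs} into a tensor isomorphism $\eta:\alpha^+_\M\xrightarrow{\sim}\alpha^-_\M$ between the $\alpha$-inductions $\B^\opp\to \End_\B(\M)$. Pairing $\alpha^+_\M$ with $\eta$ gives a tensor functor $\B\boxtimes \B^{\rev}\to \End_\B(\M)$, hence a $\B\boxtimes\B^{\rev}$-module structure on $\M$ whose restriction to the first factor recovers the given $\B$-action. Non-degeneracy of $\B$ combined with M\"uger's theorem identifies $\B\boxtimes \B^{\rev}\simeq \Z(\B)$, so this data makes $\M$ into an exact module category over $\Z(\B)$. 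The Morita-invertibility of $\B$ that follows from non-degeneracy then forces $\M\simeq \B\bt \V$ as a $\B$-module category for $\V=\Hom_\B(\B,\M)\in\Mod(\Vect)$, and a direct check shows that the module braiding $\sigma$ corresponds under this equivalence to the regular one on the $\B$-factor.

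The main obstacle will be the essential surjectivity step, specifically the passage from the $\Z(\B)$-module structure on $\M$ to a concrete equivalence $\M\simeq \B\bt\V$ compatible with $\sigma$. This is really the higher-categorical incarnation of the statement that a non-degenerate braided fusion category is an invertible object in the Morita $4$-category of fusion $2$-categories, and it is where the argument of \cite{LKW} does its real work; the earlier parts of the plan are essentially repackaging of the results already developed in this paper.
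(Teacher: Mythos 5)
Your construction of the comparison functor via Proposition~\ref{inducing to center} and the full-faithfulness argument via Example~\ref{regular pb} are fine, and your overall reduction parallels the paper's chain of equivalences $\Modbr(\B)\simeq\ModbrI(\B)\simeq\ModbrO(\B)\simeq\Modbr(\Vect)$. The problem is the essential surjectivity step, exactly where you flag the ``main obstacle'': as written, the mechanism you propose does not work, and it is not a technicality that can be outsourced to \cite{LKW}.

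The datum you extract from $(\M,\sigma)$ --- a tensor functor $\B\boxtimes\B^{\rev}\simeq\Z(\B)\to\End_\B(\M)$, i.e.\ a $\Z(\B)$-module structure on $\M$ --- does not actually use the module braiding. Every exact $\B$-module category carries this structure: it is the bimodule structure of Section~\ref{sect: from mod to bimod}, given by $\alpha^+_\M$ on one factor and $\alpha^-_\M$ on the other, and transporting $\alpha^-_\M$ along $\eta$ reproduces the same thing. Consequently it cannot force $\M\simeq\B\bt\V$: indecomposable exact $\B$-module categories that are not free (for instance invertible ones with nontrivial $\partial_\M\in\Aut_{br}(\B)$, or the module category of a Lagrangian algebra when $\B\simeq\Z(\D)$) all admit it. What the module braiding really buys --- and what the paper's proof uses --- is that by Proposition~\ref{cbs} the tensor functors $\alpha^+_\M$ and $\alpha^-_\M$ are isomorphic, so their full images in $\End_\B(\M)$ coincide; since for non-degenerate $\B$ these two images generate $\Z(\End_\B(\M))\simeq\Z(\B)\simeq\B\boxtimes\B^{\rev}$ and the forgetful functor onto $\End_\B(\M)$ is surjective, $\alpha^\pm_\M$ must be surjective, hence equivalences. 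Thus $\M$ is invertible with trivial $\partial_\M$, and the description of $\Pic(\B)$ for non-degenerate $\B$ from \cite{DN,ENO} gives $\M\simeq\B$ as a module category. Even after that, your final ``direct check'' hides a second essential use of non-degeneracy: module braidings on the regular module category form the family $\B^\nu$, $\nu\in\Aut_\ot(\id_\B)$, and these are all equivalent to the regular one only because $\alpha:\Inv(\B)\to\Aut_\ot(\id_\B)$ is an isomorphism for non-degenerate $\B$ (Example~\ref{ModbrI}); for degenerate $\B$ this step fails, which is a useful sanity check that an argument along your lines must invoke non-degeneracy at both of these points, neither of which your sketch currently does.
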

\begin{proof} 
Let $\M$ be an indecomposable  $\B$-module category. The tensor functors $\alpha_\M^\pm: \B \to \End_\B(\M)$
are given by the compositions
\begin{equation}
\label{BtoEnd}
\B \to \Z(\B) \simeq \Z(\End_\B(\M)) \to \End_\B(\M),
\end{equation}
where the first functor is the embedding of $\B$ (respectively, $\B^\rev$) into $\Z(\B)$, and the last one
is the forgetful functor.  The images of $\B$ and $\B^\rev$ generate $\Z(\End_\B(\M))$. If $\M$ has a $\B$-module braiding,
it follows from Proposition~\ref{cbs} that  the full images of
$\alpha_\M^+(\B)$ and $\alpha_\M^-(\B)$ in $\End_\B(\M)$ coincide. Since the forgetful functor is surjective, we have
$\alpha_\M^\pm(\B) = \End_\B(\M)$. Thus, $\M$ is an invertible $\B$-module category such that the  braided autoequivalence
$\partial_\M:= (\alpha_\M^+)^{-1}  \circ  \alpha_\M^-\in \Aut^{br}(\B)$ is trivial. It follows from \cite{DN, ENO} that $\M \simeq \B$
as a $\B$-module category, i.e.  $\Modbr(\B)  \simeq  \ModbrI(\B)$.  Since the homomorphism $\alpha$ from Example~\ref{ModbrI}
is an isomorphism for a non-degenerate category $\B$, 
we have $\ModbrI(\B)  \simeq  \ModbrO(\B)$.  Since $\Z_{sym}(\B)=\Vect$, $\ModbrO(\B)$  is $2$-equivalent to $\Modbr(\Vect)$,
and  the statement follows.
\end{proof}

\subsection{The symmetric monoidal $2$-category of symmetric  module categories}

Let $\E$ be a symmetric tensor category.  For any $\E$-module category $\M$ we have $\alpha_\M^+ = \alpha_\M^-$.
In particular any $\E$-module category $\M$ has  the identity module braiding $\id_{X\ot M}$. 

\begin{definition}
A braided $\E$-module category $(\M,\, \sigma)$ is called {\em symmetric} if $\sigma_{X,M}=\id_{X\ot M}$ for all
$X\in \E$ and $M\in \M$.
\end{definition}

\begin{example} 
Let $\C$ be a symmetric braided tensor category containing $\E$. Then $\C$ is a symmetric $\E$-module category.
\end{example}

Clearly, the tensor product of symmetric module categories is symmetric.
We will denote   $\Modsym(\E)$ the symmetric monoidal $2$-category of  symmetric $\E$-module categories
(its double braiding $2$-cells \eqref{symmetry 2-cell} are identities). Note that $\Modsym(\E) =\Mod(\E)$ as a monoidal $2$-category
and can also be viewed as a braided monoidal $2$-subcategory of $\Modbr(\E)$.

\begin{remark}
Let $\B$ be a braided tensor category. It follows from Example~\ref{Bnu} that
the braided monoidal $2$-category $\ModbrI(\B)$ has a symmetric structure.
\end{remark}

\begin{proposition}
\label{inducing to center}
Let  $\E$ be a tensor subcategory of $\Z_{sym}(\B)$. The induction
\begin{equation}
\label{braided module induction}
\Mod(\E)\to \Modbr(\B) : \N \mapsto \B \bt_\E \N
\end{equation}
is a braided monoidal $2$-functor.
\end{proposition}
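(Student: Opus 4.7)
I would prove this by passing through the identification $\Modbr(\B) \simeq \mathbf{Z}(\Mod(\B))$ of Theorem~\ref{Modbr=ZMod}. Under this identification, constructing a braided monoidal $2$-functor $\Mod(\E) \to \Modbr(\B)$ amounts to equipping the monoidal $2$-functor $F: \Mod(\E) \to \Mod(\B)$, $\N \mapsto \B \bt_\E \N$, which exists by Proposition~\ref{module induction}, with a lift to the $2$-center together with a compatible braided monoidal structure. Thus the plan is to construct the half-braiding $S$, the modification $\gamma$ of \eqref{tilde beta}, and the braided structure modification $\delta$ of \eqref{deltaMN}, and then verify the corresponding coherences.

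For the half-braiding, given $\M \in \Mod(\B)$ and $\N \in \Mod(\E)$, associativity of the relative tensor product together with the unit isomorphisms for $\bt_\B$ give canonical equivalences
\[
\M \bt_\B (\B \bt_\E \N) \simeq \M \bt_\E \N \qquad\text{and}\qquad (\B \bt_\E \N) \bt_\B \M \simeq \N \bt_\E \M,
\]
where $\M$ is regarded as an $\E$-bimodule by restriction along $\E \subset \B$. The map $S_\M$ is then the swap equivalence $\M \bt_\E \N \simeq \N \bt_\E \M$ induced on balancing data by the (symmetric) braiding of $\E$. The condition $\E \subset \Z_{sym}(\B)$ is essential here: centrality of $\E$ in $\B$ is what ensures that this swap, \emph{a priori} only an $\E$-module equivalence, commutes with the $\B$-actions on both sides and therefore lifts to a $\B$-module equivalence. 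Concretely, in the description of Remark~\ref{explicit btB}, sliding a $\B$-factor past an $\E$-factor in the balancing diagram \eqref{braided MbtBN} produces a double braiding on objects of $\E$, which vanishes by definition of $\Z_{sym}(\B)$.

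The modifications $\gamma_{\L, \M}$ of \eqref{tilde beta} and $\delta_{\M, \N}$ of \eqref{deltaMN} are then read off from the braided structure of $\Mod(\E)$. The cell $\gamma$ is a hexagon-type $2$-cell relating $S_{\L \bt_\B \M}$ to the composition $S_\L \circ S_\M$, assembled from naturality of the $\E$-braiding in its two arguments. The cell $\delta$ measures compatibility of the half-braiding with the monoidal constraint $(\B \bt_\E \M) \bt_\B (\B \bt_\E \N) \simeq \B \bt_\E (\M \bt_\E \N)$ coming from Proposition~\ref{module induction}, and is again supplied by naturality of the swap.

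The main obstacle is verifying the coherence axioms \eqref{central coherence} for the half-braiding and \eqref{braided2fun1}--\eqref{braided2fun2} for the braided monoidal $2$-functor. Each face of these polyhedra decomposes, under our construction, into an associativity cell for $\bt_\B$, a naturality cell of the $\E$-braiding, or an instance of the double-braiding identity $c_{X,E} c_{E,X} = \id_{X \ot E}$ for $X \in \B$, $E \in \E$. The last of these is precisely what collapses the obstructions that would otherwise arise if $\E$ were only a braided sub-tensor category of $\B$, reducing the coherences to ones already satisfied by the braided $2$-category $\Mod(\E)$ together with the monoidal $2$-functor structure of Proposition~\ref{module induction}.
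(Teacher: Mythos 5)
Your proposal is correct and takes essentially the same route as the paper: both proofs boil down to constructing the half-braiding $\M \bt_\B (\B \bt_\E \N) \simeq (\B \bt_\E \N) \bt_\B \M$ from the fact that the double braiding between $\B$ and objects of $\E\subset \Z_{sym}(\B)$ is trivial, and then lifting the monoidal induction $2$-functor of Proposition~\ref{module induction} through the center description of $\Modbr(\B)$. The only difference is packaging: the paper writes $\N = \text{Mod}_\E(A)$ for an algebra $A\in\E$ and realizes the half-braiding as the literal equality $\alpha^+_\M(A) = \alpha^-_\M(A)$ of algebras in $\End_\B(\M)$, which is exactly your ``double braiding on $\E$ vanishes'' observation expressed in $\alpha$-induction form rather than via direct manipulation of relative tensor products and balancing data.
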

\begin{proof}
Let $\N= \text{Mod}_\E(A)\in \Mod(\E)$ for some algebra $A\in \E$. Then $ \B \bt_\E \N = \text{Mod}_\B(A)$.  
For any $\M \in \Mod(\B)$  the composition of $\B$-module equivalences
\[
 \M \bt_\B (\B \bt_\E \N) \cong \text{Mod}_\M(\alpha_\M^-(A)) = \text{Mod}_\M(\alpha_\M^+(A))  \cong (\B \bt_\E \N) \bt_\B \M,
\]
where the equality in the middle is due to the fact that $A\in \Z_{sym}(\B)$, defines a half braiding on  $\B \bt_\E \N$.
It follows that $\B \bt_\E \N$ is a braided $\B$-module category and the monoidal induction $2$-functor from 
 Proposition~\ref{module induction} lifts to a braided monoidal $2$-functor
\eqref{braided module induction}.
\end{proof}

\section{$2$-categorical Picard groups }
\label{Chapter on Picards}

In this Section we describe categorical $2$-groups of module categories over tensor categories in terms introduced in Section~\ref{2cg}.

\subsection{The $2$-categorical Brauer-Picard group of a tensor category}

Let $\D$ be a tensor category. Recall from \cite[Section 4.1]{ENO} that a $\D$-bimodule category $\M$
is invertible with respect to $\bt_\D$ if and only if  $\M^o \bt_\D \M \cong \D$ and $\M \bt_\D \M^o \cong \D$, where $\M^o$ is the opposite
Abelian category of $\M$ with the left (respectively, right) $\D$-module actions of $X\in \D$ given by the right (respectively, left) 
actions of ${}^*X$ (see also \cite{DSS}).

The {\em $2$-categorical  Brauer-Picard group} \cite{ENO} of a tensor category $\D$ is 
\begin{equation}
\uuBrPic(\D)=\uuInv(\mathbf{Bimod}(\D)).
\end{equation}
Its objects are invertible $\D$-bimodule categories, $1$-cells are $\D$-bimodule equivalences, and $2$-cells
are isomorphisms of $\D$-bimodule equivalences. The tensor product is $\bt_\D$ and the unit
object is the regular $\D$-bimodule category. Let $\uBrPic(\D)$ denote the categorical group obtained by truncating 
$\uuBrPic(\D)$ and let $\BrPic(\D)$ denote the group of isomorphism classes of objects.

The homotopy groups of $\uuBrPic(\D)$  are
\begin{eqnarray}
&& \pi_0(\uuBrPic(\D)) = \BrPic(\D) \cong \Aut^{br}(\Z(\D)), \\
&& \pi_1(\uuBrPic(\D)) =  \Inv(\Z(\D)),\\
&& \pi_2(\uuBrPic(\D)) =  k^\times.
\end{eqnarray}

The $1$-categorical truncations of $\uuBrPic(\D)$ are 
\begin{eqnarray}
&& \varPi_{\leq 1}(\uuBrPic(\D)) = \uBrPic(\D) \cong \uAut^{br}(\Z(\D)), \\
&& \varPi_{1\leq}(\uuBrPic(\D)) = \uInv(\Z(\D)).
\end{eqnarray}

The first canonical class is the associator $\alpha_{\uuBrPic(\D)} \in H^3(\Aut^{br}(\Z(\D)),\, \Inv(\Z(\D)))$ of the categorical group $\uAut^{br}(\Z(\D))$.
The second canonical class is the braided associator  $q_{\uuBrPic(\D)} \in H^3_{br}( \Inv(\Z(\D)),\, k^\times)$
of  the braided categorical group $\uInv(\Z(\D))$, 
corresponding to  the quadratic form  
\[
q_{\uuBrPic(\D)}:  \Inv(\Z(\D)) \to k^\times : Z\mapsto c_{Z,Z}.
\]

The monoidal functor $\varPi_{\leq 1}(\uuBrPic(\D))  \to \A ut^{br}(\varPi_{1\leq}(\uuBrPic(\D)))$ 
coincides with the composition
$\uBrPic(\D) \cong \A ut^{br}(\Z(\D)) \to \A ut^{br}(\uInv(\Z(\D)))$ \cite{DN, ENO}. 

The non-trivial Whitehead brackets are the maps $\pi_0\times \pi_1 \to \pi_1$ and $\pi_1\times \pi_1 \to \pi_2$ given by
\begin{eqnarray}
&& \Aut^{br}(\Z(\D)) \times  \Inv(\Z(\D)) \to  \Inv(\Z(\D)) : (F,\, Z)\mapsto F(Z), \\
&&\Inv(\Z(\D)) \times  \Inv(\Z(\D))  \to k^\times : (Z,\, W) \mapsto c_{W,Z} c_{Z,W}.
\end{eqnarray}
Here $c$ denotes the braiding of $\Z(\D)$.

\subsection{The $2$-categorical Picard group of a braided tensor category}
\label{Section Pic(B)}

Let $\B$ be a braided tensor category.
Recall \cite{DN, ENO} that a $\B$-module category $\M$ is invertible if and only if the $\alpha$-induction tensor 
functors $\alpha_\M^\pm: \B^\opp \to \End_\B(\M)$, see \eqref{alpha inductions},  are equivalences. Here 
$\End_\B(\M)$ denotes the category of right exact $\B$-module endofunctors of $\M$.

Recall that a $\B$-module category $\M$ is {\em exact} \cite{EO1} if for any projective object $P\in \B$ and any object
$M\in \M$ the object $P\ot M\in \M$ is projective. For an exact $\M$, the dual category $\End_\B(\M)$ is a multitensor category.
The tensor product of functors is their composition and the left and right duals of a $\C$-module functor $F:\M\to \M$ are its left and right adjoints.

The following result was explained to us by Victor Ostrik.

\begin{proposition}
\label{inv--> exact}
An invertible $\B$-module category is exact.
\end{proposition}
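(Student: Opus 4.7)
The plan is to convert the invertibility hypothesis, which was characterized just above by the statement that the $\alpha$-induction tensor functors $\alpha_\M^{\pm}: \B^{\mathrm{op}} \to \End_\B(\M)$ are equivalences, into a structural rigidity statement about $\End_\B(\M)$. Namely, $\End_\B(\M)$ inherits via $\alpha_\M^+$ the rigid tensor structure of $\B^{\mathrm{op}}$, so every right-exact $\B$-module endofunctor of $\M$ admits both a left and a right adjoint.

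Concretely, the first step is to observe that every $F \in \End_\B(\M)$ has the form $\alpha_\M^+(X)$ for some $X \in \B$, and that the left/right duals of $X$ (taken in $\B^{\mathrm{op}}$) produce, via $\alpha_\M^+$, the left/right adjoints of $F$. In particular, for each $X \in \B$ the action functor $X \ast - : \M \to \M$ is exact and has a right adjoint naturally isomorphic to ${}^*X \ast -$.

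The second step is to show that $P \ast M$ is projective for every projective $P \in \B$ and every $M \in \M$. I would exploit the adjunction
\[
\Hom_\M(P \ast M,\, -) \;\cong\; \Hom_\M(M,\, {}^*\!P \ast -),
\]
together with the fact that in a finite tensor category duals of projectives are projective (so ${}^*\!P$ is projective in $\B$). This reduces the claim to showing that $\Hom_\M(M,\, Q \ast -)$ is exact in its argument whenever $Q \in \B$ is projective. To make this last reduction, I would transport the exactness of the regular $\B$-module category $\B$ (where the projectives form a tensor ideal, so $Q \otimes X$ is projective in $\B$ for any $X$) to $\M$ along the Morita equivalence $\M \boxtimes_\B \M^{-1} \simeq \B$ of $\B$-bimodule categories, which invertibility of $\M$ supplies.

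The main obstacle will be carrying out this transport cleanly: making precise the Morita-invariance of exactness for $\B$-module categories is the real content of the proof, and the reduction via adjunction on its own is not enough. A perhaps simpler alternative, which I would pursue in parallel, is to note that once $\End_\B(\M) \simeq \B^{\mathrm{op}}$ is known to be a genuine tensor (not merely abelian) category, one can appeal to the Etingof--Ostrik characterization that a $\B$-module category $\M$ is exact iff $\End_\B(\M)$ is a multitensor category; invertibility supplies exactly this structure on $\End_\B(\M)$, yielding exactness of $\M$.
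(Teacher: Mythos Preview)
Your second approach has a genuine gap: the Etingof--Ostrik ``characterization'' you invoke only goes one way. What is proved in \cite{EO1} (or \cite[Theorem 7.12.11]{EGNO}) is that \emph{if} $\M$ is exact \emph{then} $\End_\B(\M)$ is a multitensor category. The converse --- which is exactly what you need --- is not available as a black box; it is essentially the content of this proposition. So appealing to it is circular. Your first approach correctly reduces to showing that $\Hom_\M(M,\, Q \ast -)$ is exact for $Q$ projective, but as you yourself concede, the proposed Morita transfer of exactness from $\B$ to $\M$ is the whole difficulty, and you have not carried it out.

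The paper's argument (attributed to Ostrik) supplies the missing step in a different way. Write $\M \simeq \mathrm{Mod}_\B(A)$ for an algebra $A$ in $\B$, so that $\End_\B(\M) \cong \mathrm{Bimod}_\B(A)$ with tensor product $-\otimes_A-$. Since the tensor equivalence $\mathrm{Bimod}_\B(A)\simeq \B^{\mathrm{op}}$ transports the biexact tensor product of $\B^{\mathrm{op}}$, one concludes that $\otimes_A$ is biexact on $A$-bimodules. The key step is then to descend from bimodules to one-sided modules: a right $A$-module $M$ can be promoted to the free $A$-bimodule $A\otimes M$ (similarly for left modules), and this forces $M\otimes_A N$ to be exact in each variable for one-sided modules $M$, $N$. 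By \cite[Proposition 7.9.7(1) and Example 7.9.8]{EGNO} this exactness of $\otimes_A$ on one-sided modules is equivalent to exactness of the internal $\Hom$ functor $\underline{\Hom}(N,-)$, hence to exactness of $\M$. This bimodule-to-module descent is the idea your outline is missing.
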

\begin{proof}
Let $\M$ be an invertible $\B$-module category. It is equivalent to $\text{Mod}_\B(A)$ for some algebra $A$ in $\B$ and
\[
\text{Bimod}_\B(A) \cong \End_\B(\M) \cong \B^\opp.
\]
So the tensor product over $A$
is exact on the category of $A$-bimodules. This implies that it is exact
for right modules tensored with left modules (as any right $A$-module $M$ can be made into a bimodule $A\otimes M$ and similarly for left modules).
By \cite[Proposition 7.9.7(1) and Example 7.9.8]{EGNO}, this is equivalent to exactness of the internal $\Hom$ functor
\[
\M \to \B: M \mapsto \uHom( N,\, M)\qquad \text{for all $N\in  \text{Mod}_\B(A)$}
\]
and, hence, to exactness of $\M$.
\end{proof}

\begin{corollary}
\label{inv Cbimod is exact}
Let $\C$ be a finite tensor category. An invertible $\C$-bimodule category is exact.
\end{corollary}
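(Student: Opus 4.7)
The plan is to repeat the argument of Proposition~\ref{inv--> exact}, noting that its proof does not essentially depend on $\B$ being braided: the key inputs are only that $\M$ is invertible with respect to some tensor product, that $\M\simeq \Mod_\B(A)$ for some algebra $A$, and that invertibility yields a tensor equivalence $\Bimod_\B(A)\simeq \B^{\opp}$ from which exactness of $\bt_A$ follows. Each of these ingredients admits a direct bimodule analogue.

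Concretely, for $\M$ an invertible $\C$-bimodule category, I would write $\M\simeq \Bimod_\C(A)$ for some algebra $A\in\C$ (any finite $\C$-bimodule category is of this form). Invertibility of $\M$ with respect to $\bt_\C$ then produces an equivalence of tensor categories $\End^{\mathrm{bim}}_\C(\M)\simeq \C^{\opp}$, via the general fact that for an invertible object $\M$ in a monoidal $2$-category $\mathbf{M}$, tensoring with $\M$ and its inverse induces a monoidal equivalence $\End_{\mathbf{M}}(\I)\simeq \End_{\mathbf{M}}(\M)$; here $\mathbf{M}=\Bimod(\C)$ with unit $\I=\C$, whose endomorphism tensor category is $\C^{\opp}$. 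Under the identification $\M\simeq \Bimod_\C(A)$ the category $\End^{\mathrm{bim}}_\C(\M)$ becomes $\Bimod_\C(A)$ with tensor product $\bt_A$, so $\bt_A$ is exact on $A$-bimodules. From this point the argument of Proposition~\ref{inv--> exact} carries over verbatim: exactness on bimodules extends to exactness of the tensor product of right $A$-modules with left $A$-modules (each right or left module embeds into a bimodule such as $A\ot M$ or $M\ot A$), and by \cite[Proposition 7.9.7(1) and Example 7.9.8]{EGNO} this is equivalent to exactness of the internal $\Hom$ functor, that is, to $\M$ being exact.

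The main obstacle is justifying the equivalence $\End^{\mathrm{bim}}_\C(\M)\simeq \C^{\opp}$: invertibility of $\M$ is formulated with respect to $\bt_\C$ (the relative tensor product over $\C$), whereas $\M$ is most naturally a module over $\C\bt\C^{\opp}$, so some care is required to translate $2$-categorical invertibility in $\Bimod(\C)$ into a $1$-categorical tensor equivalence of bimodule endofunctor categories. Once this is in place, the remainder of the proof is essentially a transcription of the argument for Proposition~\ref{inv--> exact} into the bimodule setting.
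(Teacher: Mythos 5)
Your plan does not go through as written, because two of its load-bearing identifications are incorrect. First, the presentation $\M\simeq \text{Bimod}_\C(A)$ for an algebra $A\in\C$ is not available: a $\C$-bimodule category is a module category over $\C\bt\C^{\opp}$, so the canonical statement is $\M\simeq \text{Mod}_{\C\bt\C^{\opp}}(B)$ for an algebra $B\in\C\bt\C^{\opp}$. The parenthetical claim that ``any finite $\C$-bimodule category is of the form $\text{Bimod}_\C(A)$'' is false already for $\C=\Vect$ (a finite abelian category with exactly two simple objects is not the category of bimodules over any finite-dimensional algebra), and the categories $\text{Bimod}_\C(A)$ with outer $\C$-actions are typically \emph{not} invertible, so restricting attention to invertible $\M$ does not rescue the claim. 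Second, the endomorphism tensor category of the unit object of $\Bimod(\C)$ is not $\C^{\opp}$ but the Drinfeld center $\Z(\C)$: a $\C$-bimodule endofunctor of the regular bimodule category $\C$ is precisely an object of $\Z(\C)$ (consistently with $\pi_1(\uuBrPic(\C))=\Inv(\Z(\C))$ in the paper). So the equivalence you flag as ``the main obstacle'' is not merely delicate; as stated it is an equivalence with the wrong category, and the step ``hence $\bt_A$ is exact on $A$-bimodules in $\C$'' has no meaning in this setting.

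If you repair both points — replace $A\in\C$ by $B\in\C\bt\C^{\opp}$ and $\C^{\opp}$ by $\Z(\C)$ — the argument you are reconstructing is essentially the paper's proof in disguise: invertibility of $\M$ in $\Bimod(\C)$ is the same as invertibility of the corresponding $\Z(\C)$-module category under the canonical $2$-equivalence $\uuBrPic(\C)\cong\uuPic(\Z(\C))$, so Proposition~\ref{inv--> exact} applied to the braided category $\Z(\C)$ gives exactness over $\Z(\C)$, and the cited result \cite[Theorem 3.31]{EO1} is what translates this into exactness of $\M$ as a $\C$-bimodule (and, as used later in Theorem~\ref{main ENO}, as a one-sided $\C$-module category). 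Note also that even a corrected direct argument with $B\in\C\bt\C^{\opp}$ would only yield exactness of $\M$ as a $(\C\bt\C^{\opp})$-module category; passing from that to exactness over $\C$ is not automatic (projectivity of $P\bt\be$ fails for non-semisimple $\C$), which is another reason the reduction to $\Z(\C)$ together with the Etingof–Ostrik comparison of exactness is the efficient route.
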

\begin{proof}
A canonical monoidal $2$-equivalence between $\uuPic(\Z(\C))$ and $\uuBrPic(\C)$ preserves exactness by 
\cite[Theorem 3.31]{EO1}.
\end{proof}

The {\em $2$-categorical Picard group} \cite{DN, ENO} of $\B$ is 
\begin{equation}
\uuPic(\B)=\uuInv(\Mod(\B)).
\end{equation}
Its objects are invertible $\B$-module categories, $1$-cells are $\B$-module equivalences, and $2$-cells
are isomorphisms of $\B$-module equivalences. The tensor product is $\bt_\B$ and the unit
object is the regular $\B$-module category. Let $\uPic(\B)$ denote the categorical group obtained by modding out
$\uuPic(\B)$ by $2$-morphisms and let $\Pic(\B)$ denote the group of isomorphism classes of objects.

The homotopy groups of $\uuPic(\B)$  are
\begin{eqnarray}
&& \pi_0(\uuPic(\B)) = \Pic(\B), \\
&& \pi_1(\uuPic(\B)) =  \Inv(\B),\\
&& \pi_2(\uuPic(\B)) =  k^\times.
\end{eqnarray}

The $1$-categorical truncations of $\uuPic(\B)$ are 
\begin{eqnarray}
&& \varPi_{\leq 1}(\uuPic(\B)) = \uPic(\B), \\
&& \varPi_{1\leq}(\uuPic(\B)) = \uInv(\B).
\end{eqnarray}

The first canonical class is the associator $\alpha_{\uuPic(\B)} \in H^3(\Pic(\B),\, \Inv(\B))$ of the categorical group $\uPic(\B)$.
The second canonical class is the braided associator $q_{\uuPic(\B)} \in H^3_{br}( \Inv(\B),\, k^\times)$
of the braided categorical group $\uInv(\B)$,
corresponding to the quadratic form  
\[
q_{\uuPic(\B)}: \Inv(\B) \to k^\times : Z\mapsto c_{Z,Z}.
\]

The monoidal functor $\varPi_{\leq 1}(\uuPic(\B))  \to \Aut^{br}(\varPi_{1\leq}(\uuPic(\B)))$ 
coincides with the composition 
$ \uPic(\B) \to \A ut^{br}(\B) \to \A ut^{br}(\uInv(\B))$ \cite{DN, ENO}. 

The non-trivial Whitehead brackets are the maps $\pi_0\times \pi_1 \to \pi_1$ and $\pi_1\times \pi_1 \to \pi_2$ given by
\begin{eqnarray}
&& \Aut^{br}(\B) \times  \Inv(\B) \to  \Inv(\B) : (F,\, Z)\mapsto F(Z), \\
&&\Inv(\B) \times  \Inv(\B)  \to k^\times : (Z,\, W) \mapsto c_{W,Z} c_{Z,W}.
\end{eqnarray}
Here $c$ denotes the braiding of $\B$.

For any tensor category $\D$ there is a monoidal $2$-equivalence $\uuBrPic(\D)\cong \uuPic(\Z(\D))$ \cite[Theorem 5.2]{ENO}.
Thus, $2$-categorical  Picard groups generalize Brauer-Picard groups.

\subsection{The braided $2$-categorical Picard group of a braided tensor category}
\label{section: pb and AB}

The  {\em braided $2$-categorical Picard group} of a braided tensor category $\B$ is 
\begin{equation}
\uuPicbr(\B)=\uuInv(\Modbr(\B)) \cong \uuInv(\mathbf{Z}(\Mod(\B))),
\end{equation}
where the last $2$-equivalence is by Theorem~\ref{Modbr=ZMod}.
Its objects are invertible braided $\B$-module categories, $1$-cells are braided $\B$-module equivalences, and $2$-cells
are natural isomorphisms of $\B$-module equivalences. The tensor product is $\bt_\B$  and the unit
object is the regular braided $\B$-module category (see Example~\ref{pb definition}). 
Let $\uPicbr(\B)$ denote the braided categorical group obtained by by modding out
$\uuPicbr(\B)$ by $2$-morphisms  and let $\Picbr(\B)$ denote the group of isomorphism classes of objects.

The homotopy groups of $\uuPicbr(\B)$  are
\begin{eqnarray}
&& \pi_0(\uuPic(\B)) = \Picbr(\B), \\
&& \pi_1(\uuPic(\B)) =  \Inv(\Z_{sym}(\B)),\\
&& \pi_2(\uuPic(\B)) =  k^\times.
\end{eqnarray}

The $1$-categorical truncations of $\uuPicbr(\B)$ are 
\begin{eqnarray}
&& \varPi_{\leq 1}(\uuPic(\B)) = \uPicbr(\B), \\
&& \varPi_{1\leq}(\uuPic(\B)) = \uInv(\Z_{sym}(\B)).
\end{eqnarray}

The  first canonical class is the braided associator $\alpha_{\uuPicbr(\B)} \in H^3_{br}(\Picbr(\B),\,  \Inv(\Z_{sym}(\B)))$ 
of the braided categorical group $\uuPicbr(\B)$ corresponding to  the quadratic function  
\[
Q_{\uuPicbr(\B)}:  \Picbr(\B) \to  \Inv(\Z_{sym}(\B)) : \M \mapsto B_{\M,\M},
\]
where $B$ denotes the braiding of $\uuPicbr(\B)$.  The second canonical class is the
symmetric associator $q_{\uuPicbr(\B)} \in H^3_{sym}(  \Inv(\Z_{sym}(\B)),\, k^\times)$
of the symmetric categorical group $ \uInv(\Z_{sym}(\B))$ corresponding to the homomorphism
\[
q_{\uuPicbr(\B)}  : \Inv(\Z_{sym}(\B)) \to \{ \pm 1\} \subset k^\times : Z \mapsto c_{Z,Z}.
\]

\begin{proposition}
\label{PB is well defined} 
Let  $(\M,\, \sigma^\M)$ be an indecomposable  braided $\B$-module category
and let $Z$ be an invertible object in $\Z_{sym}(\B)$.
The Whitehead bracket   $[\, ,\, ]: \pi_0\times \pi_1 \to \pi_2$ \eqref{new pairing}
of $\uuPicbr(\B)$  satisfies 
\begin{equation}
\label{new pairing Picbr}
\sigma^\M_{Z,\, M}=[\M,\, Z]\, \id_{Z*M}
\end{equation}
for all objects $M\in\M$.
\end{proposition}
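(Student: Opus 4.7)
The plan is to establish that $\sigma^\M_{Z,M}$ is a scalar automorphism of $Z*M$ independent of $M$ and that this scalar equals the Whitehead bracket $[\M,Z]$, proceeding in three steps.

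First, by Proposition~\ref{cbs} the data of the module braiding $\sigma^\M$ on $\M$ is equivalent to a tensor isomorphism $\eta:\alpha^+_\M\xrightarrow{\sim}\alpha^-_\M$ with $\sigma^\M_{X,M}=\eta_X(M)$. For $Z\in\Z_{sym}(\B)$, the two $\B$-module structures on the common underlying functor $Z*-:\M\to\M$ defining $\alpha^+_\M(Z)$ and $\alpha^-_\M(Z)$ coincide, since they differ only by the use of $c_{Z,Y}^{-1}$ versus $c_{Y,Z}$, and these are equal on $\Z_{sym}(\B)$. Hence $\alpha^+_\M(Z)=\alpha^-_\M(Z)$ and $\eta_Z$ is a $\B$-module natural automorphism of $Z*-$.

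Second, as a member of $\pi_0(\uuPicbr(\B))=\Picbr(\B)$, the braided module category $\M$ is invertible, hence exact by Proposition~\ref{inv--> exact}. Combined with indecomposability, this makes $\End_\B(\M)$ a tensor (rather than merely multitensor) category with simple unit, so $\End_\B(\id_\M)=k$. Invertibility of $Z\in\B$ makes $Z*-$ an autoequivalence of $\M$ in $\End_\B(\M)$, whence its module-endomorphism algebra is isomorphic to $\End_\B(\id_\M)=k$, so $\sigma^\M_{Z,M}=\lambda\cdot\id_{Z*M}$ for a scalar $\lambda\in k^\times$ depending only on $(\M,Z)$.

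Third, I identify $\lambda$ with $[\M,Z]$ by unwinding definitions through the equivalence $\Modbr(\B)\simeq\mathbf{Z}(\Mod(\B))$ of Theorem~\ref{Modbr=ZMod}. In the center the braiding is given by the half-braiding, $B_{(\M,S^\M),(\N,S^\N)}=S^\N_\M$, and the unit object $\I=(\B,S^\I,\gamma^\I)$ has half-braiding supplied by the unit constraints of $\Mod(\B)$. A 1-automorphism $Z:\I\to\I$ of $\uuPicbr(\B)=\INV(\mathbf{Z}(\Mod(\B)))$ corresponds to $F_Z=-\otimes Z$ on the regular module $\B$. By the construction preceding~\eqref{new pairing}, $[\M,Z]$ is the value at $Z$ of the canonical monoidal automorphism of $a(\M)\simeq\id$ afforded by the braiding; in the center this is computed from the pseudo-naturality 2-cell of $B_{\M,-}=S^{(-)}_\M$ at the 1-morphism $Z$, composed with the unit constraints $L_\M$ and $R_\M$ to identify its source and target with $Z*-:\M\to\M$. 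This composite is precisely the 2-cell~\eqref{2-cell eta} with $X=Z$, which by the construction of the quasi-inverse equivalence~\eqref{Modbr to A} equals $\eta_Z=\sigma^\M_{Z,-}=\lambda\cdot\id_{Z*-}$. Hence $[\M,Z]=\lambda$, establishing~\eqref{new pairing Picbr}.

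The principal obstacle is the identification in step three: one must verify that under the equivalence of Theorem~\ref{Modbr=ZMod}, the braiding's canonical trivialization of the conjugation action $a(\M)$ on $\bG(\I,\I)=\uInv(\Z_{sym}(\B))$ is realized concretely by the composition with unit constraints appearing in formula~\eqref{2-cell eta}. Most of this bookkeeping is already implicit in the construction of that equivalence, so the verification amounts to checking that the two descriptions of the bracket---as a scalar extracted from the trivialized conjugation action, and as the pseudo-naturality 2-cell of a half-braiding in the center---agree.
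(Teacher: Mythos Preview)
Your proof is correct but takes a more abstract route than the paper's. The paper's argument is a direct two-line computation: for simple $M\in\M$ the object $Z*M$ is simple (since $Z$ is invertible), so $\sigma^\M_{Z,M}\in\Aut(Z*M)=k^\times$; then the first diagram in Definition~\ref{pb definition} gives $\sigma^\M_{Z,X*M}=c_{Z,X}c_{X,Z}\,\sigma^\M_{Z,M}=\sigma^\M_{Z,M}$ for any simple $X\in\B$ (using $Z\in\Z_{sym}(\B)$), and indecomposability lets one reach every simple $N$ as a summand of some $X*M$. The identification of this scalar with $[\M,Z]$ is left implicit in the paper. By contrast, your step~2 replaces the simple-object argument with the structural fact that $\End_\B(\id_\M)=k$, which you obtain via invertibility $\Rightarrow$ exactness $\Rightarrow$ $\End_\B(\M)$ is tensor with simple unit; this is valid but heavier than needed (indecomposability alone already forces $\End_\B(\id_\M)=k$, without invoking Proposition~\ref{inv--> exact}). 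Your step~3, tracing the Whitehead bracket through the $2$-center equivalence via the $2$-cell~\eqref{2-cell eta}, is a genuine addition: it makes explicit the identification that the paper's proof omits. So the paper's approach is shorter and more elementary for the scalar-independence claim, while yours supplies the missing verification that the scalar really is the abstractly defined bracket.
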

\begin{proof}
For any simple $M$ in $\M$ we identify  $\sigma^\M_{Z,M}\in \Aut(Z\ot M)$ with a non-zero scalar.
It suffices to check that this scalar does not in fact depend on $M$. Note that
\[
\sigma^\M_{Z,\, X\ot M}=  c_{Z,X} c_{X,Z} \sigma^\M_{Z,M} = \sigma^\M_{Z,M}
\]
for any simple object $X\in \B$. Since every simple object $N$ of $\M$ is contained in some $X\ot M$ we conclude that
$\sigma^\M_{Z,\, M} = \sigma^\M_{Z,\, N}$.
\end{proof}

Recall from \cite{DN, ENO} a monoidal functor  
\begin{equation}
\label{partial recalled}
\partial:\uPic(\B)\to \uAut^{br}(\B):  \M \mapsto (\alpha_{\M}^+)^{-1} \circ \alpha_{\M}^-,
\end{equation}
where $\alpha_\M^\pm:\B^\opp \to \End_\B(\M)$ are equivalences \eqref{alpha inductions}.

\begin{proposition}
\label{exs}
There is an exact sequence
\begin{equation}
\label{exact sequence}
 1 \to \Inv(\Z_{sym}(\B))   \xrightarrow{}  \Inv(\B) \xrightarrow{\alpha} \Aut_\ot(\id_\B)  \xrightarrow{\eps} 
 \Pic_{br}(\B)  \xrightarrow{\phi} \Pic(\B) \xrightarrow{\partial} \Aut_{br}(\B),
\end{equation}
where  $\alpha$ is defined in \eqref{DGNO a},
 $\eps(\nu) = \B^{\nu}$ (see Example~\ref{Bnu}),  and 
 $\phi(\M,\, \sigma)=\M$.
\end{proposition}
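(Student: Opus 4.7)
The plan is to verify exactness separately at each of the five intermediate terms in the sequence. The main technical inputs will be (i) Proposition \ref{cbs}, which identifies braided $\B$-module structures on $\M$ with tensor natural isomorphisms $\alpha_\M^+\xrightarrow{\sim}\alpha_\M^-$; (ii) the classification of module braidings on the regular module category in Example \ref{ModbrI}, including the explicit statement that $\B^{\nu_1}\simeq\B^{\nu_2}$ as braided $\B$-module categories iff $\nu_2=\nu_1\alpha(Z)$ for some $Z\in\Inv(\B)$; and (iii) the observation from \cite{DGNO} recalled in Example \ref{ModbrI} that $\Inv(\Z_{sym}(\B))=\ker(\alpha)$. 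Before checking exactness I will verify that each map is a homomorphism; the only nonobvious case is $\eps$, where I need to compute the tensor product $\B^{\nu_1}\bt_\B\B^{\nu_2}$ using the explicit description of balancing isomorphisms in Remark \ref{explicit btB} and the formula $\sigma^\nu_{X,Y}=(\nu_X\ot\be)c_{Y,X}c_{X,Y}$, and confirm that the resulting module braiding on $\B\bt_\B\B\simeq\B$ is $\sigma^{\nu_1\nu_2}$.

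Exactness at $\Inv(\Z_{sym}(\B))$ is immediate (inclusion of a subgroup). Exactness at $\Inv(\B)$ is a direct unwinding: $Z\in\ker\alpha$ iff $c_{Z,X}c_{X,Z}=\id_{Z\ot X}$ on all simple $X\in\B$, which is exactly the condition $Z\in\Inv(\Z_{sym}(\B))$. Exactness at $\Aut_\ot(\id_\B)$ is a restatement of the classification in Example \ref{ModbrI}: $\eps(\nu)=[\B^\nu]$ is trivial in $\Picbr(\B)$ iff $\B^\nu\simeq\B=\B^{\id}$ as a braided $\B$-module category, which by that classification is iff $\nu\in\Image(\alpha)$.

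Exactness at $\Picbr(\B)$ is the statement $\ker\phi=\Image\eps$. The inclusion $\Image\eps\subseteq\ker\phi$ is clear since $\phi(\B^\nu)=\B$ is the identity of $\Pic(\B)$. For the reverse, given $[(\M,\sigma)]\in\ker\phi$, choose a $\B$-module equivalence $F:\M\xrightarrow{\sim}\B$; transporting $\sigma$ along $F$ gives a $\B$-module braiding on the regular module category, which by Example \ref{ModbrI} has the form $\sigma^\nu$ for some $\nu\in\Aut_\ot(\id_\B)$, so $[(\M,\sigma)]=\eps(\nu)$. Exactness at $\Pic(\B)$ is where Proposition \ref{cbs} enters. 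If $\M\in\Image\phi$, then $\M$ carries a module braiding, which by Proposition \ref{cbs} provides a tensor isomorphism $\eta:\alpha_\M^+\xrightarrow{\sim}\alpha_\M^-$, so $\partial(\M)=(\alpha_\M^+)^{-1}\alpha_\M^-\cong\id_{\B^\opp}$ and $[\partial(\M)]=1$ in $\Aut_{br}(\B)$. Conversely, if $\partial(\M)\simeq\id$ as a braided autoequivalence, then in particular $\alpha_\M^+\cong\alpha_\M^-$ as tensor functors, so Proposition \ref{cbs} produces a module braiding on $\M$, showing $\M\in\Image\phi$.

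The main obstacle I anticipate is the homomorphism check for $\eps$, since it requires reconciling the explicit tensor product of braided module categories (via balancing isomorphisms, see Remark \ref{explicit btB}) with the formula $\sigma^\nu_{X,Y}=(\nu_X\ot\be)c_{Y,X}c_{X,Y}$ and checking that the canonical equivalence $\B\bt_\B\B\simeq\B$ carries $\sigma^{\nu_1}\bt\sigma^{\nu_2}$ to $\sigma^{\nu_1\nu_2}$. Once this is done, every other step is a direct application of Proposition \ref{cbs} and Example \ref{ModbrI}; in particular, the two ``kernel equals image'' verifications at $\Picbr(\B)$ and $\Pic(\B)$ are formal consequences of those results.
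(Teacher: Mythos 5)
Your proposal is correct and matches the paper's intent: the paper's own proof is just ``immediate consequence of the definitions,'' and your term-by-term unwinding via Proposition~\ref{cbs} and Example~\ref{ModbrI} (including the multiplicativity check $\B^{\nu_1}\bt_\B\B^{\nu_2}\simeq\B^{\nu_1\nu_2}$, which the paper records in Example~\ref{uuPicbrI}) is exactly the argument being left implicit.
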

\begin{proof}
This is an immediate consequence of the definitions.
\end{proof}

\begin{remark}
By the {\em fiber}  of the monoidal functor $F:\G\to\H$ between groupoids we mean the category of pairs $(X,x)$, where $X\in \G$ and $x:F(G)\to I$ for the unit object $I\in \H$. 
It follows from Proposition~\ref{cbs} that the fiber of the monoidal functor $\uPic(\B)\to \uAut(\B)$ coincides with $\uPic_{br}(\B)$. 
The exact sequence \eqref{exact sequence} can be seen as the Serre exact sequence of homotopy groups of the fibration of categorical groups
\[
\uPic_{br}(\B) \to  \uPic(\B)\to \uAut(\B).
\]
\end{remark}

\begin{example}
\label{uuPicbrI}
Let $\uuPicbrI(\B)$ be the braided $2$-categorical subgroup of $\uuPicbr(\B)$ consisting of braided $\B$-module categories
whose underlying $\B$-module category is the regular $\B$-module category $\B$.  That is, $\uuPicbrI(\B) =\textbf{Inv}(\ModbrI(\B))$, 
see Example~\ref{ModbrI}.

The objects of $\uuPicbrI(\B)$ are braided $\B$-module categories $\B^\nu,\, \nu\in \Aut_{\ot}(\id_\B)$. The module braiding of $\B^\nu$ is
\begin{equation}
\label{sigma-nu}
\sigma^\nu_{X,M}=(\nu_X \ot \id_M)\circ c_{M,X}c_{X,M},\quad X,M\in \B.
\end{equation}
The homotopy groups of $\uuPicbrI(\B)$ are 
\begin{eqnarray}
&& \pi_0(\uuPicbrI(\B)) = \text{Coker}(\Inv(\B) \xrightarrow{\partial}  \Aut_{\ot}(\id_\B)), \\
&& \pi_1(\uuPicbrI(\B)) =  \Inv(\Z_{sym}(\B)),\\
&& \pi_2(\uuPicbrI(\B)) =  k^\times.
\end{eqnarray}
The following  is a convenient ``non-skeletal'' description of $\uuPicbrI(\B)$.  The objects $\B^\nu$ correspond to elements of $\Aut_{\ot}(\id_\B)$,
$1$-morphisms are given by
\begin{equation}
\label{c2mu =nu}
\uuPicbrI(\B)(\B^{\mu},\, \B^{\nu}) = \{ Z\in \uInv(\B)) \mid c_{Z,X} \circ  c_{X,Z} \circ  (\mu_X \ot \id_Z) = \nu_X \ot \id_Z,\, X\in \B \},
\end{equation}
and $2$-cells are isomorphisms between invertible objects of  $\B$. The tensor product is  $\B^\mu \bt \B^\nu := \B^{\mu\mu}$ for all
$\mu,\nu\in \Aut_{\ot}(\id_\B)$. The associativity and braiding $2$-cells are identities and the pseudo-naturality $2$-cell for the tensor product is
\[
\bt_{Z,W} =c_{Z,W},\quad Z,W\in \uInv(\B),
\]
where $c$ denotes the braiding of $\B$. We  also have  
\[
B_{Z, \B^\nu} = \nu_Z,\quad Z\in \uInv(\B),\quad \nu\in \Aut_\ot(\B).
\] 
All other structural $2$-cells are identities.
\end{example}

\subsection{The symmetric $2$-categorical Picard group of a symmetric tensor category}

Let $\E$ be a symmetric tensor category.
The  {\em symmetric $2$-categorical Picard group} of  $\E$ is 
\begin{equation}
\uuPicsym(\E)=\uuInv(\Modsym(\E)) = \uuInv(\Mod(\E)) = \uuPic(\E).
\end{equation}
Its objects are invertible symmetric $\E$-module categories, $1$-cells are $\E$-module equivalences, and $2$-cells
are natural isomorphisms of $\E$-module equivalences. 
Let $\uPicsym(\E)$ denote the categorical group obtained by truncating 
$\uuPicsym(\E)$ and let $\Picsym(\E)$ denote the group of isomorphism classes of objects.

The homotopy groups of $\uuPicsym(\E)$  are
\begin{eqnarray}
&& \pi_0(\uuPic(\E)) = \Picsym(\E) =\Pic(\E), \\
&& \pi_1(\uuPic(\E)) =  \Inv(\E),\\
&& \pi_2(\uuPic(\E)) =  k^\times.
\end{eqnarray}

The $1$-categorical truncations of $\uuPicsym(\E)$ are 
\begin{eqnarray}
&& \varPi_{\leq 1}(\uuPic(\E)) = \uPicsym(\E) =\uPic(\E),   \\
&& \varPi_{1\leq}(\uuPic(\E)) = \uInv(\E).
\end{eqnarray}

The first canonical class is the symmetric associator $\alpha_{\uuPicsym(\B)} \in H^3_{sym}(\Pic(\E),\,  \Inv(\E))$ 
of the symmetric categorical group $\uuPic(\B)$ corresponding to  the homomorphism 
\[
Q_{\uuPicsym(\E)}:  \Pic(\E) \to  \Inv(\E)_2 : \M \mapsto B_{\M,\M},
\]
where $B$ denotes the braiding of $\uuPic(\E)$.  The second canonical class is the
symmetric associator $q_{\uuPic(\E)} \in H^3_{sym}( \Inv(\E),\, k^\times)$
of the braided categorical group $ \uInv(\E)$ corresponding to the homomorphism 
\[
q_{\uuPicsym(\E)}  : \Inv(\E) \to \{ \pm 1\} \subset k^\times : Z \mapsto c_{Z,Z}.
\]

\subsection{Azumaya algebras in braided tensor categories}
\label{appendix AzAlg}

Let $R$ be an algebra in a braided tensor  category $\B$, i.e. an object together with morphisms $\mu:R\ot R\to R$ (the {\em product}) 
and $\iota:I\to R$ (the {\em unit} map) satisfying the associativity and  unit conditions.

Denote by $\Aut_{alg}(R)$ the group of algebra automorphisms of $R$.
\begin{remark}
The assignment 
\beq
\lb{gta}
\Aut_\ot(\id_\B)\to \Aut_{alg}(R) : a\mapsto a_R
\eeq 
is a group homomorphism.
\ere

Let $M$ be a right $R$-module in $\B$ with the structural map $\rho : M \ot R \to M$. 
For any  $X\in\B$ there is an $R$-module structure on  $X\otimes M$  defined by 
\[
\id_X\otimes\rho:X\otimes M\otimes R \to X\otimes M.
\]  
Thus, the category $\B_R$ of right $R$-modules in $\B$ 
is a left $\B$-module category via
$$
\B \times \B_R \to \B_R, \quad (X,\,M)\mapsto X\otimes M.
$$
The $\alpha$-induction functors \eqref{alpha inductions} for  $\B_R$  are 
\begin{equation}\lb{aind}
\alpha_{\B_R}^\pm : \B \to {_R}\B_R = \End_\B(\B_R)^{\opp} : X \mapsto X\ot R,
\end{equation}
with the obvious right $R$-module structures and the left $R$-module structures  given by
\begin{eqnarray*}
R \ot X \ot R  & & \xrightarrow{c_{R,X} \ot \id_R}  X \ot R \ot R \xrightarrow{\id_X \ot \mu} X \ot R,\\
R \ot X \ot R  & & \xrightarrow{c_{X,R}^{-1} \ot \id_A}  X \ot R \ot R \xrightarrow{\id_X \ot \mu} X \ot R, \qquad X\in \B
\end{eqnarray*}
for $\alpha_{\B_R}^+$ and $\alpha_{\B_R}^-$, respectively.

The tensor product $R\ot S$ of two algebras $R,S\in\B$ has an algebra structure, 
with the multiplication map $\mu_{R\ot S}$ defined as 
$$
R\ot S\ot R\ot S \xrightarrow{\id_R\ot  c_{S,R}\ot \id_S }  R\ot R\ot S\ot S \xrightarrow{\mu_R\ot \mu_S}  R\ot S,
$$ 
where $\mu_R$ and $\mu_S$ are multiplications of algebras $R$ and $S$, respectively (here we
suppress the associativity constraints in $\B$). We have
\[
 \B_R \bt_\B \B_S \cong \B_{R\ot S}.
\]

Let $R^{op}=R$ denote the algebra with the multiplication  opposite to that of $R$:
\[
R\ot R \xrightarrow{c_{R,R}} R\ot R \xrightarrow{\mu} R.
\]

Following \cite{OZ}, we say that an algebra $R$ in a braided monoidal category $\B$ is {\em Azumaya} if the morphism
\[
R\ot R^{\opp}\ot R \xrightarrow{\id_R \ot c_{R,R}}   R\ot R\ot R \xrightarrow{\mu\ot \id_R} R\ot R \xrightarrow{\mu} R
\]
induces an isomorphism  $R\ot R^{\opp}\to R\ot R^*$.   The $B$-module category $\B_R$ is invertible in $\uuPic(\B)$
if and only if $R$ is an Azumaya algebra (in which case $\alpha_{\B_R}^\pm$ are equivalences).

Thus, the $2$-categorical Picard group of $\uuPic(\B)$ is monoidally $2$-equivalent
to the group of Morita equivalence classes of exact Azumaya algebras in $\B$ (the latter group was called in
\cite{OZ} the Brauer group of $\B$). 

It was shown in \cite[Theorem 3.1]{OZ} that for an Azumaya algebra $R$ the functors \eqref{ainb}
are monoidal equivalences.

For an Azmaya algebra $R\in \B$ and an automorphism $\phi \in \Aut_{alg}(R)$ let ${_\phi R}$ be the invertible $R$-bimodule
obtained from $R$ by twisting the right $R$-action by $\phi$.  Under the equivalence $\alpha_{\B_R}^\pm$
it corresponds to an invertible object $P_\phi\in \B$ and we have a group homomorphism
\begin{equation}
\label{ati}
\Aut_{alg}(R)\to \Inv(\B): \phi \mapsto P_\phi.
\end{equation}

\begin{remark}
\label{efp}
An isomorphism of $R$-bimodules $f:P\ot R\to {_\phi R}$ is completely determined by the morphism $g:P\to R$ defined by $g = f(1\ot \iota)$.
Indeed, $f = \mu(g\ot 1)$. While the right $R$-module property of such $f$ is automatic, the left $R$-module property amounts to the condition
\beq\lb{efpf}\mu(\phi\ot g) = \mu(g\ot 1)c_{R,P}\ . \eeq
\end{remark}

\begin{remark}
\label{rhh}
An Azumaya algebra $R\in\B$ gives rise to a homomorphism $\varsigma_R: \Aut_\ot(Id_\B)\to \Inv(\B)$, which is the composition of the homomorphisms 
\eqref{gta} and \eqref{ati}.

Note that $\varsigma_{R\ot S}(a) = \varsigma_R(a)\ot \varsigma_S(a)$, so that we have a homomorphism
\begin{equation}
\label{varsigma}
\varsigma: \Pic(\B)\ \to\ \Hom_{gr}(\Aut_\ot(\id_\B),\,\Inv(\B)),
\end{equation}
or, equivalently, a (bimultiplicative) pairing
\begin{equation}
\label{pairing <>} 
\langle - \, , \,- \rangle : \Pic(\B)\times \Aut_\ot(\id_\B) \to \Inv(\B).
\end{equation}
This can be interpreted in terms of module categories as follows. 
For any $\M\in  \uuPic(\B)$ there is an isomorphism $ \Aut_\B(\M) \cong \Inv(\B)$
given by $\alpha_\M^+$.
For  $\nu \in  \Aut_\ot(\id_\B)$ the value of $\langle \M ,\, \nu \rangle$ is  the image of $\nu$ under the composition 
\begin{equation}
\label{<> explained}
\Aut_\ot(\id_\B) \xrightarrow{\iota} \Inv(\Z(\B)) \cong \Inv(\Z(\End_\B(\M)))  \to \Inv(\End_\B(\M)) \cong  \Inv(\B).
\end{equation}
Note that the object $\langle \M ,\, \nu \rangle$ coincides with the central structure of the braided module $\B$-category $\B^\nu$
(see Example~\ref{uuPicbrI}), i.e. with the value of the half-braiding $\M \bt_\B \B^\nu \to  \B^\nu  \bt_\B \M$ viewed as an object of $\Aut_\B(\M) \cong \Inv(\B)$.
\end{remark}


\section{The braided $2$-categorical Picard group of a symmetric fusion category}
\label{symmetric section}

Let $G$ be a finite group and let $\Rep(G)$ denote the 
category of representations of $G$.
It was proved by Deligne  \cite{De} that a symmetric fusion category  is equivalent to the following ``super" generalization of $\Rep(G)$.
Namely, let $G$ be a finite group and let $t\in G$ be a central element such that $t^2=1$. Then $\Rep(G)$
has a braiding defined by
\begin{equation}
\label{braiding on RepGt}
c_{V,W}: V\ot W \xrightarrow{\sim} W \ot V : v\ot w \mapsto 
\begin{cases}
-w\ot v & \text{if  $tv=-v$, $tw=-w$}, \\
\quad w\ot v & \text{otherwise.}
\end{cases}
\end{equation}
The fusion category $\Rep(G)$ equipped with the above braiding will be denoted $\Rep(G,\,t)$. Any symmetric
fusion category is equivalent to $\Rep(G,\,t)$ for a unique up to an isomorphism pair $(G,\,t)$. 
Under this notation $\Rep(G,\,1)$ is nothing but $\Rep(G)$ with its usual transposition braiding. 
We call $\Rep(G,\,t)$  {\em Tannakian} if $t=1$ and {\em super-Tannakian} if $t\neq 1$. 

The Picard group of $\Pic(\Rep(G,\,t))$ was computed by Carnovale in \cite{Car}.  We recall this description in  
Sections~\ref{pstc}  and  describe
the symmetric categorical group $\uPic(\Rep(G,\,t))$, i.e. the homomorphism
\[
Q_{\Pic(\Rep(G,\,t))}: \Pic(\Rep(G,\,t)) \to \widehat{G}.
\]
In Sections~\ref{symcase} and \ref{suptancase} we describe the braided categorical Picard group $\uPicbr(\Rep(G,\,t))$.

\subsection{The $2$-categorical Picard group of a Tannakian category}
\label{Pic Tannakian}

Let $\B=\Rep(G)$ be the category of finite dimensional representations of a finite group $G$ with its standard symmetric braiding. 
For a 2-cocycle $\gamma\in Z^2(G,k^\times)$ denote by $\Rep_\gamma(G)$ the category of $\gamma$-projective representations of $G$.

The first statement of the following Proposition is well known (see e.g. \cite{Car}). 
\begin{proposition}
\lb{tanc}
The assignment 
\[
H^2(G,k^\times)\to Pic(\Rep(G)) \ :\quad \gamma\mapsto \Rep_\gamma(G)
\] 
is an isomorphism. The homomorphism $Q_{Pic(\Rep(G))} : H^2(G,k^\times)\to (\widehat{G})_2$  is trivial.
\end{proposition}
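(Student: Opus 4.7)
The plan has two parts. The first statement—that $\gamma \mapsto \Rep_\gamma(G)$ gives an isomorphism $H^2(G,k^\times) \xrightarrow{\sim} \Pic(\Rep(G))$—is Carnovale's theorem \cite{Car}, which I would cite directly. All the work is in the second statement.

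For the triviality of $Q_{\Pic(\Rep(G))}$, my approach is to compute the braiding $B_{\M,\M}$ of $\uuPicsym(\Rep(G))$ at $\M = \Rep_\gamma(G)$ and show that it is naturally isomorphic to the identity $1$-morphism on $\M\bt_{\Rep(G)}\M$. By definition $Q(\gamma) = B_{\M,\M}$, viewed inside $\pi_1(\uuPicsym(\Rep(G))) = \Inv(\Rep(G)) = \widehat G$ via the invertibility of $\M\bt_{\Rep(G)}\M$ and the identification of $\pi_1$ of a $2$-categorical group with $\Aut_{\uuPicsym(\Rep(G))}(\I) \cong \Aut_{\Rep(G)}(\M\bt_{\Rep(G)}\M)$ by conjugation. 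If $B_{\M,\M}\cong \id$ as a $1$-morphism, this element is trivial.

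Concretely, I would use the explicit formula for the braiding of $\Modsym(\B)$ given at the end of Section~\ref{section: pb and AB}. Since $\Rep(G)$ is symmetric we have $\alpha^+_\M = \alpha^-_\M$, so by Proposition~\ref{cbs} every $\Rep(G)$-module category carries the identity module braiding $\sigma^\M = \id$. The formula then simplifies to
\[
B_{\M,\M}(V,\gamma) = (V^t,\,(\gamma^t)^{-1}),
\]
where $V \mapsto V^t$ is the transposition functor $\M\bt\M \to \M\bt\M$. The symmetric braiding $c$ of $\Rep(G)$ itself provides a natural isomorphism $\tau_V \colon V \to V^t$ (the ordinary flip) for every $V\in\M\bt\M$. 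I would then verify directly that, for a typical $V = M_1 \bt M_2$ whose balancing $\gamma_X \colon X\ot M_1 \bt M_2 \to M_1 \bt X\ot M_2$ is the left-factor braiding $c_{X,M_1}\bt \id_{M_2}$, the identity
\[
(\be\bt X)*\tau_V \circ \gamma_X \;=\; (\gamma^t_X)^{-1} \circ (X\bt \be)*\tau_V
\]
holds. This follows from the symmetry axiom $c_{X,Y}c_{Y,X}=\id$ of $\Rep(G)$ and exhibits $\tau$ as a balancing-preserving natural isomorphism $(V,\gamma) \cong (V^t,(\gamma^t)^{-1})$ in $\M\bt_{\Rep(G)}\M$. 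Hence $B_{\M,\M} \cong \id$ as $1$-morphisms in $\Modsym(\Rep(G))$.

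The main obstacle is purely notational: unpacking the universal balancing data on $\M\bt_{\Rep(G)}\M$ and checking the compatibility of $\tau$ with it, which requires handling the left/right module structures carefully. No genuine difficulty appears, and once the verification is complete, $B_{\M,\M}$ corresponds to the trivial element of $\widehat G$, so $Q_{\Pic(\Rep(G))}(\gamma) = 1$ for every $\gamma\in H^2(G,k^\times)$.
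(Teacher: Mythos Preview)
Your approach has a real gap in the construction of $\tau_V$. You want a natural isomorphism $\tau_V:V\to V^t$ for $V\in\M\bt\M$, but no such thing exists: in the Deligne product $\M\bt\M$ the objects $M_1\bt M_2$ and $M_2\bt M_1$ are genuinely different, and $\Hom_{\M\bt\M}(M_1\bt M_2,\,M_2\bt M_1)\cong\Hom_\M(M_1,M_2)\ot\Hom_\M(M_2,M_1)$ vanishes for non-isomorphic simples. The symmetric braiding of $\Rep(G)$ does not help here, because $M_1,M_2$ live in $\Rep_\gamma(G)$, not in $\Rep(G)$, and $\bt$ is not a monoidal product. Consequently the balancing-compatibility identity you propose to check cannot even be written down in the $(V,\gamma)$ model of $\M\bt_\B\M$.

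The fix is to pass to a concrete realization. The balanced functor $(M_1,M_2)\mapsto M_1\ot_k M_2$ identifies $\M\bt_{\Rep(G)}\M$ with $\Rep_{\gamma^2}(G)$, and under this equivalence $B_{\M,\M}$ becomes the functor $M_1\ot_k M_2\mapsto M_2\ot_k M_1$. Now the ordinary vector-space flip $m_1\ot m_2\mapsto m_2\ot m_1$ \emph{is} a morphism in $\Rep_{\gamma^2}(G)$ (it intertwines the diagonal projective $G$-action), and it furnishes the natural isomorphism $\id\Rightarrow B_{\M,\M}$ you are after. This is exactly what the paper does, only phrased dually on the Azumaya side: the paper represents $\M$ by the algebra $\End_k(V)$, observes that the transposition automorphism of $\End_k(V)^{\ot 2}\cong\End_k(V^{\ot 2})$ is conjugation by $c_{V,V}$, and notes that $c_{V,V}$ is $G$-invariant, so the associated character in $\widehat G$ is trivial. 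Once you move to the concrete model, your argument and the paper's are the same computation seen from the module and the algebra side respectively.
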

\begin{proof}
Since an Azumaya algebra in $\Rep(G)$ is also an Azumaya algebra in $\Vect$ 
it should have the form $End_k(V)$ for some vector space $V$. The $G$-action on $End_k(V)$ corresponds to the structure of a projective $G$-representation on $V$. Its Schur multiplier (as a class in $H^2(G,\,k^\times)$) is the only Morita invariant of the $G$-algebra $End_k(V)$ .

We describe $Q_{Pic(\Rep(G))}$ as follows.
The transposition automorphism $c_{End_k(V),End_k(V)}$ of the tensor square $End_k(V)^{\ot 2}$ of the 
Azumaya algebra $End_k(V)$ is inner, i.e. is given by conjugation with an invertible element $\zeta\in End_k(V)^{\ot 2}$. 
The value $Q(End_k(V))$ is the character $\chi\in \widehat G$ defined by $g(\zeta) = \chi(g)\zeta$ for all $g\in G$.
Under the isomorphism $End_k(V)^{\ot 2}\cong End_k(V^{\ot 2})$ the element $\zeta$ corresponds to $c_{V,V}\in End_k(V^{\ot 2})$. 
Clearly, $c_{V,V}$ is $G$-invariant, which makes the character $\chi$ trivial. 
\end{proof}

\bpr\lb{prg}
The pairing \eqref{pairing <>} for $\Rep(G)$ is given by
\begin{equation}
\label{gammat}
H^2(G,k^\times)\times Z(G)\to \widehat G\ : \quad (\gamma,z)\mapsto \gamma_z,\qquad \gamma_z(g) = \frac{\gamma(g,z)}{\gamma(z,g)}.
\end{equation}
\epr
\bpf
We need to compute the invertible object $\varsigma_R(z)\in \Rep(G)$ for $R=End_k(V)$, where $V$ is a projective $G$-representation $\rho:G\to GL(V)$ with multiplier $\gamma$. 
According to the remark \ref{efp} the invertible object corresponding to an automorphism $\phi$ of $End_k(V)$ is given by the (unique) character $\chi$ such that there is an invertible $\zeta\in End_k(V)$ (the image of $g:P\to R$ on a basic vector of $P$) with the properties
$\phi(x) = \zeta x\zeta^{-1}$ and $\chi(g)\zeta\rho(g) = \rho(g)\zeta$ for any $x\in End_k(V)$ and $g\in G$.
Taking $\zeta = \rho(z)$ we get that $\chi$ is of the form $\gamma_z$. 
\epf

\subsection{The $2$-categorical Picard group of a super-Tannakian category}
\label{pstc}

We start with the basic example. 
\bex
\lb{svs}
Let $G=\bZ/2\bZ$ and let $t$ be the nontrivial element of $G$. Then $\Rep(G,\,t)=\sVect$, the category of super vector spaces. It goes back to \cite{W} that $\Pic(\sVect) = \bZ/2\bZ$. Let $\Pi$ denote the non-identity simple object of $\sVect$.
\nl
Let $R$ be an Azumaya algebra in $\sVect$ and let $\phi:R\to R$ be an automorphism. The equation \eqref{efpf} can be rewritten as $\phi(r)\zeta = \zeta r (-1)^{|r||\zeta|}$, where $\zeta$ is the value of $\phi$ on a basic element of the invertible object $P\in \sVect$.
\nl
The homomorphism $Q_{\Pic(\sVect)} : \Pic(\sVect)=\bZ/2\bZ\ \to \Inv(\sVect)= \bZ/2\bZ$ is the identity map.
Indeed, the Azymaya algebra $A=k\langle x| x^2=1\rangle = I\op\Pi$ (with $x$ odd) represents the non-trivial class in $\Pic({\sVect})$.
Its tensor square in $\sVect$ is $A^{\ot 2}=k\langle x, y| x^2=y^2=1\ xy+yx=0\rangle$. 
The braiding $c_{A,A}$ is the algebra automorphism $\tau$ interchanging $x$ and $y$.
Note that  $\tau(r)(x-y) = (x-y)r(-1)^{|r|}$ for $r\in A^{\ot 2}$.
Since the element $\zeta=x-y\in A^{\ot 2}$ is odd, the invertible object in $\sVect$ corresponding to $\tau$ is the non-trivial element $\Pi$ of $\Inv({\sVect})$.
\nl
The pairing \eqref{pairing <>} for $\sVect$ is given by
$$\Pic(\sVect)\times \Aut_\ot(\id_{\sVect})\to \Inv({\sVect})\qquad \langle A,\pi\rangle = \Pi\ ,$$
where $\pi$ is the natural automorphism of the identity functor of $\sVect$ such that $\pi_\be=\id_\be$ and $\pi_\Pi=-\id_\Pi$.
Indeed the automorphism $\pi_A$ of the Azymaya algebra $A$ satisfies $\pi(a)x = x a(-1)^{|a|}$ for $a\in A$.
\eex
We say that $\Rep(G,\,t)$ is {\em split super-Tannakian} if $\langle t \rangle$ is a direct summand of $G$ and   
{\em non-split super-Tannakian} otherwise.

The following definition was given and Theorem~\ref{pisu} below was proved by Carnovale \cite{Car}. We include the argument
for the sake of completeness and to set up notation for subsequent computations.  

Define the group  $H^2(G,t,k^\times)$ to be  the second cohomology $H^2(G,\,k^\times)$ as a set, 
with the group operation (on the level of cocycles) given by
\beq
\lb{tmc}
(\gamma*\nu)(f,g) = (-1)^{\xi_\gamma(f)\xi_\nu(g)}\gamma(f,g)\nu(f,g)\qquad f,g\in G,\, \gamma,\nu\in H^2(G,\,k^\times).
\eeq
where $\xi_\gamma:G\to \bZ/2\bZ$ is the homomorphism defined by 
\begin{equation}
\label{-1xi}
(-1)^{\xi_\gamma(g)} = \gamma_t(g) = \frac{\gamma(t,g)}{\gamma(g,t)}.
\end{equation} 

\begin{remark}
It was explained in \cite{Car}  that $H^2(G,t,k^\times)$ is (non-canonically) isomorphic to $H^2(G,k^\times)$.
\end{remark}

\begin{theorem}
\label{pisu}
The Picard group of a split super-Tannakian symmetric fusion category is 
\begin{equation}
Pic(\Rep(G,t))\ \cong\  H^2(G,t,k^\times) \times \bZ/2\bZ.
\end{equation}
The Picard group of a non-split super-Tannakian symmetric fusion category is 
\begin{equation}
Pic(\Rep(G,t))\ \cong\ H^2(G,t,k^\times).
\end{equation}
The homomorphism $Q_{Pic(\Rep(G,t))}:Pic(\Rep(G,t))\to Inv(\Rep(G,t))_2=(\widehat G)_2$ restricted to $H^2(G,t,k^\times)$ is given by
\begin{equation}
\label{chi-gamma}
\gamma\ \mapsto\ \gamma_t,\qquad \gamma_t(g) = \frac{\gamma(t,g)}{\gamma(g,t)}.
\end{equation}
In the split case, the homomorphism $Q_{Pic(\Rep(G,t))}$ restricted to $\bZ/2\bZ$ is the isomorphism $\bZ/2\bZ\to\widehat{\langle t\rangle}$.  
\end{theorem}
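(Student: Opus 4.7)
The plan is to classify Azumaya algebras in $\Rep(G,t)$ up to Morita equivalence using Wall's classification of Azumaya algebras in $\sVect$. Since $\Rep(G,t)$ has a symmetric fiber functor to $\sVect$, an Azumaya algebra $R$ in $\Rep(G,t)$ is exactly a $G$-equivariant Azumaya algebra in $\sVect$; forgetting the $G$-action gives a group homomorphism
\[
\Phi: \Pic(\Rep(G,t)) \to \Pic(\sVect) = \bZ/2\bZ
\]
whose kernel and image I would analyze separately.

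For $\Ker(\Phi)$, I would observe that such algebras are Morita-equivalent in $\sVect$ to $\End_k(V)$ for a super vector space $V$; equivariantly, $V$ carries a projective super-representation of $G$ with Schur multiplier $\gamma\in H^2(G,k^\times)$. Two such $\End_k(V), \End_k(V')$ are Morita equivalent in $\Rep(G,t)$ iff $V'\cong V\otimes L$ for invertible $L\in\Rep(G,t)$, so $\Ker(\Phi)=H^2(G,k^\times)$ as a set. The group structure requires care: the tensor product in $\Pic$ involves $\End_k(V)\ot \End_k(V')\cong \End_k(V\ot_{\sVect} V')$, and the super-braiding \eqref{braiding on RepGt} forces a sign when commuting the two projective actions past one another. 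A direct calculation, using that $\rho(t)$ acts as $\pm\sqrt{\gamma(t,t)}$ on the even/odd parts of $V$ so that the parity of $\rho(g)\in\End_k(V)$ is governed by $\xi_\gamma(g)=\log_{-1}(\gamma(t,g)/\gamma(g,t))$, identifies the resulting multiplier with $\gamma*\nu$ of \eqref{tmc}, giving $\Ker(\Phi)=H^2(G,t,k^\times)$.

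For $\Image(\Phi)$, the nontrivial class is represented by the Clifford algebra $Cl_1=k\langle x\mid x^2=1\rangle$ with $|x|=1$. An equivariant structure is a homomorphism $G\to\Aut_{alg}(Cl_1)=\bZ/2\bZ$, i.e.\ a character $\chi:G\to\bZ/2\bZ$. For the super-grading on $Cl_1$ (which must be compatible with the braiding of $\Rep(G,t)$) to come from the action of $t$ by $-1$ on odd elements, one needs $\chi(t)=-1$. Such a $\chi$ exists precisely when $\langle t\rangle$ is a direct summand of $G$, i.e.\ in the split super-Tannakian case; in the non-split case $t$ is a square in $G^{ab}$ and no such character exists, so $\Phi=0$. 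In the split case the embedding $\sVect\hookrightarrow\Rep(G,t)$ provides a section, producing the claimed direct product decomposition.

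Finally, I would compute $Q_{Pic(\Rep(G,t))}(R)=B_{R,R}$ by the argument of Proposition~\ref{tanc}: for $R=\End_k(V)$ the braiding $c_{R,R}$ is conjugation by $\zeta=c_{V,V}\in \End_k(V^{\ot 2})$, and $Q(R)$ is the character by which $G$ acts on $\zeta$. Using \eqref{braiding on RepGt} together with the projective relation $\rho(t)\rho(g)=(\gamma(t,g)/\gamma(g,t))\rho(g)\rho(t)=\gamma_t(g)\rho(g)\rho(t)$, one computes $g\cdot\zeta=\gamma_t(g)\,\zeta$, so $Q$ restricted to $H^2(G,t,k^\times)$ is \eqref{chi-gamma}. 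On the $\bZ/2\bZ$ factor in the split case, the computation reduces by restriction to Example~\ref{svs}, yielding the isomorphism $\bZ/2\bZ\to\widehat{\langle t\rangle}$. The main obstacle I anticipate is the bookkeeping of signs in the kernel analysis: making the parities of the projective generators $\rho(g)$ precisely responsible for the twist $(-1)^{\xi_\gamma(f)\xi_\nu(g)}$ in the product \eqref{tmc} is the crux, and requires tracking how the super-braiding interacts with equivariant Morita equivalence.
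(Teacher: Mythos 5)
Your route is essentially the paper's: analyze the homomorphism $\Phi:\Pic(\Rep(G,t))\to\Pic(\sVect)$ induced by forgetting the $G$-action, identify $\Ker(\Phi)$ with $H^2(G,t,k^\times)$ via matrix algebras of projective super-representations (with the parity of $\rho(g)$ governed by $\xi_\gamma$ producing the twisted product \eqref{tmc}), settle surjectivity by the split/non-split dichotomy, and compute $Q$ by realizing $c_{R,R}$ as conjugation by $\zeta=c_{V,V}$ and reading off the character by which $G$ acts on $\zeta$, exactly as in Proposition~\ref{tanc} and Example~\ref{svs}. The genuine gap is in your image analysis in the non-split case. To conclude $\Phi=0$ you must rule out \emph{every} Azumaya algebra $R$ in $\Rep(G,t)$ whose underlying super algebra lies in the nontrivial class of $\Pic(\sVect)$, whereas you only show that the particular representative $Cl_1=k\langle x\mid x^2=1\rangle$ admits no $G$-structure with $t$ acting by parity. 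A priori the nontrivial class could be realized equivariantly by some $R\cong Cl_1\otimes\mathrm{End}_k(U)$ on which $G$ acts without preserving the tensor decomposition, and your argument says nothing about such $R$. The paper closes precisely this gap: any such $R$ has ungraded center isomorphic to $I\oplus\Pi$, the $G$-action preserves this center, and since $t$ acts on $R$ by parity it acts nontrivially on the odd part of the center; the resulting homomorphism $G\to\Aut_{alg}(I\oplus\Pi)=\bZ/2\bZ$ is a character sending $t$ to the nontrivial element, which forces $G\cong\langle t\rangle\times\Ker(\chi)$, i.e.\ the split case. Your equivalence ``a character $\chi$ with $\chi(t)=-1$ exists iff $\langle t\rangle$ is a direct summand'' is correct and is the right criterion, but you need the center argument (or an equivalent) to manufacture such a $\chi$ from an \emph{arbitrary} equivariant representative, not just from $Cl_1$.

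Two smaller points. Your characterization of Morita equivalence in the kernel (``$\mathrm{End}_k(V)\sim\mathrm{End}_k(V')$ iff $V'\cong V\otimes L$ with $L$ invertible'') is false as stated, since Morita equivalence does not preserve dimension (take $V'=V\otimes k^2$ with trivial action); what the bijection of $\Ker(\Phi)$ with $H^2(G,k^\times)$ actually rests on is that the Schur multiplier class is a complete Morita invariant of these equivariant matrix algebras, which is the statement you should prove instead. Finally, the sign bookkeeping you defer as ``the crux'' is indeed where the content lies: it is the explicit computation with the isomorphism $\phi:\mathrm{End}_k(U)\otimes\mathrm{End}_k(V)\to\mathrm{End}_k(U\otimes V)$ and the twisted action $\rho(g)(u\otimes v)=(-1)^{\xi_\gamma(g)|v|}g.u\otimes g.v$, whose multiplier is \eqref{schur}; your outline points at the right statement but does not carry it out, so as a complete proof this step would still have to be supplied along the lines of the paper's calculation.
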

\bpf
Consider the homomorphism $\Pic(\Rep(G,\,t))\to Pic(\sVect)$ induced by the restriction functor $\Rep(G,t)\to \sVect$. 
We start by showing that this homomorphism is surjective if and only if  $\Rep(G,\,t)$ is split.
Indeed, a splitting of the restriction functor $\Rep(G,t)\to \sVect$ induces a splitting of the homomorphism $\Pic(\Rep(G,\,t))\to Pic(\sVect)$.
Conversely, an Azumaya algebra $R\in \Rep(G,t)$, which class is mapped to the class of $I\op\Pi\in\sVect$, 
has the form $(I\op\Pi)\ot End_k(U)$ for some vector space $U$. In particular its classical center (computed in $\Vect$) coincides with $I\op\Pi$. 
The $G$-action descends from $R$ to its center and gives a splitting $G\to \Aut_{alg}(I\op\Pi) = \bZ/2\bZ$.

The restriction of the homomorphism $Q_{Pic(\Rep(G,t))}$ to $Pic(\sVect)$ is described in Example~\ref{svs}.

In the following we argue that the kernel of the homomorphism $Pic(\Rep(G,t))\to Pic(\sVect)$ is isomorphic to $H^2(G,t,k^\times)$.
This kernel consists of classes of Azumaya algebras of the form $End_k(V)$ for a projective $G$-representation $V$. It is straightforward to see 
(e.g., by computing the left center \cite{DKR} in $\Rep(G,t)$) that $End_k(V)$ is an Azumaya algebra in $\Rep(G,t)$ for any projective $G$-representation $V$. Thus we have a set-theoretic bijection $H^2(G,t,k^\times)\to \Ker(Pic(\Rep(G,t))\to Pic(\sVect))$ sending $\gamma\in Z^2(G,k^\times)$ to the (class of) $End_k(V)$, where $V$ is a projective $G$-representation with the Schur multiplier $\gamma$.
To show that this is a group isomorphism we need a few facts.

Let $U$ and $V$ be super vector spaces.
Define a map 
\begin{equation}
\label{map phi}
\phi: End_k(U)\ot End_k(V)\to End_k(U\ot V) : a\ot b \mapsto \phi_{a,b},
\end{equation}
by $\phi_{a,b}(u\ot v)=(-1)^{|b||u|} a(u)\ot b(v)$
for all homogeneous maps $a\in End_k(U),\, b\in End_k(V)$ and homogeneous vectors $u\in U,\, v\in V$,
where $|a|$ denotes the degree of $a$. 

The map \eqref{map phi} is an isomorphism of algebras in $\sVect$, i.e. $\phi_{a,b}\circ\phi_{c,d} = (-1)^{|b||c|}\phi_{a\circ c,b\circ d}$. 
Indeed, 
\begin{align*}
\phi_{a,b}(\phi_{c,d}(u\ot v)) &= (-1)^{|d||u|}\phi_{a,b}(c(u)\ot d(v)) \\
& = (-1)^{|d||u|+|b||c(u)|}a(c(u))\ot b(d(v)) \\
& = (-1)^{|d||u|+|b||c|+||b|u|}(a\circ c)(u)\ot (b\circ d)(v) \\
& = (-1)^{|b||c|}\phi_{a\circ c,b\circ d}(u\ot v).
\end{align*}

Let now $U$ be a projective $G$-representation with the Schur multiplier $\gamma\in Z^2(G,k^\times)$.
Let $t\in G$ be a central involution. Denote by $U = U_0\op U_1$ the $\bZ/2\bZ$-grading corresponding to $t$, i.e. $u\in U$ is homogeneous 
of degree $|u|$ iff $t(u) = (-1)^{|u|}u$.
Then the $G$-action is related to the grading in the following way $|g.u| = \xi_\gamma(g)+|u|$, where $\xi_\gamma$ is defined in \eqref{-1xi}.
Indeed,
$$
t.(g.u) = \gamma(t,g)(tg).u = \gamma(t,g)(gt).u = \frac{\gamma(t,g)}{\gamma(g,t)}g.(t.u) = (-1)^{|u|}\frac{\gamma(t,g)}{\gamma(g,t)}g.u.
$$ 
Now let $U$ and $V$ be projective $G$-representations with the Schur multipliers $\gamma_U, \gamma_V\in Z^2(G,k^\times)$ correspondingly. 
Note that $End_k(U)$ is a $G$-algebra with $g(a)$ defined by $g(a)(u) = g.(a(g^{-1}.u))$ (and similarly for $End_k(V)$).
Then the homomorphism $\phi:End_k(U)\ot End_k(V)\to End_k(U\ot V)$ has the following $G$-equivariance property: 
$$\phi_{g(a),g(b)} = \rho(g)\circ \phi_{a,b}\circ\rho(g)^{-1}\ ,$$
where $\rho(g):U\ot V\to U\ot V$ is given by $\rho(g)(u\ot v) = (-1)^{\xi_\gamma(g)|v|}g.u\ot g.v$. 
Indeed, the relation $\phi_{g(a),g(b)}\circ\rho(g) = \rho(g)\circ \phi_{a,b}$ can be checked directly:
\begin{align*}
\phi_{g(a),g(b)}(\rho(g)(u\ot v)) &= (-1)^{\xi_\gamma(g)|v|}\phi_{g(a),g(b)}(g.u\ot g.v) \\
& = (-1)^{\xi_\gamma(g)|v|+|g(b)||g.u|}g(a)(g.u)\ot g(b)(g.v) \\
& = (-1)^{|b||u|+\xi_\gamma(g)|b(v)|}g.(a(u))\ot g.(b(v)) \\
& = (-1)^{|b||u|+\xi_\gamma(g)|b(v)|}g.(a(u))\ot g.(b(v)) \\
& = (-1)^{|b||u|}\rho(g)(a(u)\ot b(v)) \\
&= \rho(g)(\phi_{a,b}(u\ot v)). 
\end{align*}
The map $\rho:G\to GL(U\ot V)$ is a projective representation with the  Schur multiplier 
\begin{equation}
\label{schur}
\gamma(f,g) = (-1)^{\xi_\gamma(f)\xi_V(g)}\gamma_U(f,g)\gamma_V(f,g),\qquad f,g\in G.
\end{equation}
To see this,  we compute
\begin{align*}
\rho(fg)(u\ot v)  &= (-1)^{\xi_\gamma(fg)|v|}(fg).u\ot (fg).v \\
&= (-1)^{\xi_\gamma(fg)|v|}\gamma_U(f,g)\gamma_V(f,g)f.(g.u)\ot f.(g.v) \\
&= (-1)^{\xi_\gamma(g)|v|+\xi_\gamma(f)|g.v|}\gamma(f,g)f.(g.u)\ot f.(g.v) \\
&= (-1)^{\xi_\gamma(g)|v|}\gamma(f,g)\rho(f)(g.u\ot g.v) \\
&= \gamma(f,g)\rho(f)(\rho(g)(u\ot v)).
\end{align*}
In the rest of the proof we describe the restriction of the homomorphism $Q_{\Rep(G,t)}$ to $H^2(G,t,k^\times)$.
Let again $V$ be a projective $G$-representation with the Schur multiplier $\gamma\in Z^2(G,k^\times)$.
The automorphism $c_{End_k(V),End_k(V)}$ of the algebra $End_k(V)^{\ot 2}$ in $\Rep(G,t)$
$$
c_{End_k(V),End_k(V)}(a\ot b) = (-1)^{|a||b|}b\ot a,
$$ 
transported (along $\phi$) to an automorphism of the algebra $End_k(V^{\ot 2})$, is inner.
More precisely, we have
$$
c_{V,V}\circ \phi_{a,b}\circ c_{V,V}^{-1} = (-1)^{|a||b|}\phi_{b,a},
$$
since
\begin{align*}
c_{V,V}(\phi_{a,b}(u\ot v)) &= (-1)^{|b||u|}c_{V,V}(a(u)\ot b(v)) \\
&= (-1)^{|b||u|+|a(u)||b(v)|}b(u)\ot a(v) \\
& =  (-1)^{|a||b|+|u||v|+|a||v|}b(u)\ot a(v)\\
& = (-1)^{|a||b|+|u||v|}\phi_{b,a}(v\ot u) \\
& = (-1)^{|a||b|}\phi_{b,a}(c_{V,V}(u\ot v)). 
\end{align*}

The element $c_{V,V}\in End_k(V)^{\ot 2}$ has the following $G$-equivariance property:
$g(c_{V,V}) = \chi_\gamma(g)c_{V,V}$, where $\chi_\gamma$ is defined in \eqref{chi-gamma}. 
That is, $\rho(g)\circ c_{V,V}\circ \rho(g)^{-1} = \chi(g)c_{V,V}$ since we have
\begin{align*}
\rho(g)(c_{V,V}(u\ot v)) &= (-1)^{|u||v|}\rho(g)(v\ot u) \\
&= (-1)^{|u||v|+\xi(g)|u|}g.v\ot g.u \\
&= (-1)^{\xi(g)^2+\xi(g)|v|+|g.u||g.v|}g.v\ot g.u\\
&= (-1)^{\xi(g)+\xi(g)|v|}c_{V,V}(g.u\ot g.v) \\
&= \chi_\gamma(g)c_{V,V}(\rho(g)(u\ot v)).
\end{align*}
The formula for $Q_{Pic(\Rep(G,t))}$ on $H^2(G,t,k^\times)$ now follows from Remark \ref{efp}. 
\epf

Recall that  the character $\xi_\gamma :G \to \mathbb{Z}/2\mathbb{Z} =\{0,1\}$ was defined in \eqref{gammat}.
It depends on  $\gamma \in H^2(G,\,k^\times)$ as well as on $t\in Z(G)$.

\begin{proposition}
\label{prsg}
The pairing \eqref{pairing <>} for $\Rep(G,t)$ is given by
\begin{align}
\langle\gamma,\,z \rangle &= \gamma_{zt^{\xi_\gamma(z)}} \qquad \text{in the non-split case}, \lb{ff} \\
\label{split formula}
\langle(\gamma,\,\ve),\,z\rangle &= \gamma_{zt^{\xi_\gamma(z)}} \nu^{\xi_\gamma(z)\ve}  \qquad \text{in the split case},
\end{align}
for $\gamma \in H^2(G,\,k^\times)$ and  $z\in Z(G)$,  where $\nu$ is the composition $G\to \langle t\rangle\to k^\times$ of the non-trivial character on 
$\langle t\rangle$ with a  (chosen) splitting $G\to \langle t\rangle$ (i.e., $\nu\in \widehat G$ corresponds to the image of $\Pi$ under a (chosen) splitting $\sVect\to \Rep(G,t)$). 
\end{proposition}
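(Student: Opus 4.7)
The plan is to compute $\langle R,\,z\rangle$ using the recipe of Remark~\ref{efp}, applied to the Azumaya algebra $R\in\Rep(G,t)$ representing the Picard class together with the automorphism $\phi = z_R$ coming from the group-like element $z\in Z(G)\cong\Aut_\ot(\id_{\Rep(G,t)})$. The only new feature compared with the Tannakian case of Proposition~\ref{prg} is that the braiding \eqref{braiding on RepGt} carries Koszul-type signs indexed by the $\bZ/2\bZ$-grading coming from $t$; consequently the defining relation \eqref{efpf} specializes to $\phi(r)\zeta = (-1)^{|r||P|}\zeta r$ for homogeneous $r\in R$, where $|P|\in\{0,1\}$ is the parity of the invertible object $P\in\Rep(G,t)$ (equivalently, of $\zeta$ itself). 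The resulting character of $P$ is read off from $g(\zeta)=\chi(g)\zeta$.

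In the non-split case, choose a projective representation $\rho\colon G\to GL(V_\gamma)$ with Schur multiplier $\gamma$ and set $R=End_k(V_\gamma)$. As shown in the proof of Theorem~\ref{pisu}, the parity of $\rho(z)$ with respect to the $t$-grading on $V_\gamma$ equals $\xi_\gamma(z)$. I split into two cases. If $\xi_\gamma(z)=0$, then $\zeta=\rho(z)$ is even, the sign in \eqref{efpf} is trivial, and we are in the Tannakian situation of Proposition~\ref{prg}, giving $\chi=\gamma_z=\gamma_{zt^{\xi_\gamma(z)}}$. If $\xi_\gamma(z)=1$, I claim that $\zeta=\rho(zt)$ works: its parity is $\xi_\gamma(zt)=\xi_\gamma(z)+\xi_\gamma(t)=1$ (one checks $\xi_\gamma(t)=0$ from the definition of $\xi_\gamma$), and the identity $r\rho(t) = (-1)^{|r|}\rho(t)r$ (which holds because $\rho(t)$ acts as $\pm1$ on the two graded components of $V_\gamma$) converts the classical conjugation $\Ad_{\rho(z)}$ into the super-conjugation $(-1)^{|r|}\Ad_{\rho(zt)}$ required by \eqref{efpf}. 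The computation $g(\rho(w))=\gamma_w(g)\rho(w)$ (using centrality of $w=zt^{\xi_\gamma(z)}$) then yields $\chi=\gamma_w$, proving \eqref{ff}.

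In the split case, fix a splitting $G\to\langle t\rangle$ and the corresponding embedding $\sVect\hookrightarrow\Rep(G,t)$ under which $\Pi\mapsto k_\nu$. The resulting splitting of $\Pic(\Rep(G,t))\to\Pic(\sVect)$ identifies $(\gamma,\varepsilon)$ with the class of $R=End_k(V_\gamma)\ot A^{\ot\varepsilon}$, where $A=k\oplus k_\nu$ is the Clifford algebra of Example~\ref{svs} viewed in $\Rep(G,t)$, and the tensor product is the super tensor product inherited from the braiding \eqref{braiding on RepGt}. The pairing $\langle-,z\rangle$ is a group homomorphism, so I can compute $\langle R,z\rangle$ by producing $\zeta$ on the tensor product from the individual $\zeta_1\in End_k(V_\gamma)$ of the non-split case and $\zeta_2\in A^{\ot\varepsilon}$ of the Example~\ref{svs} computation. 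The crucial point is that in the Koszul-signed tensor product one verifies the identity $\zeta_1\ot\zeta_2$ implements $z_R$ correctly, and that $g(\zeta_1\ot\zeta_2)=g(\zeta_1)\ot g(\zeta_2)$ so the resulting character is the product of the individual characters; formula \eqref{split formula} then follows.

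The main obstacle is the careful bookkeeping of Koszul signs in the split case: one must verify that the naive choice $\zeta=\zeta_1\ot\zeta_2$ genuinely satisfies the super-conjugation formula \eqref{efpf} for the combined algebra, which requires computing $(\zeta_1\ot\zeta_2)^{-1}$ in the graded tensor product (where it picks up a sign $(-1)^{|\zeta_1||\zeta_2|}$) and checking that this sign cancels against the sign arising when commuting $\zeta_2$ past $r$ and $\zeta_1$ past $a$ in a general element $s=r\ot a$. Once this cancellation is performed, the $\gamma$- and $\varepsilon$-contributions decouple and the formula drops out.
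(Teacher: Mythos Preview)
Your proposal is correct and follows essentially the same strategy as the paper: both proofs apply Remark~\ref{efp} to the Azumaya algebra $R=End_k(V_\gamma)$, identify the required $\zeta$ as $\rho(zt^{\xi_\gamma(z)})$, and then handle the split case by multiplicativity together with Example~\ref{svs}. Your non-split argument is in fact more detailed than the paper's (which simply asserts that $\zeta=\rho(zt^{\xi_\gamma(z)})$ is the unique solution without writing out the case split or the identity $r\rho(t)=(-1)^{|r|}\rho(t)r$).

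One remark on economy: your final paragraph treats the Koszul bookkeeping for $\zeta_1\ot\zeta_2$ as the ``main obstacle,'' but this verification is unnecessary. The multiplicativity $\varsigma_{R\ot S}(a)=\varsigma_R(a)\ot\varsigma_S(a)$ is already recorded in Remark~\ref{rhh} for an arbitrary braided tensor category, so once you know $\langle End_k(V_\gamma),z\rangle$ and $\langle A^{\ot\varepsilon},z\rangle$ separately, the product formula follows immediately---this is exactly what the paper does in a single sentence. You can therefore delete the sign-cancellation discussion and simply cite Remark~\ref{rhh}.
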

\bpf
First assume that the class of $R$ is in the kernel of the homomorphism $Pic(\Rep(G,t))\to Pic(\sVect)$, i.e. $R=End_k(V)$, where $V$ is a projective $G$-representation $\rho:G\to GL(V)$ with multiplier $\gamma$. 
According to Remark \ref{efp}, the invertible object corresponding to the automorphism $\phi=\rho(z)(-)\rho(z)^{-1}$ of $End_k(V)$ is given by the (unique) character $\chi$ such that there is an invertible $\zeta\in End_k(V)$ with the properties
$\phi(r) = \zeta r\zeta^{-1}(-1)^{|r||\zeta|}$ and $\zeta\rho(g) = \chi(g)\rho(g)\zeta$ for any $g\in G$.
A (unique up to a scalar) solution is $\zeta = \rho(zt^{\xi_\gamma(z)})$, where $\xi_\gamma(z) = |\rho(z)|$ (or, equivalently, $(-1)^{\xi_\gamma(z)} = \gamma_t(z)$). 
Thus, $\chi=\gamma_{zt^{\xi_\gamma(z)}}$. This computes the pairing between $H^2(G,\,k^\times)$ and $Z(G)$ and, in particular, proves the formula \eqref{ff}. 

In the split case the restriction of the pairing to $\sVect\subset \Rep(G,t)$ is $\langle(1,\,\ve),\,z\rangle = \nu^{\xi_\gamma(z)\ve}$ (according to Example \ref{svs}). 
\epf

\begin{remark}
It should be noted that {\em both} sides of  \eqref{split formula} depend on the choice of splitting $G\to \langle t\rangle$. 
Indeed, the identification $\Pic(\Rep(G,t))\cong H^2(G,\,k^\times) \times \mathbb{Z}/2\mathbb{Z}$ (implicit on the left hand side
of \eqref{split formula}) is not canonical and depends on this choice.
\end{remark}

\subsection{The braided categorical Picard group  of a Tannakian category}
\label{symcase}

\bth
\label{pbsy}
Let $\E$ be a symmetric tensor category. There is a group isomorphism  
\begin{equation}
\label{Picbr E as a group}
\Picbr(\E) \cong \Pic(\E)\times \Aut_\ot(\id_\E)\ .
\end{equation}
The first canonical class of $\Picbr(\E)$ is 
\begin{equation}
\label{QE}
Q_{\Picbr(\E)}(\M,\, \nu) = Q_{\Pic(\E)}(\M)\ \langle \M,\, \nu \rangle.
\end{equation}
where the pairing $\langle \M,\, \nu \rangle$ is defined in \eqref{pairing <>}.
\eth
\begin{proof}
Since every $\E$-module category admits an identity $\E$-module braiding
it follows that $\uPicbr(\E)$ is generated by  symmetric categorical subgroups $\uPic(\E)$ and $\uAut_\ot(\id_\E)$
(the latter consists of invertible categories in $\ModbrI(\E)$).
These subgroups intersect trivially, so $\Picbr(\E)$ is their direct product. 
\nl
For $\M\in \Pic(\E)$ and $\nu\in \Aut_\ot(\id_\E)$ the self-braiding $C_{(\M,\, \nu),(\M,\, \nu)}$ on $(\M,\, \nu) = \M\bt_\B\B^\nu$ is the composition of (conjugates of) $C_{\M,\M}, C_{\M,\B^\nu}, C_{\B^\nu,\M}$ and $\C_{\B^\nu,\B^\nu}$. The self-braiding $C_{\M,\M}$ is the symmetric one of $\uPicbr(\E)$. The relation between the self-braiding $C_{\M,\B^\nu}$ and $\langle \M,\, \nu \rangle$ is explained in  Example~\ref{Bnu}. The braidings $C_{\B^\nu,\M}$ and $\C_{\B^\nu,\B^\nu}$ are in effect trivial (the first since $C_{\B^\nu,\M}$ is the symmetric braiding $C_{\B,\M}$ $\uPicbr(\E)$, while the second is the trivial case $\M=\B$ of Example~\ref{Bnu}).
This proves the formula for $Q_{\Picbr(\E)}$.  
\end{proof}

The Whitehead bracket $\pi_0\times \pi_1\to \pi_2$ \eqref{new pairing} of $\uuPicbr(\E)$ is given by
\begin{equation}
\label{Whitehead E}
( \Pic(\E)\times \Aut_\ot(\id_\E)) \times \Inv(\E) \to k^\times : [(\M,\,\nu),\, Z] =  \nu_Z.
\end{equation}

We can now describe the braided categorical Picard group of a Tannakian category.

\begin{corollary}
\label{Picbr Tannakian and non-split}
We have
\begin{align}
\pi_0(\uuPic_{br}(\Rep(G))) &\cong H^2(G,\, k^\times) \times  Z(G), \\
\pi_1(\uuPic_{br}(\Rep(G))) & \cong\widehat{G},  
\end{align}
where $\widehat{G}$ denotes  the group of characters of $G$.
\nl
The first canonical class is the quadratic form
\begin{equation}
Q_{\uuPic_{br}(\Rep(G))} (\gamma,\, z) = \gamma_z(-), \qquad  z\in Z(G),\, \gamma \in H^2(G,\, k^\times).
\end{equation}
The second canonical class  is trivial.
\nl
The Whitehead bracket \eqref{Whitehead E}  is given by
\begin{equation}
\label{Whitehead RepG}
[(\gamma,z),\, \chi]= \chi(z),\qquad \chi\in \widehat{G}, z\in Z(G).
\end{equation}
\end{corollary}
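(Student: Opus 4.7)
The plan is to specialize Theorem~\ref{pbsy} to $\E = \Rep(G)$ and combine it with Propositions~\ref{tanc} and~\ref{prg}. For the homotopy groups, I would first invoke the group isomorphism \eqref{Picbr E as a group}, which gives $\Picbr(\Rep(G)) \cong \Pic(\Rep(G)) \times \Aut_\ot(\id_{\Rep(G)})$. Then I would identify $\Pic(\Rep(G)) \cong H^2(G, k^\times)$ via Proposition~\ref{tanc} and invoke the standard Tannakian identification $\Aut_\ot(\id_{\Rep(G)}) \cong Z(G)$, assigning to a central element $z \in Z(G)$ the natural automorphism whose value on a representation $V$ is multiplication by the action of $z$ on $V$. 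For $\pi_1$, since $\Rep(G)$ is symmetric we have $\Z_{sym}(\Rep(G)) = \Rep(G)$, so $\pi_1 = \Inv(\Rep(G)) = \widehat{G}$, the characters of $G$.

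Next I would compute the canonical classes. The first canonical class is given by the formula \eqref{QE}:
\[
Q_{\uuPicbr(\Rep(G))}(\gamma, z) = Q_{\Pic(\Rep(G))}(\gamma) \cdot \langle \gamma, z \rangle.
\]
The first factor is trivial by the last assertion of Proposition~\ref{tanc}, and the second factor equals $\gamma_z(-) = \gamma(z,-)/\gamma(-,z)$ by Proposition~\ref{prg}. For the second canonical class, recall from Section~\ref{section: pb and AB} that $q_{\uuPicbr(\B)}$ is the quadratic form $Z \mapsto c_{Z,Z}$ on $\Inv(\Z_{sym}(\B))$. Applied to a character $\chi \in \widehat{G}$, the standard symmetric (transposition) braiding of $\Rep(G)$ satisfies $c_{\chi,\chi} = \id$ (since $\chi$ is $1$-dimensional and the braiding on a tensor square of $1$-dimensional even objects is just the flip of scalars), so $q_{\uuPicbr(\Rep(G))}$ is trivial.

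Finally, for the Whitehead bracket $[-,-] : \pi_0 \times \pi_1 \to \pi_2$, I would use formula \eqref{Whitehead E}, which gives $[(\gamma, z), \chi] = \nu_\chi$ where $\nu \in \Aut_\ot(\id_{\Rep(G)})$ is the automorphism corresponding to $z$. Under the identification above, $\nu_\chi \in \Aut(\chi) = k^\times$ is precisely the scalar by which $z$ acts on $\chi$, which is $\chi(z)$. The main obstacle will be bookkeeping: making the identification $\Aut_\ot(\id_{\Rep(G)}) \cong Z(G)$ compatible with Proposition~\ref{prg} (so that the formulas $\gamma_z$ from \eqref{gammat} and \eqref{Whitehead RepG} refer to the same $z$), and verifying that under this identification the pairing \eqref{pairing <>} reduces to the explicit formula in Proposition~\ref{prg}. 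Everything else is a direct specialization of the already-established framework.
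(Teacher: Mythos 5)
Your proposal is correct and follows essentially the same route as the paper, which proves this corollary simply by citing Propositions~\ref{tanc} and~\ref{prg} together with Theorem~\ref{pbsy}; your additional bookkeeping (the identification $\Aut_\ot(\id_{\Rep(G)})\cong Z(G)$, $\pi_1=\Inv(\Rep(G))=\widehat{G}$ via $\Z_{sym}(\Rep(G))=\Rep(G)$, triviality of $c_{\chi,\chi}$ for the second canonical class, and the evaluation $\nu_\chi=\chi(z)$ for the Whitehead bracket) is exactly the detail the paper leaves implicit.
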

\bpf
Follows from Propositions~\ref{tanc}, \ref{prg} and  Theorem  \ref{pbsy}.
\epf

\subsection{The braided categorical Picard group  of a super-Tannakian category}
\label{suptancase}

Here we deal with the super-Tannakian case. We start with the basic example
of the symmetric fusion category $\sVect$ of super vector spaces. As before, $\Pi$ denotes the non-identity simple
object of $\sVect$.

\begin{example}
The group $\pi_0(\uuPicbr({\sVect}))\cong Pic_{br}({\sVect})\times \Aut_\ot (\id_{\sVect}) \cong \bZ/2\bZ \times \bZ/2\bZ $ consists of pairs
$(\I,id),\ (\I,\pi),\ (\R,id),\ (\R,\pi)$, where $\I$ is the regular $\sVect$-module category, $\R=\Vect$ viewed as an $\sVect$-module
category (i.e. $\R=\sVect_A$, where $A$ is the algebra from Example \ref{svs}),  
$\pi$ is the natural automorphism of the identity functor of $\sVect$ such that $\pi_\be=\id_\be$ and $\pi_\Pi=-\id_\Pi$ (as in the algebra from Example \ref{svs}).
It follows from Example \ref{svs} and  Theorem \ref{pbsy} that the quadratic function $Q_{Pic_{br}({\sVect})}:\Pic_{br}(\sVect)\to Inv(\sVect)$ is given by
\[
Q_{Pic_{br}({\sVect})}(\I,id) = Q_{Pic_{br}({\sVect})}(\I,\pi) = Q_{Pic_{br}({\sVect})}(\R,\pi) = \I,\qquad Q_{Pic_{br}({\sVect})}(\R,id) = \Pi.
\]
\end{example}

\begin{corollary}
\begin{align}
\pi_0(\uuPic_{br}(\Rep(G,t))) & \cong
\begin{cases}
H^2(G,t,k^\times)\times Z(G) & \text{in the non-split case}, \\
H^2(G,t,k^\times)\times \bZ/2\bZ\times Z(G) & \text{in the split case}.
\end{cases} \\
\pi_1(\uuPic_{br}(\Rep(G,t))) & \cong  \widehat{G}.
\end{align}
The first canonical class of $\uuPic_{br}(\Rep(G,t))$ is given by the quadratic form (with values in $\widehat G$)
\begin{align}
Q_{\Pic_{br}(\Rep(G,t))}(\gamma,\,z) &= \gamma_{zt^{\xi_\gamma(z)+1}} \qquad \text{in the non-split case}, \\
Q_{\Pic_{br}(\Rep(G,t))}(\gamma,\,\ve,\,z) &= \gamma_{zt^{\xi_\gamma(z)+1}} \nu^{\xi_\gamma(z)\ve}  \qquad \text{in the split case},
\end{align}
for $\gamma \in H^2(G,\,k^\times)$, $z\in Z(G)$, and $x\in G$, where $\nu\in \widehat G$ is as in Proposition \ref{prsg}. 
\nl
The second canonical class is the homomorphism
\begin{equation}
\widehat{G} \to \{ \pm 1\} : \chi \mapsto \chi(t).
\end{equation}
The Whitehead bracket is given by \eqref{Whitehead RepG} (and does not depend on $t$).
\end{corollary}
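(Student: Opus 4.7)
The plan is to obtain each of the four conclusions (the two homotopy groups, the two canonical classes, and the Whitehead bracket) by feeding the two ingredients established earlier in the paper into the general formulas of Theorem~\ref{pbsy}. The first ingredient is the description of the Picard group $\Pic(\Rep(G,t))$ and its canonical class $Q_{\Pic(\Rep(G,t))}$ from Theorem~\ref{pisu}. The second is the pairing $\langle -,- \rangle$ of Proposition~\ref{prsg}. The only missing piece of data is $\Aut_\ot(\id_{\Rep(G,t)})$, and the first step I would carry out is to identify this group with $Z(G)$: a tensor automorphism of the identity functor on $\Rep(G)$ is the same thing as a grouplike element in the center of $k[G]$, which forces the underlying natural transformation to act on each representation by the action of an element of $Z(G)$. (The involution $t$ plays no role here because $\Aut_\ot(\id_\E)$ only sees the underlying tensor category.) Combined with Theorem~\ref{pbsy} this yields the asserted formula for $\pi_0$ in both the split and non-split cases.

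Next I would deal with $\pi_1$. By definition, $\pi_1(\uuPic_{br}(\B)) = \Inv(\Z_{sym}(\B))$, and since $\Rep(G,t)$ is itself symmetric we have $\Z_{sym}(\Rep(G,t)) = \Rep(G,t)$, whose invertible objects are exactly the $1$-dimensional representations, i.e.\ $\widehat{G}$. This gives $\pi_1 \cong \widehat{G}$. The assertion that $\pi_2 = k^\times$ is a general fact recorded in Section~\ref{section: pb and AB} and requires no verification here.

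The first canonical class is then computed by plugging into the formula
\[
Q_{\Pic_{br}(\E)}(\M,\nu) \;=\; Q_{\Pic(\E)}(\M)\,\langle \M,\nu\rangle
\]
of Theorem~\ref{pbsy} with $\E=\Rep(G,t)$. For $\E$ non-split super-Tannakian, substituting $Q_{\Pic(\Rep(G,t))}(\gamma) = \gamma_t$ from \eqref{chi-gamma} and $\langle \gamma,z\rangle = \gamma_{zt^{\xi_\gamma(z)}}$ from \eqref{ff} and using multiplicativity of $z\mapsto \gamma_z$ gives $Q(\gamma,z) = \gamma_{zt^{\xi_\gamma(z)+1}}$. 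The split case is analogous, combining the extra $\bZ/2\bZ$-factor in $Q_{\Pic(\Rep(G,t))}$ with the extra $\nu^{\xi_\gamma(z)\ve}$ term in $\langle (\gamma,\ve),z\rangle$. The second canonical class is $q(\chi) = c_{\chi,\chi}$, where $c$ denotes the symmetric braiding of $\Rep(G,t)$ from \eqref{braiding on RepGt}; evaluating this on a $1$-dimensional representation $\chi$ one sees directly that $c_{\chi,\chi} = 1$ when $\chi(t)=1$ and $c_{\chi,\chi}=-1$ when $\chi(t)=-1$, which together give $q(\chi) = \chi(t)$.

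Finally, the Whitehead bracket follows from \eqref{Whitehead E}: given $((\gamma,z),\chi)$, the bracket $[(\gamma,z),\chi]$ equals $\nu^{(z)}_\chi$, where $\nu^{(z)} \in \Aut_\ot(\id_{\Rep(G,t)})$ is the tensor automorphism corresponding to $z \in Z(G)$ under the identification above. On a $1$-dimensional representation $\chi$, the element $z$ acts by the scalar $\chi(z)$, so $[(\gamma,z),\chi] = \chi(z)$, which in particular is independent of $\gamma$ and $t$. The step I expect to require the most care is the bookkeeping in the split super-Tannakian case, where one must keep track simultaneously of the $\bZ/2\bZ$-factor from the splitting, the twisted group law \eqref{tmc} on $H^2(G,t,k^\times)$, and the dependence of $\xi_\gamma$ on the cocycle representative, to see that the product $Q_{\Pic}\cdot \langle -,-\rangle$ collapses to the stated closed form.
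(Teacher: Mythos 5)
Your proposal is correct and follows essentially the same route as the paper, whose entire proof is the citation ``Follows from Theorems~\ref{pisu} and \ref{pbsy} and Proposition~\ref{prsg}'': you substitute the Picard group and its class $Q_{\Pic(\Rep(G,t))}$ from Theorem~\ref{pisu} and the pairing of Proposition~\ref{prsg} into the formula of Theorem~\ref{pbsy}, and the auxiliary identifications you make explicit ($\Aut_\ot(\id_{\Rep(G,t)})\cong Z(G)$, $\pi_1=\Inv(\Z_{sym}(\Rep(G,t)))=\widehat G$, $q(\chi)=c_{\chi,\chi}=\chi(t)$, and the Whitehead bracket via \eqref{Whitehead E}) are exactly the general facts the paper uses implicitly.
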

\bpf
Follows from Theorems \ref{pisu} and \ref{pbsy} and Proposition \ref{prsg}.
\epf

\section{The braided Picard group of a pointed braided fusion category}
\label{pointed section}
 
Recall \cite{JS, DGNO} that a pointed braided fusion category $\B$ is determined by a quadratic form $q:A \to k^\times$, where
$A$ is the finite Abelian group of isomorphisms classes of simple objects of $\B$ and $q(x) = c_{x,x}, \, x\in A$, 
where $c$ denotes the braiding of $\B$.

\begin{proposition}
\label{Picbr ptd} 
Let $\B$ be a pointed braided fusion category. There is an equivalence
\begin{equation}
\label{syc}
\uPic_{br}(\B) \cong \uPic_{br}(\Z_{sym}(\B))
\end{equation}
 of braided categorical groups.
\end{proposition}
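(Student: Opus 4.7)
The plan is to exhibit the equivalence \eqref{syc} as the restriction to invertible objects of the braided monoidal $2$-functor of induction
\[
\iota:\Modsym(\Z_{sym}(\B))\;\longrightarrow\;\Modbr(\B),\qquad \N\;\longmapsto\; \B\bt_{\Z_{sym}(\B)}\N,
\]
from Proposition~\ref{inducing to center}, applied with $\E=\Z_{sym}(\B)\subset\B$. Since both sides are braided categorical groups, it suffices to check that the induced braided monoidal functor $\iota:\uPic_{br}(\Z_{sym}(\B))\to \uPic_{br}(\B)$ is an isomorphism on $\pi_0$ and $\pi_1$; compatibility of associators, unit, and braiding $2$-cells is automatic because $\iota$ is a braided monoidal $2$-functor.

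The isomorphism on $\pi_1$ is tautological: both sides are $\Inv(\Z_{sym}(\B))$ (using that $\Z_{sym}$ of a symmetric category is itself), and $\iota$ acts as the identity on these invertible objects. For $\pi_0$, I will compare the exact sequence from Proposition~\ref{exs} for $\B$ with its analogue for the symmetric category $\Z_{sym}(\B)$ (where the map $\alpha$ is trivial). Writing the pointed braided fusion category $\B$ as $\Vec_A^q$ with associated bimultiplicative form $b(x,y)=q(xy)q(x)^{-1}q(y)^{-1}$, one has $\Inv(\B)=A$, $\Aut_\ot(\id_\B)=\widehat A$, and the map $\alpha:A\to\widehat A$ is $a\mapsto b(a,-)$, with kernel $A_0=\Inv(\Z_{sym}(\B))$ and image the annihilator $A_0^\perp\subset\widehat A$. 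Hence $\Coker(\alpha)\cong \widehat A/A_0^\perp \cong \widehat{A_0}=\Aut_\ot(\id_{\Z_{sym}(\B)})$, so the subgroup $\eps(\Aut_\ot(\id_\B))\subset \Picbr(\B)$ coming from $\ModbrI$-objects matches $\Aut_\ot(\id_{\Z_{sym}(\B)})\subset \Picbr(\Z_{sym}(\B))$ under $\iota$.

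The remaining and principal step is to show that $\iota$ restricts to an isomorphism $\Pic(\Z_{sym}(\B))\xrightarrow{\sim}\Ker(\partial)\subseteq \Pic(\B)$, where $\partial:\Pic(\B)\to \Aut_{br}(\B)$ is the obstruction to admitting a module braiding as in \eqref{partial recalled}. One inclusion is formal: for any $\N\in\Pic(\Z_{sym}(\B))$, Proposition~\ref{inducing to center} furnishes $\iota(\N)$ with a $\B$-module braiding, so $\partial(\iota(\N))=1$. For the converse, I will use the classical description of $\Pic(\B)$ for pointed $\B$ via Lagrangian subcategories of $\B\boxtimes \B^\rev$, equivalently Lagrangian subgroups of $A\oplus A$ with respect to $q\boxplus q^{-1}$, and argue that the condition $\partial(\M)=1$ (i.e. $\alpha_\M^+\cong\alpha_\M^-$ as tensor functors, in view of Proposition~\ref{cbs}) forces the corresponding Lagrangian to be induced from a Lagrangian in $A_0\oplus A_0$; this exhibits $\M\cong \iota(\N)$ for a unique $\N\in \Pic(\Z_{sym}(\B))$.

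The hard part will be this last identification: extracting from the abstract vanishing $\alpha_\M^+\cong\alpha_\M^-$ a concrete descent datum along $\Z_{sym}(\B)\subset\B$. The key will be that for pointed $\B$ the $\alpha$-inductions are completely determined by their values on simple invertible objects, so $\alpha_\M^+\cong\alpha_\M^-$ is controlled by the restriction of $\M$ along $\Z_{sym}(\B)$; combined with the Lagrangian description of $\Pic$ this should yield the required descent and a quasi-inverse to $\iota$ on $\pi_0$. Once this is in place, the compatibility of the resulting equivalence with the braided structure is inherited from Proposition~\ref{inducing to center}.
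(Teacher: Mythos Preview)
Your overall strategy coincides with the paper's: use the exact sequence of Proposition~\ref{exs} to decompose $\Picbr(\B)$ into a $\Coker(\alpha)$ piece and a $\Ker(\partial)$ piece, identify these with $\Aut_\ot(\id_{\Z_{sym}(\B)})$ and $\Pic(\Z_{sym}(\B))$ respectively, and then invoke the structure result Theorem~\ref{pbsy} for $\Picbr$ of a symmetric category. There is, however, a structural slip in your setup. The induction $\iota$ of Proposition~\ref{inducing to center} has source $\Mod(\Z_{sym}(\B))=\Modsym(\Z_{sym}(\B))$, not $\Modbr(\Z_{sym}(\B))$; on invertible objects it yields a functor $\uPic(\Z_{sym}(\B))\to\uPicbr(\B)$, and $\uPic(\Z_{sym}(\B))$ is only one factor of $\uPicbr(\Z_{sym}(\B))\cong\uPic(\Z_{sym}(\B))\times\Aut_\ot(\id_{\Z_{sym}(\B)})$ (Theorem~\ref{pbsy}). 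Your identification $\Coker(\alpha)\cong\widehat{A_0}=\Aut_\ot(\id_{\Z_{sym}(\B)})$ is correct, but it does not occur ``under $\iota$''; the paper instead uses $\iota$ only to provide a \emph{splitting} of the short exact sequence \eqref{ses Coker} and then assembles the two factors via Theorem~\ref{pbsy}, afterwards checking explicitly (via Remark~\ref{rhh} and formula~\eqref{QE}) that the quadratic form $Q_{\uuPicbr(\B)}$ matches $Q_{\uuPicbr(\Z_{sym}(\B))}$. So your claim that the braided structure is ``automatic because $\iota$ is a braided monoidal $2$-functor'' does not apply as stated: you still need a separate verification for the $\Aut_\ot$-factor, which is what the paper does.

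For the step you flag as the ``hard part'', namely $\Pic(\Z_{sym}(\B))\xrightarrow{\sim}\Ker(\partial)$, the paper does not argue via Lagrangians at all; it simply cites \cite[Proposition~5.17]{DN}, where this isomorphism (induction being an isomorphism onto $\Ker(\partial)$ for pointed $\B$) is already established. Your Lagrangian sketch is plausible but incomplete as written: you have not explained why the condition $\alpha_\M^+\cong\alpha_\M^-$ on a Lagrangian $L\subset A\oplus A$ forces $L$ to be induced from $A_0\oplus A_0$, nor why this gives a two-sided inverse. Since the result is in the literature you may cite it, or, if you wish to give a self-contained Lagrangian argument, you will need to fill in that descent step concretely.
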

\begin{proof}
The group $\Pic_{br}(\B)$ can be computed using the exact sequence \eqref{exact sequence}.  Namely, we have
a short exact sequence
\begin{equation}
\label{ses Coker}
0\to \mbox{Coker} (\Inv(\B) \xrightarrow{\alpha}  \Aut_\ot(\id_\B)) \to \Pic_{br}(\B) \to \mbox{Ker}(\Pic(\B)\xrightarrow{\partial}   \Aut^{br}(\B)) \to 0.
\end{equation}
By \cite[Proposition 5.17]{DN}, the induction 
\[
 \mbox{Ind}: \Pic(\Z_{sym}(\B)) \to \Pic(\B) : \M \mapsto \B \bt_{\Z_{sym}(\B)} \M
\]
establishes a group isomorphism $\Pic(\Z_{sym}(\B)) \cong \mbox{Ker}({\partial})$. Note that \eqref{inducing to center} provides a splitting
for \eqref{ses Coker}.  The homomorphism $\alpha$ is the map $A \mapsto \widehat{A}$ coming from the bilinear form on $A$ associated to $q$.
Its cokernel is  $\widehat{A^\perp}\cong \Aut_\ot(\id_{\Z_{sym}(\B)})$. 

Thus, the split exact sequence \eqref{ses Coker} yields a group isomorphism \eqref{syc} by Proposition~\ref{pbsy}:
\[
\Pic_{br}(\Z_{sym}(\B))  \cong \Pic(\Z_{sym}(\B))  \times \Aut_\ot(\id_{\Z_{sym}(\B)}) \to \Pic_{br}(\B) : (\M,\,\nu) \mapsto \mbox{Ind}(\M)^\nu,
\]
cf.\ Example~\ref{uuPicbrI}. The value of the quadratic form $q_{\uuPicbr(\B)}$ on $\mbox{Ind}(\M)^x$ is given by the half braiding
\[
 \mbox{Ind}(\M) \bt_\B \B^\nu \to \B^\nu \bt_\B  \mbox{Ind}(\M).
\]
The latter coincides with the value of the pairing $\langle \M,\, x|_{\Z_{sym}(\B)} \rangle$  \eqref{pairing <>}, see Remark~\ref{efp}.
So the result follows from Proposition~\ref{pbsy}.
\end{proof}

Note that $\tau:=q|_{A^\perp}$ is an element of $\widehat{A^\perp}$ of order at most $2$.

\begin{corollary}
\begin{equation}
\Pic_{br}(\C(A,\,q)) \cong
\begin{cases}
\Hom(\Extpower^2 \widehat{A},\,k^\times) \times \widehat{A^\perp} & \text{if  $\tau=1$}, \\
\Hom(\Extpower^2 \widehat{A},\,k^\times) \times \widehat{A^\perp} & \text{if $\tau\neq 1$ and $ A= \mbox{Ker}(\tau)\times \bZ/2\bZ$}, \\
\Hom(\Extpower^2 \widehat{A},\,k^\times)  \times \bZ/2\bZ\times \widehat{A^\perp} & \text{if $\tau\neq 1$ and $ A\neq  \mbox{Ker}(\tau)\times \bZ/2\bZ$}.
\end{cases}
\end{equation}
\end{corollary}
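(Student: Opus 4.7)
The plan is to combine Proposition \ref{Picbr ptd} with the explicit computations of $\Pic_{br}(\Rep(G,t))$ carried out in Section \ref{symmetric section}. First I would identify the symmetric center: an object $x \in A$ lies in $\Z_{sym}(\C(A,q))$ iff the associated bilinear form $B(x,y) = q(x+y)/q(x)q(y)$ is trivial for all $y \in A$, which is exactly the condition $x \in A^\perp$. Hence $\Z_{sym}(\C(A,q))$ is the pointed symmetric fusion category $\C(A^\perp, \tau)$. Because $B$ vanishes identically on $A^\perp \times A^\perp$, the restriction $\tau = q|_{A^\perp}$ is multiplicative and $\{\pm 1\}$-valued, so $\tau$ is an element of $\widehat{A^\perp}$ of order at most $2$. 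Under Deligne's classification this identifies $\Z_{sym}(\C(A,q))$ with $\Rep(G,t)$, where $G := \widehat{A^\perp}$ and $t \in G$ is the element corresponding to $\tau \in \widehat{A^\perp} \cong \widehat{\widehat{G}} = G$ via Pontryagin duality.

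Next I would feed this into Proposition \ref{Picbr ptd}, which gives $\uPic_{br}(\C(A,q)) \cong \uPic_{br}(\Rep(G,t))$, and then invoke the computations from Section \ref{symmetric section} case by case. In the Tannakian case $\tau = 1$ (hence $t = 1$), Corollary \ref{Picbr Tannakian and non-split} yields
\[
\pi_0(\uuPic_{br}(\Rep(G))) \cong H^2(G, k^\times) \times Z(G).
\]
For $G = \widehat{A^\perp}$ abelian this becomes $\Hom(\Extpower^2 \widehat{A^\perp}, k^\times) \times \widehat{A^\perp}$. In the super-Tannakian cases $\tau \neq 1$, the analogous corollary at the end of Section \ref{suptancase} gives $H^2(G,t,k^\times) \times Z(G)$ in the non-split case and $H^2(G,t,k^\times) \times \bZ/2\bZ \times Z(G)$ in the split case. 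Using the (non-canonical) group isomorphism $H^2(G,t,k^\times) \cong H^2(G,k^\times) \cong \Hom(\Extpower^2 \widehat{A^\perp}, k^\times)$ noted after \eqref{tmc}, one reads off the claimed answer.

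Finally, I would justify the split/non-split dichotomy at the level of $A^\perp$. By definition, $\Rep(G,t)$ is split super-Tannakian iff $\langle t \rangle$ is a direct summand of $G$. Dualizing the splitting of $1 \to G/\langle t\rangle \to G \to \langle t\rangle \to 1$ under Pontryagin duality gives the equivalent condition that $\ker(\tau) \subset A^\perp$ is a direct summand, i.e.\ $A^\perp \cong \ker(\tau) \times \bZ/2\bZ$. Matching this to the three alternatives in the statement completes the computation.

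The main obstacle is the careful bookkeeping across Pontryagin duality: identifying $G = \widehat{A^\perp}$, transporting the split-vs-non-split dichotomy from $\langle t\rangle \subset G$ to a direct-sum condition on $A^\perp$, and verifying that only the abelian group structure of $H^2(G,t,k^\times)$ is needed so that the remark following \eqref{tmc} (which gives only a non-canonical group isomorphism with $H^2(G,k^\times)$) is enough to rewrite the first factor as $\Hom(\Extpower^2 \widehat{A^\perp}, k^\times)$.
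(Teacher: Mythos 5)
Your route is exactly the paper's: the proof given there consists precisely of Proposition~\ref{Picbr ptd} combined (via Theorem~\ref{pbsy}) with the computation of the Picard group of a symmetric fusion category in Sections~\ref{Pic Tannakian} and~\ref{pstc}, and your bookkeeping is correct -- $\Z_{sym}(\C(A,q))=\C(A^\perp,\tau)\cong \Rep(\widehat{A^\perp},\tau)$, the extra factor $\Aut_\ot(\id)\cong\widehat{A^\perp}$, the fact that only the abstract group structure of $H^2(G,t,k^\times)\cong H^2(G,k^\times)\cong \Hom(\Extpower^2\widehat{A^\perp},k^\times)$ is needed, and the dualization ``$\langle t\rangle$ a direct summand of $\widehat{A^\perp}$ iff $A^\perp\cong\Ker(\tau)\times\bZ/2\bZ$''.

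The one step that does not go through as written is your last sentence, ``matching this to the three alternatives in the statement completes the computation.'' What your argument actually yields is $\Hom(\Extpower^2\widehat{A^\perp},k^\times)\times\widehat{A^\perp}$, with an additional $\bZ/2\bZ$ factor occurring precisely in the \emph{split} case $A^\perp\cong\Ker(\tau)\times\bZ/2\bZ$; the displayed corollary instead has $\Extpower^2\widehat{A}$ and attaches the $\bZ/2\bZ$ to the case ``$A\neq\Ker(\tau)\times\bZ/2\bZ$''. These do not match, and the printed version cannot be literally correct: for a non-degenerate $\C(A,q)$ with $A=(\bZ/2\bZ)^2$ one has $A^\perp=0$ and $\Pic_{br}(\C(A,q))$ trivial, while $\Hom(\Extpower^2\widehat{A},k^\times)\cong\bZ/2\bZ$; and for $\C(A,q)=\sVect$ (where $\tau\neq 1$ and $A=A^\perp=\Ker(\tau)\times\bZ/2\bZ$) the printed second case would give $\bZ/2\bZ$, whereas the example in Section~\ref{suptancase} gives $\Pic_{br}(\sVect)\cong\bZ/2\bZ\times\bZ/2\bZ$, in agreement with your (split) formula. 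So your derivation is right and follows the paper's own proof, but it establishes the corollary with $\widehat{A^\perp}$ in place of $\widehat{A}$ and with the last two cases interchanged; rather than asserting a match with the printed alternatives, you should state the corrected trichotomy explicitly.
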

\begin{proof}
This follows from Proposition~\ref{Picbr ptd} and the description of the Picard group of a symmetric fusion category, see Sections~\ref{Pic Tannakian}
and~\ref{pstc}.
\end{proof}

\section{Classification of graded extensions}
\label{ext chapter}

\subsection{Graded tensor extensions \cite{ENO}}
\label{sect4.1}

Let $\D$ be a tensor category and let  $G$ be a finite group.   

\begin{definition}
A  {\em tensor $G$-graded extension} (or, simply, a {\em $G$-extension}) of a tensor category $\D$ is a tensor category
\begin{equation}
\label{G-extension}
\C =\bigoplus_{x\in G}\, \C_x,\qquad \C_e =\D,
\end{equation}
such that $\C_x\neq 0$ and the tensor product of $\C$ maps $\C_x\times \C_y$ to $\C_{xy}$  for all $x,y\in G$. 
\end{definition}

\begin{definition}
An {\em equivalence} between two $G$-graded extensions, $\C =\bigoplus_{x\in G}\, \C_x$ and $\tilde{\C} =\bigoplus_{x\in G}\, \tilde{\C}_x$
of $\D$ is a tensor equivalence $F: \C \xrightarrow{\sim} \tilde{\C}$ such that $F|_\D =\id_\D$
and $F(\C_x)= \tilde{\C}_x$ for all $x\in G$.  An {\em isomorphism} between equivalences of $G$-extensions
$F, F': \C \xrightarrow{\sim} \tilde{\C}$ is a tensor isomorphism $\eta: F \xrightarrow{\sim} F'$ whose restriction
on $F|_\D =\id_\D$ is the identity isomorphism. 
\end{definition}

Thus, $G$-extensions of $\D$ form a $2$-groupoid $\uuEx(G,\, \D)$  whose objects are extensions,
$1$-cells are equivalences of extensions, and $2$-cells are isomorphisms of equivalences. 

\begin{example}
\label{ext Vec}
$G$-extensions of $\Vect$ are precisely pointed fusion categories $\Vect_G^\omega$, where $\omega\in Z^3(G,\,k^\times)$
is a $3$-cocyle. 
Equivalences between extensions $\Vect_G^{\omega}$ and $\Vect_G^{\tilde{\omega}}$ correspond to  $2$-cochains 
$\mu \in C^2(G,\, k^\times)$ such that $d(\mu) = \tilde{\omega}/\omega$. Thus, $\pi_0(\Ex(G,\, \Vect)) = H^3(G,\,k^\times)$. 
\end{example}

\begin{remark}
Example~\ref{ext Vec} shows that  there exist equivalent tensor categories that are not equivalent as extensions.
Indeed, if the cohomology classes of $\omega$ and $\tilde{\omega}$ are in the same $\Aut(G)$-orbit
then $\Vect_G^\omega \cong \Vect_G^{\tilde{\omega}}$ as tensor categories. 
\end{remark}

The following theorem is essentially proved in \cite{ENO}. We include its proof for the reader's convenience.
Our arguments for central, braided, and symmetric extensions in subsequent sections will follow the same pattern.

\begin{theorem}
\label{main ENO}
There is an equivalence of $2$-groupoids $\uuEx(G,\, \D) \cong \FUN(G,\, \uuBrPic(\D))$.
\end{theorem}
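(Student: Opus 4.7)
The plan is to construct quasi-inverse $2$-functors $\Phi: \uuEx(G,\, \D) \to \FUN(G,\, \uuBrPic(\D))$ and $\Psi: \FUN(G,\, \uuBrPic(\D)) \to \uuEx(G,\, \D)$, using the observation that the $G$-grading equips each component $\C_x$ with canonical bimodule structure, while the tensor product yields the data of a monoidal $2$-functor.

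First I would construct $\Phi$. Given an extension $\C = \bigoplus_{x\in G} \C_x$, each component $\C_x$ is a $\D$-bimodule category under left and right multiplication by objects of $\D =\C_e$; the tensor product of $\C$ descends through the universal property of $\bt_\D$ to give $\D$-bimodule functors $M_{x,y}: \C_x \bt_\D \C_y \to \C_{xy}$. Since $\C_{x^{-1}} \bt_\D \C_x \to \C_e = \D$ is an equivalence (this follows from the assumption that each $\C_x$ is nonzero and $\C$ is a tensor category, so that internal Homs yield a two-sided inverse bimodule), each $\C_x$ is invertible in $\uuBrPic(\D)$. The associativity constraint of $\C$, restricted to triples of homogeneous objects, restricts to an isomorphism of $\D$-bimodule functors providing the modification $\alpha_{x,y,z}$ of \eqref{alphaLMN}, and the unit constraints give $\lambda, \rho$. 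The pentagon axiom for $\C$ on homogeneous quadruples is precisely the coherence condition \eqref{mon2fun} on the cube \eqref{cubes fghk}, so $\Phi(\C)$ is a monoidal $2$-functor. On $1$-cells, an extension equivalence $F:\C \to \tilde\C$ restricts to $\D$-bimodule equivalences $F_x: \C_x \to \tilde\C_x$ together with tensor structure cells from $F$, yielding a monoidal pseudo-natural transformation between the associated $2$-functors; on $2$-cells, tensor isomorphisms between such $F$ restrict to modifications satisfying \eqref{MM cylinder}.

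Next I would construct the quasi-inverse $\Psi$. Given a monoidal $2$-functor $\F:G\to \uuBrPic(\D)$ with $\F(x) = \C_x$, define $\C := \bigoplus_{x\in G} \C_x$ as an Abelian category, with $\D = \C_e$. The tensor product is the composition
\[
\C_x \times \C_y \to \C_x \bt_\D \C_y \xrightarrow{F_{x,y}} \C_{xy},
\]
which is $\D$-balanced and right exact in each variable, hence descends well. The $\D$-bimodule associativity constraint of each $\C_x$ together with the modifications $\alpha_{x,y,z}$ provides the associator on triples of homogeneous objects; extend additively. The coherence of $\F$ (commutativity of the cube \eqref{cubes fghk}) then translates into the pentagon identity for $\C$, while the unit modifications $\lambda,\rho$ yield unit constraints. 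One checks that left and right duals exist componentwise since invertibility of $\C_x$ in $\uuBrPic(\D)$ supplies duality data. On higher cells, monoidal pseudo-natural transformations and monoidal modifications respectively produce extension equivalences and their isomorphisms.

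Finally, I would verify that $\Phi$ and $\Psi$ are mutually quasi-inverse. On objects, both compositions reproduce the original data up to canonical equivalence: $\Phi\Psi(\F)(x) = \C_x = \F(x)$ with matching monoidal structure, and $\Psi\Phi(\C)$ reassembles $\C$ from its graded components via $\bt_\D$. On higher cells the matching is tautological because the structural data coincide on the nose. The main obstacle, and the step requiring the most care, is verifying that the pentagon for $\C$ is genuinely equivalent to the $2$-functor coherence \eqref{mon2fun}: one must track how the bimodule associativity constraints of the individual $\C_x$ interact with the modifications $\alpha_{x,y,z}$ and the bimodule coherence of $\bt_\D$, and confirm that no additional data is lost when passing between the two descriptions. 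The key technical ingredient here is Corollary~\ref{inv Cbimod is exact}, which ensures that each $\C_x$ is exact and hence that the relative tensor products appearing in both directions are well-behaved enough for the correspondence to be a genuine $2$-equivalence rather than merely a bijection on isomorphism classes.
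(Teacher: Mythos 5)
Your proposal is correct and follows essentially the same route as the paper: restrict the grading, tensor product, associativity and unit data of $\C$ to get the monoidal $2$-functor (pentagon $\leftrightarrow$ the cube \eqref{cubes fghk}), and reassemble $\bigoplus_x \C_x$ with tensor product via $\bt_\D$ and the cells $\alpha_{x,y,z}$ for the quasi-inverse, with equivalences and their isomorphisms matching pseudo-natural transformations and modifications. The only place the paper is slightly more explicit is rigidity of the reassembled category: rather than saying invertibility of $\C_x$ ``supplies duality data componentwise,'' it invokes Corollary~\ref{inv Cbimod is exact} to conclude $\End_\D(L(C))$ is a tensor category and extracts duals of homogeneous objects from the adjoints of $X\ot - $, which is the same idea you flag as the key technical ingredient.
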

\begin{proof}
We construct a  $2$-functor 
\begin{equation}
\label{M:}
M: \uuEx(G,\, \D) \to\FUN(G,\, \uuBrPic(\D))
\end{equation}
as follows. 
Given a $G$-extension $\C =\bigoplus_{x\in G}\, \C_x$ of $\D$, each homogeneous
component $\C_g$ is an invertible $\D$-bimodule category. The restrictions  $\otimes_{x,y}: \C_x \times \C_y \to \C_{xy},\, x,y\in G,$
of the tensor product of $\C$ are $\D$-balanced functors  and so give rise to  $\D$-bimodule equivalences 
\begin{equation}
\label{Mgh}
M_{x,y}: \C_x \bt_\D \C_y \xrightarrow{\sim} \C_{xy}.
\end{equation}
The associativity constraints of $\C$ restricted to $\C_x \times  \C_y \times \C_z$  can be viewed as 
natural isomorphisms of $\D$-balanced functors and so  give rise to natural isomorphisms
of $\D$-bimodule functors
\begin{equation}
\label{alphas}
\xymatrix{ 
\C_x \bt_\D \C_y \bt_\D \C_z \ar[rr]^{M_{y,z}} \ar[d]_{M_{x,y}}="a" && \C_{x}  \bt_\D \C_{yz} \ar[d]^{M_{x,yz}}="b" \\
\C_{xy} \bt_\D \C_z \ar[rr]_{M_{xy,z}} && \C_{xyz},
\ar@{}"a";"b"^(.25){}="a1"^(.75){}="b1" \ar@{=>}^{\alpha_{x,y,z}}"a1";"b1" 
}
\end{equation} 
for all $x,y,z\in G$, cf.\ \eqref{alphaLMN}. The pentagon identity for the associativity constraints of $\C$  implies that 
\eqref{mon2fun}  is satisfied (equivalently, the cubes \eqref{cubes fghk} commute
for all $x,y,z,w\in G$).
This means that the above data consisting of $\D$-bimodule categories $\C_x$, equivalences $M_{x,y}$,
and natural isomorphisms $\alpha_{x,y,z},\, x,y,z\in G,$ determine a monoidal $2$-functor $M(\C): G\to \uuBrPic(\D)$.  

Suppose that there is  another $G$-extension $\tilde{\C} =\bigoplus_{g\in G}\, \tilde{\C}_g$ of $\D$ and an equivalence 
of extensions $F: \C \to \tilde{\C}$.  It restricts to $\D$-bimodule equivalences 
\begin{equation}
\label{Fs}
F_x: \C_x \xrightarrow{\sim}  \tilde{\C}_x.
\end{equation}
The tensor structure of $F$ restricted to $\C_x \times \C_y$ gives rise to an invertible $2$-cell
\begin{equation}
\label{mus}
\xymatrix{ 
\C_x \bt_\D \C_y \ar[rr]^{F_x \bt_\D F_y} \ar[d]_{M_{x,y}}="a" && \tilde\C_{x} \bt_\D \tilde\C_y  \ar[d]^{\tilde{M}_{x,y}}="b" \\
 \C_{xy} \ar[rr]_{F_{xy}} && \tilde\C_{xy},
\ar@{}"a";"b"^(.25){}="a1"^(.75){}="b1" \ar@{=>}^{\mu_{x,y}}"a1";"b1" 
}
\end{equation}
$x,y\in G$,  and the coherence axiom for the tensor structure of $F$ implies that  
\eqref{MT cube} is satisfied (equivalently, that the cubes \eqref{cubes for PW1} commute for all $x,y,z\in G$), 
so that we have a pseudo-natural isomorphism $M(F): M(\C) \to M(\tilde\C)$.  

Given an isomorphism $\eta$ between a pair of equivalences $F,F'$ of extensions $\C$ and $\C'$
its components are natural isomorphisms of $\D$-bimodule functors:
\begin{equation}
\label{etas}
\xymatrix{
\C_x  \ar@/^1pc/[rrrr]^{F_x}_{}="a"     \ar@/^-1pc/[rrrr]_{F'_x}_{}="d"  &&&& \tilde{\C}_x.
\ar@2{->}^{\eta_x}"a";"d"
}
\end{equation}
The tensor property of $\eta$ implies that \eqref{MM cylinder} is satisfied, i.e. the cylinder
\begin{equation}
\label{cylinder eta}
\xymatrix{
\C_x \bt_\D \C_y \ar@/^1pc/[rrrr]^{F_xF_y}_{}="a"^>>>>>>>>>>>>>>>{}="x"     \ar@/^-1pc/[rrrr]_{F'_xF'_y}_{}="d"^<<<<<<<<<<<<<<{}="z"  
\ar[dd]_{M_{x,y}}
&&&& \tilde{\C}_x \bt_\D  \tilde{\C}_y \ar[dd]^{\tilde{M}_{x,y}} \\
\\
\C_{xy} \ar@{-->}@/^1pc/[rrrr]^{F_{gh}}_{}="b"^>>>>>>>>>>>>>>>>>{}="y"     \ar@/^-1pc/[rrrr]_{F'_{xy}}_{}="e"^<<<<<<<<<<<<<<<<{}="w"  
&&&& \tilde{\C}_{gh}.
\ar@{}"a";"d"^(.1){}="X"^(.9){}="Y"   \ar@2{->}^{\eta_g \eta_h}"X";"Y" 
\ar@{}"b";"e"^(.1){}="X"^(.9){}="Y"  \ar@2{->}^{\eta_{gh}}"X";"Y"
\ar@{}"x";"y"^(.15){}="X"^(.85){}="Y"  \ar@2{-->}_{\mu_{g,h}}"X";"Y"
\ar@{}"z";"w"^(.15){}="X"^(.85){}="Y"   \ar@2{->}^{\mu'_{g,h}}"X";"Y"
}
\end{equation}
commutes for all $x,y\in G$.
So we get  an invertible modification $M(\eta)$ between pseudo-natural isomorphisms $M(F)$ and $M(F')$. 
This completes the construction of a monoidal $2$-functor \eqref{M:}.

A $2$-functor
\begin{equation}
\label{L:}
L: \FUN(G,\, \uuBrPic(\D)) \to \uuEx(G,\, \D) 
\end{equation}
quasi-inverse to \eqref{M:}
can be constructed by reversing the above constructions.  Namely, let $C: G \to \uuBrPic(\D): x \mapsto \C_x$ be a monoidal $2$-functor.
Form a $\D$-bimodule category $L(C):= \bigoplus_{x\in G}\, \C_x$ with the tensor product given by composing
$\C_x\times \C_y \to \C_x\bt_\D \C_y$ with $1$-cells \eqref{Mgh} and the associativity constraints coming from
$2$-cells \eqref{alphas}.  
The commuting polytopes \eqref{cubes fghk} give the pentagon identity for the associativity constraint. 

To  check that $L(C)$ is rigid, note that by Corollary~\ref{inv Cbimod is exact} it is exact as a $\D$-module category. Hence,
the dual category $\End_\D(L(C))$ is a tensor category (i.e. is rigid). Given a homogeneous object $X$ in $\C_x \subset L(C),\, x\in G,$
define  a $\D$-module endofuctor $L(X) \in  \End_\D(L(C))$ by setting $L(X)=X \ot -$ on $\D$ and $L(X)=0$ on $\C_g,\,g\neq 0$.
Its adjoints are given by functors $X^* \ot -,\, {}^*X \ot - : \C_x\to \D $  for some objects $X^*, {}^*X\in \C_{x^{-1}}$. These objects are the duals
of $X$. Thus, $L(C)$ is a tensor category and so it is a $G$-extension of $\D$.

From the universal property of $\bt_\D$, a pseudo-natural isomorphism of functors $C,\, C': G \to \uuBrPic(\D)$ 
gives an equivalence of extensions with the tensor structure coming from \eqref{mus} and that a modification
of pseudo-natural isomorphisms gives  a natural isomorphism of equivalences of extensions.
\end{proof}

\begin{remark}
The proof of Theorem~\ref{main ENO} is based on the correspondences (coming from the universal property of $\bt_\D$) 
between the structure functors and morphisms 
of graded tensor categories and the axioms they satisfy  and the structure $1$- and $2$-cells of monoidal $2$-functors
and the commutative polytopes satisfied by them. We summarize these correspondences in Table~\ref{table-3} (cf.\ the table 
from \cite[Section 2.3]{BJ}).

\begin{table}[h!]
\centering
\begin{tabular}{ |p{7cm}|p{7cm}|  }
 \hline
 \hline
\centerline{ Tensor $G$-extensions $\C$ of $\D$} & \centerline{Monoidal $2$-functors $M: G \to \uuBrPic(\D)$}  \\
\hline
\hline
homogeneous components $\C_g$ &  $0$-cells $M(g):=\C_g$ \\
tensor products $\C_g \times \C_h\to \C_{gh}$ &  monoidal $1$-cells $M_{g,h}: \C_g\bt_\D \C_h \to \C_{gh}$ \\
associativity constraints  $a_{X,Y,Z}$  & associativity $2$-cells $\alpha_{f,g,h}$ \eqref{alphas} \\
commuting pentagon diagram for $a$ & commuting cubes \eqref{cubes fghk} for $\alpha$ \\
\hline
equivalence $F:\C \to \tilde\C$ of extensions &  $1$-cells $F_g: \C_g \to  \tilde\C_g$ \\
tensor structure of $F$  & monoidal $2$-cells $\mu_{g,h}$ \eqref{mus} \\
commuting tensor property  diagram for $F$  & commuting cubes \eqref{cubes for PW1} for $\mu$ \\
\hline
isomorphism $\eta: F \to F'$ of equivalences  & modification $2$-cells $\eta_g: F_g \to F'_g$ \eqref{etas} \\
commuting tensor property diagram for  $\eta$ & commuting cylinders \eqref{cylinder eta}  for $\eta$\\
\hline
\end{tabular}
\caption{\label{table-3} A  correspondence between tensor $G$-extensions  and monoidal $2$-functors.}
\end{table}
\end{remark}

We can describe $G$-graded extensions of $\D$ in terms of group cohomology. 
It follows from constructions of Section~\ref{Sect Functors and cohomology} that given a monoidal functor  $M:G \to \uBrPic(\D)$
there exist a canonical cohomology class $\mathcal{p}_M^0\in H^4(G,\,k^\times)$ and a canonical group homomorphism
\begin{equation*}
\mathcal{p}_M^1: H^1(G,\, \Inv(\Z(\D))) \to  H^3(G,\,k^\times)
\end{equation*} 
defined in \eqref{cubes fghk} and \eqref{cubes for PW1}, respectively.

\begin{corollary}
\label{summary monoidal ext}
A monoidal functor  $M:G \to \uBrPic(\D)$ gives rise to a $G$-graded extension of $\D$
if and only  if $ \mathcal{p}^0_M=0$ in $H^4(G,\,k^\times)$. 
Equivalence classes  of such extensions form a torsor over the cokernel of~$\mathcal{p}^1_M$.
\end{corollary}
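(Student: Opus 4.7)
The plan is to derive this Corollary as a direct combination of two earlier results: Theorem~\ref{main ENO}, which identifies the $2$-groupoid $\uuEx(G,\D)$ of tensor $G$-extensions of $\D$ with the $2$-groupoid $\FUN(G,\uuBrPic(\D))$ of monoidal $2$-functors, and Corollary~\ref{summary monoidal fun}, which provides the cohomological obstruction theory for extending a monoidal functor $C: G \to \varPi_{\leq 1}(\bG)$ into a full monoidal $2$-functor $G \to \bG$ for an arbitrary $2$-categorical group $\bG$.

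First I would instantiate Corollary~\ref{summary monoidal fun} with the specific $2$-categorical group $\bG = \uuBrPic(\D)$. Its $1$-categorical truncation is $\varPi_{\leq 1}(\uuBrPic(\D)) = \uBrPic(\D)$, so a monoidal functor $M: G \to \uBrPic(\D)$ is exactly the input data. The relevant homotopy groups are $\pi_1(\uuBrPic(\D)) = \Inv(\Z(\D))$ and $\pi_2(\uuBrPic(\D)) = k^\times$, as recorded in the introduction. The obstruction classes $\mathcal{p}^0_M$ and $\mathcal{p}^1_M$ defined in \eqref{cubes fghk} and \eqref{cubes for PW1} therefore land in $H^4(G,k^\times)$ and $H^3(G,k^\times)$ respectively, giving the precise statement of the Corollary at the level of monoidal $2$-functors.

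Next I would transport the result along the $2$-equivalence provided by Theorem~\ref{main ENO}. Vanishing of $\mathcal{p}^0_M$ is the precise criterion under which $M$ lifts to an object of $\FUN(G,\uuBrPic(\D))$; the equivalence $\uuEx(G,\D)\cong \FUN(G,\uuBrPic(\D))$ then translates this into the existence of a $G$-extension of $\D$ whose underlying monoidal functor to $\uBrPic(\D)$ is $M$. Similarly, the torsor statement of Corollary~\ref{summary monoidal fun}, asserting that equivalence classes of liftings form a torsor over $\Coker\bigl(\mathcal{p}^1_M: H^1(G,\Inv(\Z(\D))) \to H^3(G,k^\times)\bigr)$, transports under the same $2$-equivalence to the corresponding torsor of equivalence classes of tensor $G$-graded extensions.

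There is no real obstacle in this argument, since the heavy lifting has already been done: the universal-property-based bookkeeping matching graded tensor structures to monoidal $2$-functor data was carried out in the proof of Theorem~\ref{main ENO} (and summarized in Table~\ref{table-3}), and the cohomological obstruction theory was developed in Section~\ref{Sect Functors and cohomology} for arbitrary $2$-categorical groups. The only point worth verifying explicitly is that the notation for $\mathcal{p}^0_M$ and $\mathcal{p}^1_M$ used in the statement of the Corollary coincides with that of Propositions~\ref{p1CP prop} and its supporting constructions when $\bG = \uuBrPic(\D)$; this is immediate from the identification of the homotopy groups above.
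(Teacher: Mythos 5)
Your proposal is correct and follows exactly the paper's own route: the paper proves this Corollary by citing Theorem~\ref{main ENO} together with Corollary~\ref{summary monoidal fun}, which is precisely the combination you spell out (instantiating $\bG = \uuBrPic(\D)$ with $\pi_1 = \Inv(\Z(\D))$, $\pi_2 = k^\times$, and transporting along the $2$-equivalence). Your version simply makes explicit the bookkeeping that the paper leaves implicit.
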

\begin{proof}
This follows from Theorem~\ref{main ENO} and  Corollary~\ref{summary monoidal fun}.
\end{proof}

\begin{remark}
In \cite{ENO} the notion of an equivalence of graded extensions was not explicitly defined and extensions were
parameterized by a torsor over $H^3(G,\,k^\times)$. We would like to point that the map $\mathcal{p}_M^1$ is non-trivial
in general.  Here is a simple example. Let $\D =\Vect_{\mathbb{Z}/2\mathbb{Z}}^{\omega}$, where $\omega$ is the non-trivial
element of $H^3(\mathbb{Z}/2\mathbb{Z},\, k^\times)$. Then $\Inv(\Z(\D)) \cong \mathbb{Z}/2\mathbb{Z} \times \mathbb{Z}/2\mathbb{Z}$
with the asscoaitor $\omega \times \omega^{-1}$. Take the trivial monoidal functor $M:\mathbb{Z}/2\mathbb{Z} \to \uBrPic(\D)$.
The homomorphism $\mathcal{p}_M^1$ is given by \eqref{p1 formula}, i.e.
\[
\mathcal{p}_M^1: \Hom(\mathbb{Z}/2\mathbb{Z} ,\,  \mathbb{Z}/2\mathbb{Z} \times \mathbb{Z}/2\mathbb{Z} ) \to H^3(\mathbb{Z}/2\mathbb{Z},\, k^\times) :
P \mapsto (\omega \times \omega^{-1})\circ (P\times P \times P),
\]
which is clearly non-zero. This explains the difference between our parameterization of extensions and that of \cite[Theorem 1.3]{ENO}.
\end{remark}

\subsection{Central graded extensions}

Let $\B$ be a braided tensor category.

\begin{definition}
A {\em central $G$-extension} of $\B$ is a pair $(\C,\, \iota)$, where $\C$ is a $G$-extension
and  $\iota: \B \hookrightarrow \Z(\C)$
is  a braided tensor functor  whose composition with the forgetful functor $\Z(\C)\to \C$
coincides with the inclusion  $\B \hookrightarrow \C$. 
\end{definition}

\begin{definition}
Let $(\C,\iota:\B \to \Z(\C))$ and $(\tilde{\C},\tilde{\iota} :\B \to \Z(\tilde{\C}))$ be two central $G$-extensions of $\B$.
An {\em equivalence} between these extensions is an equivalence $F: \C\xrightarrow{\sim} \tilde{\C}$ of $G$-extensions
such that $\tilde\iota = \text{ind}(F)\circ \iota$, where $ \text{ind}(F):  \Z(\C)  \xrightarrow{\sim} \Z(\tilde{\C})$ is the
braided equivalence induced by $F$. 
\end{definition}

Central $G$-extensions of $\B$ form a $2$-groupoid $\uuExctr(G,\, \B)$.

Recall that a {\em $G$-crossed braided tensor category} is a $G$-graded tensor category $\C=\bigoplus_{x\in G}\, \C_x$
equipped with the action  of $G$ on $\C$, i.e. a monoidal functor $G\to \Aut_{\ot}(\C)$, such that $x(\C_y) = \C_{xyx^{-1}}$
and with a $G$-crossed braiding 
\begin{equation}
\label{crossed braiding}
c_{X,Y}: X\ot Y \to g(Y) \ot X,\qquad X\in \C_x,\, Y\in \C,
\end{equation}
satisfying certain natural axioms.
Note that the trivial component of the grading $\C_e$ is a braided tensor category.
Let $\uuExcrbr(G,\,\B)$ denote the $2$-groupoid of $G$-crossed braided fusion categories whose trivial component
is $\B$.  

The next Proposition  was essentially proved in \cite{JMPP}. 
It shows that the notions of a central extension and a $G$-crossed extension coincide. 

\begin{proposition}
\label{central = crossed braided}
There is a $2$-equivalence $\uuExcrbr(G,\,\B) \cong \uuExctr(G,\, \B)$.
\end{proposition}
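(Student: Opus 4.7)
The plan is to exhibit $2$-functors in both directions and check they are quasi-inverse, making essential use of the fact that $\C_e=\B$ and that each homogeneous component $\C_x$ is an invertible $\B$-bimodule category by Theorem~\ref{main ENO} and Corollary~\ref{inv Cbimod is exact}.

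First I would construct the easy direction $\uuExcrbr(G,\B)\to \uuExctr(G,\B)$. Given a $G$-crossed braided extension $\C=\bigoplus_{x\in G}\C_x$ with $G$-crossed braiding $c_{X,Y}:X\ot Y\to g(Y)\ot X$ (for $X\in\C_g$), restrict to $X\in\B=\C_e$. Since the unit $e\in G$ acts trivially, the family $\{c_{X,Y}:X\ot Y\to Y\ot X\}_{Y\in\C}$ is precisely a half-braiding on $X\in\C$ in the sense of the center $\Z(\C)$. The hexagon axioms for a $G$-crossed braiding specialize, on the $e$-component, to the hexagon axioms for a half-braiding, and compatibility with the braiding of $\B=\C_e$ (the restriction of the crossed braiding to $\C_e\times\C_e$) says exactly that the resulting functor $\iota:\B\to\Z(\C)$ is braided and that its composition with the forgetful functor $\Z(\C)\to\C$ is the inclusion. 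On $1$-cells and $2$-cells the assignment is essentially the identity, since an equivalence of $G$-crossed braided extensions is a monoidal equivalence compatible with the grading, the $G$-action, and the crossed braiding, which by restriction is the same as an equivalence of central extensions.

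The harder direction is $\uuExctr(G,\B)\to \uuExcrbr(G,\B)$. Given a central extension $(\C,\iota:\B\to\Z(\C))$, I need to extract both the $G$-action on $\C$ and the $G$-crossed braiding. For the action, for each $x\in G$ choose an object $X\in\C_x$ (after inverting a distinguished quasi-inverse $\C_{x^{-1}}$ with $\C_{x^{-1}}\bt_\B\C_x\simeq\B$). The functor
\[
\phi_X:=X\ot(-)\ot X^{-1}:\C\to\C
\]
is a tensor autoequivalence sending $\C_y$ to $\C_{xyx^{-1}}$. Using the half-braidings provided by $\iota$ one shows that the class of $\phi_X$ in $\Aut_\ot(\C)$ is independent of the choice of $X\in\C_x$ (different choices differ by conjugation by an invertible object, which is trivial on isomorphism classes), and that $x\mapsto \phi_X$ assembles into a monoidal functor $G\to \uAut_\ot(\C)$. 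The $G$-crossed braiding $c_{X,Y}:X\ot Y\to x(Y)\ot X$ for $X\in\C_x$, $Y\in\C$ is constructed by composing the canonical isomorphism $X\ot Y\cong X\ot Y\ot X^{-1}\ot X=x(Y)\ot X$, where the identification uses the half-braiding from $\iota$ to move $X$ past $Y$ when $Y\in\B$, and the $\B$-bimodule associativity otherwise. Concretely, on the $e$-component this recovers $\iota$; on a general $\C_y$ one uses that $\C_y$ is a $\B$-bimodule and that $X\ot(-)\ot X^{-1}$ is $\B$-central by construction.

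Finally I would verify the two $2$-functors are quasi-inverse. In one composition the constructions are essentially tautological: starting from a $G$-crossed braided extension and restricting to $\B$ and then reconstructing the action by conjugation recovers the original action, because in a $G$-crossed braided category $X\ot Y\ot X^{-1}\cong x(Y)$ canonically (this is the defining property of the crossed braiding). The other composition requires checking that a central extension, after passing to the crossed braided picture and back, recovers the original $\iota$, which follows from the definition of the crossed braiding on $\C_e$. The main obstacle is the coherence verification in the backward construction: showing that the choices of quasi-inverses $X^{-1}$ and of isomorphisms $X\ot X^{-1}\simeq\be$ can be made so that $\phi_X$ assembles into a strict (or at least coherently pseudo-natural) monoidal $2$-functor $G\to\uAut_\ot(\C)$ and that the resulting crossed braiding satisfies the heptagon axioms. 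This is the substance of the argument of \cite{JMPP}; the payoff is that the $2$-equivalence $\uuExcrbr(G,\B)\simeq\uuExctr(G,\B)$ then follows formally.
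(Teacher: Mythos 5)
Your forward direction (crossed braided $\Rightarrow$ central) is fine and matches the paper, but your backward construction has a genuine gap: you build the $G$-action by conjugation, $\phi_X = X\ot(-)\ot X^{-1}$ for a chosen object $X\in\C_x$. An arbitrary object of $\C_x$ is not an invertible object of $\C$ (think of the non-trivial component of a Tambara--Yamagami category, whose unique simple object is not invertible), so there is no object $X^{-1}$ for which $X\ot(-)\ot X^{-1}$ is an autoequivalence; taking the dual $X^*$ instead does not help, since $-\ot X\ot X^*$ is not an equivalence. Your claim that different choices of $X\in\C_x$ ``differ by conjugation by an invertible object'' is likewise false in general, so the independence-of-choice and coherence steps cannot be carried out. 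Conjugation by objects only works for quasi-trivial extensions (those with an invertible object in every component), which is far from the general case the proposition covers.

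The paper's proof avoids this by working at the level of module categories rather than objects. The central structure $\iota$ gives a natural isomorphism $c_{X,Z}:X\ot Z\xrightarrow{\sim}Z\ot X$ for $X\in\B$, $Z\in\C$, making each $\C_x$ an invertible left $\B$-module category; then for each $x,y\in G$ both
\[
\C_y\to \Fun_\B(\C_x,\,\C_{xy}):\ Y\mapsto -\ot Y
\qquad\text{and}\qquad
\C_{xyx^{-1}}\to \Fun_\B(\C_x,\,\C_{xy}):\ Y\mapsto Y\ot -
\]
are $\B$-module equivalences (this is where invertibility of the components, not of their objects, is used). Composing the second with a quasi-inverse of the first yields the tensor autoequivalence $x\in\Aut_\ot(\C)$ with $x(\C_y)=\C_{xyx^{-1}}$, together with a natural isomorphism $x(Y)\ot X\cong X\ot Y$ for $X\in\C_x$, which is exactly the $G$-crossed braiding. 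If you replace your conjugation construction by this functor-category argument, the rest of your outline (restriction on $1$- and $2$-cells, and the check that the two constructions are mutually quasi-inverse) goes through as you describe.
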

\begin{proof}
We need to explain how a $G$-crossed braided  structure 
translates into a  central functor and vice versa.

Let $\C$ be a $G$-crossed braided tensor category with $\C_e=\B$. 
The restriction of the crossed braiding  \eqref{crossed braiding} provides  $X\in \C_e$ with the structure
of a central object of $\C$ and 
\[
\iota: \B = \C_e \to \Z(\C) : X \mapsto (X, \,c_{X,-}^{-1}) 
\]
is a braided tensor functor whose composition with teh forgetful functor $\Z(\C)\to \C$ is identity.   

In the opposite direction, a central $G$-extension  $(\C,\iota:\B \to \Z(\C))$ yields a natural isomorphism
\begin{equation}
\label{partial braiding}
c_{X,Z} : X \ot Z \xrightarrow{\sim}  Z\ot X,\qquad X\in \B,\, Z\in \C,
\end{equation}
satisfying the hexagon axioms.  This turns each $\C_x$ into an invertible $\B$-module category. Furthermore,
there are $\B$-module equivalences 
\begin{eqnarray*}
\C_y \to \Fun_\B(\C_x,\, \C_{xy}) &:& Y \mapsto -\, \ot Y, \\
\C_{xyx^{-1}} \to \Fun_\B(\C_x,\, \C_{xy}) &:& Y \mapsto Y\ot \, -,
\end{eqnarray*}
for all $x,y\in G$. Here the functor categories consist of right exact $\B$-module functors.
Combining  these equivalences for a fixed $x\in G$ we obtain a tensor autoequivalence $x\in \Aut_\ot(\C)$
such that $x(\C_y) = \C_{xyx^{-1}}$  and there is a natural isomorphism
\[
x(Y) \ot X \cong X \ot Y\qquad \text{ for all }  X\in \C_x,\, Y\in \C. 
\]
The latter is a crossed braiding on $\C$.  

These constructions are inverses of each other and are 
compatible with equivalences of $G$-crossed braided and central extensions, i.e. define a $2$-equivalence
between the corresponding $2$-groupoids.
\end{proof}

\begin{remark}
Let $\C$ be a central $G$-extension of $\B$. The braided tensor category $\C^G$ obtained from $\C$ as the equivariantization \cite[Section 4]{DGNO}
with respect to the canonical action of $G$ constructed in the proof of Proposition~\ref{central = crossed braided} coincides with 
the centralizer of the image of $\B$ in $\Z(\C)$.
\end{remark}

Recall that the Picard group of $\B$ was introduced in Section~\ref{Section Pic(B)}.
The following result is essentially a consequence of Proposition~\ref{central = crossed braided} and \cite[Theorem 7.12]{ENO}.
We include the proof for the sake of completeness.

\begin{theorem}
\label{secondary ENO}
There is an equivalence of $2$-groupoids $\uuExctr(G,\, \B) \cong \FUN(G,\, \uuPic(\B))$.
\end{theorem}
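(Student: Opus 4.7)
The plan is to mirror the proof of Theorem~\ref{main ENO}, replacing $\D$-bimodule categories with $\B$-module categories, the relative tensor product $\bt_\D$ with $\bt_\B$, and the Brauer-Picard $2$-group $\uuBrPic(\D)$ with the Picard $2$-group $\uuPic(\B)$. The key conceptual point is that the datum of a central structure $\iota:\B\to \Z(\C)$ is precisely what upgrades each homogeneous component $\C_x$ from a $\B$-bimodule category (as in the setting $\D=\B$ of Theorem~\ref{main ENO}) to a left $\B$-module category, and what makes the tensor product $\C_x\times \C_y\to \C_{xy}$ into a $\B$-balanced (rather than merely $\B$-bibalanced) functor.

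First I would construct a $2$-functor $M:\uuExctr(G,\B)\to \FUN(G,\uuPic(\B))$. Given a central extension $(\C,\iota)$, the half-braiding $c_{X,Z}:X\otimes Z\xrightarrow{\sim} Z\otimes X$ ($X\in\B$, $Z\in\C$) supplied by $\iota$ makes each $\C_x$ a left $\B$-module category and provides, for $X\in\B$, $Y\in\C_x$, $Z\in\C_y$, the balancing isomorphism $(Y\otimes X)\otimes Z\to Y\otimes (X\otimes Z)$. Hence the universal property of $\bt_\B$ yields $\B$-module equivalences $M_{x,y}:\C_x\bt_\B \C_y\xrightarrow{\sim}\C_{xy}$; invertibility of $\C_x$ in $\uuPic(\B)$ is witnessed by $\C_{x^{-1}}$ via duality in $\C$. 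The associativity of $\C$ restricted to triples in different components supplies the $2$-cells $\alpha_{x,y,z}$ as in \eqref{alphas}, and the pentagon axiom becomes the commuting cube~\eqref{cubes fghk}, giving a monoidal $2$-functor $M(\C):G\to\uuPic(\B)$. Equivalences of central extensions and their isomorphisms translate to pseudo-natural transformations \eqref{mus} and modifications \eqref{etas} verbatim; the compatibility with $\iota$ is automatic since the equivalence $\ind(F):\Z(\C)\to\Z(\tilde\C)$ is determined by $F$.

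Next I would construct a quasi-inverse $L:\FUN(G,\uuPic(\B))\to \uuExctr(G,\B)$. Given $C:G\to \uuPic(\B):x\mapsto \C_x$, set $L(C)=\bigoplus_{x\in G}\C_x$ with tensor product and associativity assembled from the $1$-cells $M_{x,y}$ and $2$-cells $\alpha_{x,y,z}$; the pentagon identity is precisely the commutativity of the cubes~\eqref{cubes fghk}, and rigidity follows by the same dual-functor argument as in Theorem~\ref{main ENO} using the exactness given by Corollary~\ref{inv Cbimod is exact}. The central structure $\iota:\B\to \Z(L(C))$ arises canonically: each left $\B$-module category $\C_x$ acquires, via the construction of Section~\ref{sect: from mod to bimod}, a canonical $\B$-bimodule structure whose middle associator~\eqref{middle associativity} supplies a half-braiding $X\otimes Y\xrightarrow{\sim} Y\otimes X$ for $X\in\B$, $Y\in \C_x$. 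Glueing these across $x\in G$ yields a half-braiding on all of $L(C)$ and therefore a braided lift $\iota:\B\to \Z(L(C))$ of the inclusion $\B=\C_e\hookrightarrow L(C)$.

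The main obstacle is the last compatibility: one must check that the componentwise half-braidings are compatible with the tensor structure $M_{x,y}$ and the associativity $2$-cells $\alpha_{x,y,z}$, so that they assemble into a single braided functor $\iota$ satisfying the hexagon axioms in $\Z(L(C))$. This amounts to the assertion that $M_{x,y}:\C_x\bt_\B \C_y\to \C_{xy}$ is a map in $\Mod(\B)$ intertwining the induced bimodule structures, which unfolds to the balancing condition \eqref{braided MbtBN} built into the definition of $\bt_\B$. Once this is verified, $M$ and $L$ are mutually quasi-inverse by the same dictionary summarized in Table~\ref{table-3} (with $\uuBrPic$ replaced by $\uuPic$), and the claimed $2$-equivalence follows.
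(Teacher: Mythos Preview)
Your proposal is correct and follows essentially the same approach as the paper: both adapt the proof of Theorem~\ref{main ENO} by replacing $\D$-bimodules with $\B$-modules, using the central structure $\iota$ to endow each $\C_x$ with a $\B$-module structure and to provide the balancing needed for $\bt_\B$. The only presentational difference is in the inverse direction: the paper composes a given $2$-functor $G\to\uuPic(\B)$ with the monoidal embedding $\uuPic(\B)\hookrightarrow\uuBrPic(\B)$ and invokes Theorem~\ref{main ENO} directly, then reads off the central structure from the resulting $\B$-bimodule structure (the hexagon for \eqref{partial braiding-} being a consequence of the monoidality of $\Mod(\B)\to\Bimod(\B)$), whereas you assemble $L(C)$ by hand; for exactness you should cite Proposition~\ref{inv--> exact} rather than Corollary~\ref{inv Cbimod is exact}, though the latter also works once you pass through the embedding.
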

\begin{proof}
We adjust the proof of Theorem~\ref{main ENO} to the present setting (with $\D$-bimodule categories,
functors, and isomorphisms replaced by $\B$-module ones).

A central structure on a $G$-extension $\C=\bigoplus_{x\in G}\,\C_x$ of $\B$ consists of isomorphisms
\eqref{partial braiding} that turn every component $\C_x$ into an invertible $\B$-module category, i.e.
$\C_x$ belongs to  $\uuPic(\B)$. 
Equivalences \eqref{Mgh} coming from tensor products $\C_x \times \C_y \to \C_{xy}$ are $\B$-module equivalences 
in this case and natural isomorphisms \eqref{alphas} are isomorphisms of $\B$-module functors.

An equivalence of  central $G$-extensions of $\B$ yields $\B$-module equivalences \eqref{Fs} between homogeneous components
and isomorphisms \eqref{mus} of $\B$-module functors. An isomorphism between equivalences of central $G$-extensions
yields an isomorphism \eqref{etas} of $\B$-module functors.

Diagrams \eqref{cubes fghk}, \eqref{cubes for PW1}, and \eqref{cylinder eta} commute for the same reason as in the proof
of Theorem~\ref{main ENO}. 

Thus, \eqref{M:}  becomes a $2$-functor 
\begin{equation}
\label{M:ctr}
\uuExctr(G,\, \B) \to \FUN(G,\, \uuPic(\B)).
\end{equation}

Conversely, given a monoidal $2$-functor $G\to \uuPic(\B)$, consider its composition with the  inclusion
$ \uuPic(\B) \to \uuBrPic(\B)$. By Theorem~\ref{main ENO}, this yields a $G$-extension $\C$ of $\B$.  The $\B$-bimodule
structure of $\C$ comes from its left $\B$-module structure, so there is  a natural  isomorphism between
the functors of left and right tensor multiplication by $X\in \B$:
\begin{equation}
\label{partial braiding-}
c_{X,Z} : X \ot Z \xrightarrow{\sim}  Z\ot X,\qquad  Z\in \C.
\end{equation}
The hexagon for \eqref{partial braiding-} follows from the above definition of a $\B$-bimodule category structure of $\C$
and from the monoidal property of the $2$-functor  $\textbf{Mod}(\B) \to \textbf{Bimod}(\B)$. Thus, \eqref{partial braiding-}
is a central structure on the $G$-extension $\C$ of $\B$ and there is a $2$-functor
\begin{equation}
\label{L:ctr}
\FUN(G,\, \uuPic(\B)) \to \uuExctr(G,\, \B) 
\end{equation}
quasi-inverse to \eqref{M:ctr}.
\end{proof}

\begin{remark}
It follows from Theorem~\ref{secondary ENO} that central $G$-extensions of $\B$ can be described in terms of 
monoidal functors $G\to \uPic(\B)$ and group cohomology analogously to Corollary~\ref{summary monoidal ext}.
\end{remark}

\subsection{Braided graded extensions}

Let $\B$ be a braided tensor category and let $A$ be an Abelian group. 

\begin{definition}
A {\em braided $A$-extension} of $\B$ is a braided tensor category $\C$ that is an $A$-extension of $\B$.
\end{definition}

\begin{definition}
An equivalence between braided $A$-extensions $\C,\, \tilde{\C}$ of $\B$ is an equivalence of $A$-extensions
that is a braided functor. 
\end{definition}

\begin{example}
Braided $A$-extensions of $\Vect$ are precisely pointed braided fusion categories $\Vect_A^\omega$, where $\omega\in Z^3_{br} (G,\,k^\times)$
is an abelian $3$-cocycle. 
Equivalences between extensions $\Vect_A^{\omega}$ and $\Vect_A^{\tilde{\omega}}$ correspond to  abelian 
$2$-cochains $\mu \in C_{br}^2(A,\, k^\times)$ such that $d(\mu) = \omega/\tilde{\omega}$. 
Thus, the set of isomorphism classes $Ex_{br}(A,\,\Vect)$ of braided $A$-extensions of $\Vect$ is in bijection with $H^3_{br}(A,\,k^\times)
\cong \Quad(A,\, k^\times)$. 
\end{example}

Let $\FUNbr(A,\, \uuPicbr(\B))$ denote the $2$-groupoid of braided monoidal $2$-functors from $A$ to $\uuPicbr(\B)$.

\begin{theorem}
\label{main DN}
There is a $2$-equivalence $\uuExbr(A,\,\B) \xrightarrow{\sim} \FUNbr(A,\, \uuPicbr(\B))$.
\end{theorem}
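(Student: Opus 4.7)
The strategy is to mirror the proofs of Theorems~\ref{main ENO} and~\ref{secondary ENO}, adding one further layer of braided structure at each step. That is, I will construct mutually quasi-inverse $2$-functors
\[
M:\uuExbr(A,\,\B) \longrightarrow \FUNbr(A,\,\uuPicbr(\B)),\qquad L:\FUNbr(A,\,\uuPicbr(\B))\longrightarrow \uuExbr(A,\,\B),
\]
building on the constructions already carried out in Theorems~\ref{main ENO} and~\ref{secondary ENO}. The translation principle is that the universal property of $\bt_\B$ converts the structural data and coherence identities of a braided $A$-extension into the $1$- and $2$-cells of a braided monoidal $2$-functor $A \to \uuPicbr(\B)$, and vice versa.

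For $M$: start with a braided $A$-extension $\C=\bigoplus_{x\in A}\C_x$. Since the braiding of $\C$ gives a canonical lifting $\B\hookrightarrow \Z(\C)$, Theorem~\ref{secondary ENO} already produces from $\C$ a monoidal $2$-functor $A\to \uuPic(\B)$ sending $x\mapsto \C_x$, with tensor $1$-cells $M_{x,y}:\C_x\bt_\B \C_y\xrightarrow{\sim}\C_{xy}$ and associativity $2$-cells $\alpha_{x,y,z}$ arising from the associator of $\C$. To upgrade this to a braided $2$-functor into $\uuPicbr(\B)$, I equip each $\C_x$ with the $\B$-module braiding $\sigma_{X,Y}=c_{Y,X}c_{X,Y}$ of Example~\ref{braided extensions BC}, which makes $\C_x$ an object of $\uuPicbr(\B)$ (invertibility as a braided module category holds because it is invertible as a plain module category and $\Modbr(\B)$ is the $2$-center of $\Mod(\B)$ by Theorem~\ref{Modbr=ZMod}). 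The braided $2$-cells $\delta_{x,y}$ as in \eqref{deltaMN} are obtained from the braiding of $\C$ restricted to $\C_x\times \C_y$, viewed under $\bt_\B$ as an isomorphism $M_{y,x}\circ B_{\C_x,\C_y}\cong M_{x,y}$. The commuting cubes \eqref{cubes fghk} follow from the pentagon of $\C$ as before, and the commuting octahedra \eqref{oct theta} and \eqref{oct psi} are precisely the two hexagon identities of $\C$ restated on homogeneous components. Equivalences and $2$-isomorphisms of braided $A$-extensions translate analogously into braided pseudo-natural transformations \eqref{BT cube} and monoidal modifications \eqref{MM cylinder}, since their braided compatibility diagrams on $\C_x\times \C_y$ become the required cylinders on $\C_x\bt_\B \C_y$.

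For the quasi-inverse $L$: given a braided monoidal $2$-functor $\C:A\to \uuPicbr(\B)$, forgetting the braided structure and applying the construction of Theorem~\ref{secondary ENO} yields a central $A$-extension $L(\C)=\bigoplus_{x\in A}\C_x$ of $\B$, which by Proposition~\ref{central = crossed braided} is an $A$-crossed braided tensor category. Because $A$ is abelian and the half-braidings on $\C_x$ come from the $2$-center data of $\uuPicbr(\B)\cong \uuInv(\mathbf{Z}(\Mod(\B)))$, the $A$-action on $L(\C)$ is canonically trivial: the braided $2$-cells $\delta_{x,y}$ supply exactly a coherent trivialization of the crossed braiding's conjugation action, turning the $A$-crossed braiding $c_{X,Y}:X\ot Y \to x(Y)\ot X$ into an ordinary braiding $c_{X,Y}:X\ot Y\to Y\ot X$. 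The hexagon axioms \eqref{braided2fun1} and \eqref{braided2fun2} for a braided $2$-functor  are precisely the hexagon identities for this braiding after passing through the universal property of $\bt_\B$. That $L\circ M\cong \id$ and $M\circ L\cong \id$ follows because both constructions recover the same homogeneous components, the same tensor and associativity data (from Theorem~\ref{secondary ENO}), and the same braiding/$\delta$-cells (by construction).

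The main obstacle is purely bookkeeping: verifying that the braided compatibility data matches exactly on both sides of the correspondence. Concretely, one must check that the octahedra \eqref{oct theta}, \eqref{oct psi} and the prisms \eqref{prism x|y} translate term-by-term into the two hexagons for the braiding of $\C$ and into the braided naturality of tensor equivalences, respectively; and dually, that a braided monoidal $2$-functor's $\delta$-cells always trivialize the crossed braiding action coming from the central structure provided by Theorem~\ref{secondary ENO}. This amounts to the same kind of polytope-gluing argument used in Propositions~\ref{br obstruction} and~\ref{br p1CP}, applied directly to the $1$- and $2$-cells arising from the braided extension $\C$, so no new ideas are required beyond those already developed in Sections~\ref{Sect Braided functors and cohomology} and~\ref{section: pb and AB}.
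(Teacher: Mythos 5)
Your proposal is correct and follows essentially the same route as the paper: equip each homogeneous component with the squared-braiding module braiding $\sigma_{X,Y}=c_{Y,X}c_{X,Y}$, obtain the braided $2$-cells $\delta_{x,y}$ from the braiding of $\C$, identify the octahedra \eqref{oct theta}, \eqref{oct psi} and prisms \eqref{prism x|y} with the hexagon identities and the braided property of equivalences, and reassemble the extension from this data for the quasi-inverse. The only cosmetic difference is that you route the inverse through the central/crossed-braided picture (Theorem~\ref{secondary ENO} and Proposition~\ref{central = crossed braided}), whereas the paper applies the functor $L$ of Theorem~\ref{main ENO} directly and lets the $\delta$-cells define the braiding constraints, with the octahedra yielding the hexagons; the content is the same, modulo the routine check (done explicitly in the paper) that the $M_{x,y}$ and $F_x$ are braided $\B$-module functors.
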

\begin{proof}
Let $\C =\bigoplus_{x\in A}\, \C_x$ be a braided $A$-extension of $\B$.  Each homogeneous component $\C_y,\,y\in A,$
is an invertible $\B$-module category. The squared braiding
\[
\sigma_{X,Y}=c_{YX} c_{XY},\qquad X\in \B,\, Y\in \C_y,
\]
equips it with the structure of a braided $\B$-module category, i.e. $\C_y\in \uuPicbr(\B)$. The equivalences
$M_{x,y}: \C_x \bt_\B \C_y \xrightarrow{\sim} \C_{xy}$ from \eqref{Mgh} are braided module equivalences. Indeed,  
commutativity of the diagram~\eqref{bmf diagram} is a consequence of the identity
\[
c_{Y_1\ot Y_2, X} c_{X, Y_1\ot Y_2} = (c_{Y_1,X} \ot \id_{Y_2}) (\id_{Y_1} \ot c_{Y_2, X} c_{X, Y_2} ) (c_{X,Y_1} \ot \id_{Y_2}),
\qquad X,Y_1,Y_2\in \B,
\]
where we omit the associativity constraints in $\C$. 

As in the proof of Theorem~\ref{main ENO}, equivalences $M_{x,y}$ along with
the associativity $2$-cells $\alpha_{x,y,z}$  from \eqref{alphas} define
a monoidal structure  on the $2$-functor
\[
M(\C): A \to  \uuPicbr(\B) : x \mapsto \C_x. 
\]
Furthermore, the commutativity constraint of $\C$ gives rise to invertible $2$-cells 
\begin{equation}
\label{deltas xy}
\xymatrix{
\C_x\bt_\B \C_y  \ar[rrrr]^{B_{x,y}}_{}="a"  \ar[drr]_{M_{x,y}} &&&& \C_y\bt_\B \C_x \ar[dll]^{M_{y,x}}_{}="b" \\
&& \C_{xy} &&
\ar@{}"2,3";"a"^(.25){}="x"^(.95){}="y"  \ar@2{->}^{\delta_{x,y}}"x";"y"
}
\end{equation}
for all $x,y\in A$. 
The conditions \eqref{braided2fun1} and  \eqref{braided2fun2}  
in the definition of a braided monoidal $2$-functor  (i.e. commutativity of the octahedra \eqref{oct theta} and \eqref{oct psi})
follow from the hexagon axioms satisfied by the braiding of $\C$.

Thus, $M(\C):A \to \uuPicbr(\B)$ is a braided monoidal $2$-functor.

Suppose there is  another braided $A$-extension $\tilde{\C} =\bigoplus_{x\in A}\, \tilde{\C}_x$ of $\B$ and an equivalence 
of braided $A$-extensions $F: \C \to \tilde{\C}$.  The $\B$-module 
equivalences $F_x:\C_x\xrightarrow{\sim} \tilde{\C}_x$  between the homogeneous components are braided $\B$-module equivalences.
Indeed, commutativity of diagram~\eqref{bmf diagram} is a consequence of the braided property of $F$. 
We have  invertible $2$-cells \eqref{mus} satisfying \eqref{cubes fghk} as  in the proof of Theorem~\ref{main ENO}. 
The condition \eqref{BT cube} (i.e. commutativity of the prism \eqref{prism x|y})
follows from the braided property of $F$. 
Thus, we have a pseudo-natural isomorphism $M(F):M(\C) \to M(\tilde{\C})$ of braided monoidal $2$-functors. 

Given an isomorphism $\eta$ between a pair of equivalences  $F,\, F'$  of braided extensions $\C$ and $\C'$ 
one constructs an invertible modification $M(\eta)$ between $M(F)$ and $M(F')$ as in \eqref{etas}. 

Thus, we have a $2$-functor $M: \uuExbr(A,\,\B) \xrightarrow{\sim} \FUNbr(A,\, \uuPicbr(\B))$.
In the opposite direction, the $2$-fumctor \eqref{L:}  constructed in the proof of Theorem~\ref{main ENO}
carries a braided structure on a $2$-functor $\C: A \to \uuPicbr(\B))$ to a braiding on $L(\C)=\bigoplus_{x\in A}\, \C_x$.
Namely, $2$-cells \eqref{deltas xy} give rise to the braiding constraints  for $L(\C)$ while commuting  
octahedra  \eqref{oct theta},  \eqref{oct psi}  ensure that they satisfy give the hexagon identities.
\end{proof}

\begin{remark}
\label{Table 4 remark} 
The proof of Theorem~\ref{main DN} extends that of Theorem~\ref{main ENO}. So it extends  
the correspondences in Table~\ref{table-3} as follows:
\begin{table}[h!]
\label{braided pm table}
\centering
\begin{tabular}{ |p{7cm}|p{7cm}|  }
 \hline
 \hline
\centerline{ Braided tensor $A$-extensions $\C$ of $\B$} & \centerline{Monoidal $2$-functors $M: A \to \uuPicbr(\B)$}  \\
\hline
\hline
braiding constraints  $c_{X,Y}$  & braiding  $2$-cells $\delta_{x,y}$ \eqref{deltas xy} \\ 
commuting hexagon diagrams for $c$ & commuting octahedra \eqref{oct theta},  \eqref{oct psi}  \\
\hline
braided property  diagram for $F$  & commuting prism \eqref{prism x|y} \\
\hline
\end{tabular}
\caption{\label{table-4} A  correspondence between braided extensions  and  braided monoidal $2$-functors.}
\end{table}
\end{remark}

We can describe $A$-graded extensions of $\B$ in terms of braided group cohomology. 
It follows from constructions of Section~\ref{Sect Braided functors and cohomology}  that given a braided monoidal functor  $M:A \to \uBrPic(\D)$
there exist a canonical braided cohomology class $\mathcal{p}_M^0\in H^4_{br}(A,\,k^\times)$ and a canonical group homomorphism
$\mathcal{p}_M^1: H^1(A,\, \Inv(\B)) \to  H^3_{br}(A,\,k^\times)$. 

\begin{corollary}
A braided monoidal functor  $M:A \to \uPicbr(\B)$ gives rise to an $A$-graded extension of $\B$
if and only  if $ \mathcal{p}^0_M=0$ in $H^4_{br}(A,\,k^\times)$. 
Equivalence classes  of such extensions of  form a torsor over the cokernel of $\mathcal{p}^1_M$.
\end{corollary}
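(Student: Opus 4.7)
My plan is to derive this corollary by stringing together the $2$-categorical classification Theorem~\ref{main DN} with the cohomological analysis of braided monoidal $2$-functors developed in Section~\ref{Sect Braided functors and cohomology}. The essential observation is that both the obstruction $\mathcal{p}^0_M$ and the homomorphism $\mathcal{p}^1_M$ appearing in the statement are precisely the invariants attached to $M$ viewed as a braided monoidal functor $A \to \varPi_{\leq 1}(\uuPicbr(\B))$, once we identify $\pi_1(\uuPicbr(\B)) = \Inv(\Z_{sym}(\B))$ and $\pi_2(\uuPicbr(\B)) = k^\times$. In particular the codomain of $\mathcal{p}^0_M$ is $H^4_{br}(A,\pi_2(\uuPicbr(\B))) = H^4_{br}(A,k^\times)$, matching the statement, and similarly for $\mathcal{p}^1_M$.

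First I would invoke Theorem~\ref{main DN} to translate the question about braided $A$-extensions of $\B$ into the equivalent question about braided monoidal $2$-functors $A \to \uuPicbr(\B)$ lifting $M$; here $A$ is viewed as a discrete braided $2$-categorical group, and note that $\varPi_{\leq 1}(\uuPicbr(\B)) = \uPicbr(\B)$ so that $M$ really is a braided monoidal functor into the $1$-truncation. Under this equivalence, equivalence classes of braided $A$-extensions with underlying monoidal functor $M$ correspond bijectively to isomorphism classes of braided monoidal $2$-functor liftings of $M$.

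Next I would apply Proposition~\ref{br obstruction} together with Corollary~\ref{cohomology description braided}. The former tells us that $M$ lifts to a braided monoidal $2$-functor if and only if $\mathcal{p}^0_M$ vanishes in $H^4_{br}(A, k^\times)$, which gives the existence half. The latter gives the torsor structure: when the obstruction vanishes, equivalence classes of liftings form a torsor over $\Coker\bigl(\mathcal{p}^1_M : H^1(A,\Inv(\Z_{sym}(\B))) \to H^3_{br}(A, k^\times)\bigr)$, which is exactly what is claimed.

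The only real step worth guarding against is a bookkeeping issue: I need to ensure that the groupoid isomorphism furnished by Theorem~\ref{main DN} truly matches, at the level of fibers over a fixed underlying functor $M$, the description produced by the Section~\ref{Sect Braided functors and cohomology} cohomology machinery, i.e. that ``equivalence of braided extensions'' and ``isomorphism of braided monoidal $2$-functors'' correspond under the dictionary summarized in Tables~\ref{table-3} and~\ref{table-4}. This is immediate once one traces the definitions, but it is the one place where care is required; everything else is a direct unpacking.
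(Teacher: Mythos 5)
Your argument is exactly the paper's: Theorem~\ref{main DN} identifies braided $A$-extensions with braided monoidal $2$-functors $A\to\uuPicbr(\B)$, and then Corollary~\ref{cohomology description braided} (which packages Proposition~\ref{br obstruction} and Corollary~\ref{cokernel torsor braided}) gives both the vanishing criterion for $\mathcal{p}^0_M$ in $H^4_{br}(A,\,k^\times)$ and the torsor over $\Coker(\mathcal{p}^1_M)$. The identification of $\pi_1$ and $\pi_2$ of $\uuPicbr(\B)$ and the matching of equivalences of extensions with isomorphisms of $2$-functors are indeed the only bookkeeping points, and they are handled as you describe.
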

\begin{proof}
This follows from Theorem~\ref{main DN} and  Corollary~\ref{cohomology description braided}.
\end{proof}

\begin{example}
Let $\B$ be a non-degenerate braided fusion category. By Proposition~\ref{ModbrVect}, $\uuPicbr(\B) = \uuPicbr(\Vect)$ and so 
$\uuExbr(A,\,\B) \xrightarrow{\sim}  \uuExbr(A,\,\Vect)$. Thus, any braided $A$-extension of $\B$ is equivalent 
to one of the form
$\B\boxtimes \C(A,\,q)$  for some $q\in \Quad(A,\, k^\times) = H^3_{br}(A,\, k^\times)$.

Thus, the only braided fusion categories that admit interesting extensions are degenerate ones. 
\end{example}

\subsection{Symmetric graded extensions}

Let $\E$ be a symmetric tensor category and let $A$ be an Abelian group. 

\begin{definition}
A {\em symmetric $A$-extension} of $\E$ is a symmetric tensor category $\C$ that is an $A$-extension of $\E$.
\end{definition}

Equivalences of symmetric $A$-extensions are the same as for braided $A$-extensions. 
The $2$-groupoid  $\uuExsym(A,\,\E)$ of symmetric $A$-extensions of $\E$ is a $2$-subgroupoid of  $\uuExbr(A,\,\E)$. 

\begin{example}
Symmetric $A$-extensions of $\Vect$ are precisely pointed braided fusion categories $\Vect_A^\omega$, where $\omega\in Z^3_{sym} (G,\,k^\times)$
is a symmetric $3$-cocycle. 
Equivalences between extensions $\Vect_A^{\omega}$ and $\Vect_A^{\tilde{\omega}}$ correspond to  symmetric
$2$-cochains $\mu \in C_{sym}^2(A,\, k^\times)$ such that $d(\mu) = \omega/\tilde{\omega}$. 
Thus, the set of isomorphism classes $Ex_{sym}(A,\,\Vect)$ of braided $A$-extensions of $\Vect$ is in bijection with $H^3_{sym}(A,\,k^\times)
\cong \Hom(A,\, k^\times)_2 = \Hom(A,\, \mathbb{Z}_2)$.
\end{example}

Let $\C =\bigoplus_{x\in A}\, \C_x$ and $\C' =\bigoplus_{x\in A}\, \C'_x$ be symmetric  $A$-extensions of $\E$. 
Then $\C \bt_\E \C'$ is an $(A\times A)$-extension of $\E$. Define the tensor product of these
extensions to be the diagonal subcategory of $\C \bt_\E \C'$:
\begin{equation}
\label{odot product of exts}
\C \odot_\E \C' =  \bigoplus_{x\in A}\, \C_x \bt_\E \C'_x.
\end{equation}
This equips the $2$-groupoid  $\uuExsym(A,\,\E) $  of symmetric $A$-extensions of $\E$ with a  structure
of a symmetric $2$-categorical group.

Recall that the symmetric $2$-categorical group $\uuPicsym(\E)$ of symmetric $\E$-module categories is equivalent to 
$\uuPic(\E)$, the Picard group of $\E$.
Let $\FUNsym(A,\, \uuPic(\E))$ denote the $2$-groupoid of symmetric monoidal $2$-functors from $A$ to $\uuPic(\E)$.

\begin{theorem}
\label{main DN sym}
There is a symmetric monoidal $2$-equivalence $\uuExsym(A,\,\E) \xrightarrow{\sim} \FUNsym(A,\, \uuPic(\E))$.
\end{theorem}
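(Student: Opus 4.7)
The plan is to extend the proof of Theorem~\ref{main DN} (the braided case) to the symmetric setting, and then to upgrade the resulting equivalence of $2$-groupoids to a symmetric monoidal $2$-equivalence with respect to the $\odot_\E$-product on $\uuExsym(A,\E)$ and the $\bt$-product on $\FUNsym(A,\uuPic(\E))$ defined in Section~\ref{Sect cat grp sym2fun}.

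First I would construct the $2$-functor $M:\uuExsym(A,\E)\to\FUNsym(A,\uuPic(\E))$ by restricting the $2$-functor from Theorem~\ref{main DN}. Given a symmetric $A$-extension $\C=\bigoplus_{x\in A}\C_x$ of $\E$, each $\C_x$ is already automatically a symmetric $\E$-module category: the $\E$-module braiding $\sigma_{X,Y}=c_{Y,X}c_{X,Y}$ is the identity because $X\in \E$ is in the symmetric center of $\C$. Thus $\C_x\in\uuPicsym(\E)\cong\uuPic(\E)$, and the braided monoidal $2$-functor $M(\C):A\to\uuPicbr(\E)$ of Theorem~\ref{main DN} factors through $\uuPic(\E)$. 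To promote $M(\C)$ to a symmetric monoidal $2$-functor I must exhibit, for each pair $x,y\in A$, a commuting cone \eqref{cone} whose faces are $\delta_{x,y}$, $\delta_{y,x}$, and $\tau_{\C_x,\C_y}$. But the composition $\delta_{y,x}\circ\delta_{x,y}$ is exactly the universally-induced $2$-cell from the double braiding $c_{Y,X}c_{X,Y}$ of $\C$, which is the identity because $\C$ is symmetric; and $\tau$ is the symmetry $2$-cell of $\uuPic(\E)$. The commutativity of \eqref{cone} follows directly. This extends Table~\ref{table-4} by the final correspondence

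\begin{center}
\begin{tabular}{|p{6.5cm}|p{6.5cm}|}
\hline
symmetric property $c_{Y,X}c_{X,Y}=\id$ of $\C$ & commuting cone \eqref{cone} for $\delta$ \\
\hline
\end{tabular}
\end{center}

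Next I would construct the quasi-inverse $L:\FUNsym(A,\uuPic(\E))\to\uuExsym(A,\E)$. Starting from the $2$-functor $L$ of Theorem~\ref{main DN}, it suffices to show that if the cone \eqref{cone} commutes for a braided monoidal $2$-functor $\C:A\to\uuPic(\E)$, then the braiding on $L(\C)=\bigoplus_{x\in A}\C_x$ built from the $2$-cells $\delta_{x,y}$ satisfies $c_{Y,X}c_{X,Y}=\id_{X\ot Y}$ for all homogeneous $X\in\C_x,Y\in\C_y$. Up to the unique canonical identification of $\Hom$-spaces induced by $\bt_\E$, this symmetric condition is exactly what \eqref{cone} expresses, since $\tau_{\C_x,\C_y}$ is the identity modification on a symmetric $\E$-module category. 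The construction is clearly compatible with $1$- and $2$-cells (equivalences and modifications of symmetric $2$-functors are the same as those of braided $2$-functors), so $L$ and $M$ form a $2$-equivalence as in Theorem~\ref{main DN}.

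The final and main step is to verify that the $2$-equivalence $M$ is symmetric monoidal with respect to $\odot_\E$ on the source and $\bt$ on the target. For symmetric $A$-extensions $\C,\C'$ of $\E$, there is a canonical identification
\[
M(\C\odot_\E\C')(x)\;=\;\C_x\bt_\E\C'_x\;=\;M(\C)(x)\,\bt\,M(\C')(x),\qquad x\in A.
\]
One must check that the associativity $2$-cells $\tilde\alpha_{x,y,z}$ and the braiding $2$-cells $\tilde\delta_{x,y}$ of $M(\C\odot_\E\C')$, built from the hexagons and associators of $\C\odot_\E\C'$, match (under the universal property of $\bt_\E$) the formulas \eqref{tildealpha} and \eqref{tildedelta} of Section~\ref{Sect cat grp sym2fun} applied to $M(\C)$ and $M(\C')$. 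This is the technical heart of the argument and I expect it to be the main obstacle: it requires a careful identification of the braiding interchange cells inside $\C\odot_\E\C'$ with the structural $2$-cells $\beta_{x,y|z}$, $\beta_{x|y,z}$, and $\tau_{x,y}$ of $\uuPic(\E)$, exactly as was done in the proof of Proposition~\ref{tildeM is braided}. Once the formulas match on $2$-cells, compatibility with the braiding $B_{\C,\C'}$ of $\FUNsym(A,\uuPic(\E))$ (given by \eqref{B CC'}) follows from the tautological flip equivalence $\C_x\bt_\E\C'_x\simeq\C'_x\bt_\E\C_x$ in $\uuPic(\E)$, which encodes the symmetric braiding between the two $A$-graded factors of $\C\odot_\E\C'$. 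Combined with the unit identification (where the unit symmetric extension $\E$ corresponds to the trivial symmetric $2$-functor), this proves $M$ is a symmetric monoidal $2$-equivalence. Invoking Corollary~\ref{cohomology description symmetric} then gives a cohomological parameterization of $\pi_0(\uuExsym(A,\E))$ as in the braided case.
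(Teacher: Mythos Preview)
Your proposal is correct and follows essentially the same approach as the paper's proof: extend Theorem~\ref{main DN} to the symmetric setting by observing that homogeneous components are automatically symmetric $\E$-module categories and that the cone \eqref{cone} commutes precisely when the squared braiding of $\C$ is the identity, then establish the monoidal structure by comparing with the $2$-cells \eqref{tildealpha}, \eqref{tildedelta}, \eqref{B CC'} of Section~\ref{Sect cat grp sym2fun}. Your write-up is in fact considerably more detailed than the paper's two-paragraph proof, which merely states that ``the monoidal structure of this $2$-equivalence is established by comparing the tensor products, associativities, and braidings''; your final sentence invoking Corollary~\ref{cohomology description symmetric} is extraneous to the theorem itself but harmless.
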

\begin{proof}
We extend Theorem~\ref{main DN} to the symmetric setting. Observe that the homogeneous components of a symmetric
extension  $\C =\oplus_{x\in A}\, \C_x$ of $\E$ are necessarily symmetric $\E$-module categories. Commutativity of the cones \eqref{cone}
is equivalent to the squared braiding of $\C$ being identity, i.e. to the braided monoidal
$2$-functor $x\mapsto \C_x$ being symmetric.

The monoidal structure of this $2$-equivalence is established by comparing the tensor products, associativities, and braidings 
of $\uuExsym(A,\,\E)$ and  $\FUNsym(A,\, \uuPic(\E))$.
\end{proof}

\begin{corollary}
\label{exact sequence for Exsym groups}
There is an  exact sequence of group homomorphisms:
\begin{multline}
\label{exact sequence Exsym}
H^1(A,\,\Inv(\E)) \to H^3_{sym}(A,\, k^\times) \to \pi_0(\uuExsym(A,\,\E))  \to \pi_0(\Fun_{sym}(A,\, \uuPic(\E))) \to H^4_{sym}(A,\,k^\times).
\end{multline}
\end{corollary}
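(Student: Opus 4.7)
The plan is to deduce this corollary as a direct combination of two previously established results: Theorem~\ref{main DN sym}, which provides a symmetric monoidal $2$-equivalence $\uuExsym(A,\,\E) \xrightarrow{\sim} \FUNsym(A,\, \uuPic(\E))$, and Theorem~\ref{exact sequence for Funsym groups}, which provides an exact sequence associated to the symmetric $2$-categorical group of symmetric monoidal $2$-functors into any symmetric $2$-categorical group $\bG$.

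First, I would specialize Theorem~\ref{exact sequence for Funsym groups} to $\bG = \uuPic(\E) = \uuPicsym(\E)$. Using the computation of homotopy groups of $\uuPicsym(\E)$ recorded in Chapter~\ref{Chapter on Picards}, namely $\pi_1(\uuPic(\E)) = \Inv(\E)$ and $\pi_2(\uuPic(\E)) = k^\times$, together with the identification $\varPi_{\leq 1}(\uuPic(\E)) = \uPic(\E)$, the sequence from Theorem~\ref{exact sequence for Funsym groups} becomes
\[
H^1(A,\,\Inv(\E)) \to H^3_{sym}(A,\, k^\times) \to \pi_0(\FUNsym(A,\, \uuPic(\E)))
 \to \pi_0(\Fun_{sym}(A,\, \uPic(\E))) \to H^4_{sym}(A,\,k^\times).
\]

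Next, I would invoke Theorem~\ref{main DN sym}: since the $2$-equivalence $\uuExsym(A,\,\E) \simeq \FUNsym(A,\, \uuPic(\E))$ is symmetric monoidal, it induces a group isomorphism on the $\pi_0$ groups, i.e.\ $\pi_0(\uuExsym(A,\,\E)) \cong \pi_0(\FUNsym(A,\, \uuPic(\E)))$. Substituting this isomorphism in the middle term of the sequence above gives exactly the desired exact sequence.

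The genuine content is therefore already contained in the two previous theorems; the only thing to verify for this corollary is that the middle map $H^3_{sym}(A,\, k^\times) \to \pi_0(\uuExsym(A,\,\E))$ and the map $\pi_0(\uuExsym(A,\,\E)) \to \pi_0(\Fun_{sym}(A,\,\uPic(\E)))$ are group homomorphisms with respect to the tensor product $\odot_\E$ defined in \eqref{odot product of exts}. This follows because Theorem~\ref{main DN sym} gives a \emph{symmetric monoidal} equivalence, so the group structure on $\pi_0(\uuExsym(A,\,\E))$ induced by $\odot_\E$ is transported to the tensor product on $\FUNsym(A,\,\uuPic(\E))$ described before Theorem~\ref{exact sequence for Funsym groups}. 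The main, essentially notational, step is thus to check that the symmetric monoidal structure on the equivalence of Theorem~\ref{main DN sym} matches the tensor product of symmetric monoidal $2$-functors constructed in Proposition~\ref{tildeM is braided}; this amounts to comparing the definition of $\odot_\E$ with the formula \eqref{tildeM} and observing that both are implemented, after passing to homogeneous components, by the relative tensor product $\bt_\E$ on each graded piece.
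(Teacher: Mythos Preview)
Your proof is correct and follows essentially the same approach as the paper: the paper's proof consists of the single sentence ``This follows from Theorem~\ref{exact sequence for Funsym groups},'' which is exactly the specialization to $\bG=\uuPic(\E)$ (together with the identification $\pi_0(\uuExsym(A,\E))\cong\pi_0(\FUNsym(A,\uuPic(\E)))$ from the immediately preceding Theorem~\ref{main DN sym}) that you spell out. Your additional remarks on compatibility of the monoidal structures are more explicit than the paper but add no new ingredients.
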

\begin{proof}
This follows from Theorem~\ref{exact sequence for Funsym groups}.
\end{proof}

\subsection{The group of symmetric extensions of a symmetric fusion category}

Let $G$ be a finite abelian group and let $t\in G$ be a central element such that $t^2=1$.
Let $A$ be a finite Abelian group.  In this Section we compute  the group 
\[
Ex_{sym}(A,\, \E) := \pi_0(\uuExsym(A,\,\E))
\]
of symmetric $A$-extensions of $\E=\Rep(G,\, t)$.

\begin{theorem} 
\label{Ex groups computed}
There are group isomorphisms
\begin{eqnarray}
\label{ExsymRepG}
Ex_{sym}(A,\,\Rep(G)) &\cong& H^2(G,\, \widehat{A}) \oplus H^1(A,\, \mathbb{Z}_2),\\ 
\label{ExsymRepGt}
Ex_{sym}(A,\, \Rep(G,\, t))  &\cong& H^2(G/\langle t \rangle ,\, \widehat{A})\quad \text{if $t\neq 1$}.
\end{eqnarray}
\end{theorem}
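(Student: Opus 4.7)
The plan is to apply the long exact sequence of Corollary~\ref{exact sequence for Exsym groups} to $\E=\Rep(G,t)$. Substituting $\Inv(\E)=\widehat{G}$ and using Examples~\ref{cohomology 0-3} and~\ref{cohomology 4} together with the canonical isomorphism $\Hom(A,\bZ/2\bZ)\cong\Hom(A_2,k^\times)$ for finite abelian $A$, the sequence becomes
\begin{equation*}
\Hom(A,\widehat{G})\xrightarrow{\mathcal{p}^1} H^1(A,\bZ/2\bZ)\to Ex_{sym}(A,\Rep(G,t))\to \pi_0(\Fun_{sym}(A,\uPic(\Rep(G,t))))\xrightarrow{\mathcal{p}^0}\Hom(A_2,k^\times).
\end{equation*}
By Theorem~\ref{pisu} and Section~\ref{pstc}, $\uPic(\Rep(G,t))$ is the symmetric categorical group with $\pi_0=\Pic(\Rep(G,t))$, $\pi_1=\widehat{G}$, first canonical class the homomorphism $Q$ from \eqref{chi-gamma}, and second canonical class $\chi\mapsto\chi(t)$. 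Since all Whitehead brackets of a symmetric categorical group vanish, isomorphism classes of symmetric monoidal functors $A\to\uPic(\Rep(G,t))$ correspond to pairs $(\phi_0,\mu)\in\Hom(A,\ker Q)\oplus\Ext(A,\widehat{G})$.

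In the Tannakian case $t=1$, both $Q$ and the second canonical class vanish (Proposition~\ref{tanc} and $\chi(1)=1$), so by formula~\eqref{p1 formula} and triviality of Whitehead brackets, $\mathcal{p}^1$ and $\mathcal{p}^0$ are both zero. The sequence degenerates into the short exact sequence
\begin{equation*}
0\to H^1(A,\bZ/2\bZ)\to Ex_{sym}(A,\Rep(G))\to \Hom(A,H^2(G,k^\times))\oplus\Ext(A,\widehat{G})\to 0,
\end{equation*}
which splits via the canonical lift of $1$-functors to $2$-functors available because $\mathcal{p}^0=0$ and all higher structure maps are trivial. Combined with the Universal Coefficient decomposition $H^2(G,\widehat{A})\cong\Hom(A,H^2(G,k^\times))\oplus\Ext(A,\widehat{G})$ (which uses $H^2(G,k^\times)\cong\widehat{H_2(G,\bZ)}$ and the symmetry $\Ext(G,\widehat{A})\cong\Ext(A,\widehat{G})$ for finite abelian groups), this yields \eqref{ExsymRepG}.

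In the super-Tannakian case $t\neq 1$, the second canonical class $\chi\mapsto\chi(t)$ is surjective onto $\bZ/2\bZ$, and hence $\mathcal{p}^1$ (post-composition with it) is surjective, killing $H^1(A,\bZ/2\bZ)$ and giving $Ex_{sym}(A,\Rep(G,t))\cong\ker\mathcal{p}^0$. The identification $H^2(G,k^\times)\cong\widehat{\Extpower^2 G}$ shows that $\ker Q$ on the $H^2(G,t,k^\times)$ part of $\Pic(\Rep(G,t))$ consists of classes whose associated alternating form has $t$ in its radical, i.e., $H^2(G/\langle t\rangle,k^\times)$; in the split case the extra $\bZ/2\bZ$ summand of $\Pic(\Rep(G,t))$ maps isomorphically under $Q$ onto $\widehat{\langle t\rangle}\subset\widehat{G}_2$ and so contributes nothing. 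Using $\Ext(A,\widehat{G})\cong\Ext(G,\widehat{A})$, the main remaining step is to verify that $\mathcal{p}^0$ vanishes on the $\Hom(A,H^2(G/\langle t\rangle,k^\times))$ summand of $\pi_0(\Fun_{sym})$ and restricts on $\Ext(G,\widehat{A})$ to the connecting map $\Ext(G,\widehat{A})\to\Ext(\langle t\rangle,\widehat{A})\cong\widehat{A}_2\cong\Hom(A_2,k^\times)$ of the six-term sequence associated with $0\to\langle t\rangle\to G\to G/\langle t\rangle\to 0$, whose kernel is $\Ext(G/\langle t\rangle,\widehat{A})$. Granting this, $\ker\mathcal{p}^0\cong\Hom(A,H^2(G/\langle t\rangle,k^\times))\oplus\Ext(G/\langle t\rangle,\widehat{A})\cong H^2(G/\langle t\rangle,\widehat{A})$, establishing \eqref{ExsymRepGt}. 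The main obstacle is this explicit identification of $\mathcal{p}^0$, which requires carefully tracing the symmetric $4$-cocycle polytopes \eqref{cubes fghk}, \eqref{oct theta}, \eqref{oct psi}, and \eqref{cone} on explicit representatives of extension classes in $\Ext(G,\widehat{A})$.
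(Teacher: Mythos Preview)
Your approach via the exact sequence of Corollary~\ref{exact sequence for Exsym groups} is not the paper's primary proof; it is the paper's \emph{alternative} proof, worked out after the theorem in Propositions~\ref{Fun sym groups computed} and~\ref{obstruction hom} as an illustration of the obstruction theory. The paper's main argument is far more direct: by Deligne's classification, every symmetric $A$-extension of $\Rep(G)$ has the form $\Rep(\tilde G,\tilde t)$ for a central extension $1\to\widehat A\to\tilde G\to G\to 1$ and some $\tilde t\in(\widehat A)_2$, which gives \eqref{ExsymRepG} at once. For $t\neq 1$ the paper passes to the maximal Tannakian subcategory $\C_0$ of index~$2$ and shows that induction $\mathcal{T}\mapsto\Rep(G,t)\bt_{\Rep(G/\langle t\rangle)}\mathcal{T}$ is inverse to $\C\mapsto\C_0$, reducing to the Tannakian case for $G/\langle t\rangle$.

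Your exact-sequence route is viable but, as you concede, incomplete: the identification of $\mathcal{p}^0$ in the super-Tannakian case is the substantive step, and you have not carried it out. The paper does this in Proposition~\ref{obstruction hom}, proving that under $H^4_{sym}(A,k^\times)\cong H^2(\langle t\rangle,\widehat A)$ the obstruction is the cohomological restriction map, and then matches the result against the Lyndon--Hochschild--Serre sequence. There are smaller loose ends as well: your assertion that $\mathcal{p}^0=0$ in the Tannakian case does not follow merely from triviality of Whitehead brackets (it needs the polytope computation, which the paper handles via the same restriction description with $t=1$); your splitting of the short exact sequence is asserted rather than constructed (the paper instead builds an explicit isomorphism $\beta$ in diagram~\eqref{sesiso}); and identifications such as $H^2(G,k^\times)\cong\widehat{\Extpower^2 G}$ presuppose $G$ abelian, whereas the paper's alternative argument works with $G/[G,G]$.
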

\begin{proof}
Let us first consider symmetric $A$-extensions  of $\Rep(G)$. They are of the form
$\Rep(\tilde{G},\, \tilde{t})$, where $\tilde{G}$ is a central extension
\begin{equation}
\label{extension m}
1\to  \widehat{A} \to \tilde{G}\xrightarrow{\pi} G \to 1 
\end{equation}
and $\tilde{t}$ is a central element of $\tilde{G}$ such that $\tilde{t}^2=1$ and $\pi(\tilde{t})=1$.
Thus,  every symmetric $A$-extension of $\Rep(G)$ is completely determined by the pair consisting of
a cohomology class in $H^2(G,\, \widehat{A})$ corresponding to the isomorphism class of the
group extension \eqref{extension m} and $\tilde{t} \in (\widehat{A})_2 = H^1(A,\, \mathbb{Z}_2)$.
The corresponding map $Ex_{sym}(A,\,\Rep(G)) \to H^2(G,\, \widehat{A}) \oplus H^1(A,\, \mathbb{Z}_2)$
is a group isomorphism. It is clearly injective. To see that it is surjective note that  the elements 
of $H^2(G,\, \widehat{A})$ form a subgroup $Ex_{Tan}(A,\, \Rep(G))$ of Tannakian
$A$-extensions of $\Rep(G)$ while the elements of $H^1(A,\, \mathbb{Z}_2)$ form the subgroup of split extensions. 

Now consider a symmetric $A$-extension $\C$ of $\Rep(G,\, t)$ with $t \neq 1$. It contains
a unique maximal Tannakian subcategory $\C_0$ of index $2$ which is a Tannakian $A$-extension of $\Rep(G/\langle t \rangle)$. 
We have a group homomorphism
\begin{equation}
\label{map f}
f: Ex_{sym}(A,\,\Rep(G,\, t))  \to    Ex_{Tan}(A,\,\Rep(G/\langle t \rangle))  = H^2(G/\langle t \rangle ,\, \widehat{A}) :\C \mapsto \C_0.
\end{equation}
We claim that $f$ has an inverse  given by the induction 
\begin{equation}
\label{map g}
g: Ex_{Tan}(A,\,\Rep(G/\langle t \rangle))  \to  Ex_{sym}(A,\,\Rep(G,\, t)) : \mathcal{T} \mapsto \Rep(G) \bt_{\Rep(G/\langle t \rangle)} \mathcal{T},
\end{equation}
where the tensor product of fusion categories over a symmetric fusion category is defined in \cite[Section 2.5]{DNO}.

Indeed, we have $f \circ g =\id$ since the maximal Tannakian subcategory of $\Rep(G) \bt_{\Rep(G/\langle t \rangle)} \mathcal{T}$
is $\Rep(G/\langle t \rangle) \bt_{\Rep(G/\langle t \rangle)} \mathcal{T} \cong \mathcal{T}$.  To check that $g \circ f =\id$ 
we observe that there is a surjective symmetric tensor functor $F: \C_0 \bt \Rep(G,\,t) \to \C$ given by embedding of factors. Since
the intersection of $\C_0$ and $\Rep(G,\,t)$ in $\C$ is $\Rep(G/\langle t \rangle)$ we see that $F$ factors through
$\C_0 \bt_{\Rep(G/\langle t \rangle)} \Rep(G,\,t)$. The latter fusion category has the same Frobenius-Perron dimension as
$\C$ so that $\C \cong \C_0 \bt_{\Rep(G/\langle t \rangle)} \Rep(G,\,t)$.
\end{proof}

\begin{corollary}
$Ex_{sym}(A,\, \sVect) = 0$. 
\end{corollary}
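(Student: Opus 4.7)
The plan is to apply Theorem~\ref{Ex groups computed} directly. Recall that by Deligne's theorem, $\sVect \cong \Rep(\mathbb{Z}/2\mathbb{Z},\, t)$, where $t$ is the nontrivial element of $\mathbb{Z}/2\mathbb{Z}$, so in the notation of the theorem we have $G = \mathbb{Z}/2\mathbb{Z}$ and $t \neq 1$.

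Since $t \neq 1$, the relevant formula is \eqref{ExsymRepGt}, which gives
\[
Ex_{sym}(A,\, \sVect) \cong H^2(G/\langle t \rangle,\, \widehat{A}).
\]
Here $\langle t \rangle = G$, so the quotient $G/\langle t \rangle$ is the trivial group. Thus $H^2(G/\langle t \rangle,\, \widehat{A}) = H^2(1,\, \widehat{A}) = 0$, which yields the claim. The only step that could be considered nontrivial is verifying that the hypothesis of \eqref{ExsymRepGt} applies, i.e.\ that we are indeed in the super-Tannakian case $t \neq 1$, but this is immediate from the identification $\sVect = \Rep(\mathbb{Z}/2\mathbb{Z},\, t)$ with $t$ generating $\mathbb{Z}/2\mathbb{Z}$. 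There is no real obstacle here; the entire content has been packaged into Theorem~\ref{Ex groups computed}, and the corollary is simply the observation that $\sVect$ corresponds to the extremal case where the quotient group is trivial.
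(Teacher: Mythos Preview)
Your proof is correct and is exactly the intended argument: the corollary is an immediate specialization of Theorem~\ref{Ex groups computed} to the case $G=\mathbb{Z}/2\mathbb{Z}$ with $t$ the nontrivial element, so that $G/\langle t\rangle$ is trivial and $H^2(G/\langle t\rangle,\widehat{A})=0$.
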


Below we  describe the exact sequence \eqref{exact sequence Exsym} computing the group of symmetric extensions.
This is meant to illustrate our obstruction theory and give an alternative proof of Theorem~\ref{Ex groups computed}.

\begin{proposition}
\label{Fun sym groups computed}
There are group isomorphisms
\begin{eqnarray}
\label{FunsymRepG}
\pi_0(\Fun_{sym}(A,\,\uPicsym(\Rep(G)))) &\cong& H^2(G,\, \widehat{A}), \\
\label{FunsymRepGt}
\pi_0(\Fun_{sym}(A,\, \uPicsym(\Rep(G,\, t))))  &\cong& \mbox{Ker}\left( H^2(G,\, \widehat{A}) \xrightarrow{\Xi_t} \Hom(G/\langle t \rangle,\, (\widehat{A})_2) \right),
\end{eqnarray}
where
\begin{equation}
\label{Xit}
\Xi_t: H^2(G,\, \widehat{A}) \to \Hom(G/\langle t \rangle,\, (\widehat{A})_2) : m \mapsto  m(t,\, -) m(-,\, t)^{-1}.
\end{equation}
\end{proposition}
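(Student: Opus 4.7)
The plan is to apply obstruction theory for symmetric monoidal functors between symmetric categorical groups to $\uPicsym(\Rep(G,t))$, and then match the result with the stated cohomology group via the universal coefficient theorem. The symmetric categorical group $\uPicsym(\Rep(G,t))$ has $\pi_0=\Pic(\Rep(G,t))$ (Theorem~\ref{pisu}), $\pi_1=\Inv(\Rep(G,t))=\widehat G$, and first canonical class $\alpha\in H^3_{sym}(\pi_0,\pi_1)$ corresponding under the identification $H^3_{sym}(-,M)\cong\Hom(-,M_2)$ of Example~\ref{cohomology 0-3} to the homomorphism $Q=Q_{\Pic(\Rep(G,t))}$ of Theorem~\ref{pisu}. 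Applying the $1$-categorical analog of Corollary~\ref{cohomology description symmetric} (proved by the same obstruction argument, since $\uPicsym(\Rep(G,t))$ is $1$-truncated with $\pi_2=0$), isomorphism classes of symmetric monoidal functors $A\to\uPicsym(\Rep(G,t))$ fit into a short exact sequence
\begin{equation*}
0 \to H^2_{sym}(A,\widehat G) \to \pi_0\bigl(\Fun_{sym}(A,\uPicsym(\Rep(G,t)))\bigr) \to \ker\bigl(Q_{*}\colon\Hom(A,\Pic(\Rep(G,t)))\to \Hom(A,(\widehat G)_2)\bigr) \to 0,
\end{equation*}
where $Q_{*}(\phi)=Q\circ\phi$ is the pullback obstruction and the $H^2_{sym}$ piece parametrizes symmetric cocycle lifts modulo coboundary.

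Next I would use the universal coefficient theorem in group cohomology.  Since $k^\times$ is divisible, $\Ext(G^{ab},k^\times)=0$ and $H^2(G,k^\times)=\widehat{H_2(G)}$; together with the Pontryagin duality identity $\Ext(G^{ab},\widehat A)\cong\Ext(A,\widehat G)$, the UCT gives
\begin{equation*}
H^2(G,\widehat A)\;\cong\;\Hom(A,H^2(G,k^\times))\oplus\Ext(A,\widehat G).
\end{equation*}
Combined with $H^2_{sym}(A,\widehat G)=\Ext(A,\widehat G)$ (Example~\ref{cohomology 0-3}), this matches the two outer terms of the exact sequence above to the two UCT summands.  The key cocycle-level computation is that for $m\in Z^2(G,\widehat A)$, the associated homomorphism $\phi_m\in\Hom(A,H^2(G,k^\times))$ defined by $\phi_m(a)=[m(-,-)(a)]$ satisfies
\begin{equation*}
Q(\phi_m(a))(g)\;=\;\phi_m(a)(t,g)\,\phi_m(a)(g,t)^{-1}\;=\;m(t,g)(a)\,m(g,t)(a)^{-1}\;=\;\Xi_t(m)(g)(a),
\end{equation*}
so $Q_{*}\phi_m=0\iff\Xi_t(m)=0$.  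Since $\Xi_t$ vanishes on the $\Ext(A,\widehat G)$ summand (whose cocycle representatives can be chosen symmetric), $\ker(\Xi_t)$ equals $\ker(Q_{*})\oplus\Ext(A,\widehat G)$, exactly matching $\pi_0(\Fun_{sym})$.  In the Tannakian case $t=1$, $Q\equiv 0$ by Proposition~\ref{tanc} and $\Xi_1\equiv 0$, giving $\pi_0(\Fun_{sym}(A,\uPicsym(\Rep(G))))\cong H^2(G,\widehat A)$.

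The main obstacle will be the split super-Tannakian case, where $\Pic(\Rep(G,t))=H^2(G,t,k^\times)\oplus\mathbb Z/2\mathbb Z$ acquires the extra $\mathbb Z/2\mathbb Z$ summand, with $Q$ restricting on it to the non-trivial character $\nu\in\widehat{\langle t\rangle}\subset\widehat G$ attached to the chosen splitting $G\to\langle t\rangle$.  A homomorphism $\phi=(\phi_1,\phi_2)\in\ker(Q_{*})$ then satisfies the coupled constraint $\phi_1(a)_t=\nu^{\phi_2(a)}$ for all $a\in A$, and one must verify that the twisted group law $\ast$ on $H^2(G,t,k^\times)$ given by \eqref{tmc} is absorbed correctly so that $\ker(Q_{*})$ matches the corresponding subgroup of $H^2(G,\widehat A)$ under the Künneth-style decomposition induced by the splitting $G\cong G_0\times\langle t\rangle$.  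This reconciliation is a cocycle bookkeeping argument; its conclusion is forced by the fact that both sides of the stated isomorphism are manifestly independent of any choice of splitting.
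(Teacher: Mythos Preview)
Your approach is essentially the same as the paper's: both set up the obstruction short exact sequence
\[
0 \to H^2_{sym}(A,\widehat G) \to \pi_0(\Fun_{sym}(A,\uPicsym(\Rep(G,t)))) \to \ker(Q_*) \to 0
\]
and compare it with the (split) universal-coefficient sequence for $H^2(G,\widehat A)$. The paper phrases the latter as the Karpilovsky sequence $0\to\Ext(G/G',\widehat A)\to H^2(G,\widehat A)\to\Hom(A,H^2(G,k^\times))\to 0$, which is your UCT decomposition after applying Pontryagin duality.

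There is one genuine gap. Matching the outer terms of two short exact sequences does not by itself identify the middle terms; you still need either a map of sequences or a compatible splitting. The paper handles this by constructing an explicit homomorphism $\beta\colon H^2(G,\widehat A)\to \pi_0(\Fun_{sym})$: to $m$ one associates the central extension $1\to\widehat A\to\tilde G\to G\to 1$, and then $\Rep(\tilde G)$ is a symmetric $A$-extension of $\Rep(G)$, hence a symmetric monoidal functor $A\to\uPicsym(\Rep(G))$. This $\beta$ visibly sits in a morphism of short exact sequences with your UCT sequence on top, and since the top row is split exact, $\beta$ is an isomorphism by the five lemma. You should insert this step.

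Your worry about the split super-Tannakian case is unnecessary. The extra $\mathbb Z/2\mathbb Z$ factor in $\Pic(\Rep(G,t))$ never contributes to $\ker(Q_*)$: if $(\gamma,\ve)\in\ker Q$ then evaluating $\gamma_t=\nu^\ve$ at $t$ gives $1=(-1)^\ve$, forcing $\ve=0$. Thus $\ker Q=\{\gamma:\gamma_t=1\}$ in both the split and non-split cases. Moreover, on this kernel the twisted product $*$ of \eqref{tmc} agrees with the ordinary product (since $\xi_\gamma=0$), so $\Hom(A,\ker Q)$ is unambiguous and your cocycle computation $Q_*\phi_m=0\iff\Xi_t(m)=0$ goes through uniformly. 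The paper's own proof in the super case uses the same four-term sequence you wrote and a commuting square relating $q^*$ with $\Xi_t$; your argument is an equivalent repackaging.
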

\begin{proof}
Let us consider the Tannakian case first.  Let $G'=[G,\, G]$ and $\widehat{G}=\Hom(G,\, k^\times)$. 
We  have a homomorphism of short exact sequences
\begin{equation}
\label{sesiso}
\xymatrix{
0  \ar[r] & \Ext(G/G',\, \widehat{A}) \ar[r] \ar[d]^\alpha &
 H^2(G,\, \widehat{A}) \ar[r] \ar[d]^\beta &  
 \Hom(A,\, H^2(G,\,k^\times)) \ar@{=}[d]  \ar[r]&  0 \\
 0  \ar[r] & H^2_{sym}(A,\, \widehat{G}) \ar[r]  & \pi_0(\Fun_{sym}(A, \uPicsym(\Rep(G))))  \ar[r] &
\Hom(A,\, H^2(G,\,k^\times)) \ar[r] & 0.
}
\end{equation}
Here $\alpha$ is the duality isomorphism. The homomorphism $\beta$ is defined as follows.
An element $m \in H^2(G,\, \widehat{A})$ gives rise to a central group extension 
\begin{equation}
1\to  \widehat{A} \to \tilde{G}\xrightarrow{\pi} G \to 1. 
\end{equation}
The category $\Rep(\tilde{G})$ is a symmetric $A$-extension of $\Rep(G)$ and, therefore, yields a symmetric
monoidal functor $\alpha(m): A \to \uPic_{sym}(\Rep(G))$.
The first row of \eqref{sesiso} is split exact \cite[Theorem 2.1.19]{K} and the second
row comes from assigning to  a symmetric functor a group homomorphism. Hence, $\beta$ is an isomorphism.
This proves \eqref{FunsymRepG}. 

In the super-Tannakian case we have an exact sequence
\begin{equation}
\label{4term}
0  \to H^2_{sym}(A,\, \widehat{G}) \to Fun_{sym}(A, \uPicsym(\Rep(G,\,t)))  \to
\Hom(A,\, H^2(G,\,k^\times))  \xrightarrow{q^*} \Hom(A,\, (\widehat{G})_2),
\end{equation}
where $q^*$ is induced by the second canonical class of $\uuPicsym(\Rep(G,\,t))$,
\[
q: \Picsym(\Rep(G,\,t)) =H^2(G,\,k^\times) \to \Inv(\Rep(G,\,t))_2= (\widehat{G})_2 : \mu \mapsto \frac{\mu(t,\,-)}{\mu(-,\, t)}
\] 
see Theorem~\ref{pisu}. Combining \eqref{4term} with the commuting square
\begin{equation}
\xymatrix{
H^2(G,\, \widehat{A}) \ar[rr]^{\Xi_t} \ar[d] && \Hom(A,\, (\widehat{G})_2) \ar[d] \\
\Hom(A,\, H^2(G,\,k^\times))  \ar[rr]^{q^*} && \Hom(G,\, (\widehat{A})_2) 
}
\end{equation}
we obtain \eqref{FunsymRepGt}.
\end{proof}

Recall that isomorphism \eqref{H4sym}  identifies $H^4_{sym}(A,\,k^\times)$ with $\Hom(A_2,\, k^\times) = \widehat{(A_2)}$.
Combining this with the isomorphisms $H^2(\mathbb{Z}_2,\, \widehat{A}) \cong \widehat{A}/(\widehat{A})^2 \cong \widehat{(A_2)}$
we obtain
\begin{equation}
\label{H4 sym as H2}
H^4_{sym}(A,\,k^\times) \cong H^2(\mathbb{Z}_2,\, \widehat{A}). 
\end{equation}

\begin{proposition}
\label{obstruction hom}
The obstruction homomorphism $\pi_0(\Fun_{sym}(A,\, \uPic_{sym}(\E))) \to H^4_{sym}(A,\,k^\times)$ in \eqref{exact sequence Exsym} is 
given by 
\begin{equation}
\label{obstruction hom description}
Fun_{sym}(A,\, \uPic_{sym}(\E))  \cong \mbox{Ker}({\Xi_t})
\hookrightarrow H^2(G,\, \widehat{A})  \xrightarrow{\text{res}}  H^2(\langle t \rangle,\, \widehat{A}) \cong H^4_{sym}(A,\,k^\times),
\end{equation}
where the first isomorphism is  \eqref{FunsymRepGt}, the last one is \eqref{H4 sym as H2}, $\Xi_t$ is defined in \eqref{Xit},
and ${\text res}$ is the restriction map in cohomology.
\end{proposition}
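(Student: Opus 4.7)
The plan is to combine the exact sequence from Corollary \ref{exact sequence for Exsym groups} with the explicit computation of $Ex_{sym}(A,\,\Rep(G,t))$ from Theorem \ref{Ex groups computed} to identify the kernel of the obstruction, then pin down the obstruction map itself by naturality.

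First, I would compare the inflation map $H^2(G/\langle t\rangle,\,\widehat{A}) \hookrightarrow H^2(G,\,\widehat{A})$ with the natural map $\pi_0(\uuExsym(A,\,\Rep(G,t))) \to \pi_0(\Fun_{sym}(A,\,\uPic_{sym}(\Rep(G,t))))$ coming from \eqref{exact sequence Exsym}. Under the identifications \eqref{ExsymRepGt} and \eqref{FunsymRepGt}, a symmetric $A$-extension $\Rep(\tilde{H}\times\langle t\rangle,(1,t))$ (classified by $m \in H^2(G/\langle t\rangle,\widehat{A})$) gives rise to the symmetric monoidal functor sending $a \in A$ to the $a$-homogeneous component viewed as a symmetric $\Rep(G,t)$-module category. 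Tracing through the Schur-multiplier computations behind Theorem \ref{pisu} and Proposition \ref{prsg}, the resulting class in $H^2(G,\widehat{A})$ is precisely the inflation of the extension class of $\tilde{H}$, landing in $\mbox{Ker}(\Xi_t)\cap \mbox{Ker}(\text{res})$.

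Second, by the inflation-restriction sequence for the central extension $1\to \langle t\rangle \to G \to G/\langle t\rangle\to 1$ with trivial coefficients $\widehat{A}$, inflation yields $H^2(G/\langle t\rangle, \widehat{A}) \cong \mbox{Ker}(\text{res}\colon H^2(G,\widehat{A}) \to H^2(\langle t\rangle,\widehat{A}))$. Combined with the exactness of \eqref{exact sequence Exsym}, this shows that the kernel of the obstruction map $o_4$, restricted to $\mbox{Ker}(\Xi_t)$, equals $\mbox{Ker}(\Xi_t)\cap \mbox{Ker}(\text{res})$. Hence $o_4$ factors through the restriction, giving a well-defined injection of $\mbox{Ker}(\Xi_t)/(\mbox{Ker}(\Xi_t)\cap \mbox{Ker}(\text{res}))$ into $H^4_{sym}(A,k^\times) \cong \widehat{A}/2\widehat{A} \cong H^2(\langle t\rangle,\widehat{A})$, as required for the statement.

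The main obstacle is verifying that the induced homomorphism $H^2(\langle t\rangle,\widehat{A}) \to H^4_{sym}(A,k^\times)$ agrees with the canonical isomorphism \eqref{H4 sym as H2} of Example \ref{cohomology 4}, rather than with a non-trivial composite involving an automorphism of this target. I would handle this by naturality: the obstruction class $\mathcal{p}^0_C$ from Proposition \ref{sym obstruction} is functorial with respect to the symmetric tensor functor $\Rep(G,t) \to \Rep(\langle t\rangle,t) = \sVect$ induced by inclusion $\langle t\rangle \hookrightarrow G$. This reduces the identification to the case $G = \langle t\rangle$, where $\mbox{Ker}(\Xi_t) = H^2(\langle t\rangle,\widehat{A})$ and the corollary $Ex_{sym}(A,\sVect) = 0$ forces $o_4$ to be an isomorphism onto $H^4_{sym}(A,k^\times)$. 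To pin it down canonically, a direct calculation on a generator (such as $A = \mathbb{Z}/2\mathbb{Z}$ with the unique nontrivial central extension of $\langle t\rangle$ by $\widehat{A}$) matches the symmetry cell $\tau_{x,x}$ entering the cone \eqref{cone} with the value $a(x\,||\,x)$ contributing to the formula \eqref{EM hom}; this computation, combined with the similar matching of $\alpha$- and $\delta$-cells via \eqref{oct theta}--\eqref{oct psi}, confirms that the induced map is indeed the isomorphism \eqref{H4 sym as H2}.
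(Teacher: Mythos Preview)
Your approach is genuinely different from the paper's. The paper computes the obstruction cocycle $a\in Z^4_{sym}(A,k^\times)$ directly: using the Eilenberg--Mac~Lane formula \eqref{EM hom}, one must evaluate $a(x,x|x)$, $a(x|x,x)$, and $a(x,x,x,x)$ for $x\in A_2$. The first two vanish, and $a(x,x,x,x)$ is read off from the cube \eqref{cubes fghk} as the self-braiding $c_{M_{x,x},M_{x,x}}$ of the invertible object $M_{x,x}\in\Inv(\E)=\widehat G$, i.e.\ the value $M_{x,x}(t)$. A short commutative-square argument then matches $x\mapsto M_{x,x}(t)$ with the cohomological restriction to $\langle t\rangle$. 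This is a one-step calculation and yields exactly which piece of the obstruction cocycle survives.

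Your indirect route has a real gap and a misfire. First, the sentence ``inflation yields $H^2(G/\langle t\rangle,\widehat A)\cong\Ker(\text{res})$'' is false in general: inflation need not be injective (its kernel is the image of the transgression from $H^1(\langle t\rangle,\widehat A)$), and its image is $\Ker(\Xi_t)\cap\Ker(\text{res})$, not all of $\Ker(\text{res})$. You need the full seven-term sequence here, not the five-term one. Second, your naturality step is underspecified: the restriction functor $\Rep(G,t)\to\sVect$ induces a $2$-functor on module categories in the \emph{wrong} direction ($\Mod(\sVect)\to\Mod(\Rep(G,t))$); you would need induction $\M\mapsto\sVect\bt_{\Rep(G,t)}\M$ instead, and then you must check that the induced map on $\Fun_{sym}$ really is the cohomological restriction on $H^2(G,\widehat A)$. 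Third, the pinning-down calculation you sketch is aimed at the wrong component: the cone \eqref{cone} produces $a(x\,\|\,y)$, but this component does not enter \eqref{EM hom}; the entire content sits in $a(x,x,x,x)$, which comes from the cube \eqref{cubes fghk} and the $\bt$-interchange cell, not from $\tau$. So even after reducing to $G=\langle t\rangle$ you end up doing precisely the paper's computation, only with less control over which cell is responsible. Finally, note that the paper presents this Proposition as input to an \emph{alternative} derivation of Theorem~\ref{Ex groups computed}; invoking that theorem to prove the Proposition would undermine that purpose.
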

\begin{proof}
For a symmetric monoidal functor $F: A \to \uPic_{sym}(\E))$ let  $a=a(F) \in  H^4_{sym}(A,\,k^\times)$ be the obstruction
to lifting it to a symmetric monoidal $2$-functor. The isomorphism \eqref{H4sym} expresses $a$ as an element of 
$\widehat{(A_2)}$ in terms of its components $a(x,x,x,x)$, $a(x,x|x)$, and  $a(x|x,x)$, $x\in A_2$. 
 We have $a(x,x|x)=a(x|x,x)=1$ while
the value of $a(x,x,x,x)$ is found as follows.
Let us view the $\E$-module equivalence $M_{x,x}:  F(x) \bt_\E F(x) \xrightarrow{\sim} \E$ 
coming from the monoidal functor structure of $F$ as an element of $\Inv(\E)= \widehat{G}$. 
Then $a(x,x,x,x)$  is equal to the value of the self-braiding of $M_{x,x}$, i.e. to the evaluation $M_{x,x}(t)$.
Note that the map
\[
A_2 \to \widehat{G} : x \mapsto M_{x,x}|_{\langle t \rangle}
\]
is a homomorphism, since
\[
M_{x,x} M_{y,y} = M_{xy, xy} M_{x,y} M_{y,x},\qquad x,y\in A_2,
\]
and $M_{x,y}M_{y,x}|_{\langle t \rangle}  =M_{xy}^2|_{\langle t \rangle} =1$.

By \eqref{H4sym}, $a$ is identified with the homomorphism
\begin{equation}
A_2 \to k^\times: x \mapsto M_{x,x}(t). 
\end{equation}
That this map coincides with the restriction map $ \mbox{Ker}({\Xi_t})
\hookrightarrow H^2(G,\, \widehat{A})  \xrightarrow{\text{Res}}  H^2(\langle t \rangle,\, \widehat{A})$
follows from commutativity of the following diagram
\begin{equation}
\xymatrix{
H^2(G,\, \widehat{A}) \ar[rr]^s \ar[d]^{\text{Res}}  && H^2_{sym}(G/[G,G],\, \widehat{A})  \ar[rr]^{\sim} && H^2_{sym}(A,\, \widehat{G}) \ar [d] \\
H^2(\langle t \rangle,\, \widehat{A}) \ar[rrrr]^{\sim} &&&& H^2_{sym}(A,\, \widehat{\langle t \rangle}),
}
\end{equation}
where $s$ denotes a splitting of the first row of \eqref{sesiso}.
\end{proof}

Thus, for $t=1$ the exact sequence \eqref{exact sequence Exsym}  gives rise to a split
short exact sequence
\begin{equation}
0\to \Hom(A,\, \mathbb{Z}_2)  \to 
Ex_{sym}(A,\,\Rep(G))  \to  H^2(G,\, \widehat{A}) \to 0,
\end{equation}
while for $t\neq 1$ it becomes
\begin{equation}
\label{Exsym for t not1}
\begin{split}
H^1(G,\, \widehat{A}) \xrightarrow{\text{Res}}  H^1(\langle t \rangle,\, \widehat{A}) \to 
& Ex_{sym}(A,\,\Rep(G,\,t))  \to  \\
&  \text{Ker}\left(  H^2(G,\, \widehat{A}) \xrightarrow{\Xi_t} H^1(G/\langle t \rangle,\,  H^1(\langle t \rangle,\, \widehat{A})) \right)
  \xrightarrow{\text{Res}}  H^2(\langle t \rangle,\, \widehat{A}).
 \end{split}
\end{equation}
The isomorphism $Ex_{sym}(A,\,\Rep(G,\,t))  \cong H^2(G/\langle t \rangle,\,\widehat{A})$
from \eqref {ExsymRepGt} can be recovered  by comparing the sequence
\eqref{Exsym for t not1} with the exact sequence coming from the Lyndon-Hochschild-Serre spectral sequence
\cite{DHW, Sah}:
\begin{equation}
\begin{split}
H^1(G,\, \widehat{A}) \xrightarrow{\text{Res}}  H^1(\langle t \rangle,\, \widehat{A}) \to  
& H^2(G/\langle t \rangle,\,\widehat{A}) \xrightarrow{\text{Inf}}  \\
& \text{Ker}\left(   H^2(G,\,\widehat{A})  \xrightarrow{\text{Res}}  H^2(\langle t \rangle,\, \widehat{A})  \right)
 \xrightarrow{\Xi_t} H^1(G/\langle t \rangle,\,  H^1(\langle t \rangle,\, \widehat{A})).
\end{split}
\end{equation}

\subsection{The  Pontryagin-Whitehead quadratic function and zesting}
\label{PW section}

Let $\B$ be a braided tensor category and let $A$ be an Abelian group. 
Fix a homomorphism 
 \begin{equation}
 \label{hom f}
f:A \to \Picbr(\B) : x\mapsto \C_x
\end{equation}
that extends to a braided monoidal $2$-functor  $A \to \uuPicbr(\B)$. That is, there is 
a braided extension
\[
\C =\bigoplus_{x\in A}\,\C_x.
\]
Let $c$ denote the braiding of $\C$.

Let $\Ex_{br}^f(A,\, \B)\subset \Ex_{br}(A,\, \B)$ be the $2$-subgroupoid of extensions corresponding to $f$.
Our goal here is to describe $\pi_0(\Ex_{br}^f(A,\, \B))$.

An extension of \eqref{hom f} to a braided monoidal functor $A \to \uPicbr(\B)$ amounts to choosing
$\B$-equivalences $\C_x \bt_\B \C_y \xrightarrow{\sim} \C_{xy},\, x,y\in A,$ satisfying coherence conditions. Any two such
equivalences differ by a  tensor multiplication by an invertible object $L_{x,y} \in \Z_{sym}(\B)$.
Hence, any extension $\tilde{\C} \in \Ex_{br}^f(A,\, \B)$ is equal to $\C$ as an abelian category  and has the tensor product 
\begin{equation}
\label{zesting ot}
X \tilde{\ot} Y = L_{x,y} \ot X \ot Y,\qquad X\in \C_x\,Y\in \C_y ,\, x,y, \in A.
\end{equation}
To get associativity and braiding constraints  of  $\tilde{\C}$ it is necessary to have isomorphisms
\begin{equation}
\label{xi kappa}
\xi_{x,y,z}: L_{xy,z}\ot L_{x,y} \xrightarrow{\sim} L_{x,yz}\ot L_{y,z} ,\quad 
\kappa_{x,y}: L_{x,y} \xrightarrow{\sim} L_{y,x} ,\qquad  x,y,z\in A,
\end{equation}
i.e. $L=\{L_{x,y}\}_{x,y\in A}$ must be a $2$-cocycle in $Z^2_{br}(A,\,\Inv(\Z_{sym}(\B)))$.
These constraints are given  by
\begin{equation}
\label{zesting a}
\begin{split}
(X \tilde{\ot}  Y) \tilde{\ot}  Z
= L_{xy,z}\ot L_{x,y} \ot X\ot Y\ot Z  \xrightarrow{\xi_{x,y,z}}  L_{x,yz}\ot L_{y,z} \ot X\ot Y\ot Z \\
\xrightarrow{c_{L_{y,z}, X}}  \quad  L_{x,yz}\ot X \ot L_{y,z} \ot Y\ot Z  = X \tilde{\ot}  (Y \tilde{\ot}  Z)
\end{split}
\end{equation}
and
\begin{equation}
\label{zesting c}
X \tilde{\ot}  Y = L_{x,y} \ot X\ot Y  \xrightarrow{\kappa_{x,y}}    L_{y,x} \ot X\ot Y \xrightarrow{c_{X,Y}} L_{y,x} \ot Y\ot X =  Y \tilde{\ot}  X
\end{equation}
for all objects $X\in \C_x,\,Y\in \C_y ,\,Z\in \C_z$, $x,\,y,\,z\in A$, where we omit the associativity constraints of $\C$.

This is a  braided version of the construction introduced in \cite[Section 8.7]{ENO}.
Such extensions were considered in \cite{BGHNPRW}  where they were called {\em zestings} of $\C$. Recently,
a more general construction was studied  in great detail in \cite{DGPRZ}.
In  Propositions~\ref{PW2 components defined}  and \ref{pi0 is fiber bundle} below we compute obstructions and give
a parameterization of such extensions. Our treatment of equivalence classes of zesting extensions and obstructions seems
to be different from that of \cite[Section 4]{DGPRZ}. 

By Proposition~\ref{PB is well defined} the Whitehead bracket 
\[
[-,\, -]: \Pic_{br}(\B) \times \Inv(\Z_{sym}(\B)) \to k^\times 
\]
satisfies $c_{M,Z} c_{Z,M} = [\M,\, Z] \id_{Z\ot M}$ for all $Z\in  \Inv(\Z_{sym}(\B))$ and $M \in \M$, where $\M \in \Pic_{br}(\B)$.

Define a group homomorphism
\begin{equation}
\label{PW1 map}
PW^1_\C : H^1_{br}(A,\, \Inv(\Z_{sym}(\B))) \to H^3_{br}(A,\, k^\times) : Z \mapsto Q_Z,
\end{equation}
where $Q_Z$ is identified with the quadratic from 
\begin{equation}
\label{QZx}
Q_Z(x)=  [\C_x,\, Z(x)]  \,, c_{Z(x),Z(x)}, \qquad x\in A.
\end{equation}

Define a quadratic function
\begin{equation}
\label{PW2 map}
PW^2_\C : H^2_{br}(A,\, \Inv(\Z_{sym}(\B))) \to H^4_{br}(A,\, k^\times),
\end{equation}
by setting  the components of $PW^2_\C(L)$ for a braided $2$-cocycle $L$ to be
\begin{align}
\label{PW' xyzw}
PW^2_\C(L)(x,y,z,w) &= c_{L_{x,y}, L_{z,w}}, \\
\label{PW' xy|z}
PW^2_\C(L)(x,y|z) &= 1, \\
\label{PW' x|yz}
PW^2_\C(L)(x|y,z) &= [\C_x,\,L_{y,z}], \qquad x,y,z,w \in A.
\end{align}

\begin{definition}
We will call \eqref{PW1 map}  and \eqref{PW2 map} the {\em first} and {\em second} {\em Pontryagin-Whitehead maps}, cf.\ \cite[Section 8.7]{ENO}.
\end{definition}

\begin{remark}
The maps $PW^1_\C$ and $PW^2_\C$ depend on the homomorphism $f: A \to \Picbr(\B): x\mapsto \C_x$.
\end{remark}

\begin{proposition}
\label{PW2 components defined}
Let $L$ be a $2$-cocycle in $Z^2_{br}(A,\,\Inv(\Z_{sym}(\B)))$. One can choose isomorphisms  \eqref{xi kappa} so that
the associativity and braiding isomorphisms \eqref{zesting a}, \eqref{zesting c} satisfy the pentagon and hexagon axioms
(i.e. give rise to a tensor category) if and only if $PW^2_\C(L)$ is  trivial in $H^4_{br}(A,\, k^\times)$.
\end{proposition}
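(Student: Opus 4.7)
The plan is to fix arbitrary representatives of $\xi_{x,y,z}$ and $\kappa_{x,y}$, compute the failure of pentagon and hexagons as a braided $4$-cochain with values in $k^\times$, and then identify it (up to coboundary) with $PW^2_\C(L)$. First I would observe that, because each $L_{x,y}$ lies in $\Inv(\Z_{sym}(\B))$, the Hom-spaces appearing in \eqref{xi kappa} are either empty or one-dimensional; the braided $2$-cocycle condition on $L$ guarantees they are one-dimensional, so representatives $\xi$ and $\kappa$ exist and are unique up to rescaling by elements of $C^3(A,k^\times)\oplus C^2(A,k^\times) = C^3_{br}(A,k^\times)$. Having fixed them, the twisted tensor product $\tilde\ot$ of \eqref{zesting ot} and the would-be associator and braiding \eqref{zesting a}--\eqref{zesting c} are completely specified.

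Next I would turn the pentagon and the two hexagons for $(\tilde\ot,\tilde a,\tilde c)$ into explicit diagrams. Each diagram, after cancelling the coherence of $\C$ itself and using the defining equations of $\xi,\kappa$, reduces to an automorphism of an invertible object, hence to a scalar. Denote the resulting scalars by $p(x,y,z,w)$, $h_1(x,y|z)$, and $h_2(x|y,z)$. For the pentagon, rearranging the five-fold product $L_{xyz,w}\ot L_{xy,z}\ot L_{x,y}$ along the two pentagon paths requires commuting two $L$-factors past one another, which is exactly the double braiding $c_{L_{x,y},L_{z,w}}$; hence $p = c_{L_{x,y},L_{z,w}}$, matching \eqref{PW' xyzw}. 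For the two hexagons, one (corresponding to $\tilde c_{X\tilde\ot Y, Z}$) requires only the naturality of $\kappa$ and yields $h_1 \equiv 1$; the other (corresponding to $\tilde c_{X, Y\tilde\ot Z}$) requires carrying the factor $L_{y,z}$ through a homogeneous object $X\in \C_x$, which by Proposition~\ref{PB is well defined} contributes the Whitehead bracket $[\C_x, L_{y,z}]$, so $h_2 = [\C_x, L_{y,z}]$, as in \eqref{PW' x|yz}.

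Third, I would verify that $(p,h_1,h_2)$ is a braided $4$-cocycle by checking the five shuffle identities \eqref{d5}--\eqref{d3}. Each identity follows from an analogous coherence polytope in $\C$ for five or six objects, coupled with the $2$-cocycle condition on $L$; the contributions of $\Z_{sym}(\B)$-braidings and of module braidings $\sigma^{\C_x}$ distribute exactly as dictated by the shuffle differential across the bar. I would also check that rescaling $(\xi,\kappa)$ by $(\mu,\nu)\in C^3_{br}(A,k^\times)$ changes $(p,h_1,h_2)$ precisely by the braided coboundary $d(\mu,\nu)$ of \eqref{3cob1}--\eqref{3cob3}. Thus the class of $(p,h_1,h_2)$ in $H^4_{br}(A,k^\times)$ depends only on $L$ and equals $[PW^2_\C(L)]$; the isomorphisms $\xi,\kappa$ can be chosen so that $\tilde\C$ becomes a braided tensor category if and only if this class vanishes.

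The hardest step will be the bookkeeping in the second paragraph: it is what forces the asymmetry $h_1 = 1$ versus $h_2 = [\C_x, L_{y,z}]$. This asymmetry is dictated by the left placement of the cocycle in the zested tensor product \eqref{zesting ot}, and keeping the braiding of $\C$, the module braiding of each $\C_x$ (which on invertible objects of $\Z_{sym}(\B)$ collapses to the scalar given by the Whitehead bracket), and the braiding inside $\Z_{sym}(\B)$ strictly separate in the hexagon computation is what ultimately singles out the exact form \eqref{PW' xyzw}--\eqref{PW' x|yz}. Once this identification is clean, the cocycle and coboundary verifications follow the pattern of the analogous obstruction computations for braided monoidal $2$-functors in Section~\ref{Sect Braided functors and cohomology}.
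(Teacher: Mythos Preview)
Your overall strategy matches the paper's: write down the pentagon and two hexagons for the zested constraints, cancel the coherence of $\C$, and identify what remains as $PW^2_\C(L)$ up to the braided $3$-coboundary coming from the choice of $(\xi,\kappa)$. The identification of the nontrivial contributions --- the braiding $c_{L_{x,y},L_{z,w}}$ in the pentagon and the bracket $[\C_x,L_{y,z}]$ in one hexagon --- is exactly right.

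There is, however, an internal inconsistency in your second and third paragraphs. You assert that for your fixed choice of $(\xi,\kappa)$ the failure scalars are \emph{exactly} $(p,h_1,h_2)=PW^2_\C(L)$, and then separately say that rescaling $(\xi,\kappa)$ changes $(p,h_1,h_2)$ by a coboundary. These two claims cannot both hold: if the failure were independent of the choice it could not change under rescaling. What the paper actually shows (and what you should show) is that the pentagon and hexagon perimeters evaluate to
\[
c_{L_{x,y},L_{z,w}}\,d(a)(x,y,z,w),\qquad d(a)(x,y|z),\qquad [\C_x,L_{y,z}]\,d(a)(x|y,z),
\]
where $a=(\xi,\kappa)\in C^3_{br}(A,k^\times)$; the $\xi$'s on the two pentagon paths are \emph{different} and do not cancel, contributing precisely the alternating product $d(a)(x,y,z,w)$. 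Once you have this, the proof is immediate: the constraints satisfy pentagon and hexagons iff $PW^2_\C(L)\cdot d(a)=1$, i.e.\ iff $PW^2_\C(L)$ is a braided $4$-coboundary. This also makes your third paragraph unnecessary --- you do not need to verify the shuffle identities for $(p,h_1,h_2)$ by hand, nor separately argue well-definedness of the class. A minor slip: in the pentagon you want the \emph{single} braiding $c_{L_{x,y},L_{z,w}}$, not the double braiding (which is trivial since both objects lie in $\Z_{sym}(\B)$).
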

\begin{proof} 
Define a cochain  $a\in C^3_{br}(A,\,k^\times)$ by
$a(x,y,z)= \xi_{x,y,z},\, a(x|y)= \kappa_{x,y}$ for all $x,y,z\in A$,
where $\xi$ and $\kappa$ are isomorphisms \eqref{xi kappa}.  

In the diagrams below we will omit the the tensor product sign and the associativity constraints of $\C$.
The pentagon  for the associativity constraint \eqref{zesting a} becomes  the diagram
\begin{equation}
\label{zest pent}
\scalebox{0.85}{
\xymatrix{
 & L_{xyz,w}L_{xy,z}L_{x,y}XYZW  \ar[dl]_{\xi_{x,y,z}} \ar[r]^{\xi_{xy,z,w}} &
L_{xy,zw}L_{z,w}L_{x,y}XYZW \ar[dr]^{c_{L_{z,w},L_{x,y}}}&   \\
  L_{xyz,w}L_{x,yz}L_{y,z}XYZW  \ar[d]_{c_{L_{y,z},X}} \ar[dr]^{\xi_{x,yz,w}}  &&& 
L_{xy,zw}L_{x,y}L_{z,w}XYZW  \ar[dl]_{\xi_{x,y,zw}} \ar[d]^{c_{L_{z,w},XY}}    \\
 L_{xyz,w}L_{x,yz}XL_{y,z}YZW \ar[d]_{\xi_{x,yz,w}} & L_{x,yzw}L_{yz,w}L_{y,z}XYZW \ar[dl]^{c_{L_{y,z},X}} \ar[r]^{\xi_{y,z,w}}&
 L_{x,yzw}L_{y,zw}L_{z,w}XYZW \ar[d]_{c_{L_{z,w},X}} \ar[dr]_{c_{L_{z,w},XY}} & L_{xy,zw}L_{x,y}XYL_{z,w}ZW  \ar[d]^{\xi_{x,y,zw}}\\ 
 L_{x,yzw}L_{yz,w}XL_{y,z}YZW \ar[d]_{c_{L_{yz,w},X}} &  &   L_{x,yzw}L_{y,zw}XL_{z,w}YZW  \ar[d]_{c_{L_{y,zw},X}}&  
 L_{x,yzw}L_{y,zw}XYL_{z,w}ZW  \ar[d]^{c_{L_{y,zw},X}}  \\
  L_{x,yzw}XL_{yz,w}L_{y,z}YZW  \ar[rr]^{\xi_{y,z,w} }&& L_{x,yzw}XL_{y,zw}L_{z,w}YZW  \ar[r]^{c_{L_{z,w},Y}} & L_{x,yzw}XL_{y,zw}YL_{z,w}ZW, 
}
}
\end{equation}
while the hexagons  are  the diagrams
\begin{equation}
\label{zest hex1}
\xymatrix{
L_{xy,z}L_{x,y}XYZ \ar[rr]^{c_{L_{x,y}XY,Z }} &&  L_{xy,z}ZL_{x,y}XY \ar[rr]^{\kappa_{xy,z}}  \ar[d]^{c^{-1}_{L_{x,y},Z }} && L_{z,xy}ZL_{x,y}XY  \ar[d]^{c^{-1}_{L_{x,y},Z }}  \\
L_{x,yz}L_{y,z}XYZ \ar[u]^{\xi_{x,y,z}^{-1}} \ar[drr]^{c_{XY,Z}} &&  L_{xy,z}L_{x,y}ZXY \ar[rr]^{\kappa_{xy,z}}   && L_{z,xy}L_{x,y}ZXY   \ar[d]^{\xi_{z,x,y}^{-1}}  \\
L_{x,yz}XL_{y,z}YZ \ar[u]^{c^{-1}_{L_{y,z},X }} \ar[d]_{c_{Y,Z}} &&  L_{x,yz}L_{y,z}ZXY  \ar[u]_{\xi_{x,y,z}^{-1}} \ar[d]^{\kappa_{y,z}}  && L_{zx,y}L_{z,x}ZXY  \\
L_{x,yz}XL_{y,z}ZY \ar[d]_{\kappa_{y,z}}   &&  L_{x,yz}L_{y,z}ZXY \ar[rr]^{\xi_{x,z,y}^{-1}}  && L_{xz,y}L_{x,z}ZXY  \ar[u]_{\kappa_{x,z}} \\
L_{x,zy}XL_{z,y}ZY \ar[rr]^{c^{-1}_{L_{z,y},X }}  &&  L_{x,yz}L_{y,z}XZY \ar[rr]^{\xi_{x,z,y}^{-1}}  \ar[u]_{c_{X,Z}}  && L_{xz,y}L_{x,z}XZY, \ar[u]_{c_{X,Z}} 
}
\end{equation}
and 
\begin{equation}
\label{zest hex2}
\xymatrix{
L_{x,yz}XL_{y,z} YZ  \ar[rr]^{\kappa_{x, yz}} && L_{yz,x}XL_{y,z} YZ  \ar[rr]^{c_{X,L_{y,z}YZ}}  \ar@/^1pc/[dd]^{c_{X,L_{y,z}}}&&  L_{yz,x}L_{y,z} YZX \ar[dd]^{\xi_{y,z,x}} \\
\\
L_{x,yz}L_{y,z} XYZ  \ar[uu]^{c_{L_{y,z},X}}  \ar[rr]^{\kappa_{x, yz}}  && 
L_{yz,x}L_{y,z} XYZ 
\ar[d]^{\xi_{y,z,x}}  \ar@/^1pc/[uu]^{c_{L_{y,z},X}}&&  
L_{y,zx}L_{z,x} YZX \ar[d]^{c_{L_{z,x},Y}} \\
L_{xy,z}L_{x,y} XYZ   \ar[u]^{\xi_{x,y,z}}  \ar[d]_{\kappa_{x, y}} && L_{y,zx}L_{z,x} XYZ \ar[urr]_{c_{X,YZ}}  &&  L_{y,zx}YL_{z,x} ZX \\
L_{xy,z}L_{y,x} XYZ  \ar[rr]^{\xi_{y,x,z}}   \ar[d]_{c_{X,Y}}  && L_{y,zx}L_{x,z} XYZ \ar[u]_{\kappa_{x,z}}  \ar[d]^{c_{X,Y}} &&  L_{y,zx}YL_{z,x} XZ\ar[u]_{c_{X,Z}}  \\
L_{xy,z}L_{y,x} YXZ  \ar[rr]^{\xi_{y,x,z}}  && L_{y,xz}L_{x,z} YXZ \ar[rr]^{c_{L_{x,z},Y}}  &&  L_{y,xz}YL_{x,z} XZ, \ar[u]_{\kappa_{x,z}}
}
\end{equation}
for all $x,y,z,w\in A$ and  $X\in \C_x,\, Y\in \C_y,\, Z\in \C_z,\, W\in \C_w$.

After cancelling internal polygones commuting by the functoriality of the tensor product of $\C$, naturality of $c$, and the Yang-Baxter equation,
we see that the clockwise compositions given by the perimeters of \eqref{zest pent}, \eqref{zest hex1}, and \eqref{zest hex2} are
\[
c_{L_{x,y},L_{z,w}}\, d(a)(x,y,z,w),\quad d(a)(x,y|z),\text{\quad  and  $c_{X,L_{y,z}} c_{L_{y,z},X} \,d(a)(x|y,z)$, \quad respectively.}  
\]
Comparing this with the definition of $PW^2_\C(L)$ we get the result.
\end{proof}

\begin{proposition}
\label{pi0 is fiber bundle}
There is a fibration $F \to \pi_0(\Ex_{br}^f(A,\, \B)) \to B$,
where the base $B$ is the set of zeroes of $PW^2_\C$ and the fiber $F$ is the cokernel of $PW^1_\C$.
\end{proposition}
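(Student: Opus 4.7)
The plan is to describe an extension $\tilde\C \in \Ex_{br}^f(A,\B)$ in terms of its zesting data relative to the fixed reference extension $\C$, and then to identify when two such data yield equivalent braided extensions.

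First, I would build the projection $\pi_0(\Ex_{br}^f(A,\B)) \to B$. Since any $\tilde\C \in \Ex_{br}^f(A,\B)$ induces the same underlying homomorphism $f$, we may identify its homogeneous components with $\C_x$ as invertible braided $\B$-module categories. The tensor product equivalences $\tilde M_{x,y}, M_{x,y}:\C_x\bt_\B\C_y \xrightarrow{\sim} \C_{xy}$ then differ by tensoring with an invertible object $L_{x,y}\in \Z_{sym}(\B)$, so $\tilde\C$ is a zesting of $\C$ as in \eqref{zesting ot}. The existence of associativity and commutativity constraints on $\tilde\C$ provides isomorphisms $\xi, \kappa$ as in \eqref{xi kappa}, forcing $L$ to be a braided $2$-cocycle, and by Proposition~\ref{PW2 components defined} the pentagon/hexagon axioms force $PW^2_\C(L)=0$. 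Replacing the chosen $M_{x,y}$'s by isomorphic ones modifies $L$ by a braided $2$-coboundary, so the class $[L] \in H^2_{br}(A,\Inv(\Z_{sym}(\B)))$ depends only on $[\tilde\C]$ and lies in $B = \Ker(PW^2_\C)$.

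Next, I would establish surjectivity onto $B$ and identify the fiber. Surjectivity is exactly Proposition~\ref{PW2 components defined}: given $[L]\in B$, one can pick $\xi, \kappa$ making the zested multiplication into a braided tensor category. For the fiber over a fixed $[L]$, two choices of structure cells $(\xi,\kappa)$ and $(\xi',\kappa')$ differ by a braided $3$-cocycle $(\omega, c)\in Z^3_{br}(A,k^\times)$, and the resulting extensions are isomorphic in $\Ex_{br}^f(A,\B)$ precisely when there exists an equivalence of braided extensions inducing the identity on the underlying homomorphism $f$. Following Table~\ref{table-4}, such an equivalence is encoded by a braided monoidal pseudo-natural automorphism of the associated braided monoidal $2$-functor $A\to \uuPicbr(\B)$ preserving each $\C_x$ on the nose. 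Any such automorphism is given by a $1$-cochain $Z\in C^1(A,\Inv(\Z_{sym}(\B)))$ with $P_x = -\ot Z(x):\C_x\to\C_x$.

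Third, I would compute the cocycle by which such a gauge transformation $P=P(Z)$ modifies $(\xi,\kappa)$. Pulling $P_x\bt P_y$ through $\tilde M_{x,y}$ shifts $L_{x,y}$ by the coboundary $(dZ)_{x,y}$, and the compensating $2$-cells in the cube \eqref{cubes for PW1} and prism \eqref{prism x|y} express the discrepancy of $(\xi,\kappa)$ in terms of the naturality braidings $c_{Z(x),-}$, the Whitehead bracket $[\C_x,Z(y)]$ of Proposition~\ref{PB is well defined}, and the symmetric center self-braiding $c_{Z(x),Z(x)}$. Matching these contributions against the formulas \eqref{PW' xyzw}--\eqref{PW' x|yz} and \eqref{QZx}, one recognizes the resulting class as $PW^1_\C(Z) \in H^3_{br}(A,k^\times)$. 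Hence gauge-equivalent $(\xi,\kappa)$ pairs differ by an element of the image of $PW^1_\C$, and the fiber over $[L]$ is a torsor over $F = \Coker(PW^1_\C)$.

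The main obstacle will be step three: verifying that the explicit cocycle produced by the gauge $P(Z)$ equals $PW^1_\C(Z)$ on the nose. This requires carefully tracking how the automorphisms $\id\ot Z(x)$ interact with the zested associator \eqref{zesting a} and braiding \eqref{zesting c}, where the pentagon-type obstruction contributes $[\C_x, Z(y,z)]$-terms while the hexagon-type obstruction contributes the $c_{Z(x),Z(x)}$-term characteristic of the Pontryagin square. Once this bookkeeping is checked, the asserted fibration structure $F\to \pi_0(\Ex_{br}^f(A,\B))\to B$ follows immediately.
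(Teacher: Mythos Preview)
Your proposal is correct and follows essentially the same approach as the paper: the base is handled by Proposition~\ref{PW2 components defined}, and the fiber is identified by computing the braided $3$-cocycle arising from a gauge automorphism $X\mapsto Z(x)\ot X$. The paper's proof is somewhat more direct on the fiber computation: rather than invoking the cube~\eqref{cubes for PW1} and prism~\eqref{prism x|y} machinery, it simply writes down the single commuting square expressing the braided property of the equivalence $\C^{(\omega,\varsigma)}\to \C$ and reads off $\varsigma(x,y)=c_{Z(x),Z(y)}\,c_{Y,Z(x)}c_{Z(x),Y}$, from which $Q_Z(x)$ drops out immediately. Note also that the paper observes directly that the tensor property forces $Z$ to be a homomorphism, which in your setup is the condition $dZ=0$ needed to stay in the fiber over $[L]$.
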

\begin{proof}
The assertion about the base follows from Proposition~\ref{PW2 components defined}. 

Let $\C,\, \tilde\C$  be  $A$-extensions of $\B$ corresponding to the same braided monoidal functor $A\to \uPicbr(\B)$.
Then $\tilde\C = \C^{(\omega,\varsigma)}$ for some $(\omega,\varsigma)\in H^3(A,\,k^\times)$. 
An equivalence of extensions $\C^{(\omega,\varsigma)} \xrightarrow{\sim} \C$ is given on homogeneous component $\C_x,\,x\in A$, by   
$X \mapsto  Z(x)\ot X,\, X\in \C_x$ for $Z(x)\in \Inv(\Z_{sym}(\B))$. The tensor property of this equivalence means that $Z:A \to  \Inv(\Z_{sym}(\B))$
is a homomorphism, while its  braided property   translates to commutativity of the diagram
\begin{equation}
\label{torsor computation}
\xymatrix{
Z(x)\ot X \ot Z(y) \ot Y \ar[rrrrr]^{c_{Z(x)\ot X, Z(y) \ot Y }} \ar[d]_{c_{X,Z(y)}} &&&&& Z(y)\ot Y \ot Z(x) \ot X \ar[d]^{c_{Y,Z(x)}}  \\
Z(xy) \ot X\ot Y \ar[rrrrr]^{\varsigma(x,y)\, \,c_{X,Y}} &&&&& Z(xy) \ot Y\ot X, 
}
\end{equation}
for all $x,y\in A,\, X\in \C_x\,Y\in \C_y$. Here $c$ denotes the braiding of $\C$.

Comparing the compositions in \eqref{torsor computation} we see that
\[
 \varsigma(x,y) = c_{Z(x), Z(y)} \,  c_{Y, Z(x)}  c_{Z(x),Y},  \qquad \text{for all $Y\in \C_y$},
\]
and so the corresponding quadratic form is  $\varsigma(x,x) = [\C_x,\, Z(x)] \, c_{Z(x),Z(x)} =  Q_Z(x),\, x\in A$.
\end{proof}

\subsection{Quasi-trivial braided extensions}
\label{qt extensions}

Let $\B$ be a  braided tensor category. We saw in Example~\ref{uuPicbrI} that
the braided $2$-categorical group $\uuPicbr(\B)$ contains a full $2$-categorical subgroup $\uuPicbrI(\B)$ consisting of 
braided $\B$-module categories $\M$ such that $\M \cong \B$ as a $\B$-module category.  

\begin{definition}
\label{def qt ext}
Let $A$ be a finite group.
We say that a braided $A$-graded extension of $\B$ is {\em quasi-trivial} if it contains an invertible object in every homogeneous
component.
\end{definition}

Equivalently, an $A$-extension of $\B$ is quasi-trivial if  the corresponding homomorphism $A\to \Picbr(\B)$
factors through $\PicbrI(\B)$.  

\begin{remark}
A quasi-trivial extension is a special type of a braided {\em zesting} considered in \cite{DGPRZ}. Namely, it is a zesting
of $\C(A,\,1) \bt \B$.  
\end{remark}

Let  $\uuExbrqt(A,\,\B)$ denote the $2$-groupoid of quasi-trivial  braided $A$-extensions of $\B$. We have
an equivalence of $2$-groupoids
\[
\uuExbrqt(A,\,\B) \cong \FUNbr(A,\, \uuPicbrI(\B)).
\]

Since objects of  $\uuPicbrI(\B)$ are of the form $\B^\nu,\, \nu \in \Aut_\ot(\id_\B)$ (see Example~\ref{uuPicbrI}), 
any braided monoidal $2$-functor $A\to \uuPicbrI(\B)$ (and any extension in $\uuExbrqt(A,\,\B)$ comes from a group homomorphism
\[
f: A \to \Aut_\ot(\id_\B).
\]

\begin{example}
\label{any f}
Given $f$ as above,
there is a canonical quasi-trivial $A$-graded braided extension $\B(f)$ of~$\B$  such that
$\B(f)= \B \bt \Vect_A$ as a tensor category and its braiding is given by 
\[
c_{X\bt x,Y\bt y} = f(x)_Y\, c_{X,Y},\qquad X,Y\in \B,\, x,y\in A,
\]
where $x\in A$ denote the simple objects of  $\Vect_A$. 
\end{example}

Hence, 
\[
\uuExbrqt(A,\,\B) = \bigvee_{f\in \Hom(A,\,\Aut_\ot(\id_\B))} \, \mathbf{Ex}^f_\mathbf{br-qt}(A,\, \B),
\]
where $\mathbf{Ex}^f_\mathbf{br-qt}(A,\, \B)$ is the $2$-subgroupoid of quasi-trivial extensions corresponding to $f$. Furthermore,
$\mathbf{Ex}^{f_1}_\mathbf{br-qt}(A,\, \B) = \mathbf{Ex}^{f_2}_\mathbf{br-qt}(A,\, \B)$ if and only if $f_2 = f_1 \partial(Z)$ for some $Z\in \Inv(\B)$.

The Pontryagin-Whitehead maps \eqref{PW1 map} and \eqref{PW2 map} in this situation are given by
\begin{equation}
\label{PW1-qt}
PW^1_{\B(f)}(Z)(x) =   f(x)_{Z(x)}\, c_{Z(x), Z(x)} ,\qquad Z\in \Hom(A,\, \Inv(\Z_{sym}(\B))),
\end{equation}
and
\begin{align}
\label{obstruction for qt-qt}
PW^2_{\B(f)}(L) (x,y,z,w) &= c_{L_{x,y}, L_{z,w}}, \\
\label{PW' xy|z-qt}
PW^2_{\B(f)}(L) (x,y|z) &= 1, \\
\label{PW' x|yz-qt}
PW^2_{\B(f)}(L)  (x|y,z) &= f(x)_{L_{y,z}}, \qquad L\in H^2_{br}(A,\, \Inv(\Z_{sym}(\B))),
\end{align}
for all $x,y,z,w \in A$.

\begin{corollary}
\label{fiber-qt}
There is a fibration $F \to \pi_0(\mathbf{Ex}^f_\mathbf{br-qt}(A,\, \B)) \to B$,
where the base $B$ is the set of zeroes of  $PW^2_{\B(f)}$ and the fiber $F$ is the cokernel of $PW^1_{\B(f)}$.
\end{corollary}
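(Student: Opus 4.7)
The plan is to obtain Corollary~\ref{fiber-qt} as a direct specialization of Proposition~\ref{pi0 is fiber bundle} to the subgroupoid $\mathbf{Ex}^f_\mathbf{br-qt}(A,\,\B) \subset \Ex_{br}(A,\,\B)$. First I would note that a quasi-trivial braided extension by definition corresponds to a homomorphism $A \to \Picbr(\B)$ factoring through $\PicbrI(\B)$, and by the description of $\uuPicbrI(\B)$ in Example~\ref{uuPicbrI} such a homomorphism is determined (modulo the image of $\partial$) by a group homomorphism $f: A \to \Aut_\ot(\id_\B)$, with the associated braided $\B$-module categories being $\C_x = \B^{f(x)}$. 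The canonical extension $\B(f)$ of Example~\ref{any f} is a specific lift, so $\pi_0(\mathbf{Ex}^f_\mathbf{br-qt}(A,\,\B))$ is identified with a subset of $\pi_0(\Ex_{br}^{f}(A,\,\B))$, and it suffices to show that the Pontryagin-Whitehead maps of Proposition~\ref{pi0 is fiber bundle} reduce to \eqref{PW1-qt}--\eqref{PW' x|yz-qt} when one takes $\C = \B(f)$.

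For this reduction, the key input is the Whitehead bracket computation from Proposition~\ref{PB is well defined}: for any invertible $Z \in \Inv(\Z_{sym}(\B))$ one has $[\B^{f(x)},\, Z] = f(x)_Z$, since the module braiding of $\B^{f(x)}$ evaluated on $Z$ acting on $\be \in \B$ is by construction $f(x)_Z \cdot c_{Z,\be} c_{\be,Z} = f(x)_Z$. Substituting this into \eqref{QZx} gives $Q_Z(x) = f(x)_{Z(x)} \, c_{Z(x),Z(x)}$, which is exactly \eqref{PW1-qt}. Similarly, \eqref{PW' x|yz} becomes \eqref{PW' x|yz-qt} with the same substitution, while \eqref{PW' xyzw} and \eqref{PW' xy|z} are insensitive to the module structure and give \eqref{obstruction for qt-qt} and \eqref{PW' xy|z-qt} verbatim.

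Having matched the obstruction maps, the fibration statement follows immediately: Proposition~\ref{PW2 components defined} (specialized) tells us that the classes $L \in H^2_{br}(A,\,\Inv(\Z_{sym}(\B)))$ for which an associativity/braiding deformation of $\B(f)$ exists are precisely the zeros of $PW^2_{\B(f)}$, so the set of homotopy classes of quasi-trivial extensions lying over $f$ fibers over this zero-set $B$, and the fiber over a given $L$ is a torsor over $\Coker(PW^1_{\B(f)})$ by the computation in the proof of Proposition~\ref{pi0 is fiber bundle} (diagram \eqref{torsor computation}).

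The main issue to verify carefully is not conceptual but bookkeeping: one must check that passing from $\Ex_{br}^{f}(A,\,\B)$ to $\mathbf{Ex}^f_\mathbf{br-qt}(A,\,\B)$ is not a proper restriction, i.e.\ that every zesting of $\B(f)$ by an $L$ valued in $\Inv(\Z_{sym}(\B))$ remains quasi-trivial (which is clear, since the underlying abelian category is unchanged and each $\C_x = \B \bt f(x)$ still contains invertible objects), and, conversely, that two quasi-trivial extensions lying over the same $f$ are related by such a zesting. This last point is where care is needed, but it is forced by Proposition~\ref{pi0 is fiber bundle} applied to the inclusion $\uuPicbrI(\B) \hookrightarrow \uuPicbr(\B)$, since the ambiguity in choosing a lift $A \to \uPicbr(\B)$ of $f$ lives in $H^2_{br}(A,\,\Inv(\Z_{sym}(\B)))$ whether we work in $\uuPicbrI(\B)$ or $\uuPicbr(\B)$.
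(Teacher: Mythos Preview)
Your proposal is correct and follows the same approach as the paper. The paper states this corollary without proof, treating it as an immediate specialization of Proposition~\ref{pi0 is fiber bundle} to $\C = \B(f)$; the formulas \eqref{PW1-qt}--\eqref{PW' x|yz-qt} are simply the definitions of $PW^1_{\B(f)}$ and $PW^2_{\B(f)}$ obtained from \eqref{QZx} and \eqref{PW' xyzw}--\eqref{PW' x|yz} via the substitution $[\B^{f(x)},Z]=f(x)_Z$, which you justify correctly using Proposition~\ref{PB is well defined} and \eqref{sigma-nu}. Your additional bookkeeping paragraph, verifying that $\mathbf{Ex}^f_\mathbf{br-qt}(A,\,\B)$ coincides with $\Ex_{br}^{\eps\circ f}(A,\,\B)$ rather than being a proper subgroupoid, is a useful check the paper leaves implicit.
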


Thus, quasi-trivial $A$-extensions of $\B$ are obtained by choosing a homomorphism $f: A \to \Aut_\ot(\id_\B)$,
deforming (``zesting'') the tensor product and constraints of  $\B(f)$ by means of  $L\in Z^2_{br}(A,\, \Inv(\Z_{sym}(\B)))$
such that $PW^2_{\B(f)}(L)  =0$
via  \eqref{zesting ot} - \eqref{zesting c}, and then twisting the result by means of a braided $3$-cocycle 
$(\omega,\varsigma)\in Z^3_{br}(A,\, k^\times)$.  Corollary~\ref{fiber-qt} gives a description of equivalence classes of such extensions.

\bibliographystyle{ams-alpha}

\end{document}